\numberwithin{equation}{section}
\newtheorem{theorem}{Theorem}[section]
\newtheorem{lemma}[theorem]{Lemma}
\newtheorem{proposition}[theorem]{Proposition}
\newtheorem{definition}[theorem]{Definition}
\newcommand{\C}{\mathbf{C}}
\newcommand{\D}{\mathbf{D}}
\newcommand{\E}{\mathbf{E}}
\newcommand{\h}{\mathbf{H}}
\newcommand{\N}{\mathbf{N}}
\newcommand{\Z}{\mathbf{Z}}
\newcommand{\p}{\mathbf{P}}
\newcommand{\Q}{\mathbf{Q}}
\newcommand{\R}{\mathbf{R}}
\newcommand{\s}{\mathbf{S}}
\newcommand{\Fg}{\mathfrak {g}}
\newcommand{\Fh}{\mathfrak {h}}
\newcommand{\Fp}{\mathfrak {p}}
\newcommand{\CA}{\mathcal {A}}
\newcommand{\CD}{\mathcal {D}}
\newcommand{\CF}{\mathcal {F}}
\newcommand{\CN}{\mathcal {N}}
\newcommand{\CQ}{\mathcal {Q}}
\newcommand{\CR}{\mathcal {R}}
\newcommand{\CS}{\mathcal {S}}
\newcommand{\CT}{\mathcal {T}}
\newcommand{\CW}{\mathcal {W}}
\newcommand{\CZ}{\mathcal {Z}}
\newcommand{\CG}{\mathcal {G}}
\newcommand{\Ff}{\mathfrak{f}}
\newcommand{\SLE}{{\rm SLE}}
\newcommand{\dist}{\mathrm{dist}}
\newcommand{\diam}{\mathrm{diam}}
\newcommand{\var}{\mathrm{var}}
\newcommand{\im}{\mathrm{Im}}
\newcommand{\re}{\mathrm{Re}}
\newcommand{\loc}{\mathrm{loc}}
\newcommand{\Leb}{\mathcal{L}}
\newcommand{\cont}{\mathrm{Cont}}
\newcommand{\inrad}{\mathrm{inrad}}
\newcommand{\argmin}{\arg \, \min}
\newcommand{\cen}{\mathrm{cen}}
\newcommand{\confrad}{{\rm CR}}
\newcommand{\inn}{\mathrm{in}}
\newcommand{\pup}{\partial^\Up}
\newcommand{\pdown}{\partial^\Down}
\newcommand{\pright}{\partial^\Right}
\newcommand{\pleft}{\partial^\Left}
\newcommand{\one}{{\bf 1}}
\newcommand{\wt}{\widetilde}
\newcommand{\wh}{\widehat}
\newcommand{\wc}{\widecheck}
\newcommand{\ol}{\overline}
\newcommand{\ul}{\underline}
\newcommand{\giv}{\,|\,}
\newcommand{\BES}{\mathrm{BES}}
\newcommand{\strip}{\mathscr S}
\newcommand{\cyl}{\mathscr C}
\newcommand{\SH}{\mathrm{SH}}
\newcommand{\length}{\mathrm{length}}
\newcommand{\distqh}{\mathrm{dist}_{\mathrm{qh}}}
\newcommand{\disthyp}{\mathrm{dist}_{\mathrm{hyp}}}
\newcommand{\Bhyp}{B_{\mathrm{hyp}}}
\newcommand{\pin}{\partial^{\mathrm{in}}}
\newcommand{\pout}{\partial^{\mathrm{out}}}
\newcommand{\neigh}[2]{B(#1,#2)}
\newcommand{\qmeasure}[1]{\mu_{#1}}
\newcommand{\closure}[1]{\mathrm{cl}(#1)}
\newcommand{\nmeasure}[1]{\mu_{#1}^{\mathrm{nat}}}
\newcommand{\harm}[3]{\omega(#1,#2,#3)}
\newcommand{\Up}{\mathrm{U}}
\newcommand{\Down}{\mathrm{D}}
\newcommand{\Left}{\mathrm{L}}
\newcommand{\Right}{\mathrm{R}}
\newcommand{\Inner}{\mathrm{I}}
\newcommand{\bIn}{\partial^{\mathrm{in}}}
\newcommand{\bOut}{\partial^{\mathrm{out}}}
\newcommand{\kay}{{{\mathbf k}}}
\newcommand{\len}{{\mathrm {len}}}
\begin{document}

\title{Conformal removability of $\SLE_4$}

\author{Konstantinos Kavvadias, Jason Miller, Lukas Schoug}

\begin{abstract}
We consider the Schramm-Loewner evolution ($\SLE_\kappa$) with $\kappa=4$, the critical value of $\kappa > 0$ at or below which $\SLE_\kappa$ is a simple curve and above which it is self-intersecting.  We show that the range of an $\SLE_4$ curve is a.s.\ conformally removable, answering a question posed by Sheffield.  Such curves arise as the conformal welding of a pair of independent critical ($\gamma=2$) Liouville quantum gravity (LQG) surfaces along their boundaries and our result implies that this conformal welding is unique.  In order to establish this result, we give a new sufficient condition for a set $X \subseteq \C$ to be conformally removable which applies in the case that $X$ is not necessarily the boundary of a simply connected domain.
\end{abstract}

\date{\today}
\maketitle

\setcounter{tocdepth}{1}

\tableofcontents

\parindent 0 pt
\setlength{\parskip}{0.20cm plus1mm minus1mm}

\section{Introduction}
\label{sec:intro}

The Schramm-Loewner evolution ($\SLE_\kappa$) is a one parameter family ($\kappa > 0$) of curves which connect two boundary points of a simply connected domain.  It was introduced by Schramm in \cite{s2000sle} as a candidate to describe the scaling limit of the interfaces which arise in discrete models from statistical mechanics on planar lattices at criticality, such as loop-erased random walk and the percolation model.  Scaling limit results for such models towards $\SLE$ have now been proved in a number of cases \cite{s2001cardy,lsw2004lerw,ss2009contours,s2010ising} and similar results have also been proved where the underlying graph is a random planar map \cite{s2016inventory,kmsw2019bipolar,gm2017perc,lsw2017wood,gm2021saw}.  The value of $\kappa > 0$ determines the roughness of an $\SLE_\kappa$ curve.  In particular, the $\SLE_\kappa$ curves are simple if $\kappa \leq 4$, self-intersecting but not space-filling if $\kappa \in (4,8)$, and space-filling if $\kappa \geq 8$ \cite{rs2005basic}.  Moreover, the dimension of an $\SLE_\kappa$ curve is $\min(1+\kappa/8,2)$ \cite{rs2005basic,beffara2008dimension}.  The focus of this work is on the critical value $\kappa = 4$, which corresponds to a curve which is simple but just barely so.

In recent years, there has been a substantial amount of work centered around the relationship between $\SLE_\kappa$ and Liouville quantum gravity (LQG) surfaces.  Roughly speaking, LQG is the canonical model of a random two-dimensional Riemannian manifold and can formally be described by its metric tensor
\begin{equation}
\label{eqn:lqg_def}
e^{\gamma h(z)} (dx^2 + dy^2)
\end{equation}
where $h$ is an instance of (some form of) the Gaussian free field (GFF) on a planar domain $D$ and $\gamma \in (0,2]$ is a parameter.  Since the GFF is a distribution in the sense of Schwartz, it is non-trivial to make sense of~\eqref{eqn:lqg_def} as the exponential is a non-linear operation and this has been the focus of a tremendous amount of mathematical work in the last two decades.  For example, Kahane's theory of Gaussian multiplicative chaos \cite{k1985gmc} can be used to construct the volume form and boundary length measure associated with~\eqref{eqn:lqg_def}.  The work \cite{ds2011lqg} gives another construction of the volume form and boundary length measure.  Moreover, \cite{ds2011lqg} proves a rigorous version of the KPZ formula, which was previously used by physicists to give non-rigorous derivations of critical exponents for planar lattice models (and more recently) $\SLE_\kappa$.  At the critical value $\gamma=2$, the volume form and boundary length measure were constructed in \cite{drsv2014critical2,drsv2014critical1} and later shown to arise as properly renormalized limits of the volume form and boundary length measure as $\gamma \to 2$ in \cite{aps2019critical}.  The metric (two point distance function) associated with~\eqref{eqn:lqg_def} for $\gamma = \sqrt{8/3}$ was constructed in \cite{ms2020lqg1,ms2021lqg2} and for $\gamma \in (0,2)$ in \cite{dddf2020tightness,gm2021metric}.

The manner in which $\SLE_\kappa$ arises in the context of LQG is that it describes the interfaces which are formed when one takes two independently sampled LQG surfaces and glues them together along their boundaries.  The way that the gluing is constructed is using conformal welding, which we recall is defined as follows.  Suppose that $\D_1$, $\D_2$ are two copies of the unit disk and $\phi \colon \partial \D_1 \to \partial \D_2$ is a homeomorphism.  We say that a simple loop $\eta$ on the two-dimensional sphere $\s^2$ together with conformal maps $\psi_i$ for $i=1,2$ from $\D_i$ to the left and right sides of $\s^2 \setminus \eta$ is a conformal welding with welding homeomorphism $\phi$ if $\phi = \psi_2^{-1} \circ \psi_1|_{\partial \D_1}$.  For a given homeomorphism $\phi$, it is not obvious if such a conformal welding exists but Sheffield \cite{she2016zipper} showed that for each $\gamma \in (0,2)$ it does exist if one takes the welding homeomorphism to be the one which associates points along the boundaries of two independent $\gamma$-LQG surfaces according to boundary length using the boundary measure from~\eqref{eqn:lqg_def}.  In this case, the welding interface is an $\SLE_\kappa$ with $\kappa=\gamma^2 \in (0,4)$.  This was extended to the case $\kappa=4$ and $\gamma=2$ in \cite{hp2021welding} and we also remark that a number of welding-type results which include the case $\kappa > 4$ were established in \cite{dms2021mating}.  (Let us also mention the work \cite{ajks2011randomconformal} which studies the conformal welding of an LQG surface to a Euclidean surface.  In this case, curves different from $\SLE_\kappa$ arise.)

For a homeomorphism $\phi$ as above for which there exists a conformal welding, it is also a non-trivial question to determine whether the conformal welding is unique.  Recall that a set $X \subseteq \C$ is said to be \emph{conformally removable} if every homeomorphism $\varphi \colon \C \to \C$ which is conformal on $\C \setminus X$ is conformal on $\C$.  It is not difficult to see that the uniqueness of a conformal welding is equivalent to the welding interface being conformally removable.

In order to prove that a set $X \subseteq \C$ which is equal to the boundary of a simply connected domain~$D$ (e.g., a simple curve) is conformally removable, one often makes use of a condition due to Jones and Smirnov \cite{js2000removability}.  We will not describe the Jones-Smirnov condition here, but remark that one often checks that it holds by using one of the sufficient conditions established in \cite{js2000removability}.  For example, it is shown in \cite{js2000removability} that if the Riemann map $\D \to D$ is H\"older continuous up to $\partial \D$ (so that $D$ is a so-called H\"older domain) then $X = \partial D$ is conformally removable; in fact it is shown in \cite{js2000removability} that a much weaker modulus of continuity suffices (see also \cite{kn2005remove}).  Rohde and Schramm proved that the complementary components of an $\SLE_\kappa$ curve for $\kappa \neq 4$ are H\"older domains hence conformally removable.  The optimal H\"older exponent was later computed in \cite{gms2018multifractal} in which it was also shown that $\SLE_4$ does not form the boundary of a H\"older domain.  The precise modulus of continuity of the uniformizing map of the complementary component of an $\SLE_4$ was determined in \cite{kms2021regularity} and is given by $(\log \delta^{-1})^{-1/3+o(1)}$ as $\delta \to 0$.  The reason for the difference in behavior for $\kappa \neq 4$ is that $\SLE_4$ curves are barely non-self-intersecting and contain tight bottlenecks.  Another way of formulating this is that if $\eta$ is an $\SLE_\kappa$ for $\kappa \in (0,4)$ and $z \in \eta$ then the harmonic measure of $B(z,\epsilon)$ in the complement of $\eta$ decays as fast as a power of $\epsilon$ as $\epsilon \to 0$ \cite{gms2018multifractal} but for $\kappa=4$ can decay as fast as $\exp(-\epsilon^{-3+o(1)})$ as $\epsilon \to 0$ \cite{kms2021regularity}.  It was further shown in \cite{kms2021regularity} that the Jones-Smirnov condition itself does not hold for $\SLE_4$.

The main result of this paper is the conformal removability of $\SLE_4$, which implies the uniqueness of the welding problem for critical ($\gamma=2$) LQG. (We remark that a weaker version of the uniqueness of the welding problem for critical LQG was proved in \cite{mmq2021uniqueness}.)

\begin{theorem}
\label{thm:sle_4_removable}
Suppose that $\eta$ is an $\SLE_4$ in $\h$ from $0$ to $\infty$.  Suppose that $f \colon \h \to \h$ is a homeomorphism which is conformal on $\h \setminus \eta$.  Then $f$ is a.s.\ conformal on $\eta$.  In particular, the range of $\eta$ is a.s.\ conformally removable.
\end{theorem}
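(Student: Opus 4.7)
The plan is to combine a new abstract sufficient condition for conformal removability with a probabilistic verification step that exploits the coupling of $\eta$ with critical ($\gamma=2$) LQG. At the deepest level, showing that $\eta$ is conformally removable amounts to promoting any homeomorphism $f\colon\h\to\h$ that is conformal off $\eta$ to a conformal map on all of $\h$. The classical Weyl-type route reduces this to showing $f\in W^{1,2}_{\mathrm{loc}}(\h)$: on the two complementary components $D_\pm$ of $\eta$ in $\h$ the area formula already yields $\int_{D_\pm}|f'|^2\,dA\le\mathrm{area}(f(D_\pm))<\infty$ on compacts, and since $\eta$ has zero Lebesgue measure the only missing ingredient is existence of a distributional weak derivative of $f$ across $\eta$, which is equivalent to absolute continuity of $f$ on almost every horizontal and vertical line through $\eta$.

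The reason this is not automatic is precisely that flagged in the introduction: the Jones--Smirnov route via H\"older-type regularity of the uniformizing maps is ruled out for $\SLE_4$ by \cite{kms2021regularity}, where harmonic measure near a bottleneck of $\eta$ can decay as rapidly as $\exp(-\eps^{-3+o(1)})$. The new sufficient condition for conformal removability introduced in this paper is, I expect, a Fubini-friendly replacement: around each point $z\in X$ one asks for a positive-density sequence of dyadic scales $r_n\to 0$ on which the annulus $\{r_n<|w-z|<2r_n\}\setminus X$ has all its complementary crossings of controlled conformal modulus. On such a good scale a Koebe distortion estimate on each complementary crossing controls the oscillation of $f$ on a one-dimensional slice, and summing these oscillation bounds across the good scales upgrades to ACL and thus to the Sobolev regularity needed for Weyl's lemma. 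Because the hypothesis refers only to complementary crossings of annuli, it is insensitive to whether $X$ is the boundary of a simply connected domain.

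Step one is to make this abstract criterion precise and prove it. Step two is its probabilistic verification for $\eta$. Here one uses the welding of $\eta$ between two independent critical ($\gamma=2$) LQG surfaces along their quantum boundary lengths from \cite{hp2021welding}, together with the $(\log\delta^{-1})^{-1/3+o(1)}$ modulus-of-continuity bound from \cite{kms2021regularity}, to show that at a.e.\ point of $\eta$ a positive density of dyadic scales avoid any long-and-thin bottleneck of $\eta$. The concrete inputs are a one-point estimate certifying a ``good scale'' (an upper bound on the probability that a given annulus contains a thin pinch of $\eta$), translation covariance of the LQG/$\SLE_4$ welding along the curve under quantum-length shifts, and a Borel--Cantelli type argument running over scales; an absolute-continuity comparison between the natural quantum length and Lebesgue-typical parametrization lets one transfer the conclusion to a.e.\ point of $\eta$.

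The main obstacle is step two, namely ruling out that bottlenecks of $\eta$ pile up across all scales near a typical point of $\eta$. A single-scale bottleneck is already a delicate event at $\kappa=4$, and because the complementary modulus of continuity is only polylogarithmic any workable removability criterion has to tolerate catastrophic local distortion on most scales and extract its conclusion from a comparatively thin subsequence of good scales. The architecture of the argument is therefore that the abstract criterion of step one is designed to demand only a positive density of good scales, while the probabilistic analysis of step two is designed to supply exactly that with probability one; making these two sides meet in the middle, quantitatively tight enough to survive the $\exp(-\eps^{-3+o(1)})$ harmonic-measure decay on bad scales, is the technical heart of the paper.
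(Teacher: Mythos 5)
You correctly identify the two-step architecture — formulate a new abstract sufficient condition for removability tolerant of irregularity, then verify it almost surely for $\SLE_4$, with the deterministic payoff coming through ACL/Weyl-type regularity. However, your sketch of the verification step omits the tool that actually carries it, and this is a genuine gap rather than a cosmetic difference. The removability criterion (Theorem~\ref{thm:removability_of_X}) does not ask merely that the annulus around a typical point avoid a long thin bottleneck; it asks, at each good scale $k$, for a chain of pairwise disjoint simply connected domains $U_1,\ldots,U_m\subseteq A\setminus X$ whose consecutive boundary intersection arcs $I_i$ carry measures $\mu_i$ with prescribed dimension bounds $d_i$, together with quantitative control of the hyperbolic geometry of each $U_i$ (both distance bounds and square-counting bounds along hyperbolic geodesics from a chosen center). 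This chained-domain structure has to be \emph{constructed}, and the construction in the paper proceeds not through LQG welding but through the GFF level-line coupling of $\SLE_4$: one explores height-$\pm u,\pm 2u,\ldots$ level lines across a conformal rectangle (Section~\ref{sec:rectangle_exploration}) and then across the conformal annulus (Section~\ref{sec:annulus_exploration}), and the crucial estimates are that the exploration discovers only finitely many crossings (Lemma~\ref{lem:rectangle_crossing_bound}), that their maximal height has stretched-exponential tails (Lemma~\ref{lem:rectangle_maximum_crossing_height}), and that these bounds survive conditioning on the past of the exploration via a uniform Radon--Nikodym estimate (Lemma~\ref{lem:rn_formula}). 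LQG enters the proof only once, in Lemma~\ref{lem:good_fraction_whole_plane}, to obtain hyperbolic regularity of geodesics emanating from a quantum-typical point on the curve; the $(\log\delta^{-1})^{-1/3+o(1)}$ modulus-of-continuity estimate from \cite{kms2021regularity} is discussed in the introduction as motivation but is not used. Your proposed inputs — welding, modulus of continuity, a one-point bottleneck estimate, Borel--Cantelli — do not by themselves produce the finitely-chained domain structure that the criterion requires, and it is precisely the GFF level lines that organize the potentially many $\SLE_4$ crossings of a small annulus into a finite chain with bounded height.

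A second, smaller inaccuracy: the criterion does \emph{not} extract its conclusion from a thin positive-density subsequence of good scales. It requires that for each $n\geq n_0$ there exist a good $k\in[(1-a^2)n,n]$, so good scales must be found with density $1-O(a^2)$, essentially everywhere. The verification (Proposition~\ref{prop:good_everywhere}) achieves this by interleaving the dense set of scales at which the first-crossing regularity events from Section~\ref{sec:first_crossing} hold with the scales $\kay_j$ at which the exploration events $E_{z,\kay_j}^{M,a}$ hold, controlling the gaps $\kay_j^*\asymp\log M_j$ via stretched-exponential moment bounds (Lemma~\ref{lem:annulus_maximum_crossing_height}) and Borel--Cantelli. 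Your framing that ``bad scales vastly outnumber good scales and the criterion must survive on a thin subsequence'' has it backwards: the design is that good scales dominate, and what makes this compatible with the $\exp(-\eps^{-3+o(1)})$ harmonic-measure decay at individual bottlenecks is that the chained-domain/hyperbolic-geodesic structure asked for by Theorem~\ref{thm:removability_of_X} is a fundamentally different kind of regularity from uniform modulus of continuity of the uniformizing map.
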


We note that the first assertion of Theorem~\ref{thm:sle_4_removable} implies that the range of $\eta$ is a.s.\ conformally removable because if $f \colon \C \to \C$ is a homeomorphism which is conformal on $\C \setminus \eta$ then by the Riemann mapping theorem we can post-compose its restriction to $\h$ with a conformal map so that we obtain a homeomorphism $\h \to \h$ which is conformal on $\h \setminus \eta$.  We also remark that Theorem~\ref{thm:sle_4_removable} applies if $\eta$ is an $\SLE_4$ in an arbitrary simply connected domain $D$ since one can always conformally map $D$ to $\h$.

One of the main steps in proving Theorem~\ref{thm:sle_4_removable} is Theorem~\ref{thm:removability_of_X}, which contains a new sufficient condition for a set $X \subseteq \C$ to be conformally removable.  One of the novelties of Theorem~\ref{thm:removability_of_X} is that it does not require $X = \partial D$ for $D \subseteq \C$ simply connected as in \cite{js2000removability}.  We also remark that in general the conformal removability for boundaries of domains which are not simply connected is less well-understood.  Perhaps the simplest example is the standard Sierpinski carpet which (without much difficulty) is seen to not be conformally removable; see, e.g., the discussion after \cite[Theorem~1.8]{n2019nonremove}.  See also the recent work \cite{n2021carpet} which proves that all topological Sierpinski carpets are not conformally removable.  It is a much more difficult problem to show that the Sierpinski gasket is not conformally removable \cite{n2019nonremove}.  Like the Sierpinski gasket, the complement of an $\SLE_\kappa$ curve with $\kappa \in (4,8)$ consists of a countable collection of open sets whose boundaries can intersect each other.  In another work \cite{kms2022sle48remov}, we will use Theorem~\ref{thm:removability_of_X} to prove the conformal removability of $\SLE_\kappa$ for $\kappa \in (4,8)$ whenever the adjacency graph of connected components of the complement of such a curve is connected.  This means that if $\eta$ is an $\SLE_\kappa$ in $\h$ from $0$ to $\infty$ and $U,V$ are components of $\h \setminus \eta$ then there exist components $U_1,\ldots,U_n$ of $\h \setminus \eta$ with $U_1 = U$, $U_n = V$, and $\partial U_i \cap \partial U_{i+1} \neq \emptyset$ for each $1 \leq i \leq n-1$.  We note that it was shown in \cite{gp2020adj} that there exists $\kappa_0 \in (4,8)$ so that this condition holds for all $\kappa \in (4,\kappa_0)$.

We refer the reader to Theorem~\ref{thm:removability_of_X} for the precise statement of the sufficient condition for conformal removability, but we will give a rough description of the setup and strategy here.  Suppose that $X \subseteq \C$ has zero Lebesgue measure and $f \colon \C \to \C$ is a homeomorphism which is conformal on $\C \setminus X$.  To show that $f$ is conformal on $\C$ it suffices to show that $f$ is absolutely continuous on lines (ACL), which we recall means that it is absolutely continuous on Lebesgue a.e.\ horizontal and vertical line.  To prove that $X$ is conformally removable, it therefore suffices to control the variation of $f$ on the places where a Lebesgue typical horizontal or vertical line intersects $X$.  We will fix $a > 0$ small, $M > 1$ large, and assume that $X$ has upper Minkowski dimension at most $2-5a$.  We will further assume that we have a family of sets $\CA = \cup_{n=1}^\infty \CA_n$ where each $A \in \CA_n$ has the topology of an annulus with $\diam(A) \leq M 2^{-n}$ such that for each $z \in X$ and $n \in \N$ there exists $(1-a^2) n \leq k \leq n$ and $A \in \CA_k$ so that $B(z,2^{-n})$ is contained in the bounded component of $\C \setminus A$ and the following additional condition holds.  There are finitely many open, simply connected, and pairwise disjoint sets $U_1,\ldots,U_m$ for $1 \leq m \leq M$ in $A \setminus X$ with $\partial U_i \cap \partial U_{i+1} \neq \emptyset$ for $1 \leq i \leq m$ (with $U_{m+1} = U_1$) which satisfy some additional assumptions on the geometry of their pairwise intersections.  This will allow us to construct a path $\gamma$ which disconnects the inner and outer boundaries of $A$ and whose image under $f$ has diameter at most $2^{-(1-3a)k} + 2^{(1-a)k} \int_A |f'(w)|^2 d\Leb_2(w)$ (where $\Leb_2$ denotes two-dimensional Lebesgue measure).  This gives an upper bound on $\diam(f([z-2^{-k},z+2^{-k}]))$ which suffices because for (a compact part of) a typical line $L$ the number of intervals of length $2^{-n}$ hit by $X$ is $O(2^{(1-5a)n})$ (as $X$ has upper Minkowski dimension at most $2-5a$) and the integral of $|f'|^2$ on the $2^{-(1-a^2)n}$-neighborhood of (a compact part of) $L$ is $O(2^{-(1-a^2)n})$.  In particular, the variation of $f$ on (a compact part of) $L$ in $X \cap L$ if $O(2^{-a n/2})$ which tends to $0$ as $n \to \infty$, hence $f$ is absolutely continuous on $L$.

\subsection*{Outline and strategy}

The remainder of this article is structured as follows.  First, in Section~\ref{sec:preliminaries} we will collect a number of preliminaries.  Next, in Section~\ref{sec:initial_estimates} we will prove two regularity estimates for $\SLE_4$.  We will next in Section~\ref{sec:first_crossing} prove several estimates regarding the regularity of the first crossing of an $\SLE_4$ across an annulus.  The purpose of Section~\ref{sec:rectangle_exploration} is to define and study the properties of an exploration of crossings of GFF level lines across a rectangle \cite{schramm2013contour,wang2017level}.  We will then extend this to the case of annuli in Section~\ref{sec:annulus_exploration}.  We next show in Section~\ref{sec:sle4_is_removable} that the hypotheses of Theorem~\ref{thm:removability_of_X} are satisfied for $\SLE_4$.  Finally, we will state and prove the removability theorem in Section~\ref{sec:removability_theorem}.  Appendix~\ref{app:whitney} contains a number of facts regarding the relationship between Whitney squares and the hyperbolic metric.

\subsection*{Notation}
For two quantities $a,b$ we write $a \lesssim b$ if there is a constant $C>0$ (referred to as the implicit constant), independent of any parameters of interest, such that $a \leq Cb$. Moreover, we write $a \gtrsim b$ if $b \lesssim a$, and $a \asymp b$ if $a \lesssim b$ and $a \gtrsim b$. We often specify what parameters the implicit constant depends on.

For a domain $D$, $E \subseteq \partial D$ and $z \in D$, we denote by $\harm{z}{E}{D}$ the harmonic measure of $E$ in $D$ seen from $z$. Moreover, for a given curve $\eta$ we denote by $\eta^\Left$ (resp.\ $\eta^\Right$) the left (resp.\ right) side of the curve in the sense of prime ends.

We let $\Z$ denote the set of integers, $\N$ the set of positive integers, $\N_0 = \N \cup \{ 0 \}$, $\R$ the real numbers, $\C$ the complex plane, $\D$ the unit disk and $\h$ the upper half-plane, and $\strip = \R \times (0,\pi)$ the infinite horizontal strip of height $\pi$. For a set $A \subseteq \R^n$, we let $\closure{A}$ denote its closure. Finally, we denote by $\Leb_n$ the $n$-dimensional Lebesgue measure.

\subsection*{Acknowledgements} 

K.K.\ was supported by the EPSRC grant EP/L016516/1 for the University of Cambridge CDT (CCA).  J.M.\ and L.S.\ were supported by ERC starting grant 804166 (SPRS).

\section{Preliminaries}
\label{sec:preliminaries}

We will now collect a number of preliminaries which are used throughout this work.  We begin by reviewing some basic facts about Whitney squares and the hyperbolic metric in Section~\ref{subsec:whitney_hyperbolic}, quasiconformal maps in Section~\ref{subsec:quasiconformal}, $\SLE_\kappa$ and its relevant variants in Section~\ref{subsec:sle}, and the natural parameterization of $\SLE$ in Section~\ref{subsec:natural_parameterization}. We next review the GFF in Section~\ref{subsec:gff}, LQG in Section~\ref{subsec:lqg}, and finally the level line coupling of $\SLE_4$ and its variants with the GFF in Section~\ref{subsec:level_lines}.

\subsection{Whitney squares, hyperbolic and quasihyperbolic distance}
\label{subsec:whitney_hyperbolic}

Recall that the \emph{hyperbolic metric} in $\D$ is defined by 
\begin{align*}
	\disthyp^{\D}(z_1,z_2) = \inf \int_{z_1}^{z_2} \frac{|dz|}{1-|z|^2} \quad\text{for}\quad z_1,z_2 \in \D,
\end{align*}
where the infimum is taken over all smooth curves in $\D$ connecting $z_1$ and $z_2$.  One of the basic properties of  $\disthyp^{\D}$ is that it is conformally invariant which means that for each conformal automorphism $\varphi$ of $\D$ we have that $\disthyp^{\D}(\varphi(z_1),\varphi(z_2)) = \disthyp^{\D}(z_1,z_2)$ (see~\cite[Chapter~I.4]{gm2005harmonic}).  This makes it possible to define the hyperbolic metric on any simply connected domain $D \subseteq \C$ by $\disthyp^D(z_1,z_2) \coloneqq \disthyp^{\D}(\varphi(z_1),\varphi(z_2))$ where $\varphi: D \to \D$ is a conformal map as $\disthyp^D$ does not depend on the choice of $\varphi$. It follows that the hyperbolic distance is conformally invariant which means that if $D,\wt{D} \subseteq \C$ are simply connected domains, $\varphi: D \to \wt{D}$ is a conformal map, and $z_1,z_2 \in D$, then we have that $\disthyp^D(z_1,z_2) = \disthyp^{\wt{D}}(\varphi(z_1),\varphi(z_2))$.

We note that for each $z \in \D$, $\disthyp^\D(z,\partial \D) = \infty$ and hence the same holds in any simply connected domain $D$. However, the notion of a geodesic between an interior point and a boundary point still makes sense. We define the hyperbolic geodesic from $0$ to $1$ in $\D$ to be the line segment $[0,1]$.  The reason for this definition is that for each $\epsilon \in (0,1)$ the curve with the shortest hyperbolic length connecting $0$ and $\partial B(1,\epsilon) \cap \D$ is the segment $[0,1-\epsilon]$.  Moreover, for a Jordan domain $D$, $z \in D$, $w \in \partial D$, we define the hyperbolic geodesic from $z$ to $w$ to be $\varphi([0,1])$, where $\varphi:\D \to D$ is the unique conformal map such that $\varphi(0) = z$ and $\varphi(1) = w$.  This definition makes sense more generally if $D \subseteq \C$ is a simply connected domain, $z \in D$, and $w$ is a prime end of $\partial D$. Throughout the paper, for $z,w \in \closure{D}$, we shall denote by $\gamma_{z,w}^D$ the hyperbolic geodesic from $z$ to $w$ in $D$, parameterized so that the hyperbolic length of $\gamma_{z,w}^D([s,t])$ is $t-s$, and extended to all times $t \in \R$.

For a simply connected domain $D \subseteq \C$ one can also consider the \emph{quasihyperbolic metric} $\distqh^D$ which is defined by
\begin{align*}
	\distqh^D(z_1,z_2) = \inf \int_{z_1}^{z_2} \frac{|dz|}{\dist(z,\partial D)} \quad\text{for}\quad z_1,z_2 \in D,
\end{align*}
where the infimum is taken over all smooth curves in $D$ connecting $z_1$ and $z_2$. It follows by the Koebe-$1/4$ theorem that for any simply connected domain $D$ we have
\begin{align}\label{eq:dist_hyp_qh_comparable}
	\disthyp^D(z,w) \leq \distqh^D(z,w) \leq 4\disthyp^D(z,w);
\end{align}
see~\cite[Chapter~I.4]{gm2005harmonic}.

For any open subset $U \subsetneq \C$, there exists a family $\CW = (Q_j)$ of closed squares with pairwise disjoint interiors and sides parallel to the axes, such that $Q_j$ has sidelength $\len(Q_j) = 2^{-n_j}$ for some $n_j \in \Z$, $U = \cup_j Q_j$ and such that
\begin{align*}
	\diam(Q_j) \leq \dist(Q_j,\partial U) < 4 \diam(Q_j).
\end{align*}
Such a family is referred to as a \emph{Whitney square} decomposition of $U$ and we let $\cen{Q_j}$ denote the center of $Q_j \in \CW$.  In what follows, we assume that $U = D$ is a simply connected domain. Consider a Whitney square decomposition $\CW = (Q_j)$ of $D$ and write $x_j = \cen(Q_j)$. Furthermore, let $G_\CW = (V_\CW,E_\CW)$ denote the graph with vertex set $V = (x_j)$ and edge set $E$ such that $\{x_i,x_j\} \in E$ if and only if $\partial Q_i \cap \partial Q_j$ contains an open interval. Denote by $d_\CW$ the graph distance in $G_\CW$, that is, $d_\CW(x_i,x_j)$ is the minimal number of edges of a path in $G_\CW$ from $x_i$ to $x_j$. In other words, $d_\CW(x_i,x_j)$ is the minimal number of adjacent Whitney squares one needs to traverse to travel from $x_i$ to $x_j$. Then it follows that
\begin{align*}
	d_\CW(x_i,x_j) \asymp \distqh(x_i,x_j)
\end{align*}
where the implicit constants are universal. Furthermore, we write $x(z) = x_j$ if and only if $z \in Q_j$ and we denote by $Q_j^\dagger$ the square with center $x_j$ such that $\len(Q_j^\dagger) = \len(Q_j)/2$. Then, noting that if $z,w \in \closure{Q_j}$ for some $j$, then $\distqh^D(z,w) \leq 1$ and $\distqh^D(z,x_j) \leq 1/2$, it follows that
\begin{align}\label{eq:dist_qh_cube_comparable}
	\distqh^D(z,w) \asymp d_\CW(x(z),x(w))
\end{align}
whenever $w \in Q_j^\dagger$ and $z \in Q_i$, $i \neq j$, where the implicit constants are universal. By~\eqref{eq:dist_hyp_qh_comparable}, we also have that
\begin{align}\label{eq:dist_hyp_cube_comparable}
	\disthyp^D(z,w) \asymp d_\CW(x(z),x(w))
\end{align}
holds for some universal implicit constants.

We wrap up this subsection with a few useful inequalities for Whitney square decompositions and conformal maps.

We note that if $\CW$ is a Whitney square decomposition and $Q,Q' \in \CW$ are adjacent then there exists a constant $M > 1$ such that
\begin{align}\label{eq:neighbor_squares_sidelength}
	\frac{1}{M} \len(Q') \leq \len(Q) \leq M \len(Q').
\end{align}

Let $D$ and $\wt{D}$ be simply connected domains, $f: D \to \wt{D}$ be a conformal transformation, and let $\CW$ and $\wt{\CW}$ be Whitney square decompositions of $D$ and $\wt{D}$, respectively. By~\cite[Theorem~3.21]{lawler2008conformally} there exist universal constants $c_1,c_2>0$ such that for all $Q \in \CW$ and $z \in Q$ we have
\begin{align}\label{eq:uniform_derivative_in_square}
	c_1 |f'(\cen(Q))| \leq |f'(z)| \leq c_2 |f'(\cen(Q))|.
\end{align}
Moreover, by the Koebe-$1/4$ theorem, letting $Q(z)$ (resp.\ $\wt{Q}(f(z))$) denote any square $Q \in \CW$ (resp.\ $\wt{Q} \in \wt{\CW}$) such that $z \in \closure{Q}$ (resp.\ $f(z) \in \closure{\wt{Q}}$), we have that
\begin{align}\label{eq:derivative_diameter_relation}
	|f'(z)| \asymp \frac{\dist(f(z),\partial \wt{D})}{\dist(z,\partial D)} \asymp \frac{\diam(\wt{Q}(f(z)))}{\diam(Q(z))} = \frac{\len(\wt{Q}(f(z)))}{\len(Q(z))},
\end{align}
where the implicit constants are universal.

\subsection{Quasiconformal maps}
\label{subsec:quasiconformal}
Let $D,\wt{D}$ be domains in $\wh{\C} = \C \cup \{\infty\}$ and let $f: D \to \wt{D}$ be an orientation preserving homeomorphism. We say that $f$ is \emph{ACL} (absolutely continuous on lines) if $f$ is absolutely continuous on Lebesgue a.e.\ line segment in $D$ which is parallel to one of the axes. For $M \geq 1$, we say that $f$ is an $M$-quasiconformal mapping if $f$ is ACL and
\begin{align}\label{eq:quasiconformal_inequality}
	\left| \frac{\partial f}{\partial \ol{z}} \right| \leq \frac{M-1}{M+1} \left| \frac{\partial f}{\partial z} \right| \quad\text{a.e.}
\end{align}
(with the understanding that $\partial f/\partial z$ and $\partial f/\partial \ol{z}$ are defined a.e.). The smallest constant $M \geq 1$ such that~\eqref{eq:quasiconformal_inequality} holds is called the dilation of $f$ on $D$. The dilation is the maximal factor with which infinitesimal circles can be dilated or extremal distance can increase or decrease, see~\cite[Chapter~VII.3]{gm2005harmonic}, when transformed by $f$. We say that $f$ is quasiconformal if it is $M$-quasiconformal for some $M \geq 1$. Moreover, it holds that $f$ is a conformal map if and only if $f$ is a $1$-quasiconformal map. For more material on quasiconformal maps as well as other equivalent definitions, see for example~\cite{ahlfors1966quasiconformal,gm2005harmonic,pommerenke1992boundary}.

\begin{comment}
A $k$-quasiconformal map on $D \subseteq \C$ is a homeomorphism $f \in W_\loc^{1,2}(D)$ satisfying the Beltrami equation
\begin{align}\label{eq:Beltrami}
	\frac{\partial f}{\partial \ol{z}} = \mu(z) \frac{\partial f}{\partial z}
\end{align}
where $\mu$ is a complex-valued measurable function satisfying $k \coloneqq \| \mu \|_\infty < 1$. We note that $f$ is a conformal map if and only if it is $0$-quasiconformal.

Sometimes, a function $f \in W_\loc^{1,2}(D)$ is called $K$-quasiconformal if it solves~\eqref{eq:Beltrami} for some $\mu$ as above, satisfying $K = (1 + \| \mu \|_\infty)/(1 - \| \mu \|_\infty) \geq 1$, as the term $K$ relates directly to how $f$ dilates small circles, changes extremal distances, etc. (see~\cite[Chapter~VII.3]{gm2005harmonic}). In this notation, $f$ is conformal if and only if $f$ is $1$-quasiconformal.

In what follows, it will be more convenient for us to say that $f$ is $k$-quasiconformal if $k = \| \mu \|_\infty$, so this is the convention that we will stick to. We say that $f$ is a quasiconformal map if $f$ is a $k$-quasiconformal map for some $k < 1$.
\end{comment}

The central notion in this paper will be that of (quasi)conformal removability. We say that a set $K \subseteq D$ is \emph{(quasi)conformally removable inside a domain} $D$ if any homeomorphism $f$ which is (quasi)conformal on $D \setminus K$ is (quasi)conformal on $D$. One can apply the measurable Riemann mapping theorem (see~\cite{ahlfors1966quasiconformal}) to see that any planar set is quasiconformally removable if and only if it is conformally removable.

To show that a set $K \subseteq D$ of zero (two-dimensional) Lebesgue measure is conformally removable in $D$, then it suffices to show that every homeomorphism $f$ on $D$ which is conformal on $D \setminus K$ is ACL. Indeed, the fact that $f$ is conformal on $D \setminus K$ implies that~\eqref{eq:quasiconformal_inequality} holds a.e.\ with $M = 1$. If in addition, $f$ is ACL, then $f$ is $1$-quasiconformal and hence conformal.

\subsection{$\SLE_\kappa$ and $\SLE_{\kappa}(\underline{\rho})$ processes}
\label{subsec:sle}

\subsubsection{Chordal $\SLE$}
As we mentioned in the introduction, $\SLE_{\kappa}$ is a one-parameter family of conformally invariant random curves indexed by $\kappa > 0$ introduced by Schramm in~\cite{s2000sle} as a candidate for the scaling limit of the interfaces in discrete lattice models in two dimensions at criticality.  $\SLE_{\kappa}$ in $\h$ from $0$ to $\infty$ is defined in terms of the family of conformal maps $(g_t)$ which are obtained by solving the chordal Loewner equation
\begin{equation}
\label{eqn:loewner_equation}
\partial_t g_t(z) = \frac{2}{g_t(z) - W_t},\quad g_0(z) = z \in \h
\end{equation}
with $W = \sqrt{\kappa}B$ and $B$ a standard Brownian motion.  We write $K_t = \{z \in \h : \tau_z \leq t\}$ where $\tau_z = \sup\{t \geq 0 : \im(g_t(z)) > 0\}$.  Then $g_t$ is the unique conformal transformation from $\h_t = \h \setminus K_t$ onto $\h$ satisfying $\lim_{z \to \infty}|g_t(z) - z| = 0$.  It is shown in \cite{rs2005basic} for $\kappa \neq 8$ that the family of hulls $(K_t)$ is generated by a continuous curve which means there exists a curve $\eta \colon [0,\infty) \to \closure{\h}$ so that $\h_t$ is equal to the unbounded component of $\h \setminus \eta([0,t])$ for each $t \geq 0$.  The analogous result for $\kappa = 8$ was proved in \cite{lsw2004lerw} as a consequence of the convergence of the uniform spanning tree Peano curve to $\SLE_8$ (see also \cite{am2022sle8} for another proof which does not use discrete models).

An $\SLE_{\kappa}$ connecting boundary points $x$ and $y$ of an arbitrary simply connected domain $D$ is defined as the image of an $\SLE_{\kappa}$ in $\h$ from $0$ to $\infty$ under a conformal transformation $\varphi : \h \rightarrow D$ sending $0$ to $x$ and $\infty$ to $y$.  Moreover, it was shown in \cite{rs2005basic} that $\SLE_{\kappa}$ is a.s.\ a simple curve for $\kappa \in (0,4]$ intersecting the domain boundary only at its starting and ending points while for $\kappa \geq 8$ the curve is a.s.\ space-filling.  For $\kappa \in (4,8)$,  $\SLE_\kappa$ intersects itself and the domain boundary but is not space-filling.

The $\SLE_\kappa(\ul{\rho})$ processes are variants of $\SLE_\kappa$ in which one keeps track of extra marked points \cite[Section~8.3]{lsw2003confres} called force points.  Suppose that we have vectors $\underline{\rho}^L = (\rho^{\ell,  L},\ldots,\rho^{1,L})$ and $\underline{\rho}^R = (\rho^{1,R},\ldots,\rho^{r,R})$ (which give the weights of the force points to the left and right of $0$, respectively) and $\underline{x}^L = (x^{\ell,  L}<\ldots<x^{1,L} \leq 0)$ and $\underline{x}^R = (0\leq x^{1,R}<\ldots<x^{r,R})$ (which give the locations of the force points to the left and right of $0$).  Then an $\SLE_\kappa(\ul{\rho}) = \SLE_\kappa(\ul{\rho}^L;\ul{\rho}^R)$ process is defined in the same way as ordinary $\SLE_\kappa$ except with~$W$ taken to solve
\begin{align*}
dW_t = \sqrt{\kappa}dB_t + \sum_{q \in \{L,R\}}\sum_i \frac{\rho^{i,q}dt}{W_t - V_t^{i,q}},\quad
dV_t^{i,q} = \frac{2dt}{V_t^{i,q} - W_t},\quad V_0^{i,q} = x_i^q,\quad \text{for}\quad q \in \{L,R\}.
\end{align*}
It was shown in \cite{ms2016imag1} that there is a unique solution to this SDE up to the first $t$ such that $\sum_{i : V_t^{i,L} = W_t}\rho^{i,L} \leq -2$  or $\sum_{i : V_t^{i,R} = W_t}\rho^{i,R} \leq -2$.  This time $t$ is the so-called \emph{continuation threshold}.  Moreover, it was shown in \cite{ms2016imag1} that the $\SLE_\kappa(\ul{\rho})$ processes are generated by a continuous curve up to the continuation threshold.  The continuity of the (single force point) $\SLE_\kappa(\rho)$ processes with $\rho \in (-2-\kappa/2,-2)$ was proved in \cite{msw2017cleperc,ms2019gfflightcone}.  This gives the full range of $\rho$ values in which an $\SLE_\kappa(\rho)$ process can be defined.

\subsubsection{Radial and whole-plane $\SLE$}
We will also need to consider the radial and whole-plane $\SLE_\kappa(\rho)$ processes.  They are defined in terms of the radial form of the Loewner equation
\begin{equation}
\label{eqn:radial_loewner}
\partial_t g_t(z) = g_t(z) \frac{W_t + g_t(z)}{W_t-g_t(z)},\quad g_0(z) = z
\end{equation}
where $W \colon [0,\infty) \to \partial \D$ is a continuous function.  In the case of radial $\SLE_\kappa$, one takes $W = e^{i \sqrt{\kappa} B}$ where $B$ is a standard Brownian motion.  Let
\[ \Psi(z,w) = -z \frac{z+w}{z-w} \quad\text{and}\quad \wt{\Psi}(z,w) = \frac{\Psi(z,w) + \Psi(1/\ol{z},w)}{2}.\]
Fix $\rho \in \R$.  For radial $\SLE_\kappa(\rho)$, one instead takes $W$ to be the solution to the SDE
\begin{equation}
\label{eqn:radial_sle_kappa_rho}
dW_t = \left( -\frac{\kappa}{2} W_t + \frac{\rho}{2} \wt{\Psi}(O_t,W_t) \right) dt + i \sqrt{\kappa} W_t dB_t,\quad dO_t = \Psi(W_t,O_t) dt.
\end{equation}
By a comparison to a Bessel process, the SDE~\eqref{eqn:radial_sle_kappa_rho} has a solution which is defined for all $t \geq 0$ provided $\rho > -2$.  For each $z \in \D$ we let $\tau_z = \sup\{ t \geq 0 : |g_t(z)| < 1\}$ and $K_t = \{z \in \D : \tau_z \leq t\}$.  Then $g_t$ is the unique conformal transformation $\D \setminus K_t \to \D$ with $g_t(0) = 0$ and $g_t'(0) > 0$.  In fact, due to the normalization in~\eqref{eqn:radial_loewner} one has that $g_t'(0) = e^t$ for each $t \geq 0$.

Whole-plane $\SLE_\kappa$ and $\SLE_\kappa(\rho)$ are defined by solving~\eqref{eqn:radial_loewner} for all $t \in \R$ where in the former case $W = e^{i \sqrt{\kappa} B_t}$ for a two-sided Brownian motion $B$ and in the latter case $W$ is taken to be a stationary solution to~\eqref{eqn:radial_sle_kappa_rho} defined for all $t \in \R$.  In this case, for each $t \in \R$ we have that $g_t$ is the unique conformal transformation from $\C \setminus K_t$ to $\C \setminus \closure{\D}$ which fixes and has positive derivative at $\infty$ and $(K_t)$ is an increasing family of compact hulls in $\C$ starting from $0$.  That radial and whole-plane $\SLE_\kappa(\rho)$ processes correspond to a continuous curve for $\rho > -2$ follows from the continuity of the chordal $\SLE_\kappa(\rho)$ processes (see \cite[Section~2]{ms2017ig4}).

\subsubsection{Two-sided whole-plane $\SLE$}
Finally, we will also consider whole-plane two-sided $\SLE_\kappa$ from~$\infty$ to~$\infty$ through~$0$.  This path can be constructed by first sampling a whole-plane $\SLE_\kappa(2)$ process $\eta_1$ from $0$ to $\infty$, then a chordal $\SLE_\kappa$ process $\eta_2$ in $\C \setminus \eta_1$ from $0$ to $\infty$, and then taking the concatenation of the time-reversal of $\eta_1$ together with $\eta_2$.  Roughly speaking, this process describes the local behavior of an $\SLE_\kappa$ near an interior point $z$ when it is conditioned to pass through $z$.

\subsection{Natural parameterization for $\SLE$}
\label{subsec:natural_parameterization}

Implicit in the definition of $\SLE$ in the chordal, radial, and whole-plane cases respectively using~\eqref{eqn:loewner_equation} and~\eqref{eqn:radial_loewner} is the so-called capacity time parameterization.  This time parameterization is natural from the perspective of the Loewner equation in its chordal and radial form.  Another time parameterization for $\SLE$ is the so-called \emph{natural time parameterization}, which is conjectured to be the time parameterization which arises when considering $\SLE$ as the scaling limit of an interface of a discrete model in which the curve is parameterized by the number of edges it traverses.  An indirect construction of the natural parameterization of $\SLE_\kappa$ was given first for $\kappa < 4(7-\sqrt{33})$ in~\cite{ls2011natural} and then for all $\kappa < 8$ in~\cite{lz2013natural}.  In the case that $\kappa \geq 8$, the natural parameterization corresponds to parameterizing the curve according to the amount of Lebesgue measure it fills (this is the natural definition in this case as such $\SLE$ curves are space-filling).  In~\cite{lr2015minkowski}, a direct construction was given, where the authors proved that the natural parameterization of $\SLE_\kappa$ is in fact equal to (a constant times) the $d_\kappa = (1+\kappa/8)$-dimensional Minkowski content of $\SLE_\kappa$ (which they proved a.s.\ exists). Any curve $\eta$ whose law is locally absolutely continuous with respect to the law of an $\SLE$ process has a corresponding natural parameterization, which we will henceforth denote by $\nmeasure{\eta}$.  We view $\nmeasure{\eta}$ as measure on the Borel $\sigma$-algebra of subsets of $\C$ where $\nmeasure{\eta}(A) = \Leb_1(\eta^{-1}(A))$ when $\eta$ has the natural parameterization.

One of the important properties of the natural parameterization is that it is \emph{conformally covariant}.  This means that if $\eta$ is an $\SLE_\kappa$-type curve in $D$, $\varphi \colon D \to \wt{D}$ is a conformal map, and $\wt{\eta} = \varphi(\eta)$ then $\nmeasure{\wt{\eta}}(\varphi(A)) = \int_A |\varphi'(z)|^{d_\kappa} d\nmeasure{\eta}(z)$ for every Borel set $A \subseteq \C$.

Let us now recall the definition of Minkowski content.  Consider a set $S \subseteq \C$ and $d \in (0,2)$.  For $r > 0$ we set
\begin{align*}
\cont_d (S;r) &=e^{r(2-d)} \Leb_2(\{z : \dist(z,S) \leq e^{-r}\})
=e^{r(2-d)}\int_{\C}\one_{\{\dist(z,S) \leq e^{-r}\}}d\Leb_2(z).
\end{align*}
The $d$-dimensional Minkowski content of $S$ is defined as
\begin{align*}
\cont_d(S) = \lim_{r \to \infty} \cont_d(S;r),
\end{align*}
provided that the limit exists.

Suppose that $\eta$ is a two-sided whole-plane $\SLE_\kappa$ process from $\infty$ to $\infty$ through $0$ with the natural parameterization with time normalized so that $\eta(0) = 0$.  It was proved in \cite{zhan2019holder} that the law of $\eta$ is shift invariant in the sense that for each $t \in \R$ the law of $\eta^t = \eta(\cdot + t) - \eta(t)$ has the same law as $\eta$.  This fact will be important for us in Section~\ref{sec:initial_estimates}.

\subsection{Gaussian free fields}
\label{subsec:gff}

Let $D \subseteq \C$ be a simply connected domain with harmonically non-trivial boundary and let $H_0(D)$ be the Hilbert space closure of $C_0^{\infty}(D)$ with respect to the Dirichlet inner product $(f,g)_{\nabla} = \frac{1}{2\pi}\int_D \nabla f(z) \cdot \nabla g(z) d\Leb_2(z)$.  Then  the zero boundary Gaussian free field (GFF) $h$ is the random distribution defined by 
\begin{equation}
\label{eqn:gff_definition}
h = \sum_{n \geq 1} \alpha_n \phi_n,
\end{equation}
where $(\phi_n)_{n \geq 1}$ is an orthonormal basis of $H_0(D)$ with respect to $(\cdot,\cdot)_{\nabla}$ and $(\alpha_n)_{n \geq 1}$ is a sequence of i.i.d.\ $N(0,1)$ random variables.  The convergence in~\eqref{eqn:gff_definition} holds a.s.\ in $H^{-\epsilon}(D)$ for each $\epsilon > 0$ \cite{she2007gff}.  

Other variants of the GFF are defined using a series expansion as in~\eqref{eqn:gff_definition}.  For example, the free boundary GFF is defined in the same way except that we replace $H_0(D)$ by the closure $H(D)$ with respect to $(\cdot,\cdot)_{\nabla}$ of the space of functions $f \in C^{\infty}(D)$ with $\|f \|_\nabla < \infty$ such that $\int_D f(z) d\Leb_2(z) = 0$.  Note that in this way,  the free boundary GFF is defined in the space of distributions modulo additive constant.  However,  we can fix the additive constant by fixing the value of the field when acting on a given test function whose mean is non-zero.  Also, the whole-plane GFF is defined in the same way as the free boundary GFF,  but with the orthonormal basis in~\eqref{eqn:gff_definition} being that of the Hilbert space closure with respect to $(\cdot,\cdot)_{\nabla}$ of the set of $f \in C_0^{\infty}(\C)$ satisfying $\int_{\C}f(z) d\Leb_2(z) = 0$.  Then the field is defined modulo additive constant but we can fix its additive constant as described above.

\subsection{Liouville quantum gravity}
\label{subsec:lqg}

\subsubsection{LQG measure and surfaces}

Fix $\gamma \in (0,2]$. As mentioned in Section~\ref{sec:intro}, a $\gamma$-LQG surface is formally a random two-dimensional Riemannian manifold with metric tensor
\[ e^{\gamma h(z)} (dx^2 + dy^2)\]
where $dx^2 + dy^2$ is the Euclidean metric, $h$ is some form of the GFF on a planar domain $D$, and $\gamma \in (0,2]$.  Since $h$ is a distribution and not a function, this expression requires interpretation.  For $\gamma \in (0,2)$, the associated $\gamma$-LQG area measure with respect to $h$ on $D$ can be defined as 
\begin{equation}
\label{eqn:liouville_area-measure}
\qmeasure{h}(dz) = \lim_{\epsilon \to 0} \epsilon^{\gamma^2 / 2}e^{\gamma h_{\epsilon}(z)} d\Leb_2(z),
\end{equation}
where $h_{\epsilon}(z)$ denotes the average of $h$ on $\partial B(z,\epsilon)$ for $0 < \epsilon < \dist(z,\partial D)$ (see \cite{ds2011lqg}).  In order to obtain a non-trivial limit in the case $\gamma=2$, one instead considers \cite{drsv2014critical2,drsv2014critical1}
\begin{equation}
\label{eqn:liouville_area-measure_critical}
\qmeasure{h}(dz) = \lim_{\epsilon \to 0} (\log \epsilon^{-1})^{1/2} \epsilon^{2}e^{2 h_{\epsilon}(z)} d\Leb_2(z).
\end{equation}
In the case that $h$ is (some form of) a GFF with free boundary conditions, the $\gamma$-LQG boundary length measure is defined in an analogous way.  We note that it is also possible to define the measure for $\gamma=2$ as a renormalized limit as $\gamma \uparrow 2$ of the LQG measure for $\gamma \in (0,2)$ \cite{aps2019critical}.

Suppose that $h$ is an instance of (some form of) the GFF on a domain $D \subseteq \C$ and $\qmeasure{h}$ is as above.  Suppose that $\wt{D} \subseteq \C$ is another domain, $\psi : \wt{D} \to D$ is a conformal map, and set
\begin{equation}
\label{eqn:coordinate_change}
\wt{h} = h \circ \psi + Q\log |\psi'| \quad\text{where}\quad Q = \frac{2}{\gamma} + \frac{\gamma}{2}.
\end{equation}
Then we have that $\qmeasure{\wt{h}}(A) = \qmeasure{h}(\psi(A))$ for all $A \subseteq D$ Borel.  This leads to the definition of a \emph{quantum surface}, which is an equivalence class of pairs $(D,h)$, where $D \subseteq \C$ is a domain and $h \in H^{-1}(D)$ is a random distribution on $D$, where two such pairs are equivalent if there exists a conformal map $\psi \colon \wt{D} \to D$ so that $h$, $\wt{h}$ are related as in~\eqref{eqn:coordinate_change}.  We can also consider quantum surfaces $(D,h,x_1,\ldots,x_n)$,  $(\wt{D},\wt{h},\wt{x}_1,\ldots,\wt{x}_n)$ with marked points.  In this case, they are considered to be equivalent if they satisfy~\eqref{eqn:coordinate_change} and $\psi(\wt{x}_j) = x_j$ for all $j=1,\ldots,n$.

\subsubsection{Quantum wedges and cones}

We now proceed to give the definition of a \emph{quantum wedge} and a \emph{quantum cone}, which are the two types of quantum surfaces which will be relevant for this work.  The former (resp.\ latter) is a doubly marked surface which is naturally parameterized by either~$\h$ or the infinite strip $\strip = \R \times (0,\pi)$ (resp.\ $\C$ or the infinite cylinder $\cyl = \R \times [0,2\pi)$, with the top and bottom identified).  Quantum wedges and cones are each in fact one parameter families of quantum surfaces.  One convenient choice of parameterization is the \emph{weight} because for wedges it is compatible with the welding operation.  In this paper, we will only consider the case $\gamma=2$ and the case of weight $\tfrac{\gamma^2}{2}=2$ quantum wedges and weight $4$ quantum cones.

\begin{definition}
\label{def:weight2wedge}
A \emph{quantum wedge of weight $\frac{\gamma^2}{2}$} is the doubly marked quantum surface $(\strip,h,-\infty,+\infty)$ whose law can be sampled from as follows.  Let $X_t$ for $t \in \R$ be the process which for $t \leq 0$ is equal to $-X_{-2t}^1$ where $X^1$ has the law of a $\BES^3$ and for $t \geq 0$ is equal to $X_{2t}^2$ where $X^2$ is a standard Brownian motion where $X_0^1 = X_0^2 = 0$ and $X^1,X^2$ are independent.  Then for each $t \in \R$ the average of $h$ on the vertical line $\{ t \} \times (0, \pi)$ is equal to $X_t$ and the law of $h_2 = h - X_{\re(\cdot)}$ is that of the projection of a free boundary GFF onto the space of functions in $H(\strip)$ which have mean zero on vertical lines where $X$, $h_2$ are taken to be independent.
\end{definition}

We note that embeddings of a quantum surface parameterized by $\strip$ with marked points at $-\infty$, $+\infty$ differ only by a horizontal translation.  The particular choice of embedding in Definition~\ref{def:weight2wedge} is the so-called \emph{circle average embedding} of a quantum wedge.  Applying the conformal map $\strip \to \h$ given by $z \mapsto \exp(z)$ gives the law of a quantum wedge parameterized by $\h$ instead of $\strip$.

\begin{definition}
\label{def:weight4cone}
Fix $W > 0$.  A \emph{quantum cone of weight $W$} is the doubly marked quantum surface $(\cyl,h,-\infty,+\infty)$ whose law can be sampled from as follows.  Let $a = W/(2\gamma)$ and let $X_t : \R \to \R$ be the process $B_t + at$ where $B_t$ is a standard two-sided Brownian motion conditioned such that $B_t + a t > 0$ for all $t > 0$.  Then for each $t \in \R$ the average of $h$ on the vertical line $\{t\} \times (0, 2\pi)$ is equal to $X_t$ and the law of $h_2 = h - X_{\re(\cdot)}$ is that of the projection of a whole-plane GFF onto the space of functions in $H(\cyl)$ which have mean zero on vertical lines where $X$, $h_2$ are taken to be independent.
\end{definition}

We note that embeddings of a quantum surface parameterized by $\cyl$ with marked points at $-\infty$, $+\infty$ differ only by a horizontal translation.  The particular choice of embedding in Definition~\ref{def:weight4cone} is the so-called \emph{circle average embedding} of a quantum cone.  Applying the conformal map $\cyl \to \C$ given by $z \mapsto \exp(z)$ gives the law of a quantum cone parameterized by~$\C$ instead of~$\cyl$.

A number of results regarding the cutting and welding of quantum wedges and cones were proved in \cite{she2016zipper,dms2021mating} for $\gamma \in (0,2)$ and for $\gamma=2$ in \cite{hp2021welding}.  In this work, we will need to make use of an extension of one of the results in \cite{dms2021mating} to the case $\gamma=2$ which was proved in \cite{kms2021regularity}.

\begin{theorem}
\label{thm:critical_cone_cutting}
Suppose that $(\C,h,0,\infty)$ is a weight $4$ quantum cone and $\eta$ is a two-sided whole-plane $\SLE_4$ from $\infty$ to $\infty$ through $0$.  Then the quantum surfaces parameterized by the two components of $\C \setminus \eta$ and marked by $0$ and $\infty$ are independent quantum wedges of weight $2$.
\end{theorem}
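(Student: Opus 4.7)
The strategy is to deduce the critical case from the subcritical analogue by taking a limit $\gamma \uparrow 2$. For each $\gamma \in (0,2)$ the analogous statement---that a weight-$\gamma^2$ quantum cone cut by an independent two-sided whole-plane $\SLE_{\gamma^2}$ from $\infty$ to $\infty$ through $0$ yields two independent quantum wedges of weight $\gamma^2/2$---is a special case of the cone/wedge cutting theorem of \cite{dms2021mating}. The plan is therefore to construct a joint coupling in which the cone, the curve, and the two cut surfaces all converge as $\gamma \uparrow 2$, and to identify the limits.

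First I would couple weight-$\gamma^2$ cones $(\C,h^{(\gamma)},0,\infty)$ for $\gamma$ in a neighborhood of $2$ so that $h^{(\gamma)}\to h^{(2)}$ almost surely in $H^{-\epsilon}_{\loc}(\C)$. From the circle average embedding in Definition~\ref{def:weight4cone} the only $\gamma$-dependence is in the drift $a = W/(2\gamma) = \gamma/2$ of the conditioned two-sided Brownian motion $X^{(\gamma)}$; the projection onto mean-zero functions on vertical lines is a whole-plane GFF and can be taken common to all $\gamma$, while the drifted Brownian motions conditioned to stay positive on $(0,\infty)$ admit a standard continuous-in-$a$ coupling via a common underlying Brownian motion and an $h$-process reweighting. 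Independently of the field, I would couple two-sided whole-plane $\SLE_{\gamma^2}$ curves $\eta^{(\gamma)}$ so that $\eta^{(\gamma)}\to \eta^{(2)}$ locally uniformly modulo reparameterization; continuity in $\kappa$ at $\kappa=4$ follows from the continuity of the Loewner driving function in $\kappa$ together with standard SLE regularity estimates.

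Next I would invoke \cite{aps2019critical} to upgrade the distributional convergence of the fields to convergence of the $\gamma$-LQG area measures, together with the analogous convergence of the LQG boundary length measures induced on the two sides of $\eta^{(\gamma)}$ (which record the conformal welding of the two cut surfaces). Combined with the curve convergence, this yields convergence of the two marked quantum surfaces obtained by cutting $h^{(\gamma)}$ along $\eta^{(\gamma)}$ to those obtained by cutting $h^{(2)}$ along $\eta^{(2)}$. For each $\gamma<2$ the two cut surfaces are independent weight-$(\gamma^2/2)$ wedges by \cite{dms2021mating}, and the law of a weight-$W$ wedge depends jointly continuously on $(\gamma,W)$ at $(\gamma,W)=(2,2)$, as can be read off from the embedding in Definition~\ref{def:weight2wedge} and its standard generalization to arbitrary weight from \cite{dms2021mating}. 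Independence is preserved under weak limits, so the limiting pair is a pair of independent weight-$2$ wedges at $\gamma=2$, which is the conclusion of the theorem.

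The main obstacle is to carry out all of these convergences simultaneously within a single coupling and to ensure that the critical renormalization $(\log\epsilon^{-1})^{1/2}$ in~\eqref{eqn:liouville_area-measure_critical} interacts correctly with the rescaling used to place each subcritical cut surface into its circle average embedding as a wedge. In practice this reduces to obtaining a uniform-in-$\gamma$-near-$2$ bound on the concentration of the LQG boundary length along $\eta^{(\gamma)}$ near the origin, which is precisely the kind of fine LQG/$\SLE$ regularity input provided by \cite{kms2021regularity} in the critical regime.
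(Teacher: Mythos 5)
The high-level strategy of approaching the critical case as a $\gamma \uparrow 2$ limit of the subcritical cone-cutting theorem is reasonable and in the spirit of the literature (it is, for instance, the approach taken in \cite{hp2021welding} for the critical welding); note, however, that the present paper offers no proof of Theorem~\ref{thm:critical_cone_cutting} — it simply cites \cite{kms2021regularity}, so there is no internal argument to compare against.

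That said, your proposal contains a concrete error in its very first step. You state the subcritical analogue as: a weight-$\gamma^2$ quantum cone cut by a two-sided whole-plane $\SLE_{\gamma^2}$ yields two independent weight-$\gamma^2/2$ wedges. This is not the statement in \cite{dms2021mating}. The correct statement (DMS, cone version of the cutting theorem) is that, for every $\gamma \in (0,2)$, a \emph{weight-$4$} $\gamma$-quantum cone cut by an independent two-sided whole-plane $\SLE_{\gamma^2}$ from $\infty$ to $\infty$ through $0$ — where, as in this paper's Section~\ref{subsec:sle}, the whole-plane piece is an $\SLE_{\gamma^2}(2)$ and the chordal piece a plain $\SLE_{\gamma^2}$ — gives two independent \emph{weight-$2$} wedges. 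The weights $4$ and $2$ are fixed for all $\gamma$; they only happen to coincide with $\gamma^2$ and $\gamma^2/2$ at $\gamma=2$. This changes several downstream computations in your proposal: the drift of the cone's radial process is $a = W/(2\gamma) = 2/\gamma$, not $\gamma/2$; and the Bessel dimension governing the weight-$2$ wedge is $2 + 2W/\gamma^2 = 2 + 4/\gamma^2$, which does depend on $\gamma$ (it equals $3$ only at $\gamma=2$). In particular, your key simplification — that the wedge description in Definition~\ref{def:weight2wedge} is $\gamma$-independent, so the limit identification is essentially automatic — relies on the incorrect weight $\gamma^2/2$ (for which the Bessel dimension $2+2(\gamma^2/2)/\gamma^2 = 3$ is indeed $\gamma$-free). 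With the correct weight $2$, identifying the limit requires tracking a $\gamma$-dependent Bessel law and a $\gamma$-dependent coordinate-change exponent $Q = 2/\gamma + \gamma/2$, and is not automatic.

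Beyond the misstated input, two other steps are asserted without justification and would need real work: (i) continuity in $\kappa$ of the two-sided whole-plane $\SLE_\kappa$ law at $\kappa=4$ in a topology strong enough to give Carathéodory convergence of the two complementary domains, and (ii) the claim that convergence of fields, curves, and LQG measures "yields convergence of the two marked quantum surfaces" — this requires convergence of the uniformizing maps together with the circle-average normalization used to place each cut surface into its embedding, which is not a direct consequence of measure convergence. You correctly flag a uniform-in-$\gamma$ regularity input as the main obstacle, but as written the argument does not close.
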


\subsection{Level lines}
\label{subsec:level_lines}

Let $\lambda = \pi/2$ and suppose that $h$ is an instance of the GFF on $\h$ with boundary conditions given by $-\lambda$ (resp.\ $\lambda$) on $\R_-$ (resp.\ $\R_+$).  Even though $h$ is a distribution and not a function, it is shown in~\cite{schramm2013contour} that it is possible to make sense of a zero level line $\eta$ of $h$ from $0$ to $\infty$ and its law is that of a chordal $\SLE_4$.  Moreover, $\eta$ is a path-valued function of $h$ which is characterized by the property that for each $t \geq 0$  the conditional law of $h$ given $\eta|_{[0,t]}$ is that of a GFF in $\h \setminus \eta([0,t])$ with boundary conditions given by $-\lambda$ (resp.\ $\lambda$) on the left (resp.\ right) side of $\eta([0,t])$ and $\R_-$ (resp.\ $\R_+$).

In~\cite{wang2017level},  the methods introduced in~\cite{schramm2013contour} were generalized to the setting in which the GFF has piecewise constant boundary data which changes values at most finitely many times.  More precisely, suppose that we have fixed force points $(\underline{x}^L;\underline{x}^R)$ and weights $(\underline{\rho}^L;\underline{\rho}^R)$ as in the definition of $\SLE_4(\ul{\rho}^L;\ul{\rho}^R)$.  Let $h$ be a GFF on $\h$ with boundary conditions given by $-\lambda (1 + \sum_{i=0}^j \rho^{i,L})$ for $x \in [x^{j+1,L},x^{j,L})$ and $\lambda ( 1 + \sum_{i=0}^j \rho^{i,R})$ for $x \in [x^{j,R},x^{j+1,R})$ where $\rho^{0,L} = \rho^{0,R} = 0$,  $x^{0,L} = 0^-$,  $x^{\ell + 1,L} = -\infty$,  $x^{0,R} = 0^+$, and $x^{r+1,R} = \infty$.  Then the zero level line of~$h$ from~$0$ to~$\infty$ has the law of an $\SLE_4(\ul{\rho}^L; \ul{\rho}^R)$ in~$\h$ from~$0$ to~$\infty$.  As above, $\eta$ is a path valued function of the field which is characterized by the property that for each $t \geq 0$  the conditional law of $h$ given $\eta|_{[0,t]}$ is that of a GFF in $\h \setminus \eta([0,t])$ with boundary conditions given by $-\lambda$ (resp.\ $\lambda$) on the left (resp.\ right) side of $\eta([0,t])$ and the same boundary values as $h$ on $\partial \h$.

When $(h,\eta)$ are as above, we refer to $\eta$ as the height $0$ level line of~$h$ from~$0$ to $\infty$.  More generally, for each $u \in \R$ the height $u$ level line of $h$ is given by the $0$ height level line of $h-u$ from $0$ to $\infty$. Note that then the boundary data for $h-u$ along such a level line is $-\lambda$ (resp.\ $\lambda$) on its left (resp.\ right) side hence the boundary data for $h$ is given by $u-\lambda$ (resp.\ $u+\lambda$) on its left (resp.\ right) side.  The results of \cite{wang2017level} moreover describe the interaction of GFF level lines starting from different points and serves to generalize the flow line interaction rules established in \cite{ms2016imag1}. (We note that, in~\cite{wang2017level}, they call the level line of $h+u$ the height $u$ level line, rather than the height $-u$ level line and hence the following interaction result is mirrored, compared to~\cite{wang2017level}.)

\begin{theorem}
\label{thm:interaction_rules}
Suppose that $h$ is a GFF on $\h$ whose boundary value is piecewise constant.  For each $u \in \R$ and each $x \in \partial \h$,  we let $\eta_u^x$ be the level line of $h$ with height $u$ starting from $x$.  Fix $x_1 \leq x_2$.
\begin{enumerate}
\item If $u_2 > u_1$  then $\eta_{u_2}^{x_2}$ a.s.\ stays to the right of $\eta_{u_1}^{x_1}$.
\item If $u_2 = u_1$ then $\eta_{u_2}^{x_2}$ may intersect $\eta_{u_1}^{x_1}$ at the boundary and, upon intersecting,  the two curves merge and do not subsequently separate.
\item If $u_2 - u_1 \geq 2 \lambda$ then $\eta_{u_1}^{x_1}$, $\eta_{u_2}^{x_2}$ a.s.\ do not intersect (except at possibly their starting point).
\end{enumerate}
\end{theorem}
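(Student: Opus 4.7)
The plan is to adapt the strategy of \cite{schramm2013contour, wang2017level}, which uses the characterization of GFF level lines as path-valued functions of the field to reduce the interaction problem to a boundary-hitting question for an $\SLE_4(\underline\rho)$-type process obtained by conditioning.

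The first step is the conditional reduction. For a stopping time $\tau$ of $\eta_{u_1}^{x_1}$, the level-line coupling from Section~\ref{subsec:level_lines} gives that conditionally on $\eta_{u_1}^{x_1}([0,\tau])$, the field $h$ on $\h \setminus \eta_{u_1}^{x_1}([0,\tau])$ is a GFF whose boundary data agrees with the original data on $\partial \h$ and takes values $u_1 - \lambda$ and $u_1 + \lambda$ on the left and right sides of $\eta_{u_1}^{x_1}([0,\tau])$, respectively. Let $U$ be the connected component of $\h \setminus \eta_{u_1}^{x_1}([0,\tau])$ containing $x_2$. By the characterization property of level lines, the continuation of $\eta_{u_2}^{x_2}$ within $U$ is the height-$u_2$ level line of this conditional GFF started from $x_2$, equivalently the zero level line of $h - u_2$ in $U$. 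The formula from Section~\ref{subsec:level_lines} relating GFF boundary data to $\SLE_4(\underline\rho)$ weights then identifies this zero level line as an $\SLE_4(\underline\rho)$-type process whose weight $\rho^L$ on the force point immediately to the left of $x_2$ (i.e., on the portion of $\partial U$ contributed by $\eta_{u_1}^{x_1}$) equals $(u_2 - u_1)/\lambda - 2$.

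Each case of the theorem then follows from the classical boundary-hitting trichotomy for $\SLE_4(\rho)$-type processes applied to this $\rho^L$. In case (1), $u_2 > u_1$ gives $\rho^L > -2$; starting at $x_2 \ge x_1$, the curve lies initially to the right of $\eta_{u_1}^{x_1}$, and since it stays within the domain $U$ as an $\SLE_4(\underline\rho)$-type curve there, it cannot cross its own left boundary, although it may touch it tangentially when $\rho^L \in (-2, 0)$. In case (2), $u_2 = u_1$ gives the continuation-threshold value $\rho^L = -2$ and the right-side data of $\eta_{u_1}^{x_1}$ matches the right-side data of $\eta_{u_2}^{x_2}$; upon a boundary intersection, both curves satisfy the same level-line characterization in the remaining simply connected complementary component and the uniqueness result of \cite{wang2017level} forces them to coincide thereafter. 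In case (3), $u_2 - u_1 \ge 2\lambda$ gives $\rho^L \ge 0$, which is the non-hitting regime by the standard Bessel comparison for the $\SLE_\kappa(\rho)$ driving function, so the two curves are disjoint (except possibly at a common initial point).

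The main technical obstacle will be case (2). One needs to show that the level-line characterization persists across the random boundary-intersection time, so that after intersection both $\eta_{u_1}^{x_1}$ and $\eta_{u_2}^{x_2}$ are determined by the same conditional GFF on the same simply connected piece of the complement. The approach, developed in \cite{wang2017level}, is to approximate the intersection time by stopping times at which the two curves are strictly separated, apply the conditional GFF description at each, and then pass to the limit using uniqueness of level lines with piecewise constant boundary data. Secondary subtleties -- additional force points on $\partial \h$ created by the piecewise constant data of $h$, and degenerate configurations such as $x_1 = x_2$ or coincidences with existing force points -- are handled by routine approximation and do not affect the key hitting criterion on the portion of $\partial U$ contributed by $\eta_{u_1}^{x_1}$.
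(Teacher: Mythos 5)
The paper does not prove Theorem~\ref{thm:interaction_rules}; it simply cites it from~\cite{wang2017level} (with a note about the mirroring of the height convention). Your proposal is a sketch of essentially the same argument given in that reference, and the essentials are correct. In particular, the weight computation $\rho^L = (u_2 - u_1)/\lambda - 2$ is right: the boundary data of $h - u_2$ on the right side of $\eta_{u_1}^{x_1}$ is $u_1 + \lambda - u_2$, and matching to $-\lambda(1 + \rho^L)$ gives exactly your formula. The three regimes $\rho^L > -2$ (stays to the right, possibly touching when $\rho^L \in (-2,0)$), $\rho^L = -2$ (continuation threshold, merging), and $\rho^L \geq 0$ (no hitting, via the Bessel dimension $1 + (\rho^L+2)/2 \geq 2$) correspond to the three cases and match the cutoffs $u_2 > u_1$, $u_2 = u_1$, $u_2 - u_1 \geq 2\lambda$. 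You also correctly flag that the merging case is the delicate one, where one has to pass the conditional level-line characterization through a random intersection time.

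Two points you gloss over, which the actual proof in~\cite{wang2017level} has to address carefully: (i) the conditional reduction requires a commutation argument showing that the zero level line of $h - u_2$ in $\h$, stopped at the exit time of $U$, agrees with the zero level line of the conditional field in $U$ — this is the ``local set'' machinery of~\cite{schramm2013contour,ms2016imag1}, not just a restatement of the Markov property; and (ii) in case (3) one must run the argument symmetrically (also conditioning on $\eta_{u_2}^{x_2}$ and checking the weight $\rho^R$ of the resulting force point on the right side of $\eta_{u_1}^{x_1}$) to conclude that neither curve hits the other, rather than only showing non-hitting from one side at fixed stopping times of $\eta_{u_1}^{x_1}$.
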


Finally in \cite{wang2017level},  the authors proved the following about the reversibility of the level lines of the GFF.

\begin{theorem}
\label{thm:level_lines_reversibility}
Suppose that $h$ is a GFF on $\h$ whose boundary value is piecewise constant.  Let $\eta$ be the level line of $h$ starting from $0$ targeted at $\infty$ and let $\wt{\eta}$ be the level line of $-h$ starting from $\infty$ targeted at $0$.  Then,  on the event that the two paths do not hit the continuation threshold before they reach the target points,  the two paths $\wt{\eta}$ and $\eta$ are equal (viewed as sets) a.s.
\end{theorem}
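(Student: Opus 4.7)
The plan is to show that $\eta$ and $\wt\eta$ (the latter reversed to run from $0$ to $\infty$) coincide by coupling both with the same underlying field $h$ and exploiting the level-line characterization together with the interaction rules of Theorem~\ref{thm:interaction_rules}.

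First, I would verify boundary-value compatibility. By the level-line coupling, conditional on an initial segment of $\wt\eta$ grown from $\infty$, the field $-h$ in the complement has boundary data $-\lambda$ (resp.\ $+\lambda$) on the left (resp.\ right) of that segment. Translating to $h$ and then reversing orientation (which swaps left/right) shows that, viewing the reversed $\wt\eta$ as running from $0$ to $\infty$, the boundary data of $h$ are $-\lambda$ on its left and $+\lambda$ on its right, matching those prescribed along $\eta$. Hence the two curves locally prescribe the same conditional law of $h$ on their complements, and there is no boundary-data obstruction to them agreeing.

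Second, I would construct a joint coupling of $\eta$, $\wt\eta$, and $h$ by first growing $\eta$ from $0$ (which determines the conditional law of $h$ in the complement) and then, given $\eta([0,t])$, growing $\wt\eta$ as the level line of $-h$ from $\infty$ in the complement. After a M\"obius transformation sending $\infty$ to a finite boundary point, Theorem~\ref{thm:interaction_rules} applies to the pair, and the same-height case ($u_2 = u_1 = 0$) forces the two curves to merge upon any intersection and never subsequently separate. By letting both curves grow toward each other one concludes that they must meet and merge. The argument is then closed by invoking uniqueness of the level line of $h$ from $0$ to $\infty$ (as established via the coupling characterization of \cite{wang2017level}): any two continuous paths coupled with $h$ that satisfy the level-line coupling property with the same boundary data and the same endpoints coincide.

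The principal obstacle is making this coupling rigorous when one of the curves grows from $\infty$. The interaction rules of Theorem~\ref{thm:interaction_rules} are formulated for curves starting from boundary points, so one must carefully invoke conformal invariance of the GFF and the level-line coupling to transfer to the present setting, e.g.\ by first working in a bounded simply connected subdomain obtained via conformal mapping, applying the machinery of \cite{schramm2013contour, wang2017level} there, and then passing to the limit using continuity of level lines. The continuation-threshold hypothesis is also essential in this step: it guarantees that neither curve halts before meeting the other, so the merging argument can be carried through to identify the full curves as equal sets.
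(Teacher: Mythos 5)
The paper does not actually prove Theorem~\ref{thm:level_lines_reversibility}: it is quoted as a preliminary and imported directly from \cite{wang2017level} (the sentence preceding it reads ``Finally in \cite{wang2017level}, the authors proved the following about the reversibility of the level lines of the GFF.''), so there is no ``paper's own proof'' to compare against. Evaluating your sketch on its own terms: the overall plan (match boundary data under reversal, couple both curves to the same $h$, invoke interaction rules, finish with a uniqueness/characterization argument) is the correct strategy and is in the spirit of Wang--Wu, but the sketch leaves the hardest step essentially untouched and contains a subtle circularity.

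The gap is in the transition from ``the boundary data match'' to ``therefore the curves coincide.'' Theorem~\ref{thm:interaction_rules} as stated applies to level lines \emph{of the same field $h$} emanating from ordered boundary points $x_1 \le x_2$, so it does not directly speak to the pair $(\eta,\wt\eta)$ where $\wt\eta$ is a level line of $-h$ started at $\infty$; to use it you must first replace the pair by an appropriately conformally mapped and reoriented pair of curves with the \emph{same} field and establish that $\wt\eta$ (reversed) has the level-line coupling property with $h$ from $0$ to $\infty$. That last statement is precisely the content of the theorem, so invoking ``uniqueness of the level line of $h$ from $0$ to $\infty$'' to close the argument is circular unless you first give an independent proof that the reversed $\wt\eta$ satisfies the conditional-GFF Markov property with the correct $\pm\lambda$ boundary data at every stopping time for its \emph{own} filtration. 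This requires a nontrivial argument (in \cite{wang2017level} it goes via showing that, conditionally on the full $\wt\eta$, the law of $h$ on each side is a GFF with the required boundary conditions and that $\eta$ must then coincide with $\wt\eta$ as a set by an interaction/monotonicity argument that sandwiches $\eta$ between $\wt\eta$ and itself); it is not obtained by simply growing $\wt\eta$ ``in the complement of $\eta([0,t])$'', since $\wt\eta$ is a deterministic functional of $h$ on all of $\h$ and is not a priori a level line of the conditional field given $\eta([0,t])$. You correctly flag the passage to a bounded domain and the role of the continuation-threshold hypothesis, but the coupling/Markov step that you defer is the real content of the proof, not a technical afterthought.
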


\section{Estimates for $\SLE_4$}
\label{sec:initial_estimates}

This section is devoted to the proof of two regularity estimates for $\SLE_4$.

\begin{lemma}
\label{lem:good_fraction_whole_plane}
Suppose that we have a two-sided whole-plane $\SLE_4$ process $\eta^w$ which has the natural parameterization and is normalized so that $\eta^w(0) = 0$.  For each $a ,\delta, p \in (0,1)$ there exists $C > 0$ so that the following is true.  Let $V$ be the component of $\C \setminus \eta^w$ which contains $1$, let $\CW$ be a Whitney square decomposition of $V$, and let $Q_0 \in \CW$ be the square which contains $1$.  Let~$\CT$ be the set of times $t \in [0,1]$ so that the hyperbolic geodesic from $1$ to $\eta^w(t)$ in $V$ satisfies the property that for every $Q \in \CW$ which it hits we have that $\disthyp^V(\cen(Q),1) \leq C\len(Q)^{-a}$.  Then $\p[ \Leb_1(\CT) \geq 1-\delta] \geq p$.
\end{lemma}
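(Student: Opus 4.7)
The plan is to reduce the lemma to a spatial estimate via Markov's inequality and Fubini, use the shift invariance of the natural parameterization of two-sided whole-plane $\SLE_4$ from \cite{zhan2019holder} to pass to a local event near a single boundary point, and then apply the regularity estimates for $\SLE_4$ from \cite{kms2021regularity}.

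First, by Markov's inequality, $\p[\Leb_1(\CT^c) > \delta] \leq \delta^{-1}\E[\Leb_1(\CT^c)]$, so it suffices to show that for $C = C(a,\delta,p)$ sufficiently large we have $\E[\Leb_1(\CT^c)] \leq (1-p)\delta$. By Fubini, $\E[\Leb_1(\CT^c)] = \int_0^1 \p[t \in \CT^c]\,dt$, where $\{t \in \CT^c\}$ is the event that the hyperbolic geodesic from $1$ to $\eta^w(t)$ in $V$ passes through some $Q \in \CW$ with $\disthyp^V(\cen(Q),1) > C\len(Q)^{-a}$.

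Next, I would apply the shift invariance of two-sided whole-plane $\SLE_4$: for each $t \in \R$, the process $\eta^w(\cdot+t)-\eta^w(t)$ has the same law as $\eta^w$. Under the translation $w \mapsto w - \eta^w(t)$, the event $\{t \in \CT^c\}$ becomes the event that the hyperbolic geodesic from $z_t := 1-\eta^w(t)$ to $0$, in the component of $\C \setminus \eta^w$ containing $z_t$, passes through a bad Whitney square of that component. By H\"older continuity of the natural parameterization, $\sup_{t \in [0,1]}|\eta^w(t)| \leq R$ with high probability for $R$ large, so we can condition on this high-probability event and are reduced to showing that $\sup_{z \in A_R} \p[G_z] \to 0$ as $C \to \infty$, where $A_R \subseteq \C \setminus \{0\}$ is a compact annulus around $0$ and $G_z$ is the event that the hyperbolic geodesic from $z$ to $0$ in the component of $\C \setminus \eta^w$ containing $z$ hits a bad Whitney square. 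The region where $|z_t|$ is small is handled separately using that $z_t$ is uniformly bounded away from $0$ on the same high-probability event.

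For this spatial estimate, I would use the regularity results from \cite{kms2021regularity}. Their $(\log\delta^{-1})^{-1/3+o(1)}$ modulus of continuity for the Riemann map to a complementary component of $\SLE_4$ yields only the deterministic bound $\disthyp^V(\cen(Q),z) \lesssim \len(Q)^{-3-o(1)}$, which is too weak for $a \in (0,1)$. The needed probabilistic strengthening is that the natural measure of boundary points whose geodesic from $z$ passes through a bad Whitney square vanishes in expectation as $C \to \infty$. This should follow from moment bounds on the Riemann map derivative together with the harmonic-measure decay $\exp(-\epsilon^{-3+o(1)})$ from \cite{kms2021regularity}, followed by a union bound over the dyadic scales $\len(Q) = 2^{-n}$. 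The main obstacle is exactly this probabilistic upgrade: the worst-case modulus is saturated only at exceptional bottleneck points of the boundary, and quantifying that these carry small natural measure in expectation, uniformly in the reference point $z \in A_R$, is the core technical challenge.
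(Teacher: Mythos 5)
Your reduction (Markov + Fubini + shift invariance of the naturally parameterized two-sided whole-plane $\SLE_4$) matches the skeleton of the paper's proof, but you have correctly flagged — and left open — the entire technical content of the lemma. The gap you identify, ``quantifying that [exceptional bottleneck] points carry small natural measure in expectation,'' is precisely what remains to be proved, and your proposed toolkit (modulus of continuity of the Riemann map plus the $\exp(-\epsilon^{-3+o(1)})$ harmonic-measure decay from \cite{kms2021regularity}, followed by a union bound) does not obviously lead there: those are worst-case / pointwise statements, and a naive union bound over dyadic scales would have to beat a doubly exponentially small tail at exceptional points, which you do not control.

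The paper's resolution is of a different character and is the one piece of machinery you did not invoke: Liouville quantum gravity. The paper couples $\eta^w$ with an independent weight-$4$ quantum cone centered at $\eta^w(U)$ (with $U \sim \mathrm{Unif}[0,1]$ independent), and uses the cutting theorem (Theorem~3.1 of \cite{kms2021regularity}, stated here as Theorem~\ref{thm:critical_cone_cutting}) to decompose the two sides of $\eta^w$ into independent weight-$2$ quantum wedges parameterized by the strip $\strip$. After conformally mapping the component containing $1$ to $\strip$, the hyperbolic distance from $1$ to $\eta^w(U)$ is essentially the horizontal coordinate $k$; the derivative $|\phi'|$ near depth $k$ is bounded below by a power of the quantum area $\qmeasure{\wt{h}}(W_k^\pm)$; and the circle-average decomposition of the wedge field plus standard GMC moment bounds and Bessel/Brownian estimates yield the stretched-exponential tail $\qmeasure{\wt{h}}(W_k^\pm) \lesssim e^{-k^b}$ with high probability. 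Chaining these gives $e^{k^b} \lesssim \len(Q)^{-u}$, hence $\disthyp^V(\cen(Q),1) \lesssim k \lesssim \len(Q)^{-a}$ for any $a>0$, and integrating over $U$ recovers the statement by Markov. This exponential-in-$k$ control on quantum area is what replaces the union-bound strategy you gesture at, and without it the argument does not close. In short: your outline is consistent with the paper up to the point where the real work begins, but the key idea — converting the $\SLE_4$ regularity question into a GMC tail estimate via the weight-$4$ cone / weight-$2$ wedge welding — is absent.
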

\begin{proof}
\noindent{\it Step 1. Setup.}  We assume that we have the notation and setup described in the lemma statement.  We also let $U$ be uniform in $[0,1]$ independently of $\eta^w$ and let $(\C,h,0,\infty)$ be a quantum cone of weight $4$ with the circle average embedding which is independent of $(\eta^w ,  U)$.  We then consider the translation map $\psi_{U} \colon \C \to \C$ given by $z \mapsto z+\eta^w(U)$ and set $h^U = h \circ \psi_{U}^{-1}$ (note that $(\psi_U^{-1})' \equiv 1$).  Then the quantum surface $(\C,h^U,  \eta^w (U) , \infty)$ is a quantum cone of weight $4$ with the circle average embedding since the quantum surface $(\C,h,0,\infty)$ has the law of a quantum cone of weight $4$ with the circle average embedding and it is independent of $(\eta^w, U)$.  We also let $V_1$ (resp.\ $V_2$) be the connected component of $\C \setminus \eta^w$ lying to the right (resp.\ left) side of $\eta^w$, and for $j=1,2$ we consider the conformal transformation $\phi_{U}^j : V_j \rightarrow \strip$ such that $\phi_{U}^j(\eta^w(U)) = -\infty$,  $\phi_{U}^j(\infty) = +\infty$ and set $\wt{h}^{U,j} = h^U \circ (\phi_{U}^{j})^{-1} + Q\log |((\phi_{U}^{j})^{-1})'|$.  We pick the third parameter in the definition of $\phi_{U}^j$ so that the quantum surface $(\strip, \wt{h}^{U,j}, -\infty,+\infty)$ has the circle average embedding.  Since $(\eta^w(s+U) - \eta^w(U))_{s \in \R}$ has the same law as $\eta^w$, it follows from \cite[Theorem~3.1]{kms2021regularity} that the surfaces
$(\strip,  \wt{h}^{U,1} -\infty,+\infty)$ and $(\strip, \wt{h}^{U,2},-\infty,+\infty)$ are independent and they both have the law of a quantum wedge of weight $2$ parameterized by $\strip$ with the circle average embedding.  Note that $\wt{h}^{U,j}$ can be decomposed as $\wt{h}^{U,j} = h_1^j + h_2^j$ where $h_1^j \in \mathcal{H}_1(\strip)$ and $h_2^j \in \mathcal{H}_2(\strip)$.  We will abuse notation and write~$h_1^j(t)$ for the common value of $h_1^j$ on the vertical segment $\{t \} \times (0,\pi)$.  We also have that $(h_1^j(t))_{t \leq 0}$ (resp.\ $(h_1^j(t))_{t \geq 0}$) has the same law as $(-X^j_{-2t})_{t \leq 0}$ (resp.\ $(B^j_{2t})_{t \geq 0}$) where~$X^j$ (resp.\  $B^j$) is a $\BES^3$ process (resp.\ standard Brownian motion) starting from~$0$ and $X^j$ and $B^j$ are independent of each other.  Moreover~$h_2^j$ is independent of~$h_1^j$ and has the same law as the corresponding projection to $\mathcal{H}_2(\strip)$ of a free boundary GFF on~$\strip$.

\noindent{\it Step 2. Initial estimates.}  Next we fix $q \in (0,1)$ small (to be chosen and depending only on $a$, $\delta$, and $p$).  Let $V_{j_0}$ for $j_0 \in \{1,2\}$ be the connected component of $\C \setminus \eta^w$ containing $1$. Since $\phi_U^{j_0}(1) \in \strip$,  we can find $R \in \N$ and $r \in (0,\pi / 2)$ so that with $K = [-R,R] \times [r, \pi - r]$ we have that $\p[\phi_U^{j_0}(1) \in K] \geq 1-q/100$.  For $k \in \N_0$,  we consider the rectangles $W_k^- = [-k-1,-k] \times [r,\pi - r]$ and $W_k^+ = [k,k+1] \times [r,\pi - r]$.  Let $z_k^-$ (resp.\  $z_k^+$) be the center of $W_k^-$ (resp.\  $W_k^+$).  Let also $\phi \coloneqq (\phi_{U}^{j_0})^{-1}$.  Note that~\cite[Theorem~3.21]{lawler2008conformally} implies that there exist finite universal constants $c_1,  c_2 >0$ depending only on $r$ such that
\begin{equation}
\label{eq:derivative_bound}
c_1 |\phi'(z_k^-)| \leq |\phi'(z)| \leq c_2 |\phi'(z_k^-)| \quad \text{for all} \quad z \in W_k^- \quad \text{and} \quad k \in \N_0
\end{equation}
and a similar statement holds with $z_k^-$ and $W_k^-$ replaced by $z_k^+$ and $W_k^+$.  We also claim that we can find finite constants $c_3,  c_4 >0$ depending only on $r$ such that for all $k \in \N_0$ we have 
\begin{equation}
\label{eq:bound_on_the_side_length}
c_3 |\phi'(z_k^-)| \leq \len(Q) \leq c_4 |\phi'(z_k^-)| \quad \text{for}\quad Q \in \CW,\ Q \cap \phi(W_k^-) \neq \emptyset 
\end{equation}
and a similar statement holds if $Q \cap \phi(W_k^+) \neq \emptyset$ when we replace $z_k^-$ and $W_k^-$ by $z_k^+$ and $W_k^+$.  Indeed, let $Q \in \CW$ be such that $Q \cap \phi(W_k^-) \neq \emptyset$ and fix $w \in Q \cap \phi(W_k^-)$.  Then by the Koebe-1/4 theorem and~\eqref{eq:derivative_bound} we have that
\begin{align*}
\dist(Q,\partial V_{j_0}) \leq \dist(w , \partial V_{j_0}) \leq 4 c_2 r |\phi'(z_k^-)|.
\end{align*}
Moreover, $\sqrt{2}\len(Q) = \diam (Q) \leq \dist(Q,  \partial V_{j_0})$ and thus $\len(Q) \leq 2\sqrt{2} c_2 r |\phi'(z_k^-)|$.  On the other hand,  we have that
\begin{align*}
\dist(w, \partial V_{j_0}) \leq \dist(Q , \partial V_{j_0}) + \diam (Q)\leq 5 \diam (Q) = 5 \sqrt{2}\len(Q)
\end{align*}
and by the Koebe-1/4 theorem,
\begin{align*}
\dist(w,  \partial V_{j_0}) \geq \frac{1}{4}\dist(\phi_U^{j_0}(w) ,  \partial \strip) |\phi'(\phi_U^{j_0}(w))| \geq \frac{c_1 r}{4} |\phi'(z_k^-)|.
\end{align*}
Consequently, $\len(Q) \asymp |\phi'(z_k^-)|$ with implicit constants depending only on $r$.  This proves~\eqref{eq:bound_on_the_side_length} whenever $Q \cap \phi(W_k^-) \neq \emptyset$ and similarly whenever $Q \cap \phi(W_k^+) \neq \emptyset$.  Moreover,  there exist finite constants $c_5,  c_6 > 0$ depending only on $r$ such that 
\begin{equation}\label{eq:bound_on_diameter}
c_5 |\phi'(z_k^-)| \leq \diam(\phi(W_k^-)) \leq c_6 |\phi'(z_k^-)|
\end{equation}
and similarly with $z_k^-$ and $W_k^-$ replaced by $z_k^+$ and $W_k^+$.  To see this,  we first note that~\cite[Corollary~3.25]{lawler2008conformally} implies that there exists a finite constant $c_*>0$ depending only on $r$ such that $B(\phi(z_k^-),c_*|\phi'(z_k^-)|) \subseteq \phi(W_k^-)$ and so $\diam(\phi(W_k^-)) \gtrsim |\phi'(z_k^-)|$.  Also~\eqref{eq:bound_on_the_side_length} implies that $\diam(\phi(W_k^-)) \lesssim |\phi'(z_k^-)|$ where the implicit constant depends only on $r$.

\noindent{\it Step 3. Quantum area lower bounds.} Next, we focus on giving lower bounds on $\qmeasure{\wt{h}^{U,j}}(W_k^\pm)$ for $k \in \N_0$ and $j =1,2$. By considering a fine grid of points in $B(\eta^w(U),1)$ and applying~\cite[Lemma~4.3]{kms2021regularity} it follows that there exist $\epsilon_0 \in (0,1)$ and $u>0$ such that with probability at least $1-q/100$ we have that 
\begin{equation}\label{eq:lower_bound_on_quantum_mass}
\qmeasure{h^U}(B(w,\epsilon)) \geq \epsilon^{u}\quad \text{for all} \quad w \in B(\eta^w(U) ,  1/2)\quad \text{and all} \quad \epsilon \in (0,\epsilon_0].
\end{equation}
Also, we have that $\dist(\eta^w(U) ,  \phi(W_k^-)) \to 0$ as $k \to \infty$ and so combining with~\eqref{eq:bound_on_diameter} implies that there exists $k_0 \in \N_0$ such that with probability at least $1-q/100$ we have that $c_*|\phi'(z_k^-)| \leq \epsilon_0$ for all $k \geq k_0$.  Moreover, since $\qmeasure{\wt{h}^{U,j}}(W_k^-) > 0$ and $\qmeasure{\wt{h}^{U,j}}(W_k^+) > 0$ for all $k \in \N_0$ and $j=1,2$ a.s.,  we can find a finite constant $C_1 > 0$ such that with probability at least $1-q/100$ we have that $|\phi'(z_k^-)|^u \leq C_1 \qmeasure{\wt{h}^{U,j_0}}(W_k^-)$ for all $0 \leq k \leq k_0$ and $|\phi'(z_k^+)|^u \leq C_1 \qmeasure{\wt{h}^{U,j_0}}(W_k^+)$ for all $0 \leq k \leq R$.  Combining,  we obtain that we can find constants $u, C_1 >0$ such that with probability at least $1-q/3$ we have that $\phi_U^{j_0}(1) \in K$,
\begin{align*}
|\phi'(z_k^-)|^u &\leq C_1 \qmeasure{h^U}(B(\phi(z_k^-),c_*|\phi'(z_k^-)|))
 \leq C_1 \qmeasure{h^U}(\phi(W_k^-)) = C_1 \qmeasure{\wt{h}^{U,j_0}}(W_k^-) \quad \text{for all} \quad k \in \N_0
\end{align*}
and
\begin{align*}
|\phi'(z_k^+)|^{u} &\leq C_1 \qmeasure{h^U}(B(\phi(z_k^+),c_*|\phi'(z_k^+)|)) \leq C_1 \qmeasure{h^U}(\phi(W_k^+)) = C_1 \qmeasure{\wt{h}^{U,j_0}}(W_k^+) \quad \text{for all} \quad 0 \leq k \leq R.
\end{align*}

\noindent{\it Step 4. Quantum area upper bounds.} Now, we turn to give some high probability upper bounds on $\qmeasure{\wt{h}^{U,j}}(W_k^\pm)$.  We note that the ideas are similar to those of the proof of~\cite[Lemma~A.1]{kms2021regularity}. First,  we observe that
\begin{align*}
\qmeasure{\wt{h}^{U,j}}(W_k^-) &\leq \exp\left(2\sup_{-k-1\leq t \leq -k}  h_1^j(t) \right)\qmeasure{h_2^j}(W_k^-)
   = \exp\left(-2\inf_{2k \leq t \leq 2(k+1)} X_t^j  \right)\qmeasure{h_2^j}(W_k^-)\\
&=\exp\left(-2\inf_{2k \leq t \leq 2(k+1)} (X_t^j - X_{2k}^j) \right) \exp(-2X_{2k}^j) \qmeasure{h_2^j}(W_k^-).
\end{align*}
By the Markov property of $X^j$ it follows that $(X_t^j - X_{2k}^j)_{t \geq 2k}$ and $X_{2k}^j$ are independent and that $-\inf_{2k \leq t \leq 2(k+1)} (X_t^j - X_{2k}^j)$ is stochastically dominated by $\wt{S}^j = \sup_{0 \leq t \leq 2} \wt{B}_t^j$ where $\wt{B}_t^j$ is a standard Brownian motion with $\wt{B}_0^j = 0$. Consequently, it follows by~\cite[Lemma A.4]{kms2021regularity} that
\begin{align*}
\E\!\left[ e^{2s\wt{S}^j} \qmeasure{h_2^j}(W_k^-)^s \right] = \E\!\left[ e^{2s\wt{S}^j}\qmeasure{h_2^j}(W_0)^s \right] < \infty \quad\text{for each}\quad s \in (0,1/2).
\end{align*}
Thus Markov's inequality implies that 
\begin{align*}
\p\!\left[ e^{2\wt{S}^j}\qmeasure{h_2^j}(W_k^-) \geq k^{2/s} \right] \lesssim k^{-2},
\end{align*}
where the implicit constant is independent of $k$.

Next,  to treat the term $e^{2X_{2k}^j}$,  we first note that the transition density for $X^j$ is given by $p_t(0,y) = \sqrt{\frac{2}{\pi}}t^{-3/2}y^2 e^{-y^2 / 2t}$ \cite[Chapter~XI]{revuz2013continuous}.  Fix $b \in (0,1/6)$.  Since $X^j$ satisfies Brownian scaling we have that
\begin{align*}
\p\!\left[ X_{2k}^j \leq k^b / 2 \right] = \p\!\left[ X_1^j \leq k^{b-1/2}/ 2^{3/2}\right] \lesssim k^{3(b-1/2)},
\end{align*}
where the implicit constant is independent of $k$.  Therefore we obtain that
\begin{align*}
\p \!\left[ \qmeasure{\wt{h}^{U,j}}(W_k^-) \geq k^{2/s}e^{-k^b} \right] &\leq \p\!\left[ e^{2\wt{S}^j}\qmeasure{h_2^j}(W_k^-) \geq k^{2/s} \right] + \p\!\left[ X_{2k}^j \leq k^b  / 2\right]
 \lesssim k^{-2} + k^{3(b-1/2)}
\end{align*}
and so as $3(b-1/2) < -1$ we have that
\begin{align*}
\sum_{k \geq 1} \p\!\left[ \qmeasure{\wt{h}^{U,j}}(W_k^-) \geq k^{2/s}e^{-k^b} \right] < \infty.
\end{align*}
Thus we can find $k_0 \in \N$ such that with probability at least $1-q/100$ we have that $\qmeasure{\wt{h}^{U,j}}(W_k^-) \leq k^{2/s}e^{-k^b}$ for all $k \geq k_0$ and $j=1,2$.  Again,  by possibly decreasing $b>0$, there exists a constant $C_2 > 0$ such that with probability at least $1-q/3$ it holds that $\qmeasure{\wt{h}^{U,j}}(W_k^-) \leq C_2 e^{-k^b}$ for all $k \in \N_0$ and $\qmeasure{\wt{h}^{U,j}}(W_k^+) \leq C_2 e^{-k^b}$ for all $0 \leq k \leq R$ and $j=1,2$.

\noindent{\it Step 5. Conclusion of the proof.}
Combining everything,  we obtain that we can find constants $r,R,u, C_1,  C_2 > 0$ and $b \in (0,1/6)$ such that with probability at least $1-q$ the following hold:
\begin{enumerate}[(i)]
\item \label{it:bound_on_location} $\phi_U^{j_0}(1) \in K$,
\item \label{it:bound_on_mass_from_derivative}$|\phi'(z_k^-)|^{u} \leq C_1 \qmeasure{\wt{h}^{U,j_0}}(W_k^-)$ for all $k \in \N_0$ and $|\phi'(z_k^+)|^{u} \leq C_1 \qmeasure{\wt{h}^{U,j_0}}(W_k^+)$ for all $0 \leq k \leq R$, and
\item \label{it:upper_bound_on_mass}$\qmeasure{\wt{h}^{U,j}}(W_k^-) \leq C_2 e^{-k^b}$ for all $k \in \N_0$ and $\qmeasure{\wt{h}^{U,j}}(W_k^+) \leq C_2 e^{-k^b}$ for all $0 \leq k \leq R$ and $j=1,2$.
\end{enumerate}
From now on we assume that we are working on the event that~\eqref{it:bound_on_location},  \eqref{it:bound_on_mass_from_derivative} and~\eqref{it:upper_bound_on_mass} hold.

Now we are ready to show that every $Q \in \CW$ which intersects the hyperbolic geodesic from $1$ to $\eta^w(U)$ satisfies $\disthyp^{V_{j_0}}(\cen(Q),1) \leq C \len(Q)^{-a}$ for some constant $C > 0$.  We set $z = \phi_U^{j_0}(1) \in K$. If $\im(z) \neq \frac{\pi}{2}$, then $e^{z} \notin i\R$, and so the hyperbolic geodesic in $\h$ between $e^z$ and the origin is given by the arc in $\h$ connecting $e^z$ to the origin of the circle passing through $e^z$ and the origin and whose center lies in $\R$.  If $\im(z) = \frac{\pi}{2}$,  then $e^z \in i\R$ and so the hyperbolic geodesic in $\h$ between $e^z$ and the origin is given by the segment in $i\R$ connecting them.  In either case, we let $\wt{\gamma}_z$ be the image of the said geodesic under $w \mapsto \log(w)$.  It follows that $\wt{\gamma}_z$ is contained in $(-\infty,R] \times [r,\pi-r]$ and that the image $\gamma_U$ of $\wt{\gamma}_z$ under $\phi$ is the hyperbolic geodesic in $V_{j_0}$ from $1$ to $\eta^w(U)$.

From the definition of the hyperbolic metric it is not difficult to see that there exists a constant $C_r > 0$ depending only on $r$ such that for all $k,m \in \N_0$,  $\wt{z} \in W_k^-$ and $\wt{w} \in W_m^-$,  we have that $\disthyp^{\strip}(\wt{z},\wt{w}) \leq C_r (1 + |k-m|)$ (e.g., by considering the hyperbolic length of the segment from~$\wt{z}$ to~$\wt{w}$).  The same also holds with the $W_k^-$'s replaced by the $W_k^+$'s.  Let $Q$ be a square in $\CW$ that $\gamma_U$ intersects and fix $w \in Q \cap \gamma_U$.  Then, either $\wt{z} =\phi_U^{j_0}(w) \in W_k^-$ for some $k \in \N_0$ or $\wt{z} \in W_k^+$ for some $0 \leq k \leq R$.  Suppose that the former holds.  Then
\begin{align*}
\disthyp^{V_{j_0}}(w,1) &= \disthyp^{\strip}(\wt{z},z) \lesssim k.
\end{align*}
Combining~\eqref{eq:bound_on_the_side_length} with~\eqref{it:bound_on_mass_from_derivative} and~\eqref{it:upper_bound_on_mass} we obtain that $e^{k^b} \lesssim \len(Q)^{-u}$.  This implies that if we set $v = \frac{a}{u}$,  then $k \lesssim \len(Q)^{-a}$ since $k \lesssim e^{v k^b}$ and so 
\begin{align*}
\disthyp^{V_{j_0}}(\cen(Q),1) \lesssim k \lesssim \len(Q)^{-a}
\end{align*}
with the implicit constants depending only on $r$,  $R$,  $a$, and $q$.  If $\wt{z} \in W_k^+$ for some $0 \leq k \leq R$,  then a similar analysis holds and so $\disthyp^{V_{j_0}}(\cen(Q),1) \leq C \len(Q)^{-a}$.

To finish the proof of the lemma,  for $t \in [0,1]$ we let $A_t$ be the event that if $\gamma_t$ is the hyperbolic geodesic in $V_{j_0}$ connecting $1$ to $\eta^w(t)$,  then for every $Q$ in $\CW$ such that $\gamma_t \cap Q \neq \emptyset$,  we have that $\disthyp^{V_{j_0}}(\cen(Q),1) \leq C \len(Q)^{-a}$. Then the analysis of the previous paragraphs implies that
\begin{align*}
\E[ \Leb_1(\{t \in [0,1] : A_t^c \text{ occurs}\})] \leq q.
\end{align*}
Markov's inequality therefore implies that
\begin{align*}
\p[ \Leb_1(\CT) < 1-\delta] = \p[ \Leb_1(\{t \in [0,1] : A_t^c \text{ occurs}\})\geq \delta] \leq \frac{q}{\delta}
\end{align*}
and so the claim of the lemma follows by taking $q$ to be sufficiently small.
\end{proof}

\begin{lemma}
\label{lem:natural_good_whole_plane}
Fix $\kappa \in (0,8)$ and let $d_\kappa = 1 + \frac{\kappa}{8}$.  Suppose that $\eta$ is a chordal $\SLE_\kappa$ in $\h$ from $0$ to $\infty$ with the natural parameterization.  For each compact set $K \subseteq \h$, $a > 0$, and $p \in (0,1)$ there exists $C > 0$ such that the following is true.  With probability at least $p$ for every Borel set $A \subseteq K$ we have that $\nmeasure{\eta}(A) = \Leb_1(\eta^{-1}(A)) \leq C \diam(A)^{d_\kappa-a}$.
\end{lemma}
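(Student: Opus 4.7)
My plan is to reduce the uniform H\"older-type bound to a uniform upper bound on $\nmeasure{\eta}(B(z,r))$ over a dyadic grid of centers and radii, and then to deduce the grid bound from a high-moment estimate combined with a union bound and a Borel--Cantelli argument. Observe first that if $A \subseteq K$ is Borel with $\diam(A) \in (2^{-(n+1)}, 2^{-n}]$ and $z \in A$, then $A \subseteq B(z, 2^{-n})$, so the lemma reduces to producing, on an event of probability at least $p$, a constant $C$ such that
\[
\nmeasure{\eta}(B(z, 2^{-n})) \leq C\, 2^{-n(d_\kappa - a)} \quad \text{for every } z \in K \text{ and every } n \geq 0.
\]
By the triangle inequality, this need only be checked with $z$ ranging over a $2^{-n}$-grid $\CG_n$ covering the $1$-neighbourhood of $K$ (with the radius inflated by a constant factor), and $|\CG_n| \lesssim 2^{2n}$.

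Given $a > 0$, the key input is a moment estimate
\[
\sup_{z \in K}\, \E\!\left[ \nmeasure{\eta}(B(z,r))^q \right] \leq C_q\, r^{q d_\kappa}, \qquad r \in (0,1],
\]
for some $q > 4/a$. Using the description of $\nmeasure{\eta}$ as the $d_\kappa$-dimensional Minkowski content of $\eta$ established in~\cite{lr2015minkowski}, the left side is, up to constants, the integral of the $q$-point $\SLE_\kappa$ Green's function $G_q(w_1, \ldots, w_q)$ over $B(z,r)^q$, and the requisite multi-point Green's function estimates are available for $\kappa \in (0,8)$; the factors of $\dist(w_i, \partial \h)$ that appear are harmless because $K \Subset \h$. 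Markov's inequality then yields
\[
\p\!\left[ \nmeasure{\eta}(B(z, 2^{-n})) > 2^{-n(d_\kappa - a/2)} \right] \lesssim 2^{-n q a/2},
\]
and a union bound over $\CG_n$ gives failure probability $\lesssim 2^{n(2 - qa/2)}$, which is summable in $n$ since $q > 4/a$. Borel--Cantelli furnishes a random $n_0$ beyond which the grid event holds a.s.; absorbing the finitely many earlier scales into $C$ and choosing $C$ sufficiently large makes the event occur with probability at least $p$, after which the reduction from the first paragraph converts the grid bound at scale $2^{-n}$ into $\diam(A)^{d_\kappa - a}$ (the shift from $a/2$ to $a$ being absorbed in the constant).

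The main technical obstacle is obtaining the moment estimate with $q$ large enough. Low-order moments reduce to one- and two-point $\SLE$ Green's function estimates which are well understood for $\kappa \in (0,8)$, but sharp bounds on the $q$-point Green's function integrated on small balls become progressively more delicate. If invoking the general multi-point estimates is inconvenient in the present setting, an alternative is to bootstrap a two-point bound by a multiscale chaining argument that exploits conformal covariance of $\nmeasure{\eta}$ and the domain Markov property of $\SLE_\kappa$: one iterates a two-point estimate across successive dyadic scales to upgrade the second moment bound, avoiding $q$-point Green's functions at the cost of more combinatorial bookkeeping. Either route should suffice to complete the proof for all $\kappa \in (0,8)$.
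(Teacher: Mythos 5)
Your proposal follows essentially the same route as the paper: both reduce the bound to a dyadic-scale event, bound $\E[\nmeasure{\eta}(\cdot)^q]$ for large $q$ via the multi-point correlation estimates (the paper cites Rezaei--Zhan \cite{rz2017higher}, which is exactly the reference you need for the moment bound you treat as the ``main technical obstacle''), and conclude by Markov, union bound over $O(2^{2n})$ scales/points, and Borel--Cantelli. The only cosmetic difference is that the paper works with dyadic squares and the Minkowski-content approximation $\cont_{d_\kappa}(\eta\cap A;r)$ (passing to the limit via Fatou) rather than balls, but the argument is the same.
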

\begin{proof}
Let $R \subseteq \h$ be a rectangle such that $K \subseteq R$ and $d = \dist(K,\partial R) > 0$.  The proof follows closely that of \cite[Theorem~1.2]{rz2017higher}.  For a domain $D \subseteq \h$ and $r > 0$,  we set 
\begin{align*}
\cont_{d_{\kappa}}(\eta \cap D ; r) = r^{d_{\kappa}-2} \Leb_2(\{z \in D : \dist(z,\eta) \leq r\}).
\end{align*}
Then it is shown in \cite{lr2015minkowski} that the limit 
\begin{align}
\label{eqn:cont_limit}
\cont_{d_{\kappa}}(\eta \cap D) = \lim_{r \to 0} \cont_{d_{\kappa}}(\eta \cap D ; r)
\end{align}
exists a.s.\  and it is the natural length of $\eta$ in the domain $D$.

Let $\alpha_{\kappa} = \frac{8}{\kappa}-1$.  For $y \geq 0$,  we define $P_y \colon \R_+ \to \R_+$ by
\begin{equation}
P_y(x) =
\begin{cases} 
y^{\alpha_{\kappa} - (2-d_{\kappa})} x^{2 - d_{\kappa}} \quad&\text{if}\quad x \leq y,\\
x^{\alpha_{\kappa}} \quad&\text{if}\quad x \geq y.
\end{cases}
\end{equation}
Then it is shown in \cite[Theorem~1.1]{rz2017higher} that if $z_0, \ldots, z_n \in \closure{\h}$ are distinct and such that $z_0 = 0$,  $y_k = \im(z_k) \geq 0$ and $\ell_k = \dist(z_k ,  \{z_j : 1 \leq j < k\})$ for $1\leq k \leq n$,  then we can find a constant $C_n < \infty$ depending only on $\kappa$ and $n \in \N$ such that 
\begin{align}
\label{eqn:rz_bound}
\p [ \dist(z_k , \eta) \leq r_k ,\  1\leq k \leq n] \leq C_n \prod_{k=1}^n \frac{P_{y_k}(r_k \wedge \ell_k)}{P_{y_k}(\ell_k)} \quad\text{for all}\quad r_1,\ldots,r_n > 0.
\end{align}
We fix $n \in \N$ and $A \subseteq R$ Borel such that $\diam(A) < \dist(R,\partial \h)$.  First we will find an upper bound for 
\begin{align}
\label{eqn:cont_bound}
r^{n(d_{\kappa}-2)}\p[ \dist(z_k,\eta) < r,\ 1 \leq k \leq n] \quad\text{for}\quad z_1,\ldots,z_n \in A.
\end{align}
By~\eqref{eqn:rz_bound}, we have that~\eqref{eqn:cont_bound} is bounded by 
\begin{align}
\label{eqn:cont_bound2}
r^{n(d_{\kappa}-2)}C_n \prod_{k=1}^n \frac{P_{y_k}(r \wedge \ell_k)}{P_{y_k}(\ell_k)}.
\end{align}
Suppose that there exist $i_1,\ldots,i_k$ such that $r \leq \ell_{i_j}$ for all $1\leq j \leq k$ and $r > \ell_j$ for all $j \in \{1,\ldots,n\} \setminus \{i_1,\ldots,i_k\}$.  Then since
\[ \frac{P_{y}(x_1)}{P_{y}(x_2)} \leq \frac{x_1^{2-d_{\kappa}}}{x_2^{2-d_{\kappa}}} \quad\text{for all}\quad y \geq 0 \quad\text{and}\quad 0 \leq x_1 < x_2\]
we obtain that~\eqref{eqn:cont_bound2} is bounded by
\begin{align*}
C_n r^{n(d_{\kappa}-2)}\prod_{j=1}^k \frac{r^{2-d_{\kappa}}}{\ell_{i_j}^{2-d_{\kappa}}} \leq C_n r^{n(d_{\kappa}-2)}\prod_{i=1}^n \frac{r^{2-d_{\kappa}}}{\ell_i^{2-d_{\kappa}}} = C_n \prod_{i=1}^n \ell_i^{d_{\kappa}-2}.
\end{align*}
If either $r \leq \ell_j$ for all $1\leq j \leq n$ or $r > \ell_j$ for all $1 \leq j \leq n$,  then a similar bound holds.  Set 
\begin{align*}
g(z_1,\ldots,z_n) = \prod_{k=1}^n \ell_k^{d_{\kappa}-2} = \prod_{k=1}^n \min\{|z_k-z_0|,\ldots,|z_k-z_{k-1}|\}^{d_{\kappa}-2}.
\end{align*}
Since $\diam(A) < \dist(R,\partial \h)$,  we obtain that $\ell_k = \min\{|z_k-z_1|,\ldots,|z_k-z_{k-1}|\}$.  Hence for all $1 \leq k \leq n$,  we have that
\begin{align*}
\int_A \min\{|z_k-z_0|,\ldots,|z_k - z_{k-1}|\}^{d_{\kappa}-2} d\Leb_2(z_k)&=\int_A \min\{|z_k - z_1|,\ldots,|z_k-z_{k-1}|\}^{d_{\kappa}-2} d\Leb_2(z_k)\\
&\leq \sum_{j=1}^{k-1}\int_A |z_k-z_j|^{d_{\kappa}-2} d\Leb_2(z_k) \\
&\leq n \int_{|z| \leq \diam(A)}|z|^{d_{\kappa}-2} d \Leb_2(z) \lesssim \diam(A)^{d_{\kappa}}
\end{align*}
where the implicit constant depends only on $n$ and $\kappa$.  This implies that
\begin{align*}
\E[ \cont_{d_\kappa}(\eta \cap A ; r)^n]&=r^{n(d_{\kappa}-2)}\E\left[\left(\int_A \one_{\dist(z,\eta) < r} d\Leb_2(z) \right)^n\right]\\
&=\int_{A^n}r^{n(d_{\kappa}-2)}\p[ \dist(z_j,\eta) < r,\ 1 \leq j \leq n] d\Leb_2(z_1)\cdots d\Leb_2(z_n) \\
&\leq C_n \int_{A^n}g(z_1,\ldots,z_n) d\Leb_2(z_1)\cdots d\Leb_2(z_n)\\
&\lesssim \diam(A)^{nd_\kappa}
\end{align*}
where the implicit constant depends only on $n$ and $\kappa$.  By applying Fatou's lemma and using~\eqref{eqn:cont_limit} we obtain that
\begin{equation}\label{eq:moment_bound_minkowski_content}
\E[\cont_{d_\kappa}(\eta \cap A)^n] \lesssim \diam(A)^{nd_\kappa}.
\end{equation}
Note that a similar analysis gives that $\E[ \cont_{d_\kappa}(\eta \cap R)^n] < \infty$.

Next for $m \in \Z$ we let $\CS_m$ be the set of dyadic squares contained in $R$ with side length $2^{-m}$.  Set $\CS_m = \{S_{m,1},\dots,S_{m,n_m}\}$ and note that $n_m \leq M 2^{2m}$ for all $m \in \Z$,  where $M = \Leb_2(R)$.  Also,  we fix $\epsilon \in (0,1)$ small and for $m \in \Z$,  $1 \leq j \leq n_m$,  we set $X_{m,j} = \cont_{d_{\kappa}}(\eta \cap S_{m,j})$ and let $C_m$ be the event that $X_{m,j} \geq 2^{-m(1-\epsilon)d_{\kappa}}$ for some $j \in \{1,\dots,n_m\}$.  Then combining~\eqref{eq:moment_bound_minkowski_content} with $n_m \leq M 2^{2m}$ and $\diam(S_{m,j}) < \dist(R,\partial \h)$ for all $m$ sufficiently large,  we obtain that
\begin{align*}
\p[C_m] \lesssim 2^{m(2-\epsilon n d_\kappa)}
\end{align*}
for all $m$ large enough,  where the implicit constant depends only on $n$,  $\kappa$ and $R$.  Hence if we take $n$ large enough such that $2-\epsilon n d_{\kappa} < 0$,  we have that $\sum_{m=1}^{\infty}\p[C_m] < \infty$ and so there exists $m_0$ large such that $\p[ \cup_{m=m_0}^{\infty}C_m] < \frac{p}{2}$.  Moreover there exists $N \in \Z_-$ such that $\CS_m = \emptyset$ for all $m \le N$ and since $\E[ \cont_{d_{\kappa}}(\eta \cap R)^n] < \infty$,  we can find a constant $\wt{C} < \infty$ depending only on $n$,  $\kappa$,  $p$ and $R$ such that with probability at least $1-p$ we have that $X_{m,j} \leq \wt{C} \diam(S_{m,j})^{d_{\kappa}-\epsilon d_{\kappa}}$ for all $m \geq N$ and all $1 \leq j \leq n_m$.  From now on,  we assume that we are working on this event.

Finally,  to complete the proof of the lemma,  we let $A \subseteq K$ be a Borel set and we fix $c_1 > 0$ small (to be chosen independently of $A$).  Let $m \in \Z$ be the unique integer such that $2^{-m-1} < c_1 \diam(A) \leq 2^{-m}$.  Let also $S_1,\dots,S_{k_m}$ be the dyadic squares of side length $2^{-m}$ intersecting $A$.  Then $\diam(S_j) = \sqrt{2}2^{-m} \leq c_1 2^{3/2} \diam(A)$ for all $1 \leq j \leq k_m$.  Also $A \subseteq B(x,\diam(A))$ for a fixed $x \in A$ and so $\bigcup_{j=1}^{k_m} S_j \subseteq B(x,(1+c_1 2^{3/2})\diam(A))$.  Thus we have that
\begin{align*}
k_m 2^{-2m} =\Leb_2\!\left(\bigcup_{j=1}^{k_m}S_j \right) \leq \pi (1 + c_1 2^{3/2})^2 \diam(A)^2 \leq \pi(2^{3/2} + c_1^{-1})^2 2^{-2m}
\end{align*}
and so $k_m \leq \pi(2^{3/2}+c_1^{-1})^2$.  Moreover by taking $c_1 > 0$ small enough (depending only on $R$),  we have that $S_j \subseteq R$ and so $S_j \in \mathcal{S}_m$ for all $1\leq j \leq k_m$.  Therefore we have that
\begin{align*}
\cont_{d_{\kappa}}(S_j)\leq \wt{C} \diam(S_j)^{d_{\kappa}(1-\epsilon)} \lesssim 2^{-m d_{\kappa}(1-\epsilon)}
\end{align*}
and so
\begin{align*}
\cont_{d_{\kappa}}(\eta \cap A) \leq \sum_{j=1}^{k_m} \cont_{d_{\kappa}}(S_j) \lesssim 2^{-m(1-\epsilon)d_{\kappa}} \lesssim \diam(A)^{(1-\epsilon)d_{\kappa}}
\end{align*}
where the implicit constants depend only on $n$,  $\kappa$,  $p$ and $R$.  Since $\epsilon \in (0,1)$ was arbitrary,  this completes the proof of the lemma.
\end{proof}

\section{First crossing lemmas}
\label{sec:first_crossing}

The purpose of this section is to collect a number of regularity estimates for the first crossing of an $\SLE_4$ across an annulus.  We will work with a conformal annulus rather than a Euclidean annulus as this will simplify the proofs in a number of places.  We begin in Section~\ref{subsec:first_crossing_setup} by describing the setup and notation.  Next, in Section~\ref{subsec:extremal_length} we will show that the extremal length between the left and right sides of the conformal rectangle formed by cutting the conformal annulus with the first crossing of the $\SLE_4$ with the left and right sides of the $\SLE_4$ taken to the left and right sides of the rectangle is unlikely to be too large (Lemma~\ref{lem:extremal_length_good}).  We will also prove some additional estimates (Lemmas~\ref{lem:band_mapping}, \ref{lem:first_crossing_number_of_squares}) which control the regularity properties of the map which takes the cut annulus to the rectangle as well as the Lebesgue measure of a small neighborhood of the first crossing.  Finally, in Section~\ref{subsec:first_crossing_regularity} (Lemma~\ref{lem:first_excursion_good}) we will establish a version of Lemmas~\ref{lem:good_fraction_whole_plane}, \ref{lem:natural_good_whole_plane} which hold for the first crossing of the $\SLE_4$ across an annulus.

\subsection{Setup and notation}
\label{subsec:first_crossing_setup}

\begin{figure}[ht!]
\begin{center}
\includegraphics[scale=.9]{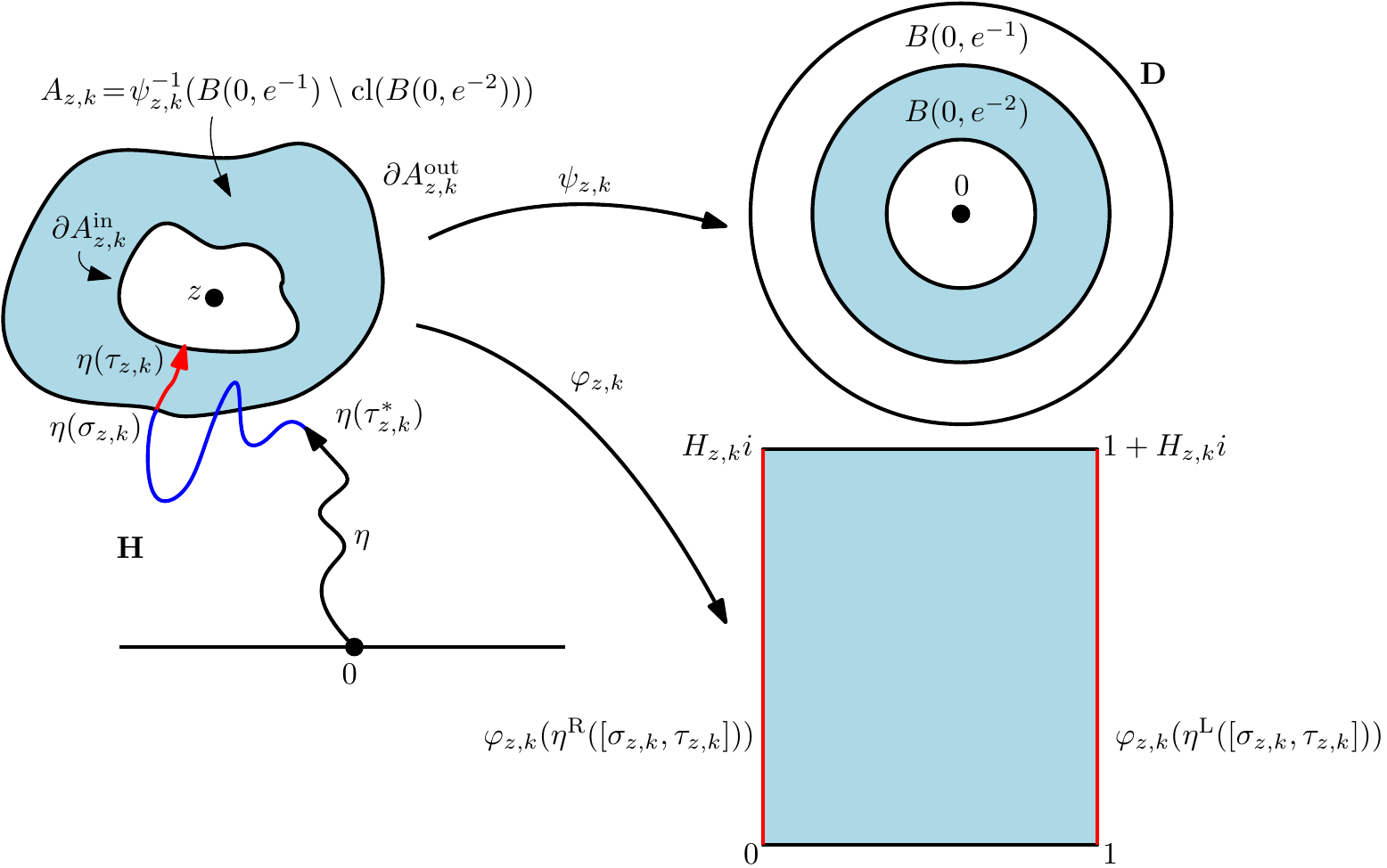}
\end{center}
\caption{\label{fig:first_crossing_setup} Illustration of the setup and notation described in Section~\ref{subsec:first_crossing_setup}.  Not shown is $A_{z,k}^*$, which is the component of $A_{z,k} \setminus \eta([0,\tau_{z,k}])$ with $\bIn A_{z,k}$ on its boundary.}
\end{figure}

See Figure~\ref{fig:first_crossing_setup} for an illustration of the setup and notation. Suppose that $\eta$ is an $\SLE_4$ in $\h$ from $0$ to $\infty$.  For each $t \geq 0$ we let $\CG_t = \sigma(\eta(s) : s \leq t)$ and $\h_t = \h \setminus \eta([0,t])$.  For each $k \in \N$ we let $\tau_{z,k}^* = \inf\{t \geq 0 : \confrad(z, \h_t) \leq e^{-4k}\}$.  On $\tau_{z,k}^* < \infty$, we let $\psi_{z,k}:\h_{\tau_{z,k}^*} \to \D$ be the unique conformal map satisfying $\psi_{z,k}(z) = 0$ and $\psi_{z,k}(\eta(\tau_{z,k}^*)) = 1$.  We let $A_{z,k} = \psi_{z,k}^{-1}( B(0,e^{-1}) \setminus \closure{B(0,e^{-2})})$, $\pin A_{z,k} = \psi_{z,k}^{-1}( \partial B(0,e^{-2}))$, $\pout A_{z,k} = \psi_{z,k}^{-1}( \partial B(0,e^{-1}))$, and $\tau_{z,k} = \inf\{t \geq 0 : \eta(t) \in \pin A_{z,k}\}$.  We let $A_{z,k}^*$ be the component of $A_{z,k} \setminus \eta([0,\tau_{z,k}])$ with $\pin A_{z,k}$ on its boundary.  Let $\sigma_{z,k}$ be the largest time $t$ before $\tau_{z,k}$ that $\eta(t) \in \pout A_{z,k}$.  Let $H_{z,k}$ be the unique positive real number so that there exists a conformal transformation $\varphi_{z,k} \colon A_{z,k}^* \to (0,1) \times (0,H_{z,k})$ which takes $\eta^\Right([\sigma_{z,k},\tau_{z,k}])$ to $[0, H_{z,k} i]$ and $\eta^\Left([\sigma_{z,k},\tau_{z,k}])$ to $1+[0,H_{z,k} i]$.  Let also $(g_t)$ be the Loewner flow for $\eta$, $W$ its Loewner driving function, $Z_t = g_t(z) - W_t = X_t + i Y_t - W_t$ and
\[ \Delta_t = |g_t'(z)|,\quad \Upsilon_t = \frac{Y_t}{|g_t'(z)|},\quad \Theta_t = \arg Z_t, \quad S_t = \sin \Theta_t.\]

\subsection{Extremal length of an annulus}
\label{subsec:extremal_length}

\begin{lemma}
\label{lem:hit_good}
For every $p_0 \in (0,1)$ there exists $\delta_0 > 0$ so that the following is true.  For each $k$ we let $\wt{\tau}_{z,k} = \inf\{t \geq \tau_{z,k-1} : S_t \leq \delta_0\}$.  Then
\[ \p[ \tau_{z,k+1} < \infty \giv \CG_{\wt{\tau}_{z,k}}] \one_{\wt{\tau}_{z,k} < \infty} \one_{\wt{\tau}_{z,k} \leq \tau_{z,k}} \leq p_0 \one_{\wt{\tau}_{z,k} < \infty} \one_{\wt{\tau}_{z,k} \leq \tau_{z,k}}.\]
\end{lemma}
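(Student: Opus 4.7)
The key observation is that for $\SLE_4$ the angular process $\Theta_t = \arg(g_t(z)-W_t)$ is a driftless local martingale whose quadratic variation equals $-d\log\confrad(z,\h_t)$, so that after a time change it becomes a standard Brownian motion on $(0,\pi)$. Indeed, applying It\^o's formula to $\Theta_t = \arg Z_t$ with $Z_t = g_t(z)-W_t$ satisfying $dZ_t = (2/Z_t)\,dt - \sqrt{\kappa}\,dB_t$ yields, for $\kappa=4$,
\[
d\Theta_t = \frac{2\sin\Theta_t}{r_t}\,dB_t, \qquad d\langle\Theta\rangle_t = \frac{4\sin^2\Theta_t}{r_t^2}\,dt,
\]
while a parallel computation gives $d(-\log\confrad(z,\h_t)) = (4\sin^2\Theta_t/r_t^2)\,dt$. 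Hence, with $\sigma(s) := \inf\{t\geq 0 : -\log\confrad(z,\h_t)\geq s\}$, Dambis--Dubins--Schwarz shows that $\wh\Theta_s := \Theta_{\sigma(s)}$ is a standard Brownian motion on $(0,\pi)$, defined for $s \in [-\log\confrad(z,\h_0), s^*)$ with $s^* := -\log\lim_{t\to\infty}\confrad(z,\h_t)$ being the BM time at which $\wh\Theta$ first exits $(0,\pi)$.

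Next I bound the BM time $s_0 := -\log\confrad(z,\h_{\wt\tau_{z,k}})$ on the event $\{\wt\tau_{z,k}\leq\tau_{z,k}\}$. Let $U := \psi_{z,k}^{-1}(B(0,e^{-2})) \subseteq \h_{\tau_{z,k}^*}$, a Jordan region enclosing $z$ with $\confrad(z,U) = e^{-4k-2}$ by the conformal covariance of the conformal radius. Since the $\SLE_4$ curve does not enter $U$ before $\tau_{z,k}$ (by definition, as $\partial U \supseteq \pin A_{z,k}$), we have $U \subseteq \h_{\wt\tau_{z,k}}$ whenever $\wt\tau_{z,k} \geq \tau_{z,k}^*$; otherwise $\confrad(z,\h_{\wt\tau_{z,k}}) > e^{-4k}$ trivially. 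In either case, monotonicity of the conformal radius under inclusion yields $\confrad(z,\h_{\wt\tau_{z,k}}) \geq e^{-4k-2}$ and hence $s_0 \leq 4k+2$. Since $\{\tau_{z,k+1}<\infty\} \subseteq \{\tau_{z,k+1}^*<\infty\} = \{s^*\geq 4(k+1)\}$, on the latter event $\wh\Theta$ must remain in $(0,\pi)$ throughout $[s_0,4(k+1)]$, an interval of length at least $2$.

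Finally, $\wt\tau_{z,k}$ is a stopping time for $\eta$, so $s_0$ is $\CG_{\wt\tau_{z,k}}$-measurable and is a stopping time for $\wh\Theta$; the strong Markov property for Brownian motion at $s_0$ therefore applies. By the reflection symmetry $\theta \mapsto \pi-\theta$ of Brownian motion on $(0,\pi)$ we may assume $\wh\Theta_{s_0} = \Theta_{\wt\tau_{z,k}} \in (0,\arcsin\delta_0]$, and the reflection principle then yields
\[
\p\bigl[\wh\Theta_s \in (0,\pi) \text{ for all } s\in[s_0,s_0+2]\,\big|\,\CG_{\wt\tau_{z,k}}\bigr] \;\leq\; 2\Phi\!\bigl(\wh\Theta_{s_0}/\sqrt{2}\bigr)-1 \;\leq\; C\delta_0
\]
for a universal constant $C$, so choosing $\delta_0$ with $C\delta_0\leq p_0$ completes the proof. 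The main step requiring care is the geometric bound $s_0 \leq 4k+2$: it critically uses that $\SLE_4$ is a simple curve, so that $\confrad(z,\h_t)$ stays bounded below by $\confrad(z,U)$ for as long as the curve has not actually crossed $\pin A_{z,k}$. The SDE computation and the reflection-principle estimate themselves are entirely standard.
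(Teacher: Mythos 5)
Your proof is correct, but it takes a genuinely different (and more quantitative) route than the paper's. The paper's own argument is soft: it applies the domain Markov property at $\wt{\tau}_{z,k}$, conformally maps to $\h$ via $\psi = (g_{\wt{\tau}_{z,k}}-W_{\wt{\tau}_{z,k}})/|Z_{\wt{\tau}_{z,k}}|$ (which sends $z$ to a point $w = e^{i\Theta_{\wt{\tau}_{z,k}}}$ of modulus one with $\im(w) = S_{\wt{\tau}_{z,k}}\leq\delta_0$), and then appeals qualitatively to the facts that chordal $\SLE_4$ is simple, boundary-avoiding, and transient to conclude that the probability of the restarted curve entering a small neighborhood of $w$ tends to zero as $\delta_0\to 0$. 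You instead compute the exact SDE for the angle process, identify $-\log\confrad(z,\h_t)$ as its quadratic variation at $\kappa=4$, perform the Dambis--Dubins--Schwarz time change to get a Brownian motion on $(0,\pi)$, and then use the reflection principle to get an explicit linear bound $\lesssim\delta_0$. The essential geometric input is the same in both: the fact that $\confrad(z,\h_{\wt{\tau}_{z,k}})\geq e^{-4k-2}$ on $\{\wt{\tau}_{z,k}\leq\tau_{z,k}\}$, which you make explicit via the inclusion $U=\psi_{z,k}^{-1}(B(0,e^{-2}))\subseteq\h_{\wt{\tau}_{z,k}}$ and which the paper uses implicitly. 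Your approach has the advantage of a clean quantitative rate ($p_0\asymp\delta_0$) and bypasses the compactness considerations needed to make the paper's ``tends to $0$ uniformly'' step fully rigorous; it also avoids having to argue that $\psi(B(z,e^{-4(k+1)}))$ is contained in a ball of radius comparable to $\im(\psi(z))$ (a Koebe-distortion step the paper leaves implicit). Two very minor remarks: (i) $\{\tau_{z,k+1}^*<\infty\}$ is really $\{s^*>4(k+1)\}$ rather than $\{s^*\geq 4(k+1)\}$ since $-\log\confrad$ is strictly increasing and does not attain its supremum, but you only need the inclusion, which is fine; (ii) the case $\tau_{z,k}^*=\infty$ (so $\psi_{z,k}$ and $U$ are undefined) is subsumed in your ``otherwise'' clause since then $s^*\leq 4k$ and $\tau_{z,k+1}$ cannot be finite, but it is worth saying explicitly.
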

\begin{proof}
We suppose that we are working on the event $\wt{\tau}_{z,k} < \infty$ and $\wt{\tau}_{z,k} \leq \tau_{z,k}$.  Let $\psi = (g_{\wt{\tau}_{z,k}} - W_{\wt{\tau}_{z,k}}) / |Z_{\wt{\tau}_{z,k}}|$ be the unique conformal transformation from $\h \setminus \eta([0,\wt{\tau}_{z,k}])$ to $\h$ which takes $\eta(\wt{\tau}_{z,k})$ to $0$, fixes $\infty$ and is normalized so the modulus of the image of $z$ is equal to $1$.  Note that $S_{\wt{\tau}_{z,k}}$ is proportional to $\im(\psi(z))$.  Thus as $S_{\wt{\tau}_{z,k}} \leq \delta_0$ we have that $\im(\psi(z)) \lesssim \delta_0$.  Since $\wt{\eta} = \psi(\eta|_{[\wt{\tau}_{z,k},\infty)})$ is an $\SLE_4$ in $\h$ from $0$ to $\infty$ and an $\SLE_4$ a.s.\ does not intersect the domain boundary (except at its starting point) and is transient it follows that the probability that $\wt{\eta}$ hits $\psi(B(z,e^{-4(k+1)}))$ tends to $0$ as $\delta_0 \to 0$, which proves the result.
\end{proof}

\begin{lemma}
\label{lem:extremal_length_good}
For every $p_0 \in (0,1)$ and $\delta_0 > 0$ there exists $H_0 > 0$ so that
\[ \p\left[ \tau_{z,k+1} < \infty,\ \inf_{\tau_{z,k} \leq t \leq \tau_{z,k+1}} S_t \geq \delta_0,\ H_{z,k+1} \leq H_0 \giv \CG_{\tau_{z,k}} \right] \one_{\tau_{z,k} < \infty} \leq p_0 \one_{\tau_{z,k} < \infty}.\]
\end{lemma}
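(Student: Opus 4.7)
The plan is to use the Markov property of $\SLE_4$ at $\tau_{z,k}$ together with the conformal and scale invariance of $\SLE_4$, extremal length, and the angular process $S$ to reduce the claim to a uniform estimate for a standard $\SLE_4$ in $\h$ whose reference point varies over a compact subset of $\h$; the uniform bound is then produced by a compactness argument.

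Conditioning on $\CG_{\tau_{z,k}}$ on $\{\tau_{z,k} < \infty\}$, the Markov property gives that $\eta|_{[\tau_{z,k},\infty)}$ has the law of an $\SLE_4$ in $\h_{\tau_{z,k}}$ from $\eta(\tau_{z,k})$ to $\infty$. Applying the conformal map $\Psi := g_{\tau_{z,k}} - W_{\tau_{z,k}}$, which sends $(\h_{\tau_{z,k}}, \eta(\tau_{z,k}), \infty)$ to $(\h, 0, \infty)$, and then rescaling by $|Z_{\tau_{z,k}}|^{-1}$ via scale invariance, we obtain a standard $\SLE_4$ $\tilde\eta$ in $\h$ from $0$ to $\infty$ with reference point $\tilde z \in \h$ satisfying $|\tilde z|=1$. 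Because the annuli $A_{z,\ell}$, the extremal length $H_{z,\ell}$, and the process $S$ are all conformally and scale invariant, the event in the lemma transforms faithfully into the analogous event for $(\tilde\eta, \tilde z)$. The hypothesis $S_{\tau_{z,k}} \geq \delta_0$ confines $\tilde z$ to the compact set
\[ K_{\delta_0} := \{w \in \h : |w|=1,\ \sin(\arg w) \geq \delta_0\}. \]
Thus the lemma reduces to showing that for all $p_0, \delta_0 > 0$ there exists $H_0 > 0$ with
\[ F_{H_0}(w) := \p\!\left[\tilde\tau < \infty,\ \inf_{[0,\tilde\tau]} \tilde S_s \geq \delta_0,\ \tilde H \leq H_0\right] \leq p_0 \quad \text{for all } w \in K_{\delta_0}, \]
where $\tilde\tau$, $\tilde S$, $\tilde H$ are the obvious analogues of $\tau_{z,k+1}$, $S$, $H_{z,k+1}$ for $\tilde\eta$ with reference point $w$.

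For each fixed $w \in K_{\delta_0}$, on $\{\tilde\tau < \infty\}$ the first-crossing region $\tilde A^*$ is almost surely a topological quadrilateral with nonempty interior (since $\SLE_4$ is simple and a.s.\ avoids any given interior point $w$), so $\tilde H > 0$ a.s.\ and consequently $F_{H_0}(w) \downarrow 0$ as $H_0 \downarrow 0$ by continuity of measure. To upgrade this pointwise statement to a uniform bound over the compact set $K_{\delta_0}$, I would establish that $w \mapsto F_{H_0}(w)$ is continuous on $K_{\delta_0}$ for $H_0$ outside an at-most-countable exceptional set of atoms of the distribution of $\tilde H$. The key observation is that $\tilde \eta$ itself does not depend on $w$: for a.e.\ realization of $\tilde\eta$, the point $w$ lies off the curve, and the annulus $\tilde A_1(w)$, its first crossing, and the associated extremal length $\tilde H(w)$ are continuous functions of $w \in \h \setminus \tilde\eta$ by Carath\'eodory continuity of conformal maps. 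A standard open-cover argument then produces the uniform $H_0$: for each $w \in K_{\delta_0}$ pick $H_0(w) > 0$ with $F_{H_0(w)}(w) \leq p_0/2$, extend to a neighborhood using continuity, extract a finite subcover $\{U_{w_i}\}$, and take $H_0 := \min_i H_0(w_i)$; monotonicity of $F_{\cdot}(w)$ in its subscript then gives $F_{H_0}(w) \leq p_0$ uniformly on $K_{\delta_0}$.

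The main obstacle is the continuity of $F_{H_0}$ in $w$, specifically handling the boundary event $\{\tilde H_w = H_0\}$. The atoms of the distribution of $\tilde H_w$ form an at-most-countable set for each $w$; in the finite subcover step one only needs to avoid the atoms of $\tilde H_{w_i}$ for the finitely many $w_i$'s, so a common continuity point $H_0$ can be chosen. A secondary subtlety is ensuring that the conformal first-crossing geometry really varies continuously in $w$ on the full-measure event $\{w \not\in \tilde\eta\}$, which reduces to standard Carath\'eodory-type arguments applied to the uniformizing maps defining the annuli and the rectangle $(0,1) \times (0, H_{z,k+1})$. Finally, the condition $\inf_{[0,\tilde\tau]} \tilde S_s \geq \delta_0$ plays an essential role: it is precisely what confines $\tilde z$ to a compact set of configurations, away from the degenerate setups (where $\tilde z$ approaches the boundary) in which the first-crossing geometry could collapse and the compactness argument would fail.
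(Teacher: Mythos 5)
Your plan takes a genuinely different route from the paper. The paper applies Girsanov with the local martingale $M_t = S_t\Upsilon_t^{-1/2}$ to reweight the chordal $\SLE_4$ law into that of a radial $\SLE_4(2)$ aimed at $z$ (under which $\tau_{z,k+1} < \infty$ is automatic), bounds the Radon--Nikodym factor by $\delta_0^{-1}$ via the constraint $S_{\tau_{z,k+1}} \geq \delta_0$, and then uses the Markov structure of the radial driving angle together with Lawler's transition density estimate $f(y) \asymp \sin^2(y/2)$ to reduce everything to $\int_0^{2\pi} \p_\theta^*[\wt H \leq H_0]\sin^2(\theta/2)\,d\theta \to 0$ by dominated convergence. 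You instead stay under the chordal $\SLE_4$ law, use the domain Markov property and scaling to reduce to a standard $\SLE_4$ with reference point $\tilde z$ in the compact arc $K_{\delta_0}$, and aim to get uniformity by a continuity-plus-open-cover argument.

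Two things need attention. First, the reduction to $K_{\delta_0}$ is incomplete: the annuli $A_{z,\ell}$ are anchored to the absolute conformal-radius threshold $e^{-4\ell}$, which is \emph{not} scale invariant. After rescaling by $|Z_{\tau_{z,k}}|^{-1}$, the threshold defining $\tau_{z,k+1}^*$ becomes a second $\CG_{\tau_{z,k}}$-measurable parameter $\rho = e^{-4(k+1)}/|Z_{\tau_{z,k}}|$; the Koebe distortion bound used in the paper's proof shows that $\rho$ lives in a bounded interval, but it does vary, so your covering argument has to run over $(\tilde z, \rho) \in K_{\delta_0} \times [\rho_-,\rho_+]$ and the continuity must be in both coordinates. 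Second, and more importantly, your argument shifts the entire burden of uniformity onto continuity of $w \mapsto F_{H_0}(w)$, which requires Carath\'eodory-type stability for the whole nested construction (the stopping time $\tilde\tau^*$, the annulus $\psi_{z,k+1}^{-1}(\cdot)$, the crossing times $\sigma,\tau$, the crossed arc, and the extremal length $H$) jointly in $(w,\rho)$, together with verifying that all the relevant boundary events have probability zero. You flag this as an obstacle, but it is the crux of the argument and is left essentially unaddressed. The paper's route sidesteps continuity entirely: it needs only the pointwise limit $\p_\theta^*[\wt H \leq H_0] \to 0$ for fixed $\theta$, with the weight $\sin^2(\theta/2)$ providing the dominating integrable bound and absorbing any lack of uniformity as $\theta \to 0$ or $\theta \to 2\pi$. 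So your approach is plausible in outline and is more elementary in that it avoids the Girsanov reweighting and the Bessel density input, but as written it defers the hard part; to complete it you would need to supply the full continuity-in-$(w,\rho)$ argument, which is a nontrivial piece of work the paper deliberately engineers around.
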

We will first collect some preliminary facts before giving the proof of Lemma~\ref{lem:extremal_length_good}.  For each $r \in \R$ we let
\[ a = \frac{2}{\kappa},\quad \lambda = \frac{r^2}{2a} + r\left(1-\frac{1}{2a}\right),\quad \xi = \frac{r^2}{4a}.\]
Recall from~\cite[Section~6.1]{vl2012almostsure} (see also \cite{sw2005coordinate}) that
\[ M_t = S_t^{-r} \Upsilon_t^{\xi+r} \Delta_t^{\lambda+r}\]
is a continuous local martingale for $\SLE_4$.  Making the choice
\[ r = 1- \frac{8}{\kappa} = -1\]
we see that
\[ M_t = S_t \Upsilon_t^{-1/2}\]
is a continuous local martingale for $\SLE_4$.  Moreover, it follows from~\cite[Theorem~6]{sw2005coordinate} that if we weight the law of an $\SLE_4$ by $M_t$ then the resulting process is an $\SLE_4(-4)$ up to time $t$.  Recall from \cite[Theorem~3]{sw2005coordinate} that a chordal $\SLE_4(-4)$ process has the same law as a radial $\SLE_4(2)$ process (up to time change).  Note that $\Upsilon_t = \confrad(z, \h_t) / 2$.

\begin{proof}[Proof of Lemma~\ref{lem:extremal_length_good}]
Fix $p_0 \in (0,1)$, $\delta_0 > 0$, and $H_0 > 0$.  Let $E_{z,k+1}$ be the event that $\tau_{z,k+1} < \infty$, $\inf_{\tau_{z,k} \leq t \leq \tau_{z,k+1}} S_t \geq \delta_0$, and $H_{z,k+1} \leq H_0$.  We have that
\begin{align*}
\p[ E_{z,k+1}  \giv \CG_{\tau_{z,k}}] \one_{\tau_{z,k} < \infty}
&= \E\left[  \left(\frac{M_{\tau_{z,k+1}}}{M_{\tau_{z,k}}} \right)^{-1} \left(\frac{M_{\tau_{z,k+1}}}{M_{\tau_{z,k}}} \right) \one_{E_{z,k+1}} \giv \CG_{\tau_{z,k}} \right] \one_{\tau_{z,k} < \infty}\\
&= S_{\tau_{z,k}} \E_{z,k}^*\left[ S_{\tau_{z,k+1}}^{-1} \left(\frac{\Upsilon_{\tau_{z,k+1}}}{\Upsilon_{\tau_{z,k}}}\right)^{1/2} \one_{E_{z,k+1}} \right] \one_{\tau_{z,k} < \infty}
\end{align*}
where $\p_{z,k}^*$ denotes the law of a radial $\SLE_4(2)$ process in $\h \setminus \eta([0,\tau_{z,k}])$ from $\eta(\tau_{z,k})$ to $z$, until time $\tau_{z,k+1}$ and with force point located at $\infty$, and~$\E_{z,k}^*$ denotes the expectation with respect to~$\p_{z,k}^*$. Note that $\p_{z,k}^*[\tau_{z,k+1} < \infty] = 1$.  We further note that $\Upsilon_{\tau_{z,k+1}}/\Upsilon_{\tau_{z,k}} \leq 1$ and $S_{\tau_{z,k+1}}^{-1} \leq \delta_0^{-1}$ on $E_{z,k+1}$.  It therefore suffices to bound
\begin{align*}
	S_{\tau_{z,k}} \delta_0^{-1} \p_{z,k}^*[E_{z,k+1}] \one_{\tau_{z,k} < \infty} \leq \delta_0^{-1} \p_{z,k}^*[E_{z,k+1}] \one_{\tau_{z,k} < \infty}.
\end{align*}
On $\tau_{z,k} < \infty$, we have that
\begin{align*}
4k+2 - \log 4 &= - \log \confrad(z, \h_{\tau_{z,k}^*}) - \log(4 e^{-2})\\
&\leq -\log \confrad(z, \h_{\tau_{z,k}}) \quad\text{(by Koebe-$1/4$)}\\
&\leq - \log \confrad(z, \h_{\tau_{z,k}^*}) - \log(e^{-2}/4) \quad\text{(by Koebe-$1/4$)}\\
&= 4k+2 + \log 4.
\end{align*}
This implies that
\[ \log \confrad(z,\h_{\tau_{z,k}}) - \log \confrad(z, \h_{\tau_{z,k+1}^*}) \geq 1/2.\]
Thus if we let $\wh{\tau}_{z,k} = \inf\{t \geq 0 : \confrad(z, \h_t) \leq e^{-4k-1/2}\}$ we have that $\tau_{z,k} \leq \wh{\tau}_{z,k} \leq \tau_{z,k+1}^*$.  Let $\theta_{z,k+1} \in [0,2\pi)$ be such that $\psi_{z,k+1}(\infty) = e^{ i \theta_{z,k+1}}$.  Note that under $\p^*_{z,k}$ the curve $\wh{\eta} = \psi_{z,k}(\eta|_{[\tau_{z,k}^*,\tau_z)})$ has the law of a radial $\SLE_4(2)$ in $\D$ from $1$ to $0$ with the force point located at $\psi_{z,k}(\infty)$,  where $\tau_z$ is the first time that $\eta$ hits $z$.  Let $(\wh{W},\wh{O})$ be the driving pair for $\wh{\eta}$ and set $\wh{\theta}_t = \arg(\wh{O}_t) - \arg(\wh{W}_t)$.  We parameterize $\wh{\eta}$ by log conformal radius as seen from $0$.  Let also $\wh{\psi}_{z,k}$ be the unique conformal transformation mapping $\h_{\wh{\tau}_{z,k}}$ onto $\D$ such that $\wh{\psi}_{z,k}(z) = 0$ and $\wh{\psi}_{z,k}(\eta(\wh{\tau}_{z,k})) = 1$ and let $\wh{\theta}_{z,k} \in [0,2\pi)$ be such that $\wh{\psi}_{z,k}(\infty) = e^{i\wh{\theta}_{z,k}}$.  By the definitions of $\wh{\tau}_{z,k}$ and $\tau_{z,k+1}^*$ and the Markov property of $\wh{\theta}$ under $\p^*_{z,k}$ it follows that the conditional law of $\theta_{z,k+1}$ given $\wh{\theta}_{z,k}$ is that of $\wh{\theta}_2$.  Let~$f$ be the density of the law of $\theta_{z,k+1}$ under $\p^*_{z,k}$ with respect to Lebesgue measure.  By \cite[Proposition~4.4]{lawler2019bessel}, it follows that there exists a universal constant $C<\infty$ such that 
\begin{align}
\label{eqn:bound_on_density}
C^{-1}\sin(y/2)^2 \leq f(y) \leq C \sin(y/2)^2 \quad\text{for all}\quad y \in (0,2\pi).
\end{align}

For each $\theta \in (0,2\pi)$ we let $\p_\theta^*$ denote the law of a radial $\SLE_4(2)$ process $\wt{\eta}$ in $\D$ from $1$ to $0$ where the force point at time $0$ is located at $e^{i \theta}$.  Let $\wt{\tau} = \inf\{t \geq 0 : \wt{\eta}(t) \in \partial B(0,e^{-2})\}$, let $\wt{\sigma}$ be the largest time $t$ before $\wt{\tau}$ that $\wt{\eta}(t) \in \partial B(0,e^{-1})$, and let $\wt{A}^*$ be the component of $B(0,e^{-1}) \setminus \closure{B(0,e^{-2})}$ with $\partial B(0,e^{-2})$ on its boundary.  Finally, let $\wt{H} > 0$ be the unique positive real number so that there exists a conformal map $\wt{\varphi} \colon \wt{A}^* \to (0,1) \times (0,\wt{H})$ which takes the right (resp.\ left) side of $\wt{\eta}([\wt{\sigma},\wt{\tau}])$ to $[0, \wt{H} i]$ (resp.\ $1 + [0, \wt{H} i]$).    Then it suffices to show that
\[ \E_{z,k}^*[ \p_{\theta_{z,k+1}}^*[ \wt{H} \leq H_0] ] \one_{\tau_{z,k} < \infty} \to 0 \quad\text{uniformly as}\quad H_0 \to 0.\]
To prove the above,  we note that~\eqref{eqn:bound_on_density} implies that 
\begin{align*}
\E^*_{z,k}[ \p_{\theta_{z,k+1}}^*[\wt{H} \leq H_0]] = \int_0^{2\pi} \p_{\theta}^*[\wt{H} \leq H_0] f(\theta) d\theta
\leq C \int_0^{2\pi}\p_{\theta}^*[\wt{H} \leq H_0]\sin(\theta/2)^2 d\theta.
\end{align*}
Also $\wt{H}$ is a.s.\ positive as a radial $\SLE_4(2)$ is a.s.\ a simple curve which implies that $\p_{\theta}^*[\wt{H} \leq H_0] \to 0$ as $H_0 \to 0$.  The claim then follows by applying the dominated convergence theorem.  This completes the proof of the lemma.
\end{proof}

\begin{lemma}
\label{lem:band_mapping}
For every $p_0 \in (0,1)$, $\delta_0, H_0 > 0$,  and $\wt{\xi}_1 > 0$ there exist $b \in (1,2),  \xi_1 \in (0,(e^{-b}-e^{-2})/2)$, $\xi_0 \in (0,H_0/2)$ (depending only on $p_0,\delta_0$ and $H_0$) and $\wt{\xi}_0 \in (0,H_0/2)$ (depending only on $p_0,\delta_0, H_0$ and $\wt{\xi}_1$) so that the following is true.  Let $F_{z,k+1}^1$ be the event that
\begin{enumerate}[(i)]
\item\label{it:middle_circle_good} $\varphi_{z,k+1}(A^*_{z,k+1} \cap\psi_{z,k+1}^{-1}( B(0,e^{-b}-\xi_1) \setminus B(0,e^{-b}-2\xi_1))) \subseteq (0,1) \times (\xi_0, H_0- \xi_0)$ and
\item\label{it:away_from_the_boundary_good} 
$\varphi_{z,k+1}(\{ w : \dist(w, \partial A_{z,k+1}^*) \geq \wt{\xi}_1 \diam(A_{z,k+1}),  \psi_{z,k+1}(w)\in B(0,e^{-b}-\xi_1) \setminus B(0,e^{-b}-2\xi_1)\}) 
\subseteq (\wt{\xi}_0,1-\wt{\xi}_0) \times (\wt{\xi}_0, H_0-\wt{\xi}_0).$
\end{enumerate}
Then
\[ \p\left[ \tau_{z,k+1} < \infty,\ \inf_{\tau_{z,k} \leq t \leq \tau_{z,k+1}} S_t \geq \delta_0,\ (F_{z,k+1}^1)^c \giv \CG_{\tau_{z,k}} \right] \one_{\tau_{z,k} < \infty} \leq p_0 \one_{\tau_{z,k} < \infty}.\]
\end{lemma}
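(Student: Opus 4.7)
The plan is to combine Lemma~\ref{lem:extremal_length_good} with a Girsanov change of measure as in its proof, together with standard conformal--geometric estimates (extremal length, hyperbolic distance), to upgrade the lower bound on $H_{z,k+1}$ into a two-sided bound and then control the images inside the rectangle.

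First I would apply Lemma~\ref{lem:extremal_length_good} with parameters $p_0/6$ and $\delta_0$ to produce some $H_1>0$; after possibly shrinking, $H_1\le H_0/2$. Thus on $E_\star:=\{\tau_{z,k+1}<\infty,\ \inf_{[\tau_{z,k},\tau_{z,k+1}]} S_t\ge\delta_0\}$, we have $H_{z,k+1}\ge H_1$ with conditional probability $\ge 1-p_0/6$ given $\CG_{\tau_{z,k}}$. As in the proof of Lemma~\ref{lem:extremal_length_good}, Girsanov with $M_t=S_t\Upsilon_t^{-1/2}$ converts $\p$-probability bounds on $E_\star\cap\{\tau_{z,k+1}<\infty\}$ into $\p^*_{z,k}$-probability bounds (up to a factor $\delta_0^{-1}$), where under $\p^*_{z,k}$ the continuation of $\eta$ after $\tau_{z,k}$ is (once pushed to $\D$ by $\psi_{z,k}$) a radial $\SLE_4(2)$ started from $1$, targeted at $0$, with force point at $\psi_{z,k}(\infty)$. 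Since this is a.s.\ a continuous simple curve reaching its target, $H_{z,k+1}$ is $\p^*_{z,k}$-a.s.\ finite, so by continuity of probability one can fix $M\ge H_0$ with $\p^*_{z,k}[H_{z,k+1}>M]\le p_0\delta_0/6$; undoing the Girsanov reweighting, this gives $H_{z,k+1}\le M$ with $\p$-conditional probability $\ge 1-p_0/6$ on $E_\star$. Hence, with conditional probability $\ge 1-p_0/3$, we may work on the further good event $E_{\star\star}:=\{H_1\le H_{z,k+1}\le M\}\cap E_\star$.

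Next I fix $b=3/2$ (any $b\in(1,2)$ works after adjusting constants) and set $\gamma_b:=\psi_{z,k+1}^{-1}(\partial B(0,e^{-b}))$. On $E_{\star\star}$ the intersection $\gamma_b\cap A^*_{z,k+1}$ consists of finitely many Jordan arcs, each going from $\eta^\Right$ to $\eta^\Left$ and mapped by $\varphi_{z,k+1}$ to a left-to-right crossing of $(0,1)\times(0,H_{z,k+1})$ at some height $y_j$. By conformal invariance of extremal length, each $y_j$ equals the modulus of the sub-quadrilateral of $A^*_{z,k+1}$ between $\pin A_{z,k+1}$ and the $j$-th arc, and $H_{z,k+1}-y_j$ is the modulus of the complementary sub-quadrilateral up to $\pout A_{z,k+1}\cap\partial A^*_{z,k+1}$. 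Using continuity of these moduli in the driving path of the radial $\SLE_4(2)$ under $\p^*_{z,k}$, together with the compact range $[H_1,M]$ of $H_{z,k+1}$, I would further shrink $M$ (and if needed $H_1$) so that with $\p^*_{z,k}$-probability $\ge 1-p_0\delta_0/3$ all $y_j$ lie in a common compact interval $[c_{\min},c_{\max}]\subseteq(0,H_0)$. Choosing $\xi_0\in(0,\min(c_{\min},H_0-c_{\max},H_0/2))$ and then $\xi_1$ small enough that the thin annulus image lies in an $\xi_0/2$-Euclidean neighborhood of $\varphi_{z,k+1}(\gamma_b\cap A^*_{z,k+1})$ then yields (i).

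For (ii), if $w\in A^*_{z,k+1}$ satisfies $\dist(w,\partial A^*_{z,k+1})\ge\wt{\xi}_1\diam(A_{z,k+1})$, then by the Whitney square comparisons~\eqref{eq:dist_qh_cube_comparable} and the comparison~\eqref{eq:dist_hyp_qh_comparable} between hyperbolic and quasihyperbolic distances, $w$ is at hyperbolic distance in $A^*_{z,k+1}$ bounded in terms of $\wt{\xi}_1$ from a fixed reference point (e.g.\ the center of a Whitney square well inside $A^*_{z,k+1}$). Since $\varphi_{z,k+1}$ is conformal, this hyperbolic bound transfers to the rectangle $(0,1)\times(0,H_{z,k+1})$; combined with $H_{z,k+1}\le M$ (so the rectangle has bounded modulus) this forces $\varphi_{z,k+1}(w)$ to sit at Euclidean distance at least some $\wt{\xi}_0>0$ (depending on $p_0,\delta_0,H_0,\wt\xi_1$) from all four sides, establishing (ii). The main obstacle will be the last step of the preceding paragraph: showing that $y_j\in[c_{\min},c_{\max}]\subseteq(0,H_0)$ uniformly with high $\p^*_{z,k}$-probability. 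The lower bound $y_j\ge c_{\min}$ rules out pinching of the inner sub-quadrilateral, and the upper bound $y_j\le c_{\max}<H_0$ requires enough control on $M$ that the outer sub-quadrilateral has modulus bounded below; making this precise demands careful extremal-length estimates combined with the SLE-specific regularity of radial $\SLE_4(2)$ under $\p^*_{z,k}$.
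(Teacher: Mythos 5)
Your overall framework---Girsanov reweighting via $M_t = S_t\Upsilon_t^{-1/2}$ to pass to radial $\SLE_4(2)$ under $\p^*_{z,k}$, then a continuity/dominated-convergence argument over a.s.\ finite quantities---matches the paper. However, there is a genuine gap in how you control the vertical position of the thin annulus image. You fix $b=3/2$ at the outset (claiming ``any $b\in(1,2)$ works after adjusting constants'') and then try to force the heights $y_j$ of $\varphi_{z,k+1}$ of the thin annulus into a compact interval $[c_{\min},c_{\max}]\subseteq(0,H_0)$ by arranging $H_{z,k+1}\leq M$. This cannot work. For $b$ bounded away from $1$, the circle $\psi_{z,k+1}^{-1}(\partial B(0,e^{-b}))$ sits at a fixed conformal depth inside $A^*_{z,k+1}$, and its image under $\varphi_{z,k+1}$ lands at heights of order $H_{z,k+1}$, not heights uniformly below $H_0$. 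The lemma must hold for every $H_0>0$, in particular $H_0$ much smaller than typical values of $H_{z,k+1}$; you would need $\p^*_{z,k}[H_{z,k+1}<H_0]$ close to $1$, which is false---Lemma~\ref{lem:extremal_length_good} establishes the opposite tail, that small $H_{z,k+1}$ is unlikely. Moreover ``further shrinking $M$'' is incoherent: it worsens the bound $\p^*_{z,k}[H_{z,k+1}>M]\leq p_0\delta_0/6$, and $H_{z,k+1}$ is not a parameter under your control.

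The correct move, and the reason the statement lets $b,\xi_1,\xi_0$ depend on $H_0$, is to take $b$ close to $1$. Then $e^{-b}\to e^{-1}$, so (after also sending $\xi_1\to 0$) the thin annulus $B(0,e^{-b}-\xi_1)\setminus B(0,e^{-b}-2\xi_1)$ shrinks towards $\partial B(0,e^{-1})=\pout A_{z,k+1}$, whose image under $\varphi_{z,k+1}$ is the bottom edge $\{y=0\}$ of the rectangle. Since under $\p^*_{z,k}$ the curve is a.s.\ simple and $\varphi_{z,k+1}$ is a.s.\ a homeomorphism, the image is a.s.\ eventually contained in $(0,1)\times(0,H_0/2)$, and this is uniform in $\CG_{\tau_{z,k}}$ by exactly the Girsanov argument of Lemma~\ref{lem:extremal_length_good}; no upper or lower bound on $H_{z,k+1}$ is needed. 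Once $b,\xi_1$ are fixed this way, the remaining inclusion in~(i) follows by letting $\xi_0\to 0$, and~(ii) by the same dominated-convergence argument with $\wt\xi_0\to 0$.
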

\begin{proof}
On $\tau_{z,k+1} < \infty$, let $Q_{z,k+1} = \varphi_{z,k+1}(A^*_{z,k+1} \cap \psi_{z,k+1}^{-1}( B(0,e^{-b}-\xi_1) \setminus B(0,e^{-b}-2\xi_1)))$.  Note that $\varphi_{z,k+1}$ is a homeomorphism on $\tau_{z,k+1} < \infty$.  Hence it follows that the probability of $Q_{z,k+1} \not\subseteq (0,1) \times (0,H_0/2)$ tends to $0$ by first taking $b \to 1$ and then $\xi_1 \to 0$ uniformly given $\CG_{\tau_{z,k}}$ on $\tau_{z,k} < \infty$. Indeed, this follows from the same argument used to prove Lemma~\ref{lem:extremal_length_good}.  Similarly, for fixed $b \in (1,2),  \xi_1 \in (0,(e^{-b}-e^{-2})/2)$,  the probability of $Q_{z,k+1} \subseteq (0,1) \times (0,H_0/2)$,  $Q_{z,k+1} \not\subseteq (0,1) \times (\xi_0,H_0-\xi_0)$ tends to $0$ as $\xi_0 \to 0$ uniformly given $\CG_{\tau_{z,k}}$ on $\tau_{z,k} < \infty$.  This proves~\eqref{it:middle_circle_good}.  The case of~\eqref{it:away_from_the_boundary_good} follows from the same argument.
\end{proof}

\begin{lemma}
\label{lem:first_crossing_number_of_squares}
For every $p_0 \in (0,1)$, $a \in (0,\frac{1}{2}),\delta_0 > 0$ there exists $M_0 \geq 1$ so that the following is true.  Let $F_{z,k+1}^2$ be the event that for every $m \in \N$ such that $2^{-m} \leq \diam(A_{z,k+1})$ the Lebesgue measure of the $2^{-m}$-neighborhood of $\eta([\sigma_{z,k+1},\tau_{z,k+1}])$ is at most $M_0 2^{m(a-1/2)}\diam(A_{z,k+1})^{a+3/2}$.  Then we have that
\[ \p\left[ \tau_{z,k+1} < \infty,\ \inf_{\tau_{z,k} \leq t \leq \tau_{z,k+1}} S_t \geq \delta_0,\ (F_{z,k+1}^2)^c \giv \CG_{\tau_{z,k}} \right] \one_{\tau_{z,k} < \infty} \leq p_0 \one_{\tau_{z,k} < \infty}.\]	
\end{lemma}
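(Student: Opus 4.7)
My plan is to reduce to a chordal $\SLE_4$ in $\h$ via the conformal Markov property, prove a finite-scale variant of Lemma~\ref{lem:natural_good_whole_plane}, and transfer back using conformal distortion. On $\{\tau_{z,k}<\infty\}$, the continuation $\eta(\cdot+\tau_{z,k})$ is a chordal $\SLE_4$ in $\h_{\tau_{z,k}}$ from $\eta(\tau_{z,k})$ to $\infty$, so I push it forward via a conformal map $\phi \colon \h_{\tau_{z,k}} \to \h$ taking $\eta(\tau_{z,k})$ to $0$ and $\infty$ to $\infty$, with the remaining real degree of freedom fixed so that $|\phi(z)|=1$. Then $\wt{\eta} = \phi \circ \eta(\cdot+\tau_{z,k})$ has the law of an $\SLE_4$ in $\h$ from $0$ to $\infty$, and the trajectory of $\eta$ pulls forward to a first crossing $\wt{S} = \phi(\eta([\sigma_{z,k+1},\tau_{z,k+1}]))$ of $\wt{A} = \phi(A_{z,k+1})$.

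The hypothesis $\inf_{\tau_{z,k}\leq t\leq \tau_{z,k+1}} S_t \geq \delta_0$ provides the distortion control. Since $S_t$ is the sine of the angle of $z$ as seen from $\eta(t)$ after the Loewner flow, maintaining this angular condition throughout the crossing keeps $\wt{A}$ inside a compact set $K = \{w \in \h : \im w \geq \delta_0/2,\ |w| \leq R_0\}$ for some $R_0 = R_0(\delta_0)$, and similarly forces $\diam(\wt{A}) \asymp 1$ on the good event. Because $K$ is bounded and bounded away from $\partial \h$, standard Koebe distortion estimates give $|\phi^{-1}{}'| \asymp \diam(A_{z,k+1})$ uniformly on a neighborhood of $\wt{A}$, with implicit constants depending only on $\delta_0$. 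Consequently the Euclidean $2^{-m}$-neighborhood of $S$ corresponds, up to multiplicative constants depending only on $\delta_0$, to a Euclidean $(c\,2^{-m}/\diam(A_{z,k+1}))$-neighborhood of $\wt{S}$, and Lebesgue areas scale by $\diam(A_{z,k+1})^2$.

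Next I would establish the following finite-scale version of Lemma~\ref{lem:natural_good_whole_plane}: for every compact $K \subseteq \h$, $a > 0$, and $p \in (0,1)$, there is $C > 0$ such that with probability at least $p$, for every Borel $B \subseteq K$ and every $\epsilon \in (0,\diam(B)]$,
\[
\Leb_2(\{w : \dist(w, \wt{\eta} \cap B) \leq \epsilon\}) \leq C \,\epsilon^{1/2-a} \diam(B)^{3/2+a}.
\]
The derivation adapts that of Lemma~\ref{lem:natural_good_whole_plane}, using the finite-scale Minkowski content $\cont_{3/2}(\wt{\eta} \cap A;\log\epsilon^{-1}) = \epsilon^{-1/2}\Leb_2(\{w: \dist(w,\wt{\eta}\cap A)\leq\epsilon\})$ in place of its $\epsilon\to 0$ limit. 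Crucially, the moment bound $\E[\cont_{3/2}(\wt{\eta} \cap A; r)^n] \lesssim \diam(A)^{3n/2}$ obtained from~\cite{rz2017higher} in the proof of Lemma~\ref{lem:natural_good_whole_plane} holds with constants independent of $r$, so Markov's inequality together with a union bound over dyadic cubes and dyadic scales plus a Borel--Cantelli argument yields the claimed uniform control.

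I then apply the display with $B = \wt{A}$ and $\epsilon = c\,2^{-m}/\diam(A_{z,k+1})$ (where $c$ absorbs the distortion constant from the second paragraph) and pull back via $\phi^{-1}$, losing a factor $\asymp \diam(A_{z,k+1})^2$, to obtain
\[
\Leb_2(\{w : \dist(w,S) \leq 2^{-m}\}) \lesssim \diam(A_{z,k+1})^2 \cdot \epsilon^{1/2-a} \asymp 2^{m(a-1/2)} \diam(A_{z,k+1})^{3/2+a},
\]
which is the desired bound after absorbing constants into $M_0$. The main obstacle I foresee is the finite-scale variant of Lemma~\ref{lem:natural_good_whole_plane}, where managing uniformity simultaneously in the Borel set $B$ and the scale $\epsilon$ requires a careful Borel--Cantelli argument over two indices; but this should go through essentially as in the proof of Lemma~\ref{lem:natural_good_whole_plane}, given the scale-uniformity of the Rezaei--Zhan moment bounds.
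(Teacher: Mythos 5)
Your proposal is correct and follows the same overall structure as the paper's proof: use the conformal Markov property at time $\tau_{z,k}$ together with the truncation $\inf_{\tau_{z,k}\leq t\leq\tau_{z,k+1}}S_t\geq\delta_0$ to map the crossing into a fixed compact $K\subseteq\h$ (depending only on $\delta_0$), prove a Lebesgue bound on $2^{-m}$-neighborhoods of an $\SLE_4$ intersected with $K$ that holds simultaneously for all small $m$, and then pull back via Koebe distortion, picking up the expected factor of $\diam(A_{z,k+1})^2$ from the Jacobian and $\diam(A_{z,k+1})^{-1}$ in the scale.

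The one place your route diverges from the paper's is in the intermediate neighborhood bound, and there you are doing more work than is needed. You propose a finite-scale variant of Lemma~\ref{lem:natural_good_whole_plane}, which is uniform over Borel sets $B\subseteq K$ and over scales $\epsilon$, and which rests on the Rezaei--Zhan $n$-th moment estimates for the (finite-scale) Minkowski content. This does go through, since those moment bounds are uniform in $r$; but for the application you only invoke it at $B=\wt{A}$ whose diameter is bounded above and below by constants depending only on $\delta_0$, so the dependence on $\diam(B)$ is vacuous. The paper instead uses a cheaper first-moment estimate: by Beffara's dimension upper bound, the expected number of $2^{-m}$-lattice points near $\gamma\cap K$ is $\lesssim 2^{3m/2}$, Markov gives $\p[\Lambda_m\geq 2^{(-1/2+a)m}]\lesssim 2^{-am}$ for the neighborhood Lebesgue measure $\Lambda_m$, and Borel--Cantelli gives the simultaneity over scales. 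That shortcut buys you nothing new in terms of conclusion but avoids re-running the Rezaei--Zhan machinery and the two-index uniformity you worry about; conversely, the paper's chosen normalization is $h_{\tau_{z,k}}(\cdot)=(g_{\tau_{z,k}}(\cdot)-W_{\tau_{z,k}})/\im(g_{\tau_{z,k}}(z)-W_{\tau_{z,k}})$ rather than your $|\phi(z)|=1$, which is a cosmetic difference.
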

\begin{proof}
Let $\gamma$ be an $\SLE_4$ in $\h$ from $0$ to $\infty$ and fix $K \subseteq \h$ compact.  For each $m \in \N$ we let~$N_m$ be the number of points $z \in  2^{-m} \Z^2$ with $\dist(z,K) \leq 2^{-m}$  such that $\gamma \cap B(z, 2^{-m}) \neq \emptyset$.  Then \cite[Proposition~4]{beffara2008dimension} implies that $\E[N_m] \lesssim 2^{3 m/2}$ and so $\p[N_m \geq 2^{(3/2+a)m}] \lesssim 2^{-a m}$ where the implicit constant depends only $K$.  Let $\Lambda_m$ be the Lebesgue measure of the $2^{-m}$-neighborhood of $\gamma \cap K$.  Then we have that $\p[ \Lambda_m \geq 2^{(-1/2+a)m}] \lesssim 2^{-a m}$ where the implicit constant tends only on $K$.  This implies that $\p[ \Lambda_\ell \leq 2^{(-1/2+a)\ell}\ \forall \ell \geq m] \to 1$ as $m \to \infty$.  We let $f_t = g_t - W_t$ be the centered Loewner flow of $\eta$ and set $h_t(\cdot) = f_t(\cdot)/ \im(f_t(z))$.  Then we can make the choice of $K \subseteq \h$ (depending only on $\delta_0$) so that on~$\tau_{z,k+1} < \infty$ and $\inf_{\tau_{z,k} \leq t \leq \tau_{z,k+1}} S_t \geq \delta_0$ we have that $h_{\tau_{z,k}}(A_{z,k+1}) \subseteq K$.  The result then follows since $h_{\tau_{z,k}}(\eta|_{[\tau_{z,k},\infty)})$ has the law of an $\SLE_4$ in $\h$ from $0$ to $\infty$ given $\CG_{\tau_{z,k}}$ on $\tau_{z,k} < \infty$ and $|(h_{\tau_{z,k}}^{-1})'|$ on $K$ is comparable to $\diam(A_{z,{k+1}})$ (with constants depending only on $\delta_0$) on $\tau_{z,k+1} < \infty$.
\end{proof}

\subsection{Regularity of the first crossing}
\label{subsec:first_crossing_regularity}

\begin{figure}[ht!]
\begin{center}
\includegraphics[scale=.9]{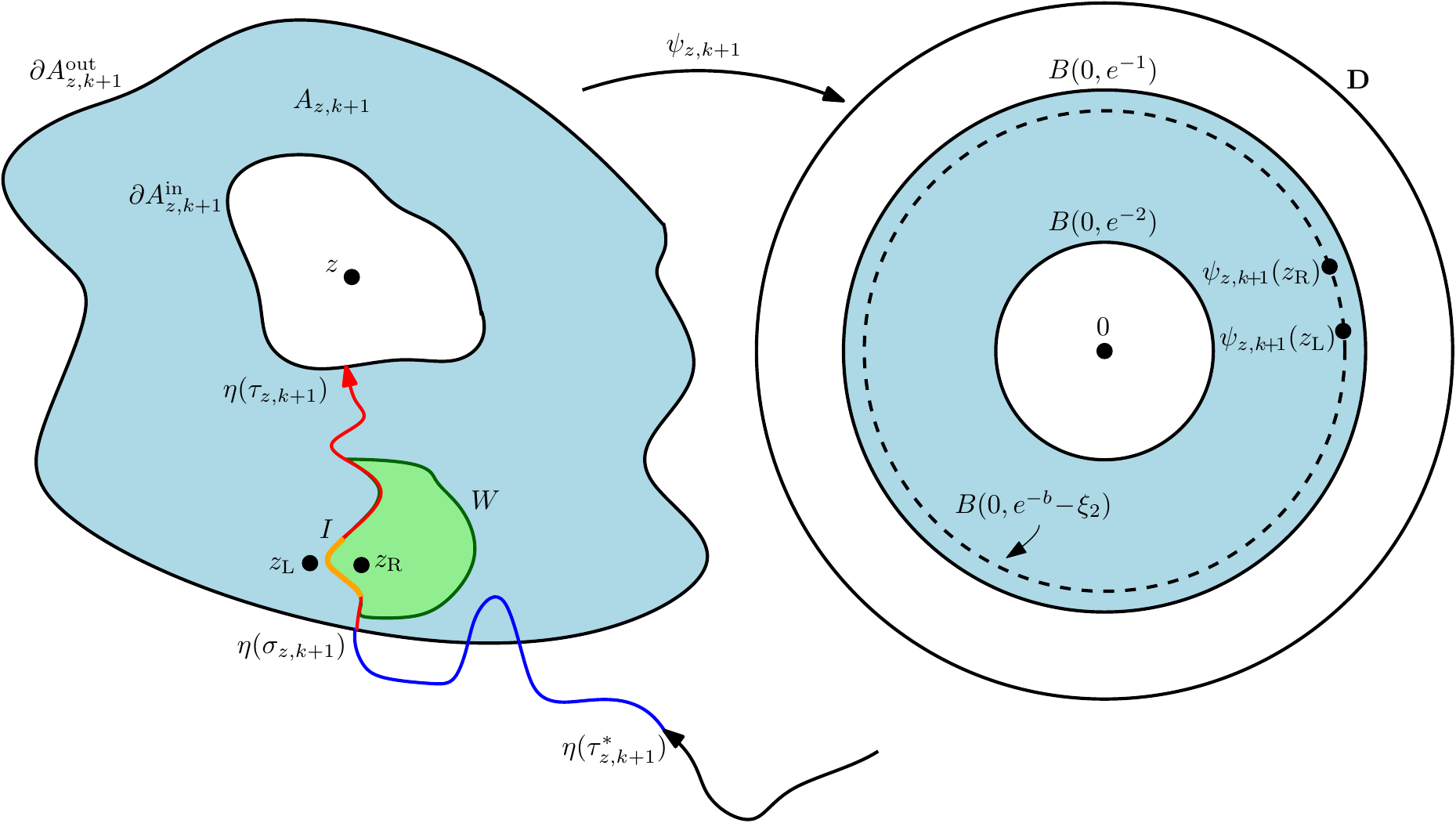}
\end{center}
\caption{\label{fig:first_crossing_good} Illustration of the setup for Lemma~\ref{lem:first_excursion_good}.  Here, $b \in (1,2)$ and $\xi_2 > 0$ is small so that $B(0,e^{-b} - \xi_2) \subseteq B(0,e^{-1}) \setminus \closure{B(0,e^{-2})}$ as shown.  The orange arc of $\eta([\sigma_{z,k+1},\tau_{z,k+1}])$ is~$I$ and $W$ is shown in the case that the component of $B(z_\Right, \delta_1^{1/4} \diam(A_{z,k+1})) \setminus \eta([0,\tau_{z,k+1}])$ containing $z_\Right$ is contained in $W$.}
\end{figure}

The purpose of this subsection is to establish a version of Lemmas~\ref{lem:good_fraction_whole_plane} and~\ref{lem:natural_good_whole_plane} which holds for the first crossing of an $\SLE_4$ across the conformal annulus~$A_{z,k+1}$ (Lemma~\ref{lem:first_excursion_good}).  As in Section~\ref{subsec:extremal_length}, we will need to truncate on the event that $S_t$ is not too small so that we can make the comparison between the first crossing and a radial $\SLE_4(2)$ in a uniform manner.  Some care will be needed in both the setup and the proof because, in contrast to Lemma~\ref{lem:good_fraction_whole_plane}, we will need to find a single arc along the first crossing of the radial $\SLE_4(2)$ and points~$z_\Left$ and~$z_\Right$ which are respectively close to the left and right sides such that the fraction (measured using the natural parameterization) of points along the crossing so the hyperbolic geodesic only passes through Whitney squares $Q$ satisfying the condition of Lemma~\ref{lem:good_fraction_whole_plane} is at least $3/4$.  Furthermore, we will need the statement to hold for the hyperbolic metric with respect to a general simply connected domain~$W$ which contains a neighborhood of~$z_\Left$ (resp.\ $z_\Right$) which is much larger than the distance of~$z_\Left$ (resp.\ $z_\Right$) to the arc.

We now turn to give the precise version of the main statement; see Figure~\ref{fig:first_crossing_good} for an illustration.  We fix $\delta_0,\delta_1, \delta_2, a \in (0,1)$, $b \in (1,2)$, and $\xi_2 \in (0,(e^{-b}-e^{-2})/2)$.  For $M_0>1$ large we let $C_{z,k+1}$ be the event that there exists a segment $I$ of $\eta([\sigma_{z,k+1},\tau_{z,k+1}])$ which satisfies the following.  Let $I^\Left$ (resp.\ $I^\Right$) be the left (resp.\ right) side of $I$.
\begin{enumerate}[(i)]
\item\label{it:x_y_pos} There exist $z_\Left,z_\Right \in A_{z,k+1}^*$ such that $\delta_2 \leq \frac{\dist(z_{q},I^{q})}{\diam(A_{z,k+1})} \leq \delta_1$ with $\psi_{z,k+1}(z_{q}) \in \partial B(0,e^{-b}-\xi_2)$ for $q \in \{\Left,\Right\}$ and satisfy the following property.  Suppose that $W \subseteq A_{z,k+1}^*$ is any simply connected domain such that the component of $B(z_\Right,\delta_1^{1/4} \diam(A_{z,k+1})) \setminus \eta([0,\tau_{z,k+1}])$ containing $z_\Right$ is contained in $W$ and $\CW$ is a Whitney square decomposition of $W$.  Let $G^+$ be the set of points $w \in I$ such that each square $Q \in \CW$ which is intersected by the hyperbolic geodesic in $W$ from $z_\Right$ to $w$ satisfies $\disthyp^{W}(z_\Right,\cen(Q)) \leq M_0 (\len(Q)/\diam(A_{z,k+1}))^{-a}$.  Then we have that
\[ \nmeasure{\eta}(G^+) \geq \frac{3}{4} \nmeasure{\eta}(I).\]
The same moreover holds with $z_\Left$ in place of $z_\Right$ in which case we denote by $G^-$ the set of points in $I$.
\item\label{it:lower_bound_natural_measure} $\nmeasure{\eta}(I) \geq M_0^{-1} \diam(A_{z,k+1})^{3/2}$.
\item\label{it:measure_diameter_bound} $\nmeasure{\eta}(Y \cap I) \leq M_0 \diam(Y)^{3/2-a}$ for all $Y$ Borel.
\end{enumerate}

The reason for the precise form of condition~\eqref{it:x_y_pos} is that in the proof of Theorem~\ref{thm:sle_4_removable} we will need that $\varphi_{z,k+1}(z_\Left)$ and $\varphi_{z,k+1}(z_\Right)$ are respectively close to the right and left sides of the rectangle $(0,1) \times (0,H_{z,k+1})$ and we will need both to be close to the bottom of the rectangle.  In particular, if $C_{z,k+1}$ and the event $F_{z,k+1}^1$ from Lemma~\ref{lem:band_mapping} both occur then $\varphi_{z,k+1}(z_\Left)$, $\varphi_{z,k+1}(z_\Right)$ will be mapped into the desired areas of the rectangle. 

\begin{lemma}
\label{lem:first_excursion_good}
Fix $\delta_0, p_0,a \in (0,1)$,  $b \in (1,2)$, and $\xi_2 \in (0,(e^{-b}-e^{-2})/2)$ and suppose that we have the above setup.  Then for all $\delta_1 \in (0,1)$ sufficiently small there exists $M_0 > 1$ and $\delta_2 \in (0,1)$ such that
\begin{align*}
\p[ \tau_{z,k+1} < \infty,  \inf_{\tau_{z,k} \leq t \leq \tau_{z,k+1}}S_t \geq \delta_0,  C_{z,k+1}^c  \giv  \CG_{\tau_{z,k}}] \one_{B_{z,k}} \leq p_0 \one_{B_{z,k}}
\end{align*}
where $B_{z,k} = \{\tau_{z,k} < \infty,\ S_{\tau_{z,k}} \geq \delta_0\}$.
\end{lemma}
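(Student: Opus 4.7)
The plan is to reduce the statement to a combination of Lemma~\ref{lem:good_fraction_whole_plane} and Lemma~\ref{lem:natural_good_whole_plane} via a change of measure to a radial $\SLE_4(2)$, following the same template as the proof of Lemma~\ref{lem:extremal_length_good}.

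First, I would perform the Girsanov change of measure used earlier: weighting by $M_t = S_t \Upsilon_t^{-1/2}$ (a local martingale under chordal $\SLE_4$), the law of $\eta$ up to $\tau_{z,k+1}$ becomes that of a radial $\SLE_4(-4) \cong \SLE_4(2)$ targeted at $z$. As in the proof of Lemma~\ref{lem:extremal_length_good}, on the event $\inf_{\tau_{z,k}\leq t\leq \tau_{z,k+1}} S_t \geq \delta_0$ we have $S_{\tau_{z,k+1}}^{-1} \leq \delta_0^{-1}$ and $\Upsilon_{\tau_{z,k+1}}/\Upsilon_{\tau_{z,k}} \leq 1$, so the chordal conditional probability on $B_{z,k}$ is bounded by $\delta_0^{-1} \p_{z,k}^*[(C_{z,k+1})^c \cap \{\inf S_t \geq \delta_0\}]$ where $\p_{z,k}^*$ is the law of the associated radial $\SLE_4(2)$. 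It therefore suffices to show that for suitable choices of parameters we can make $\p_{z,k}^*[(C_{z,k+1})^c] \leq \delta_0 p_0$.

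Next, I would transfer the radial statement to the whole-plane setting. Applying $\psi_{z,k}$ maps the tip of $\eta$ at time $\tau_{z,k}^*$ to $1 \in \partial \D$ and $z$ to $0$, and under $\p_{z,k}^*$ the image of $\eta|_{[\tau_{z,k}^*,\tau_z)}$ is a radial $\SLE_4(2)$ in $\D$ from $1$ to $0$. By the coupling of two-sided whole-plane $\SLE_4$ through $0$ as the concatenation of a whole-plane $\SLE_4(2)$ from $0$ to $\infty$ (reversed) and an independent chordal $\SLE_4$, the local behavior of this radial process near $\partial B(0,e^{-2})$ is absolutely continuous with respect to that of a two-sided whole-plane $\SLE_4$ around a point near $\psi_{z,k}^{-1}(\partial B(0,e^{-2}))$. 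This absolute continuity has Radon--Nikodym derivative with bounded moments on the event $\inf S_t \geq \delta_0$, so any high-probability event under the whole-plane law transfers to a high-probability event under $\p_{z,k}^*$ (with a quantitative loss depending only on $\delta_0$).

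I then construct the arc $I$ and the points $z_\Left, z_\Right$ explicitly. Pick $b \in (1,2)$ and $\xi_2>0$ so that $\psi_{z,k+1}^{-1}(\partial B(0,e^{-b}-\xi_2))$ is well inside $A_{z,k+1}^*$, and let $I$ be the sub-arc of $\eta([\sigma_{z,k+1},\tau_{z,k+1}])$ lying in $\psi_{z,k+1}^{-1}(\closure{B(0,e^{-b}-\xi_2)} \setminus B(0,e^{-b}-2\xi_2))$. For small enough $\delta_1$, I can find points $z_\Left$ and $z_\Right$ in $\psi_{z,k+1}^{-1}(\partial B(0,e^{-b}-\xi_2))$ on the two sides of $I$ whose distance to $I$ lies in $[\delta_2 \diam(A_{z,k+1}), \delta_1 \diam(A_{z,k+1})]$ for an appropriately chosen $\delta_2$ (the existence of such points with high $\p_{z,k}^*$-probability comes from the fact that the radial $\SLE_4(2)$ is a.s.\ a simple curve and the extremal-length estimate of Lemma~\ref{lem:extremal_length_good}). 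Conditions (ii) and (iii) are immediate consequences of Lemma~\ref{lem:natural_good_whole_plane} (applied after the conformal change of coordinates that sends the local picture to a chordal $\SLE_4$ in $\h$), using scale covariance of the natural parameterization under $|f'|^{3/2}$ to introduce the factor $\diam(A_{z,k+1})^{3/2}$; the lower bound in (ii) comes from the positivity of the natural length of the crossing together with a standard tightness argument.

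The main obstacle, and the place where some care is needed, is condition~(i) because of the arbitrary domain $W$. For the candidate domain $W_0$ equal to the connected component of $B(z_\Right, \delta_1^{1/4}\diam(A_{z,k+1})) \setminus \eta([0,\tau_{z,k+1}])$ containing $z_\Right$, a direct application of Lemma~\ref{lem:good_fraction_whole_plane} (after uniformizing so that the whole-plane shift invariance applies to each $\eta^w(t) \in I$) gives that the set of good points $w$ for $W_0$ has natural measure at least $\tfrac{3}{4}\nmeasure{\eta}(I)$ with $\p_{z,k}^*$-probability at least $1 - p_0$ on $\{\inf S_t \geq \delta_0\}$. To upgrade from $W_0$ to any simply connected $W \supseteq W_0$, I use that domain monotonicity gives $\disthyp^W \leq \disthyp^{W_0}$, while the Koebe $1/4$ theorem (via the relations~\eqref{eq:dist_hyp_qh_comparable}--\eqref{eq:dist_hyp_cube_comparable}) implies that a Whitney square $Q$ of $W$ intersecting the hyperbolic geodesic from $z_\Right$ to $w \in I$ has side length comparable to $\dist(Q,\partial W)$, and in turn $\dist(Q,\partial W) \leq \dist(Q,\partial W_0)$ is comparable to the side length of the enclosing Whitney square of $W_0$. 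Consequently the bound $\disthyp^{W_0}(z_\Right,\cen(Q_0)) \leq M_0 (\len(Q_0)/\diam(A_{z,k+1}))^{-a}$ along the geodesic in $W_0$ implies the corresponding bound in $W$ (with a slightly worse constant absorbed into $M_0$). The same argument applies on the left side, and collecting the three high-probability events, we can choose $M_0$, $\delta_1$, $\delta_2$ so that $\p_{z,k}^*[(C_{z,k+1})^c] \leq \delta_0 p_0$, which yields the lemma.
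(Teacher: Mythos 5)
Your overall architecture (Girsanov reweighting to a radial $\SLE_4(2)$, comparison to the two-sided whole-plane $\SLE_4$, construction of the arc $I$ and points $z_\Left,z_\Right$, and then invoking Lemmas~\ref{lem:good_fraction_whole_plane} and~\ref{lem:natural_good_whole_plane}) matches the paper's. The genuine gap is in the step transferring condition~(i) from the base domain $W_0$ to an arbitrary simply connected $W \supseteq W_0$.

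You argue that domain monotonicity gives $\disthyp^W \leq \disthyp^{W_0}$ and that Whitney squares of $W$ along the geodesic are at least as large as those of $W_0$, so the $(M,a)$-good bound transfers. This is not sufficient: the good event in condition~(i) is a statement about \emph{which} squares the $W$-hyperbolic geodesic from $z_\Right$ to $w$ passes through, and this geodesic is a different curve from the $W_0$-geodesic. Domain monotonicity controls the value of $\disthyp^W$ but says nothing about the trajectory of $\gamma^W_{z_\Right,w}$; the $W$-geodesic could wander through regions of $W\setminus W_0$ (e.g., close to parts of $\partial W$ where Whitney squares are small and $\disthyp^W$ from $z_\Right$ is large in the $(M,a)$ sense) that the $W_0$-geodesic never visits. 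Your comparison of square sizes applies only to a point $z$ and the two Whitney squares containing it in $\CW(W)$ versus $\CW(W_0)$ — it does not say anything about squares hit by the $W$-geodesic that the $W_0$-geodesic misses. The paper addresses exactly this in Lemma~\ref{lem:change_of_domains} via the appendix Lemma~\ref{lem:hyperbolic_geodesics_close}, which says that the unit-speed hyperbolic geodesics from two nearby starting points in nested domains to a common prime end stay within bounded hyperbolic distance of each other (under an inradius condition); this lets one compare the squares hit by $\gamma^W_{z_\Right,w}$ to those hit by $\gamma^{\wt{\h}_\Right}_{z_\Right,w}$ up to a bounded hyperbolic-distance shift, which is then absorbed via Lemma~\ref{lem:hyperbolic_ball_covered}. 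You need an argument of this sort; the monotonicity-plus-Koebe observation alone does not close the gap. A secondary, more minor point: your ``absolute continuity with bounded moments'' between the radial $\SLE_4(2)$ near the crossing and the two-sided whole-plane $\SLE_4$ is asserted rather than argued — the paper instead sets up an explicit coupling through the stopping times $\wt{\sigma}_R$, $\wt{\tau}_R$ and the events $F_0,\ldots,F_5$, which is what actually makes the quantitative transfer go through.
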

Before we give the proof of Lemma~\ref{lem:first_excursion_good}, we state and prove the following upper bound for the probability that a radial $\SLE_4(2)$ in $\D$ gets close to a point which holds uniformly in the location of its force point.

\begin{lemma}
\label{lem:radial_sle_hits_a_ball}
Fix $p \in (0,1)$,  $0 < r_1 < r_2 < 1$, $\theta_0 \in (0,\pi)$, and let $\eta$ be a radial $\SLE_4(2)$ in $\D$ from $1$ and to $0$ with its force point located at $e^{i \theta}$  for $\theta \in [\theta_0 ,  2\pi - \theta_0]$.  Then there exists $\epsilon_0 \in (0,1)$ depending only on $p$, $r_1$, $r_2$, and $\theta_0$ such that for every $\epsilon \in (0,\epsilon_0)$ and every $w \in \D$ with $r_1 \leq |w| \leq r_2$ we have that
\begin{align*}
\p[ \eta \cap B(w,\epsilon) \neq \emptyset] \leq p.
\end{align*}
\end{lemma}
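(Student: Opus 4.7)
My plan is to prove the lemma by a compactness and upper semicontinuity argument. Writing $\p_\theta$ for the law of radial $\SLE_4(2)$ in $\D$ from $1$ to $0$ with force point $e^{i\theta}$, the parameter set $K := \{(w,\theta) : r_1 \leq |w| \leq r_2,\ \theta_0 \leq \theta \leq 2\pi - \theta_0\}$ is compact. The three key ingredients are: (i) $\p_\theta[w \in \eta] = 0$ for every $(w,\theta) \in K$; (ii) the map $\theta \mapsto $ (law of $\eta$ under $\p_\theta$) is continuous with respect to the weak topology on random compact subsets of $\closure{\D}$ equipped with the Hausdorff metric; and (iii) upper semicontinuity of $g_\epsilon(w,\theta) := \p_\theta[\dist(w,\eta) \leq \epsilon]$ in $(w,\theta) \in K$.

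For ingredient (i), I would use the radial/chordal equivalence \cite[Theorem~3]{sw2005coordinate} recalled in the proof of Lemma~\ref{lem:extremal_length_good} (or equivalently the Girsanov change of measure via the martingale $M_t = S_t \Upsilon_t^{-1/2}$ appearing there) to compare the law of $\eta$ under $\p_\theta$ with that of a chordal $\SLE_4$ suitably mapped into $\D$. Up to any stopping time at which the curve is bounded away from both the target $0$ and the force point $e^{i\theta}$, the laws are mutually absolutely continuous. Since chordal $\SLE_4$ is a simple curve of Hausdorff dimension $3/2$ and hence almost surely does not contain any fixed interior point, neither does the corresponding initial portion of $\eta$ under $\p_\theta$. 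Combined with the simple-curve property of radial $\SLE_4(2)$ (once the curve has reached modulus below $r_1/2$ it cannot return to modulus $\geq r_1$) and the separation of $w$ with $|w| \leq r_2 < 1$ from the force point $e^{i\theta}$ for $\theta \in [\theta_0, 2\pi-\theta_0]$, one obtains $\p_\theta[w \in \eta] = 0$. Ingredient (ii) follows from the continuous dependence of the solution to the driving SDE~\eqref{eqn:radial_sle_kappa_rho} on the initial condition $O_0 = e^{i\theta}$. Ingredient (iii) follows from (ii) by the Portmanteau theorem applied to the Hausdorff-closed event $\{\dist(w,\eta) \leq \epsilon\}$, together with the $1$-Lipschitz dependence of $\eta \mapsto \dist(w,\eta)$ on $w$.

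To conclude, by (i) and monotone convergence, $g_\epsilon(w,\theta) \downarrow 0$ as $\epsilon \downarrow 0$ pointwise on $K$. For fixed $p \in (0,1)$, the sets $F_\epsilon := \{(w,\theta) \in K : g_\epsilon(w,\theta) \geq p\}$ are closed by (iii), nested decreasing in $\epsilon$, and have empty intersection. Compactness of $K$ therefore produces $\epsilon_0 > 0$ with $F_{\epsilon_0} = \emptyset$, giving the lemma since $\p_\theta[\eta \cap B(w,\epsilon) \neq \emptyset] \leq g_\epsilon(w,\theta)$. The main obstacle is step (i): while it is expected and standard that radial $\SLE_4(2)$ does not visit any fixed interior point other than its target, setting up the local absolute continuity with chordal $\SLE_4$ requires care, since the relevant Radon-Nikodym derivative can blow up both as $\eta$ approaches $0$ and as it approaches $e^{i\theta}$. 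An appropriate two-sided stopping-time argument is needed, handling separately the region near the force point (where the martingale $M_t$ from Lemma~\ref{lem:extremal_length_good} must be controlled using $\theta \in [\theta_0, 2\pi-\theta_0]$) and the region near the target (controlled by stopping at $\confrad$ exit as in Lemma~\ref{lem:extremal_length_good}).
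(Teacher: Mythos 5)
Your overall architecture is sound, and ingredient (i) is essentially the paper's Step~1 (the paper runs a compactness argument in $w$ for each fixed $\theta$, derives a hypothetical $w_0$ with $\p_\theta[w_0 \in \eta] \geq p$, and contradicts this via absolute continuity with a chordal $\SLE_4$ by \cite[Theorem~3]{sw2005coordinate}; you arrive at $\p_\theta[w \in \eta] = 0$ by the same absolute continuity and correctly flag the two-sided stopping issues near the target and force point). The joint compactness in $(w,\theta)$, the Portmanteau step, the joint upper semicontinuity of $g_\epsilon$ via the $1$-Lipschitz dependence on $w$, and the finite-intersection argument all hang together logically.

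The genuine gap is ingredient (ii). You assert that $\theta \mapsto (\text{law of range}(\eta)\text{ under }\p_\theta)$ is weakly continuous in the Hausdorff topology and justify this by ``continuous dependence of the solution to the driving SDE on the initial condition.'' That inference does not go through: uniform convergence of the driving pair $(W,O)$ (even a.s.\ under a coupling by the same Brownian motion) does not, in general, imply Hausdorff convergence of the generated curve ranges. The driving-to-curve map in the Loewner equation is not continuous, and the deterministic continuity theorems that do exist (e.g.\ Lind--Marshall--Rohde) require a small Hölder-$1/2$ norm, which the $\SLE_4$ driving function does not have. There are probabilistic continuity-in-law results for $\SLE_\kappa$, but they are substantial theorems, not consequences of SDE stability; invoking one would need an explicit citation and a check that it covers radial $\SLE_4(2)$ with a varying force point. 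As written, (ii) is the load-bearing step that makes the compactness argument in $(w,\theta)$ work, and it is not established.

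The paper removes the $\theta$-dependence in a more hands-on way, without any continuity-in-law input. After Step~1 gives the result for each fixed $\theta$ (with $\epsilon_0$ depending on $\theta$), Step~2 proceeds as follows: a deterministic Loewner-flow estimate (via the hull radius $r(\delta)$ and \cite[Proposition~3.57]{lawler2008conformally}) shows that for $\delta$ small, $g_t$ moves the annulus $\{r_1 \leq |z| \leq r_2\}$ only into a slightly larger annulus for all $t \in [0,\delta]$, uniformly over all driving; then one discretizes $[\theta_0, 2\pi-\theta_0]$ into finitely many values $\theta_1,\ldots,\theta_n$, shows via the angle diffusion $\theta_t$ (comparison with Brownian motion from \cite[Section~2]{kms2021regularity}) that the stopping time $\tau$ at which $\theta_t$ first hits the grid satisfies $\p[\tau \geq \delta] \leq p/2$ once $n$ is large, and applies Step~1 at each of the finitely many grid values together with the strong Markov property and a distortion bound for $g_\tau$. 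This avoids any claim about continuity of the law of the range, at the cost of being more computational. If you want to keep your cleaner topological framework, you should either (a) replace (ii) with a genuine continuity-in-law reference and verify its hypotheses, or (b) replace (ii) with a quantitative absolute continuity estimate between $\p_\theta$ and $\p_{\theta'}$ up to an appropriate stopping time, showing the Radon--Nikodym derivative is close to $1$ for $\theta' \to \theta$ (this would give convergence in total variation, which is stronger and sidesteps the topology on curve space entirely). Either way, (ii) as currently justified is a gap, not a routine step.
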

\begin{proof}
\noindent{\it Step 1. Result for fixed $\theta$.}  Fix $\theta \in [\theta_0,2\pi-\theta_0]$ and suppose that $p$, $r_1$, $r_2$ are as in the statement of the lemma.  We claim that there exists $\epsilon_0 \in (0,1)$ depending only on $p$, $r_1$, $r_2$, and $\theta$ such that for every $\epsilon \in (0,\epsilon_0)$ and every $w \in \D$ with $r_1 \leq |w| \leq r_2$ we have that $\p[ \eta \cap B(w,\epsilon) \neq \emptyset] \leq p$.  Indeed, suppose that this does not hold.  Then there exists a sequence $(w_k)$ in $\D$ with $r_1 \leq |w_k| \leq r_2$ for every~$k$ and a sequence $(\epsilon_k)$ of positive reals with $\epsilon_k \to 0$ as $k \to \infty$ such that $\p[ \eta \cap B(w_k,\epsilon_k) \neq \emptyset] \geq p$ for every $k$.  By passing to a subsequence if necessary, we may assume that $(w_k)$ converges to a limit $w_0$.  Fix $\epsilon > 0$.  Then as $B(w_0,\epsilon)$ contains $B(w_k,\epsilon_k)$ for all $k$ large enough, it follows that $\p[ \eta \cap B(w_0,\epsilon) \neq \emptyset] \geq p$.  Since $\epsilon > 0$ was arbitrary, we obtain that $\p[w \in \eta] \geq p$.  Since $\lim_{t \to \infty} \eta(t) = 0$ a.s., it follows that there exists $T \in (0,\infty)$ so that $\p[ w \in \eta([0,T]) ] \geq p/2$.  This is a contradiction since the law of $\eta|_{[0,T]}$ is absolutely continuous with respect to that of an $\SLE_4$ in $\D$ from $-1$ to $1$ \cite[Theorem~3]{sw2005coordinate}.

\noindent{\it Step 2. Uniformity in $\theta \in [\theta_0,2\pi-\theta_0]$.}  We will now show that we can take the value of $\epsilon_0 \in (0,1)$ from the previous step to be uniform in $\theta \in [\theta_0,2\pi-\theta_0]$.  Let $\eta$ be an $\SLE_4(2)$, $(W,O)$ the driving pair for $\eta$, and $\theta_t = \arg W_t - \arg O_t$.  Fix $r_1' \in (0,r_1)$ and $r_2' \in (r_2,1)$.  We claim that we can find $\delta \in (0,1)$ sufficiently small depending only on $r_1,r_2,r_1',r_2'$ such that $g_t(\{\frac{r_1+r_1'}{2} \leq |z| \leq \frac{r_2+r_2'}{2}\}) \subset \{r_1' \leq |z| \leq r_2'\}$ for each $t \in [0,\delta]$.  Indeed we set $\D_t = \D \setminus \eta([0,t])$ and recall from \cite[Section~3.5]{lawler2008conformally} that $\log(g_{\delta}'(0)) = \delta = -\E_0[\log(|B_{\sigma}|)]$ where $B = (B_t)$ is a Brownian motion starting from $0$ and $\sigma$ is the first time that it exits $\D_{\delta}$.  Let $r \in (0,1)$ be such that $\eta([0,\delta]) \cap B(0,1-r) \neq \emptyset$ and let $s$ be the last time before $\delta$ that $\eta$ hits $\partial B(0,1-r/2)$.  Then we have that $\eta((s,\delta]) \subseteq B(0,1-r/2)$ and $\diam(\eta((s,\delta])) \geq r/2$ and so \cite[Exercise~2.7]{lawler2008conformally} implies that with probability at least $c_1 r$,  $B$ hits $\eta((s,\delta])$ before exiting $B(0,1-r/2)$,  where $c_1>0$ is a universal constant.  It follows that $\delta \geq -c_1 r \log(1-r/2)$.  Let $r(\delta)$ be the largest $r \in (0,1)$ such that $\delta \geq -c_1 r \log(1-r/2)$.  Note that $\eta([0,\delta]) \subseteq \D \setminus \closure {B(0,1-2r(\delta))}$ and $r(\delta) \to 0$ as $\delta \to 0$.  Thus~\cite[Proposition~3.57]{lawler2008conformally} implies that $|\log(g_t(z)/z)| \leq c r(\delta) (1 - \log(1-|z|))$ for each $z \in \closure{B(0,1-4r(\delta))}$ and each $t \in [0,\delta]$,  where $c < \infty$ is a universal constant.  The claim then follows by taking $\delta$ sufficiently small since $|\log(|g_t(z)|/|z|)| \leq |\log(g_t(z)/z)|$.

 Fix $n \in \N$ and assume that $\theta_0 = \theta_1 < \ldots < \theta_n = 2\pi-\theta_0$ are equally spaced.  Let $\tau = \inf\{t \geq 0 : \theta_t \in \{\theta_1,\ldots, \theta_n\} \}$.  Note that we can choose $n$ sufficiently large such that $\p[\tau \geq \delta] \leq p/2$.  Indeed \cite[Section~2]{kms2021regularity} implies that the law of $\theta|_{[0,\delta]}$ stopped at the first time it exits $[\theta_0 / 2,  2\pi - \theta_0 / 2]$ is absolutely continuous with respect to the law of a standard Brownian motion $B$ starting from $\theta$ in the time interval $[0,\delta]$ and stopped at the first time it exits $[\theta_0 / 2,  2\pi - \theta_0 / 2]$.  Let $\wt{\p}_{\theta}$ be the law of $B$ and $\wt{\tau}$ the corresponding stopping time.  Then the claim follows since $\wt{\p}_{\theta}[\wt{\tau} \geq \delta] \to 0$ as $n \to \infty$ uniformly in $\theta$ and the Radon-Nikodym derivative is bounded from above and below by constants depending only on $\theta_0$ and $\delta$. We assume that $n$ is sufficiently large so that $\p[\tau \geq \delta] \leq p/2$.  On $\{\tau \leq \delta\}$, we have that $g_\tau( \{ \frac{r_1+r_1'}{2} \leq |z| \leq \frac{r_2+r_2'}{2}\}) \subset \{r_1' \leq |z| \leq r_2'\}$.  Moreover, by Step 1 there exists $\epsilon_0 \in (0,1)$ such that for every $w \in \D$ with $r_1' \leq |w| \leq r_2'$ and $\epsilon \in (0,\epsilon_0)$ the probability that $g_{\tau}(\eta([\tau,\infty)))$ hits $B(w,\epsilon)$ is at most $p/2$ and $B(w,\epsilon) \subset \{\frac{r_1+r_1'}{2}\leq |z| \leq \frac{r_2+r_2'}{2}\}$.  Combining with~\cite[Theorem~3.21]{lawler2008conformally} (and possibly decreasing $\epsilon_0 > 0$ to account for the distortion of $g_\tau$) implies assertion of the lemma.
\end{proof}

\begin{figure}[ht!]
\begin{center}
\includegraphics[scale=0.9]{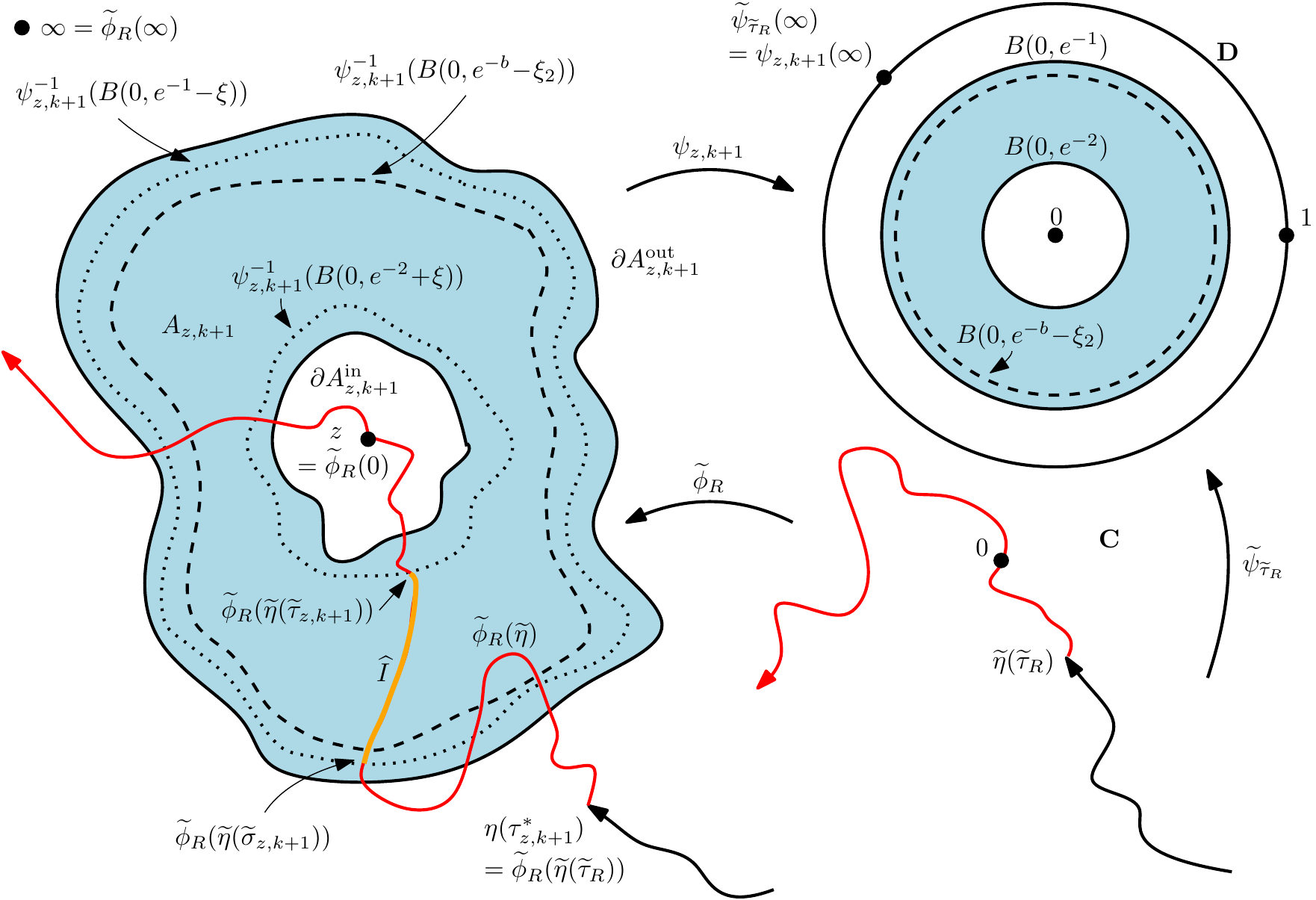}	
\end{center}
\caption{\label{fig:first_crossing_proof} Illustration of the proof of Lemma~\ref{lem:first_excursion_good}.  The conformal map $\psi_{z,k+1}$, $\wt{\psi}_{\wt{\tau}_R}$ are chosen so that $1 = \psi_{z,k+1}(\eta(\tau_{z,k+1}^*)) = \wt{\psi}_{\wt{\tau}_R}(\wt{\eta}(\wt{\tau}_R))$.  In particular, $\wt{\phi}_R = \psi_{z,k+1}^{-1} \circ \wt{\psi}_{\wt{\tau}_R}$.}
\end{figure}

As in the proof of Lemma~\ref{lem:extremal_length_good}, we have that the law of the first crossing of $\eta$ across $A_{z,k+1}$ is absolutely continuous with respect to the law of a radial $\SLE_4(2)$ curve.  This will allow us to work with the conformal image of part of a two-sided whole-plane $\SLE_4$ curve from~$\infty$ to~$\infty$ through~$0$ (i.e., the setting of Lemma~\ref{lem:good_fraction_whole_plane}) in place of an $\SLE_4$.  Some care will be needed because we need to find a pair of points close to the left and right sides of the crossing and a single arc so that the condition of Lemma~\ref{lem:good_fraction_whole_plane} holds for both points simultaneously.  We will also need to control the part of the curve after it completes the crossing to argue that it does not affect the hyperbolic geodesics near the distinguished arc.  See Figure~\ref{fig:first_crossing_proof} for an illustration of the setup of the proof.

The first step in proving Lemma~\ref{lem:first_excursion_good} is to show that we can transfer the result from the case of a whole-plane $\SLE_4$ to the radial $\SLE_4$. From radial $\SLE_4$ it is then easy to deduce the result for chordal $\SLE_4$ with an absolute continuity argument (see the last proof of this section). Recall the setup and above and consider the following.  Let $\wt{\eta}_1$ be a whole-plane $\SLE_4(2)$ from $\infty$ to $0$ and given $\wt{\eta}_1$ we let $\wt{\eta}_2$ be a chordal $\SLE_4$ in $\C \setminus \wt{\eta}_1$ from $0$ to $\infty$.  Then the concatenation $\wt{\eta}$ of $\wt{\eta}_1$ and $\wt{\eta}_2$ is a two-sided whole-plane $\SLE_4$ from $\infty$ to $\infty$ passing through $0$ and we parameterize it according to the natural parameterization with $\wt{\eta}(0) = 0$.  We assume that $\wt{\eta}$ is independent of $\eta$ so that $\wt{\eta}$ and $\eta$ are on the same probability space.  For each $t < 0$ we let~$\wt{\psi}_t$ be the unique conformal transformation mapping $\C \setminus \wt{\eta}((-\infty,t])$ to $\D$ with $\wt{\psi}_t(0) = 0$ and $\wt{\psi}_t(\wt{\eta}(t)) = 1$.  For each $R> 0$ we also consider the stopping times $\wt{\sigma}_R = \inf\{t \in \R : \wt{\eta}(t) \in \partial B(0,R)\}$ and $\wt{\tau}_R = \inf\{t \geq \wt{\sigma}_R : \wt{\psi}_t(\infty) = \psi_{z,k+1}(\infty)\}$. 

\begin{lemma}
Fix $p_0 \in (0,1)$.  There exists $R_0 = R_0(p_0) > 1$ such that for all $R \geq R_0$,
\begin{align}
\label{eqn:tau_sigma_lbd}
	\p[ \wt{\tau}_R \leq \wt{\sigma}_1 \giv   \CG_{\tau_{z,k}}] \one_{B_{z,k}} \geq \left(1-\frac{p_0}{100} \right) \one_{B_{z,k}}.
\end{align}
\end{lemma}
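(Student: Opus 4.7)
The plan is to use that $\wt\eta$ is independent of $\eta$ in order to reduce the assertion to a hitting problem for the angle process of a whole-plane $\SLE_4(2)$, and then invoke the ergodicity of that angle process together with a capacity bound coming from Koebe $1/4$.

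\textbf{Reduction.} Since $\wt\eta$ is independent of $\eta$, further conditioning on $\sigma(\eta)$ (in addition to $\CG_{\tau_{z,k}}$) turns $\psi_{z,k+1}(\infty)\in\partial\D$ into a deterministic point, which I call $e^{i\alpha}$. Under this further conditioning, $\{\wt\tau_R\leq\wt\sigma_1\}$ is precisely the event that the continuous $\partial\D$-valued process $\theta_t=\arg(\wt\psi_t(\infty))$ hits $\alpha$ (mod $2\pi$) at some natural time $t\in[\wt\sigma_R,\wt\sigma_1]$. So it suffices to show that this $\wt\eta$-hitting probability is at least $1-p_0/200$ for $R\geq R_0$, uniformly over $\alpha$ in a set of large probability under the conditional law of $\arg\psi_{z,k+1}(\infty)$ on $B_{z,k}$.

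\textbf{Capacity bound.} Reparameterize $\wt\eta|_{(-\infty,0]}$ by capacity $s=-\log\confrad(0,\C\setminus\wt\eta((-\infty,t]))$ so it becomes a standard whole-plane $\SLE_4(2)$ from $\infty$ to $0$. By Koebe $1/4$, at natural time $\wt\sigma_R$ the conformal radius of the complement from $0$ lies in $[R,4R]$ (the lower bound because $\wt\eta$ stays outside $B(0,R)$ up to $\wt\sigma_R$, the upper bound because the tip lies at Euclidean distance $R$), and similarly at $\wt\sigma_1$ it lies in $[1,4]$. Consequently, in capacity time the interval $[\wt\sigma_R,\wt\sigma_1]$ has length at least $T:=\log R - \log 4$.

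\textbf{Ergodicity of the angle.} In capacity parameterization, $\theta_s$ satisfies an SDE with diffusion coefficient $\sqrt{\kappa}=2$, repelling drift at the singular angles $\{0,2\pi\}$, and (for whole-plane $\SLE_4(2)$) is stationary ergodic with invariant density proportional to $\sin(y/2)^2$ on $(0,2\pi)$; this is exactly the density appearing in the proof of Lemma~\ref{lem:extremal_length_good} via~\cite[Proposition~4.4]{lawler2019bessel}. Standard one-dimensional diffusion theory then yields, for every compact $K\subset(0,2\pi)$, constants $C_K,\lambda_K>0$ with
\[\sup_{\alpha\in K}\p\!\left[\theta_s\neq\alpha\ \text{for all}\ s\in[0,T]\right]\leq C_K e^{-\lambda_K T}.\]
To handle targets $\alpha$ near $\{0,2\pi\}$, I use the absolute continuity computation from the proof of Lemma~\ref{lem:extremal_length_good} (based on the martingale $M_t=S_t\Upsilon_t^{-1/2}$) together with $S_{\tau_{z,k}}\geq\delta_0$ on $B_{z,k}$: this makes the Radon-Nikodym derivative bounded in terms of $\delta_0$, so the conditional density of $\arg\psi_{z,k+1}(\infty)$ on $B_{z,k}$ is bounded by a $\delta_0$-dependent constant times $\sin(y/2)^2$; in particular the conditional probability that $\arg\psi_{z,k+1}(\infty)$ falls within $\delta$ of $\{0,2\pi\}$ is $O(\delta^3)$. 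Splitting the target domain into a compact piece and its $\delta$-neighborhood of $\{0,2\pi\}$, choosing first $\delta$ small and then $T=\log R-\log 4$ large, gives the desired bound $\geq 1-p_0/100$ for $R\geq R_0$.

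\textbf{Main obstacle.} The delicate step is the uniformity of the hitting estimate for $\theta_s$: the compact-set constants $C_K,\lambda_K$ degenerate as $K$ approaches $\{0,2\pi\}$, so one must carefully balance the exponential rate in $T$ against the $\sin(y/2)^2$ density bound for the target. Verifying that this upper bound on the density of $\arg\psi_{z,k+1}(\infty)$ really does persist under the chordal (and not merely the weighted radial) law, together with the dependence on the event $B_{z,k}$, is the most technical piece; a Girsanov/coupling comparison with a simpler reference diffusion may be needed to make the optimization go through cleanly.
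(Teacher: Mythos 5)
Your proposal is in essence the paper's argument: unfold $\wt{\eta}$ at $\wt{\sigma}_R$ into a radial $\SLE_4(2)$ in $\D$ from $1$ to $0$, bound the capacity-time length of $[\wt{\sigma}_R,\wt{\sigma}_1]$ below by $\log R-\log 4$ via Koebe, and reduce to showing the driving/force-point angle process hits a fixed target $\alpha\in(0,2\pi)$ before that time. Where you differ is precisely in the two spots you flag as delicate. On the uniformity: rather than appealing to abstract ergodicity of the angle diffusion (whose constants $C_K,\lambda_K$ indeed degrade as $K$ grows), the paper observes that every $\alpha\in[\theta_0,2\pi-\theta_0]$ is automatically hit whenever the angle completes a single full sweep from $(0,\theta_0]$ to $[2\pi-\theta_0,2\pi)$ within two units of capacity time; the transition density lower bound~\cite[Proposition~4.4]{lawler2019bessel} gives a starting-point-uniform minorization $q_0^2>0$ for one such sweep, and iterating via the Markov property $\asymp\tfrac12\log R$ times yields $\p[\wt{\tau}_R>\wt{\sigma}_1]=O(R^{-q})$. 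This sweep device dissolves the uniformity problem at the source, and you should adopt it rather than try to optimize a compact-set hitting rate against the boundary density. On the location of $\alpha=\arg\psi_{z,k+1}(\infty)$: the paper simply asserts $\alpha\in[\theta_0,2\pi-\theta_0]$ on $B_{z,k}$, whereas you (more cautiously, since $\psi_{z,k+1}(\infty)$ is not $\CG_{\tau_{z,k}}$-measurable) integrate over $\alpha$ against its conditional density and bound the small-$\sin$ region. Your instinct is sound, but the $O(\delta^3)$ you quote is the estimate under the radial $\SLE_4(2)$ weighting; passing back to the unweighted law through the Girsanov factor $\propto S_{\tau_{z,k+1}}^{-1}$ degrades the density from $\sin(y/2)^2$ to $\sin(y/2)$, giving $O(\delta^2)$ instead. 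That is still more than enough, but the exponent in your write-up is off and hides exactly the chordal-versus-weighted subtlety you correctly identify as the technical crux.
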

\begin{proof}
Note that if $B_{z,k}$ occurs, we have that $|\psi_{z,k+1}(\infty) - 1| \gtrsim \delta_0$ where the implicit constant is universal.  

Fix $R >1$.  Then $\wh{\eta} = \wt{\psi}_{\wt{\sigma}_R}(\wt{\eta}|_{[\wt{\sigma}_R,0]})$ is a radial $\SLE_4(2)$ in $\D$ from $1$ to $0$ with the force point located at $\wt{\psi}_{\wt{\sigma}_R}(\infty) \in \partial \D$.  We parameterize $\wh{\eta}$ by log conformal radius as seen from~$0$, let $(\wh{W},\wh{O})$ be its driving pair, and set $\wh{\theta}_t = \arg(\wh{O}_t) - \arg(\wh{W}_t)$.  If $\{\wt{\tau}_R > \wt{\sigma}_1\}$ occurs, then $\wh{\theta}$ does not hit $\arg(\psi_{z,k+1}(\infty))$ in the time interval $[0,\log R+c_1]$ where $c_1 \in \R$ is a universal constant. Therefore it suffices to show that the latter event occurs with probability tending to $0$ as $R \to \infty$.  As explained at the beginning of the proof, there exists $\theta_0 \in [0,\pi/4]$ depending only on $\delta_0$ such that $\arg(\psi_{z,k+1}(\infty)) \in [\theta_0,2\pi - \theta_0]$.  Let $E_R$ be the event that there exists $t \in [0,\log R + c_1 -2]$ such that $\wh{\theta}|_{[t,t+2]}$ hits both $(0,\theta_0]$ and $[2\pi-\theta_0,2\pi)$ and note that if $E_R$ occurs then $\wh{\theta}$ hits $\arg(\psi_{z,k+1}(\infty))$ before time $\log R + c_1$.  Set $\wt{\theta}_t = \wh{\theta}_t / 2$ and let $q_t$ be the transition density for $\wt{\theta}$.  As in~\eqref{eqn:bound_on_density} in the proof of Lemma~\ref{lem:extremal_length_good}, \cite[Proposition~4.4]{lawler2019bessel} implies that there exists $q_0 > 0$ depending only on $\theta_0$ so that if $q_t(\wt{\theta}_0,A) = \int_A q_t(\wt{\theta_0},s) ds$, then
\begin{equation}
\label{eqn:transition_lbd}
 q_1(\wt{\theta}_0, (0,\theta_0/2]) \geq q_0 \quad\text{and}\quad  q_1(\wt{\theta}_0, [\pi-\theta_0/2,\pi)) \geq q_0 \quad\text{for all}\quad \wt{\theta}_0 \in [0,\pi].
\end{equation}
Note that~\eqref{eqn:transition_lbd} implies that
\begin{equation}
\label{eqn:transition_lbd_markov}
\p[\wh{\theta}|_{[0,2]} \ \text{hits both}\ (0,\theta_0]\ \text{and}\ [2\pi-\theta_0,2\pi) ] \geq q_0^2 \quad\text{uniformly in}\quad \wh{\theta}_0 \in [0,2\pi].
\end{equation}
Applying the Markov property $\tfrac{1}{2}(\log R+c_1)-1$ times for $\wh{\theta}$ implies that there exists $q > 0$ depending only on $\theta_0$ so that $\p[E_R] = 1- O(R^{-q})$.  This proves~\eqref{eqn:tau_sigma_lbd}.
\end{proof}

Next, we shall show that with high probability, the crossing of $A_{z,k+1}$ is separated from the rest of the curve. We begin by proving that a regularity event is very likely to occur, provided that we choose the parameters correctly.
\begin{lemma}\label{lem:lower_bound_F1}
Fix $p_0 \in (0,1)$, $R \geq R_0$ and $\epsilon > 0$.  There exist $A,C > 0$ large and $\delta \in (0,1)$ such that the following holds. Define the events $F_0 = \{ \wt{\tau}_R \leq \wt{\sigma}_1,\ \wt{\tau}_R \geq -A,\ |\wt{\eta}(s) - \wt{\eta}(t)| \leq C |s-t|^{1/2}\ \forall s,t \in [-A,A]\}$ and $F_1 = F_0 \cap \{ \dist(\wt{\eta}([-A,s]),\wt{\eta}([s+\epsilon,\infty))) \geq \delta \ \text{for each} \ s \in [-A,0] \}$. Then,
\begin{align}\label{eq:lower_bound_F1}
	\p[ F_1 \giv \CG_{\tau_{z,k}}] \one_{B_{z,k}} \geq \left(1-\frac{p_0}{25}\right) \one_{B_{z,k}}.
\end{align}
\end{lemma}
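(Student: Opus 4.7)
The plan is to decompose $F_1^c$ via a union bound into four ``bad'' pieces and arrange for each of them to have conditional probability at most $p_0/100$ on $B_{z,k}$. Writing
\[
F_1^c \;\subseteq\; \{\wt{\tau}_R > \wt{\sigma}_1\} \;\cup\; \{\wt{\tau}_R < -A\} \;\cup\; E_{\mathrm{Hol}} \;\cup\; E_{\mathrm{sep}},
\]
where $E_{\mathrm{Hol}}$ denotes the failure of the H\"older bound on $[-A,A]$ and $E_{\mathrm{sep}}$ denotes the failure of the separation condition for some $s \in [-A, 0]$, the conclusion then follows by summing the four estimates.

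The first event is controlled directly by the preceding lemma once $R \geq R_0(p_0)$. For the second, I would use the inclusion $\{\wt{\tau}_R < -A\} \subseteq \{\wt{\sigma}_R < -A\}$ (since $\wt{\tau}_R \geq \wt{\sigma}_R$ by definition), independence of $\wt{\eta}$ from $\CG_{\tau_{z,k}}$, and the a.s.\ finiteness of $\wt{\sigma}_R$ coming from the transience of $\wt{\eta}$ at $-\infty$; for each $R$ one can then take $A$ large enough (depending only on $R$ and $p_0$) that $\p[\wt{\sigma}_R < -A] \leq p_0/100$, and the estimate is uniform in the conditioning. For $E_{\mathrm{Hol}}$, with $A$ now fixed, I would invoke the standard H\"older regularity of $\SLE_4$ in the natural parameterization on the compact interval $[-A, A]$ (H\"older of every exponent strictly less than $1/d_4 = 2/3$, in particular $1/2$) to choose $C$ large enough depending on $A$ that $\p[E_{\mathrm{Hol}}] \leq p_0/100$.

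The only substantive step is $E_{\mathrm{sep}}$, and this is where I expect the main work. The key input is that $\wt{\eta}:\R \to \C$ is a simple and proper curve: simplicity follows from the construction of $\wt{\eta}$ as the concatenation of a whole-plane $\SLE_4(2)$ from $\infty$ to $0$ with an independent chordal $\SLE_4$ in its complement (each piece is simple and the second lies in the complement of the first, so they cannot intersect except at~$0$), and properness from transience of both pieces at infinity. Set
\[
I_A \;:=\; \inf_{s \in [-A, 0]} \dist\bigl( \wt{\eta}([-A, s]),\, \wt{\eta}([s+\epsilon, \infty)) \bigr).
\]
I claim $I_A > 0$ almost surely. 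Indeed, if $I_A = 0$ then one extracts sequences $s_n \in [-A, 0]$, $s_n' \in [-A, s_n]$, and $s_n'' \in [s_n + \epsilon, \infty)$ with $|\wt{\eta}(s_n') - \wt{\eta}(s_n'')| \to 0$. By compactness of $[-A, 0]$, one may pass to a subsequence with $s_n \to s_*$ and $s_n' \to s_*' \in [-A, s_*]$; continuity of $\wt{\eta}$ then gives $\wt{\eta}(s_n'') \to \wt{\eta}(s_*')$, so the sequence $\{\wt{\eta}(s_n'')\}$ is bounded, properness of $\wt{\eta}$ forces $\{s_n''\}$ to be bounded, and any subsequential limit $s_*''$ satisfies $s_*'' \geq s_* + \epsilon > s_*'$ together with $\wt{\eta}(s_*'') = \wt{\eta}(s_*')$, contradicting simplicity. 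Since $I_A$ is independent of $\CG_{\tau_{z,k}}$ and a.s.\ positive, $\delta > 0$ can be chosen small enough that $\p[I_A < \delta \giv \CG_{\tau_{z,k}}] \leq p_0/100$, yielding the bound on $E_{\mathrm{sep}}$ and hence the lemma.
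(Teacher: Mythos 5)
Your argument is correct and matches the paper's proof in substance: the paper establishes the $F_0$ estimate by combining \eqref{eqn:tau_sigma_lbd}, the independence of $\wt{\eta}$ and $\eta$, and the H\"older continuity of the natural parameterization from \cite[Corollary~4.7]{zhan2019holder}, and then handles the separation event via the simplicity of $\wt{\eta}$ and the transience of $\wt{\eta}_1,\wt{\eta}_2$. Your union-bound decomposition and the compactness/properness argument for $I_A>0$ simply spell out what the paper leaves implicit.
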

\begin{proof}
We assume that $\wt{\eta}$ has the natural parameterization with time normalized so that $\wt{\eta}(0) = 0$.  Then \cite[Corollary~4.7]{zhan2019holder}, the independence of $\wt{\eta}$ and $\eta$, and~\eqref{eqn:tau_sigma_lbd} imply that there exist constants $A,C>0$ such that
\[ \p[ F_0 \giv \CG_{\tau_{z,k}}] \one_{B_{z,k}} \geq \left(1- \frac{p_0}{50}\right) \one_{B_{z,k}}.\]

Moreover, since $\wt{\eta}$ is a simple curve and $\wt{\eta}_1,\wt{\eta}_2$ are transient \cite{ms2017ig4}, it follows that we can find $\delta \in (0,1)$ such that~\eqref{eq:lower_bound_F1} holds.
\end{proof}

Before stating the next lemma, we define the following random times. Let $\wt{\phi}_R$ be the unique conformal map from $\C \setminus \wt{\eta}((-\infty,\wt{\tau}_R])$ to $\h_{\tau_{z,k+1}^*}$ with $\wt{\phi}_R(0) = z$ and $\wt{\phi}_R(\wt{\eta}(\wt{\tau}_R)) = \eta(\tau_{z,k+1}^*)$ and for a small parameter $\xi \in (0,1)$, define
\begin{align*}
\wt{\tau}_{z,k+1} &= \inf\{ t \in \R : \wt{\phi}_R(\wt{\eta}(t)) \in \psi_{z,k+1}^{-1}( B(0,e^{-2}+\xi))\},\\
\wt{\sigma}_{z,k+1} &= \sup\{t \leq \wt{\tau}_{z,k+1} : \wt{\phi}_R(\wt{\eta}(t)) \notin \psi_{z,k+1}^{-1}(B(0,e^{-1}-\xi))\}, \quad\text{and}\\
\wh{\tau}_{z,k+1} &= \inf\{t \geq \wt{\tau}_{z,k+1} : |\wt{\phi}_R(\wt{\eta}(t)) - \wt{\phi}_R(\wt{\eta}(\wt{\tau}_{z,k+1}))| \geq \xi \diam(A_{z,k+1})\}.
\end{align*}
Moreover, we let $\wh{I} = \wt{\phi}_R(\wt{\eta}([\wt{\sigma}_{z,k+1},\wt{\tau}_{z,k+1}]))$. It is from this segment that we will pick the segment~$I$ of the event $C_{z,k+1}$. The next lemma shows that on the event $F_1$, anything but the immediate future of the curve stays away from the set $\wh{I}$.

\begin{lemma}
\label{lem:separation_past_future}
Consider the setup of the previous lemma. There exists $\epsilon,\zeta > 0$ small such that on $F_1$ we have that $\dist(\wt{\phi}_R(\wt{\eta}([\wt{\sigma}_{z,k+1},\wt{\tau}_{z,k+1}])),\wt{\phi}_R(\wt{\eta}([\wh{\tau}_{z,k+1},\infty)))) \geq \zeta \diam(A_{z,k+1})$.
\end{lemma}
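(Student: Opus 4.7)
The plan is to reduce the target-space separation to a source-space separation and then transfer back via Koebe distortion bounds. Recall that $\wt{\phi}_R = \psi_{z,k+1}^{-1} \circ \wt{\psi}_{\wt{\tau}_R}$ is conformal from $\C \setminus \wt{\eta}((-\infty,\wt{\tau}_R])$ to $\h_{\tau_{z,k+1}^*}$, sending $0 \mapsto z$ and $\infty \mapsto \infty$.

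First, I observe that $\wt{\eta}(0) = 0$ maps under $\wt{\phi}_R$ to $z$, which lies in $\psi_{z,k+1}^{-1}(B(0,e^{-2}+\xi))$. By the definition of $\wt{\tau}_{z,k+1}$ we therefore have $\wt{\tau}_{z,k+1} \leq 0$; combined with $\wt{\tau}_R \leq \wt{\sigma}_{z,k+1} \leq \wt{\tau}_{z,k+1}$ and $\wt{\tau}_R \geq -A$ on $F_1$, both $\wt{\sigma}_{z,k+1}$ and $\wt{\tau}_{z,k+1}$ lie in $[-A,0]$. Taking $\xi>0$ small, Koebe-$1/4$ and the distortion theorem applied to $\wt{\phi}_R$ at $0$ (where $\dist(z,\partial \h_{\tau_{z,k+1}^*}) \asymp \diam(A_{z,k+1})$) show that $\wt{\phi}_R$ is bi-Lipschitz between a neighborhood of $0$ in the source and $\psi_{z,k+1}^{-1}(B(0,e^{-2}+\xi))$ with both Lipschitz constants comparable to $\diam(A_{z,k+1})$. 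Since by definition $|\wt{\phi}_R(\wt{\eta}(\wh{\tau}_{z,k+1})) - \wt{\phi}_R(\wt{\eta}(\wt{\tau}_{z,k+1}))| = \xi \diam(A_{z,k+1})$, this yields $|\wt{\eta}(\wh{\tau}_{z,k+1}) - \wt{\eta}(\wt{\tau}_{z,k+1})| \geq c_\xi$ for some $c_\xi > 0$ independent of $\diam(A_{z,k+1})$. The Hölder bound of $F_1$ (applied on $[-A,A]$; if $\wh{\tau}_{z,k+1} > A$ then the time gap already exceeds $A$) then gives $\wh{\tau}_{z,k+1} - \wt{\tau}_{z,k+1} \geq c_\xi^2/C^2$, and I set $\epsilon := c_\xi^2/C^2$.

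With this choice of $\epsilon$, the $F_1$-separation applied at $s = \wt{\tau}_{z,k+1} \in [-A,0]$ gives
\[
\dist\bigl(\wt{\eta}([\wt{\sigma}_{z,k+1},\wt{\tau}_{z,k+1}]), \wt{\eta}([\wh{\tau}_{z,k+1},\infty))\bigr) \geq \dist\bigl(\wt{\eta}([-A,\wt{\tau}_{z,k+1}]), \wt{\eta}([\wt{\tau}_{z,k+1}+\epsilon,\infty))\bigr) \geq \delta.
\]
To transfer this source separation to the target, I again invoke Koebe distortion: the Hölder and separation estimates packaged in $F_1$ confine $\wt{\eta}([-A,A])$ to a compact region of the source domain which, by applying the $F_1$-separation with additional choices of $s$, stays at distance at least a constant (depending only on $A,C,\delta,\epsilon$) from the boundary $\wt{\eta}((-\infty,\wt{\tau}_R])$. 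On this region $\wt{\phi}_R$ is bi-Lipschitz with lower Lipschitz constant $\asymp \diam(A_{z,k+1})$, so $\wt{\phi}_R(\wt{\eta}([\wt{\sigma}_{z,k+1},\wt{\tau}_{z,k+1}]))$ is at target distance $\gtrsim \delta \diam(A_{z,k+1})$ from $\wt{\phi}_R(\wt{\eta}([\wh{\tau}_{z,k+1},A]))$. Points of $\wt{\eta}([A,\infty))$ are treated separately: since $\wt{\phi}_R$ fixes $\infty$, their images tend to $\infty$ in $\h_{\tau_{z,k+1}^*}$ and hence are at target Euclidean distance $\gg \diam(A_{z,k+1})$ from the bounded image $\wt{\phi}_R(A_{z,k+1})$ once $|\wt{\eta}(t)|$ is large enough (which can be arranged, after possibly enlarging $A$, by the Hölder control from $F_1$ extended to a slightly larger interval).

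The main obstacle is obtaining a bi-Lipschitz bound on $\wt{\phi}_R$ uniformly across the whole compact piece $\wt{\eta}([-A,A])$, since individual points of this piece could a priori approach the boundary $\wt{\eta}((-\infty,\wt{\tau}_R])$, blowing up the derivative $|\wt{\phi}_R'|$. The $F_1$-separation $\delta$ is designed precisely to prevent this: applied across all $s \in [-A,0]$, it guarantees a uniform positive lower bound on $\dist(\wt{\eta}(t),\wt{\eta}((-\infty,\wt{\tau}_R]))$ for all $t \in [\wt{\tau}_R+\epsilon,A]$, which by Koebe translates into uniformly bounded distortion on the relevant part of the target.
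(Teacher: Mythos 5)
Your overall strategy matches the paper's: establish that $\wt{\sigma}_{z,k+1},\wt{\tau}_{z,k+1}\in[-A,0]$, use Koebe distortion of $\wt{\phi}_R$ together with the H\"older bound from $F_0$ to force $\wh{\tau}_{z,k+1}-\wt{\tau}_{z,k+1}\geq\epsilon$, invoke the source-space separation encoded in $F_1$, and then transfer back to the target. The forward steps are fine. However, the final transfer paragraph has a genuine flaw in the way you justify uniform distortion control. You claim that the $F_1$-separation ``guarantees a uniform positive lower bound on $\dist(\wt{\eta}(t),\wt{\eta}((-\infty,\wt{\tau}_R]))$,'' but the separation in $F_1$ only compares $\wt{\eta}([-A,s])$ to $\wt{\eta}([s+\epsilon,\infty))$ for $s\in[-A,0]$; it says nothing about $\wt{\eta}((-\infty,-A))$, which is part of the boundary $\wt{\eta}((-\infty,\wt{\tau}_R])$ and could in principle come close. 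That gap is actually not needed: the transfer should be carried out on the target side, where the first-crossing piece $\wt{\phi}_R(\wt{\eta}([\wt{\sigma}_{z,k+1},\wt{\tau}_{z,k+1}]))$ is confined to $\psi_{z,k+1}^{-1}(\closure{B(0,e^{-1}-\xi)}\setminus B(0,e^{-2}+\xi))$, hence sits at distance $\gtrsim \diam(A_{z,k+1})$ from $\partial\h_{\tau_{z,k+1}^*}$; applying the Koebe growth/distortion estimates at such a target point, together with the paper's bound $|(\wt{\phi}_R^{-1})'(w)|\lesssim \diam(A_{z,k+1})^{-1}$ on $A_{z,k+1}$ (which depends on $\wt{\tau}_R\leq\wt{\sigma}_1$ from $F_0$, not on $F_1$), shows directly that any $w_2\in\wt{\phi}_R(\wt{\eta}([\wh{\tau}_{z,k+1},\infty)))$ within $\zeta\diam(A_{z,k+1})$ of the crossing would have preimage within $O(\zeta)$ of it in the source, contradicting the $\delta$-separation. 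With this fix the separate treatment of $\wt{\eta}([A,\infty))$ becomes unnecessary, and the argument aligns with what the paper does (implicitly) via its $C_R$ distortion bound.
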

\begin{proof}
Note that $\wt{\phi}_R(\wt{\eta}|_{[\wt{\tau}_R,0]})$ is a radial $\SLE_4(2)$ in~$\h_{\tau_{z,k+1}^*}$ from $\eta(\tau_{z,k+1}^*)$ to $z$ with the force point located at $\infty$.  (Note that $\wt{\phi}_R = \psi_{z,k+1}^{-1} \circ \wt{\psi}_{\wt{\tau}_R}$.)

Suppose that we are working on $F_0$. We fix $\xi \in (0,1)$ small (depending only on the implicit constants that we have considered so far). Note that~\cite[Corollary~3.19 and Theorem~3.21]{lawler2008conformally} imply that there exist universal constants $c_1,c_2>0$ and a constant $C_R >1$ depending only on $R$ such that $\psi^{-1}_{z,k+1}(B(0,e^{-1}+\xi)) \subseteq B(z,c_1 \diam(A_{z,k+1}))$,  $B(z,c_2 \diam(A_{z,k+1}))\subseteq \psi_{z,k+1}^{-1}(B(0,e^{-2}-2\xi))$ when $\xi \in (0,1)$ is sufficiently small and $C_R^{-1}  \leq |(\wt{\phi}_R^{-1})'(w)| \diam(A_{z,k+1}) \leq C_R$ for each $w \in A_{z,k+1}$.  Combining this with \cite[Theorem~3.21]{lawler2008conformally} we obtain that if $\xi > 0$ is sufficiently small,  then we have that $|\wh{\tau}_{z,k+1}-\wt{\tau}_{z,k+1}|\geq (\xi / 2CC_R)^2$.  Choosing $\epsilon > 0$ sufficiently small, it follows that $\wt{\phi}_R(\wt{\eta})([\wh{\tau}_{z,k+1},\infty)) \subseteq \wt{\phi}_R(\wt{\eta}([\wt{\tau}_{z,k+1}+\epsilon,\infty)))$ and $\wt{\phi}_R(\wt{\eta}([\wt{\tau}_R,\wt{\tau}_{z,k+1}])) \subseteq \wt{\phi}_R(\wt{\eta}([-A,0]))$. Thus, assuming that we are on $F_1$, it follows that $\dist(\wt{\phi}_R(\wt{\eta}([\wt{\sigma}_{z,k+1},\wt{\tau}_{z,k+1}])),\wt{\phi}_R(\wt{\eta}([\wh{\tau}_{z,k+1},\infty)))) \geq \zeta \diam(A_{z,k+1})$, proving the lemma.
\end{proof}

We now turn our attention to the construction of~$I$ and the two points $z_\Left,z_\Right$ which are close to $I$. We fix $\wt{\delta}_1 \in (0,1)$ small and $\xi_2 \in (0,(e^{-b}-e^{-2})/2)$ (both to be chosen) such that $2\pi/\wt{\delta}_1 \in \N$.  We divide $\partial B(0,e^{-b}-\xi_2)$ into open and pairwise disjoint arcs $J_1,\ldots,J_{n(\wt{\delta}_1)}$ of length $\wt{\delta}_1$ such that $\partial B(0,e^{-b}-\xi_2) = \cup_{m=1}^{n(\wt{\delta}_1)}\closure{J_m}$.  Let $J$ be the arc in $\{J_1,\ldots,J_{n(\wt{\delta}_1)}\}$ so that its image under $\psi_{z,k+1}^{-1}$ is last hit by $\wh{I}$.  Let also $z_\Left$ (resp.\  $z_\Right$) be the two endpoints of $\psi_{z,k+1}^{-1}(J)$ (where naturally $z_\Right$ is the image of the first point of $J$ if it is ordered clockwise).  Moreover, we let $w_0$ be the last point (chronologically) on $\psi_{z,k+1}^{-1}(J)$ that $\wh{I}$ hits and let $w_1$ be the first point on $\partial B(w_0,\delta_1\diam(A_{z,k+1}))$ that $\wh{I}$ hits after it hits $w_0$. Finally, we let $t_{z,k+1}^j$, $j=0,1$ be such that $\wt{\phi}_R(\wt{\eta}(t_{z,k+1}^j)) = w_j$ and define $I = \wt{\phi}_R(\wt{\eta}([t_{z,k+1}^0,t_{z,k+1}^1]))$.

In the following lemma we provide the distance-diameter ratio bounds of~\eqref{it:x_y_pos}.
\begin{lemma}\label{lem:lower_bound_F2}
Fix $p_0 \in (0,1)$, $\delta_1 \in (0,1)$ and the parameters $R,A,C,\delta,\epsilon$ as above. Then for all $\wt{\delta}_1,\delta_2 \in (0,1)$ small enough we have that if $F_2 = F_1 \cap \{\dist(\{z_\Left,z_\Right\},\wt{\phi}_R(\wt{\eta})) \geq \delta_2 \diam(A_{z,k+1}) \ \cap \{ \dist(\{z_\Left,z_\Right\},I) \leq \delta_1 \diam(A_{z,k+1}) \}$, then
\begin{align*}
	\p[ F_2 \giv \CG_{\tau_{z,k}}] \one_{B_{z,k}} \geq (1-p_0/20) \one_{B_{z,k}}.
\end{align*}
\end{lemma}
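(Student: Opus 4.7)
The strategy is to combine Lemma~\ref{lem:lower_bound_F1} (which already gives $F_1$ with probability at least $1 - p_0/25$ on $B_{z,k}$) with a bound showing that the extra constraints defining $F_2$ fail on $F_1$ with probability at most $p_0/100$. These extra constraints split cleanly into a deterministic upper bound $\dist(\{z_\Left, z_\Right\}, I) \leq \delta_1 \diam(A_{z,k+1})$ and a probabilistic lower bound $\dist(\{z_\Left, z_\Right\}, \wt{\phi}_R(\wt{\eta})) \geq \delta_2 \diam(A_{z,k+1})$, which I plan to handle separately.

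For the upper bound I observe that $z_\Left$, $z_\Right$, and $w_0$ all lie in $\psi_{z,k+1}^{-1}(\closure{J})$, where $J$ is an arc of length $\wt{\delta}_1$ on the circle $\partial B(0, e^{-b} - \xi_2) \subset \D$. Since this circle sits in a fixed compact subset of $\D$, Koebe distortion applied to $\psi_{z,k+1}^{-1}$ yields $|z_q - w_0| \leq \diam(\psi_{z,k+1}^{-1}(\closure{J})) \leq C \wt{\delta}_1 \diam(A_{z,k+1})$ for a universal $C > 0$ and $q \in \{\Left, \Right\}$. Because $w_0 \in I$ by construction, this forces $\dist(z_q, I) \leq C \wt{\delta}_1 \diam(A_{z,k+1})$, and choosing $\wt{\delta}_1 \leq \delta_1/C$ makes the upper bound deterministic.

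The main obstacle is the probabilistic lower bound, which I plan to attack by a union bound over the $2 n(\wt{\delta}_1)$ endpoints $\{p_m^{\pm} : 1 \leq m \leq n(\wt{\delta}_1)\}$ of $\psi_{z,k+1}^{-1}(J_1), \ldots, \psi_{z,k+1}^{-1}(J_{n(\wt{\delta}_1)})$. Since $\{z_\Left, z_\Right\}$ always coincides with some $\{p_m^-, p_m^+\}$, a failure of the lower bound implies $\dist(p_m^{\pm}, \wt{\phi}_R(\wt{\eta})) < \delta_2 \diam(A_{z,k+1})$ for some $m$ and sign; this is where dropping the (conditional) constraint that $J_m$ is the last-hit arc buys us target points that are fixed given $\CG_{\tau_{z,k}}$. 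The goal is then to show that, on $B_{z,k}$, each of these $2 n(\wt{\delta}_1)$ small-ball hitting probabilities is at most $p_0/(200\, n(\wt{\delta}_1))$ for $\delta_2$ sufficiently small. To do this I will split $\wt{\phi}_R(\wt{\eta})$ into its past $\wt{\phi}_R(\wt{\eta}|_{[\wt{\tau}_R, 0]})$ and its future $\wt{\phi}_R(\wt{\eta}|_{[0, \infty)})$.

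The past is, by construction, a radial $\SLE_4(2)$ in $\h_{\tau_{z,k+1}^*}$ from $\eta(\tau_{z,k+1}^*)$ to $z$ with force point at $\infty$; postcomposing with $\psi_{z,k+1}$ turns it into a radial $\SLE_4(2)$ in $\D$ from $1$ to $0$ whose force point $\psi_{z,k+1}(\infty) = e^{i\theta}$ satisfies $\theta \in [\theta_0, 2\pi - \theta_0]$ for some $\theta_0 = \theta_0(\delta_0) > 0$ on $B_{z,k}$, by the same translation of $S_{\tau_{z,k}} \geq \delta_0$ used at the end of the proof of Lemma~\ref{lem:extremal_length_good}. Because $\psi_{z,k+1}(p_m^{\pm}) \in \partial B(0, e^{-b} - \xi_2)$ lies in a fixed annulus bounded away from $0$ and $\partial \D$, Lemma~\ref{lem:radial_sle_hits_a_ball} (combined with Koebe distortion to convert Euclidean radii between the $\D$-side and the $\h_{\tau_{z,k+1}^*}$-side) delivers the needed bound for the past once $\delta_2$ is chosen small enough in terms of $p_0$, $\delta_0$, $\wt{\delta}_1$, $\xi_2$, and $b$. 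The future is a chordal $\SLE_4$ in a simply connected subdomain of $\h_{\tau_{z,k+1}^*}$ starting from the interior tip $z$ and ending at the boundary point $\wt{\phi}_R(\infty)$; the analogous small-ball avoidance estimate follows from absolute continuity with an $\SLE_4$ in $\h$ and the compactness argument of Step~2 of the proof of Lemma~\ref{lem:radial_sle_hits_a_ball}, using that $\SLE_4$ almost surely avoids any fixed interior point. Summing the $2 n(\wt{\delta}_1)$ contributions bounds the bad probability by $p_0/100$, and combining with Lemma~\ref{lem:lower_bound_F1} yields $\p[F_2 \giv \CG_{\tau_{z,k}}] \one_{B_{z,k}} \geq (1 - p_0/25 - p_0/100) \one_{B_{z,k}} \geq (1 - p_0/20) \one_{B_{z,k}}$.
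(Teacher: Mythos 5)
Your upper-bound and union-bound steps agree with the paper: by construction, the points $z_\Left$, $z_\Right$, and $w_0$ all lie on $\psi_{z,k+1}^{-1}(\closure{J})$, so Koebe distortion gives $\dist(z_q, I) \lesssim \wt\delta_1 \diam(A_{z,k+1})$ deterministically, and passing to a union bound over the deterministic collection of arc endpoints $\{x_0, \dots, x_{n(\wt\delta_1)}\}$ (equivalently your $p_m^{\pm}$) is exactly how the paper decouples the random identity of $J$ from the small-ball estimate. Your treatment of the past $\wt\phi_R(\wt\eta|_{[\wt\tau_R, 0]})$ via the radial $\SLE_4(2)$ identification, the fact that $S_{\tau_{z,k}} \geq \delta_0$ on $B_{z,k}$ forces the force point $\psi_{z,k+1}(\infty)$ into a compact sub-arc $[\theta_0, 2\pi - \theta_0]$, and Lemma~\ref{lem:radial_sle_hits_a_ball} applied to $\wt\psi_{\wt\tau_R}(\wt\eta_1|_{[\wt\tau_R, 0]})$ is precisely the paper's route.

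Where you genuinely depart from the paper is the future $\wt\phi_R(\wt\eta|_{[0,\infty)})$, and this is the part of your sketch with a real gap. The paper does not run any new probabilistic estimate there: on $F_1$ (which you already have), the H\"older bound $|\wt\eta(s) - \wt\eta(t)| \leq C|s-t|^{1/2}$ from $F_0$ together with the separation $\dist(\wt\eta([-A,s]), \wt\eta([s+\epsilon,\infty))) \geq \delta$ make the future stay a definite macroscopic distance from $\psi_{z,k+1}^{-1}(J)$ after choosing $\delta_1$ (and $\xi$) small enough; this is a deterministic restatement of the argument behind Lemma~\ref{lem:separation_past_future}, so only the past needs the radial-$\SLE$ small-ball estimate. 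You instead want to run an avoidance estimate for a chordal $\SLE_4$ in the random domain $\wt\phi_R(\C \setminus \wt\eta_1)$, invoking ``absolute continuity with an $\SLE_4$ in $\h$'' and ``the compactness argument of Step~2 of Lemma~\ref{lem:radial_sle_hits_a_ball}.'' Neither reference carries the weight you put on it: the relation between $\wt\phi_R(\wt\eta_2)$ and $\SLE_4$ in $\h$ is conformal invariance (not absolute continuity), and Step~2 of Lemma~\ref{lem:radial_sle_hits_a_ball} gives uniformity only over the one real parameter $\theta$, not over the infinite-dimensional family of past traces $\wt\eta_1$ that parameterize the domain of the future curve. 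To make your route work you would need (i) to first condition on the event (already bounded by your past argument) that the $p_m^\pm$ are at distance $\geq \delta_2'\diam(A_{z,k+1})$ from $\wt\phi_R(\wt\eta_1)$, then (ii) map the future domain to $\h$, use Koebe distortion to convert distances, and (iii) invoke a quantitative one-point estimate (e.g.\ the Beffara-type bound that the probability an $\SLE_4$ in $\h$ from $0$ to $\infty$ enters $B(w,\epsilon)$ is $\lesssim (\epsilon/\im w)^{1/2}$ uniformly in $w$), finally choosing $\delta_2 \ll \delta_2'$. That is a workable but substantially more involved argument than what you wrote, and it creates a two-step dependence in the choice of $\delta_2$ that your sketch does not acknowledge. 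The paper's deterministic-on-$F_1$ separation avoids this entirely, which is why you should notice that $F_1$ is already doing the work you are re-deriving probabilistically.
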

\begin{proof}
By construction we have that $\dist(z_\Left,I^\Left) \lesssim \wt{\delta}_1\diam(A_{z,k+1})$ and $\dist(z_\Right,I^\Right) \lesssim \wt{\delta}_1\diam(A_{z,k+1})$ with universal implicit constant, where $I^\Left$ and $I^\Right$ denote the left and right side of $I$, respectively.  Thus, if we fix $\delta_1 \in (0,1)$ then we can pick $\wt{\delta}_1 \in (0,1)$ to be sufficiently small so that $\dist(z_\Left,I^\Left) \leq \delta_1\diam(A_{z,k+1})$ and $\dist(z_\Right,I^\Right) \leq \delta_1\diam(A_{z,k+1})$.

Next, noting that an argument similar to that of the proof of Lemma~\ref{lem:separation_past_future} implies that we can choose $\delta_1 > 0$ so small that $\dist(\psi_{z,k+1}^{-1}(J),\wt{\phi}_R(\wt{\eta}([\wt{\tau}_{z,k+1},\infty)))) \geq \delta_1 \diam(A_{z,k+1})$. From now on, we assume that $\delta_1$ is taken to be that small. We next claim that we can choose $\delta_2 > 0$ sufficiently small so that it is likely that we also have that $\dist(\{z_\Left,z_\Right\},\wt{\phi}_R(\wt{\eta}([\wt{\tau}_R,\wt{\tau}_{z,k+1}]))) \geq \delta_2 \diam(A_{z,k+1})$.  Indeed, let $x_{m-1}$ and $x_m$ be the endpoints of $\psi_{z,k+1}^{-1}(J_m)$ in clockwise order for $m=1,\ldots,n(\wt{\delta}_1)$,  where $x_0 = z_\Left$ and $x_{n(\wt{\delta}_1)} = z_\Right$.  Then Lemma~\ref{lem:radial_sle_hits_a_ball} applied to $\psi_{z,k+1} \circ \wt{\phi}_R(\wt{\eta}_1|_{[\wt{\tau}_R,0]})$ (note $\psi_{z,k+1} \circ \wt{\phi}_R = \wt{\psi}_{\wt{\tau}_R}$) implies that we can find $\delta_2 \in (0,1)$ sufficiently small such that conditional on $\CG_{\tau_{z,k}}$ and on the event $B_{z,k}$ we have that $\wt{\phi}_R(\wt{\eta}([\wt{\tau}_R,0])) \cap B(x_m ,  \delta_2\diam(A_{z,k+1})) = \emptyset$ for each $m=0,\ldots,n(\wt{\delta}_1)$ with probability at least $1-p_0/100$.  In particular,  we have that $\dist(\{z_\Left,z_\Right\},\wt{\phi}_R(\wt{\eta})) \geq \delta_2 \diam(A_{z,k+1})$. Combining this with Lemma~\ref{lem:lower_bound_F1} gives the result.
\end{proof}

The next lemma states that $I$ is likely to satisfy conditions~\eqref{it:lower_bound_natural_measure} and~\eqref{it:measure_diameter_bound}.

\begin{lemma}\label{lem:lower_bound_F3}
Fix $a,p_0 \in (0,1)$ and let $R,A,C,\delta,\epsilon,\delta_1,\delta_2$ be as above. There exists $M_0^* \geq 1$ such that for all $M_0 \geq M_0^*$, the following holds. Define the event
\begin{align*}
	F_3 = F_2 \cap \{ \nmeasure{\wt{\phi}_R(\wt{\eta})}(I) \geq M_0^{-1} \diam(A_{z,k+1})^{3/2} \} \cap \{ \nmeasure{\wt{\phi}_R(\wt{\eta})}(Y \cap I) \leq M_0 \diam(Y)^{3/2 - a},  \forall Y \ \text{Borel} \}.
\end{align*}
Then,
\begin{align*}
	\p[ F_3 \giv \CG_{\tau_{z,k}}] \one_{B_{z,k}} \geq (1-p_0/10) \one_{B_{z,k}}.
\end{align*}
\end{lemma}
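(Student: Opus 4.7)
The plan is to work conditionally on $\CG_{\tau_{z,k}}$ and on the event $B_{z,k}$, and to verify each of the two additional conditions defining $F_3$ with conditional probability at least $1 - p_0/40$ on $F_2$, then take a union bound together with Lemma~\ref{lem:lower_bound_F2}.  Two ingredients are common to both conditions.  First, Koebe distortion (as in the proof of Lemma~\ref{lem:separation_past_future}) gives a constant $c_R > 0$ so that $c_R^{-1} \diam(A_{z,k+1}) \leq |\wt{\phi}_R'(w)| \leq c_R \diam(A_{z,k+1})$ uniformly for $w$ in the relevant compact region of $\C \setminus \wt{\eta}((-\infty,\wt{\tau}_R])$.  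Second, since $d_4 = 3/2$, the conformal covariance of the natural parameterization combined with this bound yields
\[
\nmeasure{\wt{\phi}_R(\wt{\eta})}(\wt{\phi}_R(A)) = \int_A |\wt{\phi}_R'(w)|^{3/2}\, d\nmeasure{\wt{\eta}}(w) \asymp \diam(A_{z,k+1})^{3/2}\, \nmeasure{\wt{\eta}}(A)
\]
for every Borel set $A$ in this region.

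For the lower bound on $\nmeasure{\wt{\phi}_R(\wt{\eta})}(I)$, I would exploit the H\"older regularity from $F_0 \supseteq F_2$.  By construction $|w_1 - w_0| = \delta_1 \diam(A_{z,k+1})$, so Koebe distortion for $\wt{\phi}_R^{-1}$ gives $|\wt{\eta}(t_{z,k+1}^1) - \wt{\eta}(t_{z,k+1}^0)| \asymp \delta_1$, and the H\"older estimate in $F_0$ then yields $t_{z,k+1}^1 - t_{z,k+1}^0 \gtrsim \delta_1^2$.  Since $\wt{\eta}$ has the natural parameterization, this quantity equals $\nmeasure{\wt{\eta}}(\wt{\eta}([t_{z,k+1}^0, t_{z,k+1}^1]))$, and the comparison above therefore gives $\nmeasure{\wt{\phi}_R(\wt{\eta})}(I) \gtrsim \delta_1^2 \diam(A_{z,k+1})^{3/2}$.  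Choosing $M_0$ large enough in terms of $\delta_1$, $R$, and universal constants ensures the first condition holds on all of $F_2$.

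For the upper bound on $\nmeasure{\wt{\phi}_R(\wt{\eta})}(Y \cap I)$, I would apply a whole-plane analogue of Lemma~\ref{lem:natural_good_whole_plane} to the compact piece $\wt{\eta}([-A-1, A+1])$.  The identical argument as in Lemma~\ref{lem:natural_good_whole_plane} goes through: the multi-point hitting probability estimate from \cite{rz2017higher} extends to $\wt{\eta}$ on this compact set via mutual absolute continuity of $\wt{\eta}_1$, $\wt{\eta}_2$ with chordal $\SLE_4$ on regions bounded away from the target points (using~\cite[Theorem~3]{sw2005coordinate} for the whole-plane piece together with the conformal Markov property for $\wt{\eta}_2$).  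This yields a constant $\wt{C}$ such that with conditional probability at least $1 - p_0/40$, every Borel $Y' \subseteq \wt{\eta}([-A-1, A+1])$ satisfies $\nmeasure{\wt{\eta}}(Y') \leq \wt{C} \diam(Y')^{3/2-a}$.  Given a Borel $Y \subseteq \C$, setting $Y' = \wt{\phi}_R^{-1}(Y \cap I)$ and using Koebe to bound $\diam(Y') \lesssim \diam(Y)/\diam(A_{z,k+1})$, the comparison above then gives
\[
\nmeasure{\wt{\phi}_R(\wt{\eta})}(Y \cap I) \lesssim \diam(A_{z,k+1})^{3/2}\, \wt{C}\, \diam(Y')^{3/2-a} \lesssim \diam(A_{z,k+1})^{a}\, \diam(Y)^{3/2-a}.
\]
Since by Koebe and the definition of $\tau_{z,k+1}^*$ we have $\diam(A_{z,k+1}) \leq 4e^{-1}\confrad(z,\h_{\tau_{z,k+1}^*}) \leq 4e^{-5} < 1$, the factor $\diam(A_{z,k+1})^a$ is bounded, and taking $M_0$ large enough gives the second condition.

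The main obstacle is justifying the extension of Lemma~\ref{lem:natural_good_whole_plane} from chordal $\SLE_4$ to the two-sided whole-plane $\SLE_4$ process $\wt{\eta}$.  Although this should follow routinely from the decomposition $\wt{\eta} = \wt{\eta}_1 \cup \wt{\eta}_2$ and standard absolute-continuity arguments, carefully tracking the Radon-Nikodym derivatives on the relevant compact time window in order to obtain uniform constants requires some bookkeeping.
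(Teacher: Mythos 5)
Your argument mirrors the paper's proof: the lower bound on $\nmeasure{\wt{\phi}_R(\wt{\eta})}(I)$ comes from the $1/2$-H\"older modulus built into $F_0$ together with the Koebe bound $|\wt{\phi}_R'| \asymp \diam(A_{z,k+1})$ established in Lemma~\ref{lem:separation_past_future}, and the upper bound comes from the conformal covariance of the natural parameterization together with a Lemma~\ref{lem:natural_good_whole_plane}-type estimate applied to the relevant arc of $\wt{\eta}$ and then transported by Koebe (with $\diam(A_{z,k+1})^a < 1$ absorbing the extra factor). The extension you flag at the end is not a gap particular to your write-up — the paper invokes Lemma~\ref{lem:natural_good_whole_plane} (stated for chordal $\SLE_4$) for the whole-plane curve $\wt{\eta}$ with the same brevity, citing only ``arguing as in the proof of Lemma~\ref{lem:first_crossing_number_of_squares}'' — and it is resolved exactly as you indicate: the arc of $\wt{\eta}$ that contributes to $I$ lies, on $F_0$, in a compact time window of $(-A,0)$ bounded away from $0$, so $\wt{\psi}_{\wt{\tau}_R}(\wt{\eta}_1)$ on this window is a radial $\SLE_4(2)$ which, by \cite[Theorem~3]{sw2005coordinate}, is mutually absolutely continuous with chordal $\SLE_4$ with a Radon--Nikodym derivative whose tail can be controlled, letting one import the multi-point Green's function estimate underlying Lemma~\ref{lem:natural_good_whole_plane}.
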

\begin{proof}
By an argument to the one used in Lemma~\ref{lem:separation_past_future} and noting that $\nmeasure{\wt{\phi}_R(\wt{\eta})}(dw) = |\wt{\phi}_R'(\wt{w})|^{3/2} \nmeasure{\wt{\eta}}(d\wt{w})$,  we obtain that the conditional probability of $F_1$ and the event that
\begin{equation}
\label{eqn:good_interval_nat_lbd}
\nmeasure{\wt{\phi}_R(\wt{\eta})}(I) \gtrsim \delta_1^2 \diam(A_{z,k+1})^{3/2}	
\end{equation}
given $\CG_{\tau_{z,k}}$ and on $B_{z,k}$ is at least $1-p_0/24$,  where the implicit constant depends only on~$C$ and~$C_R$ (recall the proof of Lemma~\ref{lem:separation_past_future}).  Thus, choosing $M_0$ large enough so that $M_0^{-1}$ is at most $\delta_1^2$ times the implicit constant we need only care about the event about the upper bound. 

Moreover, by arguing as in the proof of Lemma~\ref{lem:first_crossing_number_of_squares} and using Lemma~\ref{lem:natural_good_whole_plane}, we have (by possibly increasing the value of $M_0$) that on $F_2 \cap \{ \nmeasure{\wt{\phi}_R(\wt{\eta})}(I) \geq M_0^{-1} \diam(A_{z,k+1})^{3/2} \}$, the inequality $\nmeasure{\wt{\phi}_R(\wt{\eta})}(Y \cap I) \leq M_0 \diam(Y)^{3/2 - a}$ holds for all Borel sets $Y$. Thus the result follows.
\end{proof}

For the next lemma, we consider the following. Let $\wt{\h}_\Left$ (resp.\ $\wt{\h}_\Right$) be the connected component of $\h \setminus (\eta([0,\tau_{z,k+1}^*]) \cup \wt{\phi}_R(\wt{\eta}))$ to the left (resp.\ right) of $\eta([0,\tau_{z,k+1}^*]) \cup \wt{\phi}_R(\wt{\eta})$ and let $\wt{\CW}_q$ be a Whitney square decomposition of $\wt{\h}_q$, $q \in \{\Left,\Right\}$, chosen in some measurable way. 

\begin{lemma}\label{lem:lower_bound_F4}
Fix $a,p_0 \in (0,1)$ and let $R,A,C,\delta,\epsilon,\delta_1,\delta_2$ be as above. There exists $M_0 > 0$ such that the following holds for all $M \geq M_0$. Let $F_4$ be the intersection of $F_3$ and the event that there exists a set $I_0 \subset I$ such that $\nmeasure{\wt{\phi}_R(\wt{\eta})}(I \setminus I_0) \leq \nmeasure{\wt{\phi}_R(\wt{\eta})}(I)/100$ such that for $q \in \{\Left,\Right\}$ and for each $w \in I_0$ and each $\wt{Q}$ which is intersected by $\gamma_{z_q,w}^{\wt{\h}_q}$ (recall Section~\ref{subsec:whitney_hyperbolic}), the following holds
\begin{align}\label{eq:good_square_half_plane}
	\disthyp^{\wt{\h}_q}(z_q,\cen(\wt{Q})) \leq M (\len(\wt{Q})/\diam(A_{z,k+1}))^{-a}.
\end{align}
Then
\begin{align*}
	\p[ F_4 \giv \CG_{\tau_{z,k}}] \one_{B_{z,k}} \geq (1-p_0/8) \one_{B_{z,k}}.
\end{align*}
\end{lemma}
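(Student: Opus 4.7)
The plan is to pull the problem back to the whole-plane picture via $\wt{\phi}_R^{-1}$, apply Lemma~\ref{lem:good_fraction_whole_plane} to the two-sided whole-plane $\SLE_4$ process $\wt{\eta}$, and transfer the conclusion back to $\wt{\h}_q$ using the conformal invariance of the hyperbolic metric together with the comparisons between hyperbolic distance and Whitney graph distance from Section~\ref{subsec:whitney_hyperbolic}. Let $V_q$ denote the component of $\C \setminus \wt{\eta}$ containing $\wt{z}_q \coloneqq \wt{\phi}_R^{-1}(z_q)$; since $\wt{\eta}$ is simple, $\wt{\phi}_R$ restricts to a conformal isomorphism $V_q \to \wt{\h}_q$, so $\gamma_{z_q,w}^{\wt{\h}_q} = \wt{\phi}_R(\gamma_{\wt{z}_q,\wt{w}}^{V_q})$ with $\wt{w} = \wt{\phi}_R^{-1}(w)$, and hyperbolic distances are preserved. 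The Koebe distortion theorem, combined with the normalizations $\wt{\phi}_R(0) = z$ and $|\wt{\phi}_R(\wt{\eta}(\wt{\tau}_R)) - z| \asymp \diam(A_{z,k+1})$, yields $|\wt{\phi}_R'| \asymp \diam(A_{z,k+1})$ on any bounded subregion of $V_q$ which stays a bounded positive distance from $\partial V_q$. Combining this with~\eqref{eq:derivative_diameter_relation} and~\eqref{eq:dist_hyp_cube_comparable}, one finds that in such a region, the condition~\eqref{eq:good_square_half_plane} on a Whitney square $\wt{Q} \in \wt{\CW}_q$ is equivalent, up to adjusting $M$ by a universal constant, to the pulled-back condition $\disthyp^{V_q}(\wt{z}_q, \cen(Q)) \leq M' \len(Q)^{-a}$ for every Whitney square $Q \in \CW_q$ that $\gamma_{\wt{z}_q,\wt{w}}^{V_q}$ meets.

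Next I would apply Lemma~\ref{lem:good_fraction_whole_plane} to $\wt{\eta}$. The statement there uses reference point $1$ and time interval $[0,1]$, whereas here the reference point is $\wt{z}_q$ (random) and the relevant interval is $[t_{z,k+1}^0, t_{z,k+1}^1]$. Combining the scale invariance of $\SLE_4$ (under which the natural parameterization scales as $\lambda^{3/2}$) with the shift-invariance of $\wt{\eta}$ from~\cite{zhan2019holder}, one obtains the analogous conclusion for any deterministic reference point in the containing component and any deterministic interval, at the cost of inflating $C$. On $F_3$, the points $\wt{z}_\Left, \wt{z}_\Right$ both lie in a bounded subregion $K^*$ of $V_q$ which is a bounded positive distance from $\partial V_q \cup \wt{\eta}((-\infty, \wt{\tau}_R])$, and $|t_{z,k+1}^1 - t_{z,k+1}^0|$ is bounded above and below in terms of the fixed parameters (by the argument in the proof of Lemma~\ref{lem:separation_past_future}). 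Covering $K^*$ by a finite collection of deterministic balls and $[-A,A]$ by finitely many deterministic subintervals, then applying the rescaled lemma to each with $\delta = 1/400$ and $p$ close enough to~$1$, a union bound produces a single constant $M'$ such that, with probability at least $1 - p_0/40$ given $\CG_{\tau_{z,k}}$ on $B_{z,k}$, simultaneously for both $q \in \{\Left, \Right\}$, the Lebesgue measure of the set of $t \in [t_{z,k+1}^0, t_{z,k+1}^1]$ for which every $Q \in \CW_q$ hit by $\gamma_{\wt{z}_q, \wt{\eta}(t)}^{V_q}$ satisfies the pulled-back condition is at least $(1 - 1/200)|t_{z,k+1}^1 - t_{z,k+1}^0|$.

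Transferring via $\wt{\phi}_R$ converts the Lebesgue good-time set into a subset $I_0 \subseteq I$; by conformal covariance of the natural parameterization, $\nmeasure{\wt{\phi}_R(\wt{\eta})}(\wt{\phi}_R(B)) = \int_B |\wt{\phi}_R'|^{3/2} \, d\nmeasure{\wt{\eta}}$, and since $|\wt{\phi}_R'| \asymp \diam(A_{z,k+1})$ on the relevant region, the ratio $\nmeasure{\wt{\phi}_R(\wt{\eta})}(I \setminus I_0) / \nmeasure{\wt{\phi}_R(\wt{\eta})}(I)$ is comparable up to universal constants to the corresponding Lebesgue ratio, hence is at most $1/100$ after adjusting constants. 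Combining with the estimate $\p[F_3^c \giv \CG_{\tau_{z,k}}] \one_{B_{z,k}} \leq (p_0/10) \one_{B_{z,k}}$ from Lemma~\ref{lem:lower_bound_F3} and taking $M$ large enough in terms of $M'$ and the implicit Koebe constants yields the stated bound on $\p[F_4^c \giv \CG_{\tau_{z,k}}] \one_{B_{z,k}}$. The main obstacle will be ensuring that the hyperbolic geodesic $\gamma_{z_q,w}^{\wt{\h}_q}$ stays in the region where the Koebe bound $|\wt{\phi}_R'| \asymp \diam(A_{z,k+1})$ holds; this is controlled by observing that on $F_3$ the quasihyperbolic distance from $z_q$ to $w$ in $\wt{\h}_q$ is bounded in terms of $\delta_1, \delta_2$, so by~\eqref{eq:dist_qh_cube_comparable}--\eqref{eq:dist_hyp_cube_comparable} the geodesic visits only Whitney squares lying in a controlled tubular neighborhood of $\wh{I}$. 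A secondary technical point is handling the random reference point $\wt{z}_q$ when applying Lemma~\ref{lem:good_fraction_whole_plane}, which the finite covering argument above resolves.
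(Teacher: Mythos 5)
Your overall plan---pull back via $\wt{\phi}_R^{-1}$ to the whole-plane picture, apply the argument of Lemma~\ref{lem:good_fraction_whole_plane} at deterministic reference points via a grid/union bound using the shift-invariance of $\wt{\eta}$, and then transfer the Whitney-square condition back to $\wt{\h}_q$ via conformal covariance of the hyperbolic metric, Koebe estimates, and conformal covariance of the natural parameterization---is the same route the paper takes, and most of the intermediate steps (the $|\wt{\phi}_R'|\asymp\diam(A_{z,k+1})$ bound on the relevant region, the use of $\len(\wt{Q})/\len(Q)\asymp\diam(A_{z,k+1})$, and the use of the lower bound on $\nmeasure{\wt{\phi}_R(\wt{\eta})}(I)$ from $F_3$ to convert the Lebesgue-fraction bound into the $\nmeasure{\cdot}$-fraction bound) are identified correctly.

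There is however a genuine gap in the passage from grid points to the random reference points. You claim that the finite covering of $K^*$ by deterministic balls and a union bound ``resolves'' the issue of the random reference point $\wt{z}_q=\wt{\phi}_R^{-1}(z_q)$. This is not quite so: knowing that for a nearby deterministic grid point $z_j$ the geodesic $\gamma_{z_j,\wt{\eta}(t)}^{\C_q}$ hits only $(M,a)$-good squares does not by itself imply the same for $\gamma_{\wt{z}_q,\wt{\eta}(t)}^{\C_q}$. These are \emph{different} geodesics to the same boundary point, and a Whitney square hit by one could a priori be hyperbolically far from every square hit by the other. What is needed is a geodesic-stability statement: if $\disthyp^{\C_q}(z_j,\wt{z}_q)\leq C$, then $\disthyp^{\C_q}\bigl(\gamma_{z_j,\wt{\eta}(t)}^{\C_q}(s),\gamma_{\wt{z}_q,\wt{\eta}(t)}^{\C_q}(s)\bigr)$ stays bounded uniformly in $s$. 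This is exactly what Lemma~\ref{lem:hyperbolic_geodesics_close} provides, and the paper invokes it at this precise step. The Whitney/quasihyperbolic comparisons~\eqref{eq:dist_hyp_qh_comparable}--\eqref{eq:dist_hyp_cube_comparable} you cite compare \emph{distances}, not geodesics, and do not supply the required stability.

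A smaller point: you write that ``on $F_3$ the quasihyperbolic distance from $z_q$ to $w$ in $\wt{\h}_q$ is bounded in terms of $\delta_1,\delta_2$.'' Since $w\in I_0\subset I$ is a boundary point of $\wt{\h}_q$, that quasihyperbolic distance is infinite, so this cannot be the localization mechanism. The correct statement (and what the paper uses) is that the Euclidean diameter of $\gamma_{z_q,w}^{\wt{\h}_q}$ is $\lesssim\delta_1\diam(A_{z,k+1})$, so the geodesic remains inside $A_{z,k+1}$, where the Koebe bound on $|\wt{\phi}_R'|$ holds; this is again a Gehring--Hayman type fact rather than a consequence of a finite quasihyperbolic distance.
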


\begin{proof}
Assume that we are working on $F_3$. We can find $r>0$ large (depending only on $R$) and by decreasing the value of $\delta_2 \in (0,1)$ if necessary we have that $\wt{\phi}_R^{-1}(B(z,c_1 \diam(A_{z,k+1}))) \subseteq B(0,r)$ and $\dist(\wt{\phi}_R^{-1}(z_\Left),\wt{\eta}) \geq C_R^{-1}\delta_2$,  $\dist(\wt{\phi}_R^{-1}(z_\Right),\wt{\eta}) \geq C_R^{-1} \delta_2$ (where $c_1$ and $C_R$ are as in the proof of Lemma~\ref{lem:separation_past_future}).  

We let $\C_\Left$ (resp.\ $\C_\Right$) denote the connected of $\C \setminus \wt{\eta}$ which is to the left (resp.\ right) of $\wt{\eta}$ and let $\CW_q$, $q \in \{\Left,\Right\}$, be a Whitney square decomposition of $\C_q$ chosen in some fixed measurable way. We fix some $\Ff \in (0,1)$ (to be determined later) and $p_1 \in (0,1)$ and note that the proof of Lemma~\ref{lem:good_fraction_whole_plane} combined with the translation invariance property of $\wt{\eta}$ when parameterized according to the natural parameterization implies that there exists $M>0$ such that conditional on $\CG_{\tau_{z,k}}$ and on the event $B_{z,k}$, with probability at least $1-p_1$ the following holds for sufficiently small $\delta_2$.  Fix a point $z'$ in $B(0,r) \setminus B(0,r_0)$ for some $r_0 \in (0,r)$.  Then $\dist(z',\wt{\eta}) \geq C_R^{-1} \delta_2$, and if we let $q \in \{ \Left,\Right \}$ be such that $z' \in \C_q$ and $X$ be the set of $t \in [-A,0]$ such that
\begin{align}\label{eq:good_squares_whole_plane}
	\disthyp^{\C_q}(\cen(Q),z') \leq M \len(Q)^{-a},
\end{align}
for each $Q \in \CW_q$ which is intersected by the hyperbolic geodesic in $\C_q$ from $z'$ to $\wt{\eta}(t)$. Then $\Leb_1(X) \geq (1-\Ff)A$. Thus, picking $p_1 > 0$ extremely small, $M > 0$ appropriately large, considering a fine grid of points $(z_j)$ in $B(0,r)$ and taking a union bound, it holds that conditional on $\CG_{\tau_{z,k}}$ and on the event $B_{z,k}$, we have that with probability at least $1-p_0/100$, we can find points $z_\Left' \in \C_\Left \cap B(0,r)$ and $z_\Right' \in \C_\Right \cap B(0,r)$ such that $\dist(\{z_\Left',z_\Right'\},\wt{\eta}) \geq C_R^{-1} \delta_2$, so that if $X_q$ is the set of $t \in [-A,0]$ such that for each $Q \in \CW_q$ intersected by $\gamma_{z_q',\wt{\eta}(t)}^{\C_q}$,~\eqref{eq:good_squares_whole_plane} holds  with $z' = z_q'$, then $\Leb_1(X_q) \geq (1-\Ff)A$. 

Thus, letting $X = X_\Left \cap X_\Right$, we may choose $\Ff \in (0,1)$ so that $\nmeasure{\wt{\phi}_R(\wt{\eta})}(I \cap \wt{\phi}_R( \wt{\eta}([-A,0] \setminus X))) \leq \nmeasure{\wt{\phi}_R(\wt{\eta})}(I) / 100$ and applying Lemma~\ref{lem:hyperbolic_geodesics_close}, we have that~\eqref{eq:good_squares_whole_plane} holds with $z' = z_\Left$ and $z' = z_\Right$ by possibly increasing $M$ by a universal constant. Hence we are done if we can show that~\eqref{eq:good_square_half_plane} follows from~\eqref{eq:good_squares_whole_plane}, so that we can set $I_0 = I \cap \wt{\phi}_R(\wt{\eta}( X))$.

Suppose that the intersection of $F_3$ and the above event occurs (the conditional probability of this given $\CG_{z,k}$ and the event $B_{z,k}$ is at least $1-p_0/8$).  Note that $z_\Right \in \wt{\h}_\Right$ since in order for $z_\Right$ to lie in $\wt{\h}_\Left$,  the curve $\wt{\phi}_R(\wt{\eta})([\wt{\tau}_{z,k+1},\infty))$ must separate $z_\Right$ from the right side of $\wh{I}$ and the latter can only happen when $\wt{\phi}_R(\wt{\eta})([\wt{\tau}_{z,k+1},\infty))$ comes within distance $\delta_1\diam(A_{z,k+1})$ of $z_\Right$ since $\dist(z_\Right,\wh{I}) \leq \delta_1\diam(A_{z,k+1})$ but the choice of~$\delta_1$ implies that this cannot happen.  Similarly, we have that $z_\Left \in \wt{\h}_\Left$. Fix $\wt{w}_0 \in I \cap \wt{\phi}_R(\wt{\eta}(X))$ and let $\wt{Q} \in \wt{\CW}_\Right$,  $\wt{z} \in \wt{Q}$ be such that $\gamma_{z_\Right,\wt{w}_0}^{\wt{\h}_\Right}(s) = \wt{z}$ for some $s\geq0$.  Note that $\diam(\gamma_{z_\Right,\wt{w}_0}^{\wt{\h}_\Right}) \leq c_1 \delta_1 \diam(A_{z,k+1})$ for some universal constant $c_1>0$ and so by taking $\delta_1 > 0$ sufficiently small we have that $\gamma_{z_\Right,\wt{w}_0}^{\wt{\h}_\Right} \subseteq A_{z,k+1}$.  This implies that $ \len(\wt{Q}) \leq \diam(A_{z,k+1})$ and so we can assume that $\disthyp^{\wt{\h}_\Right}(z_\Right,\wt{Q}) \geq 1$.  Let $Q \in \CW_\Right$ be such that $\wt{w} = \wt{\phi}_R^{-1}(\wt{z}) \in Q$.  Then we have that $\wt{w} \in \gamma_{\wt{\phi}_R^{-1}(z_\Right),w_0}^{\C_\Right}$ where $w_0 = \wt{\phi}_R^{-1}(\wt{w}_0) \in \wt{\eta}(X)$ and so $\disthyp^{\C_\Right}(\wt{\phi}_R^{-1}(z_\Right),\cen(Q)) \leq M\len(Q)^{-a}$.  This implies that
\begin{align*}
	\disthyp^{\wt{\h}_\Right}(z_\Right,\wt{Q}) &\leq \disthyp^{\wt{\h}_\Right}(z_\Right,\wt{z}) = \disthyp^{\C_\Right}(\wt{\phi}_R^{-1}(z_\Right),\wt{w}) \\
	&\leq \disthyp^{\C_\Right}(\wt{\phi}_R^{-1}(z_\Right),\cen(Q))+1 \leq M\len(Q)^{-a} + 1 
\end{align*}
since $\disthyp^{\C_\Right}(\wt{x},\wt{y}) \leq 1$ for each $\wt{x},\wt{y} \in Q$.  It follows that $\disthyp^{\wt{\h}_\Right}(z_\Right,\cen(\wt{Q})) \lesssim \len(Q)^{-a}$.  Moreover~\eqref{eq:derivative_diameter_relation} implies that $\len(\wt{Q})/\len(Q) \asymp \diam(A_{z,k+1})$.  It follows that
\[ \disthyp^{\wt{\h}_\Right}(z_\Right,\cen(\wt{Q}))\lesssim ( \len(\wt{Q})/\diam(A_{z,k+1}))^{-a}\]
where the implicit constant depends only on the previously chosen constants. Analogously, the same holds with $z_\Left$ and $\wt{\h}_\Left$ in place of $z_\Right$ and $\wt{\h}_\Right$ and thus the proof is done.
\end{proof}

The final ingredient in the proof of Lemma~\ref{lem:first_excursion_good} will be the following, which transfers the result of Lemma~\ref{lem:lower_bound_F4} to domains $W$ as in~\eqref{it:x_y_pos}.
\begin{lemma}\label{lem:change_of_domains}
Fix $a,p_0 \in (0,1)$ and let $R,A,C,\delta,\epsilon,\delta_1,\delta_2$ be as above. There exists $M_0 > 0$ such that if $F_5$ is the intersection of $F_4$ and the event that~\eqref{eq:good_square_half_plane} holds with any $W$ as in~\eqref{it:x_y_pos} in place of $\wt{\h}_q$. Then,
\begin{align*}
	\p[ F_5 \giv \CG_{\tau_{z,k}}] \one_{B_{z,k}} \geq (1-p_0/5) \one_{B_{z,k}}.
\end{align*}
\end{lemma}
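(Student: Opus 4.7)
The plan is to work on the event $F_4$ of Lemma~\ref{lem:lower_bound_F4}, on which the Whitney-square hyperbolic distance estimate~\eqref{eq:good_square_half_plane} holds for every Whitney square of $\wt{\h}_q$ hit by the geodesic $\gamma_{z_q,w}^{\wt{\h}_q}$ for $w \in I_0$. Fix $q = \Right$ (the case $q = \Left$ is symmetric), let $W$ be admissible as in~\eqref{it:x_y_pos}, and let $\CW$ be a Whitney decomposition of $W$. Write $\gamma^W = \gamma_{z_\Right,w}^W$ and $\gamma^{\wt\h} = \gamma_{z_\Right,w}^{\wt\h_\Right}$. The key geometric input is the inclusion $\wt\h_\Right \cap A_{z,k+1}^* \subseteq W$, which gives (by monotonicity of the hyperbolic metric under inclusion, i.e., Schwarz's lemma)
\begin{align*}
\disthyp^W(p_1,p_2) \leq \disthyp^{\wt\h_\Right \cap A_{z,k+1}^*}(p_1,p_2)\quad \text{for all}\quad p_1,p_2 \in \wt\h_\Right \cap A_{z,k+1}^*,
\end{align*}
and also, since $z_\Right$ and points near $I$ sit far from $\pout A_{z,k+1}$, restriction from $\wt\h_\Right$ to $\wt\h_\Right \cap A_{z,k+1}^*$ changes the hyperbolic metric locally only by a bounded multiplicative factor depending on the implicit constants of $F_4$.

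For a square $\wt Q' \in \CW$ intersected by $\gamma^W$, I would split into two cases. In \textbf{Case A}, $\len(\wt Q') \geq \delta_1^{1/2}\diam(A_{z,k+1})$: then $\wt Q'$ is at Euclidean distance $O(\diam(A_{z,k+1}))$ from $z_\Right$ with $\dist(\wt Q', \partial W) \asymp \len(\wt Q') \gtrsim \delta_1^{1/2}\diam(A_{z,k+1})$, so via~\eqref{eq:dist_hyp_qh_comparable} we get $\disthyp^W(z_\Right, \cen(\wt Q')) = O(\log \delta_1^{-1})$, which is dominated by $M(\len(\wt Q')/\diam(A_{z,k+1}))^{-a} \geq M\delta_1^{-a/2}$ once $M$ is chosen large enough depending on $\delta_1$. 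In \textbf{Case B}, $\len(\wt Q') < \delta_1^{1/2}\diam(A_{z,k+1})$: then $\wt Q'$ lies within distance $O(\delta_1^{1/2}\diam(A_{z,k+1}))$ of $\partial W$, which well inside $B(z_\Right, \delta_1^{1/4}\diam(A_{z,k+1}))$ coincides with parts of $\eta \cup \wt\phi_R(\wt\eta)$ (the artificial boundary of $D_\Right$ is far from $\gamma^W$). Using the separation properties of $F_2$ together with the fact that $\gamma^W$ terminates at $w \in I \subseteq \wt\phi_R(\wt\eta)$, the square $\wt Q'$ is in fact close to $\wt\phi_R(\wt\eta)$ near $I$, where $\partial W$ and $\partial \wt\h_\Right$ agree. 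A Koebe-distortion comparison of the two domains in this region produces a Whitney square $\wt Q \in \wt\CW_\Right$ of comparable size at a comparable location such that $\wt Q$ is intersected by a hyperbolic geodesic in $\wt\h_\Right$ corresponding (under the near-boundary identification) to $\gamma^W$, and such that $\len(\wt Q) \asymp \len(\wt Q')$.

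The estimate from $F_4$ applied to $\wt Q$ gives $\disthyp^{\wt\h_\Right}(z_\Right, \cen(\wt Q)) \leq M(\len(\wt Q)/\diam(A_{z,k+1}))^{-a}$, and transferring back to $W$ via the monotonicity and the comparability stated above completes the bound~\eqref{eq:good_square_half_plane} for $\wt Q'$ at the cost of absorbing universal multiplicative constants into $M$. The main obstacle is Case B: one must verify that for each small $\wt Q'$ hit by $\gamma^W$, the corresponding $\wt Q$ in $\wt\h_\Right$ is actually hit by the geodesic $\gamma^{\wt\h}$ and that $\wt Q$ falls under the estimate of $F_4$ (i.e., corresponds to $w$ in $I_0$, up to harmlessly shrinking $I_0$). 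This is where the Koebe distortion bounds on the identification between the two boundary-coincident regions, together with a slight enlargement of the exceptional set in the definition of $I_0$ (absorbed into the $p_0/5$ bound via Lemma~\ref{lem:lower_bound_F4}'s $p_0/8$ bound), become essential.
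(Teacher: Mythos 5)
Your proposal has a genuine gap. The central geometric input you invoke, namely $\wt\h_\Right \cap A_{z,k+1}^* \subseteq W$, is false in general: $W$ is only required to be a simply connected subdomain of $A_{z,k+1}^*$ containing the component of $B(z_\Right,\delta_1^{1/4}\diam(A_{z,k+1})) \setminus \eta([0,\tau_{z,k+1}])$ that contains $z_\Right$, and it may be dramatically smaller than $\wt\h_\Right \cap A_{z,k+1}^*$ (for instance, $W$ could be exactly that ball component). So the monotonicity step has no foundation, and the uniformity in $W$, which is the entire point of the lemma, is lost.

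More substantively, Case~B glosses over the key difficulty. Knowing that $F_4$ controls Whitney squares of $\wt\h_\Right$ hit by $\gamma_{z_\Right,w}^{\wt\h_\Right}$ does not, on its own, say anything about the Whitney squares of $W$ hit by $\gamma_{z_\Right,w}^{W}$, because the two geodesics live in different domains and may a priori take very different routes. Your ``Koebe-distortion comparison produces a corresponding $\wt Q$ intersected by a hyperbolic geodesic in $\wt\h_\Right$'' is exactly the claim that needs proof, and nothing you've written forces $\gamma_{z_\Right,w}^{\wt\h_\Right}$ to pass near the location of $\wt Q'$. The paper's proof supplies precisely this: it constructs an anchor point $\phi_w^{-1}(ic/4)$ and verifies, via a harmonic-measure event $\wt F_5$ (uniform over the finitely many relevant components of the $\delta_1^{1/2}$-grid squares minus $\wt\eta$), that the local domain $\wt W$ has a bounded hyperbolic diameter from $i$ to $ic/4$ and an inradius $\gtrsim c$ after rescaling. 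That inradius bound is exactly the hypothesis of Lemma~\ref{lem:hyperbolic_geodesics_close}, which then forces the geodesics $\gamma_{z_\Right,w}^{\wt\h_\Right}$, $\gamma_{\phi_w^{-1}(ic/4),w}^{\h_{\tau_{z,k+1}}}$, and $\gamma_{z_\Right,w}^{W}$ to stay within bounded hyperbolic distance of one another \emph{simultaneously for all admissible $W$}, and this is what lets one transfer the $(M,a)$-good estimate from $\wt\h_\Right$-squares to $W$-squares with only a universal change in the constant. Without verifying the inradius condition there is no mechanism that synchronizes the two geodesics, and Case~B collapses.
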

\begin{proof}
The entirety of this proof is dedicated to showing that with sufficiently high probability, we are in the setting to use Lemma~\ref{lem:hyperbolic_geodesics_close}. 
We fix a simply connected domain $W \subseteq A^*_{z,k+1}$ as in condition~\eqref{it:x_y_pos} in the case of $z_\Right$ and suppose that $\CW$ is a Whitney square decomposition of $W$.  We will now apply Lemma~\ref{lem:hyperbolic_geodesics_close} in order to argue that after possibly increasing the value of $M$ (independently of $W$), the hyperbolic geodesics in $W$ from $z_\Right$ to $I$ intersect only squares $Q$ satisfying $\disthyp^W(\cen(Q),z_\Right) \leq M (\len(Q) / \diam(A_{z,k+1}))^{-a}$.  First, we introduce one more event for~$\wt{\eta}$.  We let $\CS$ be the set of squares whose top left corner is in $(\delta_1^{1/2}/4)\Z^2$,  have side length equal to $\delta_1^{1/2}$ and are contained in $B(0,r)$.  Note that $\CS = \{S_1,\ldots,S_m\}$ where~$m$ depends only on~$r$ and~$\delta_1$.  For $1 \leq j \leq m$ we let $U^j_1,\ldots,U^j_{k_j}$ be the connected components of $S_j \setminus \wt{\eta}$ which contain a point whose distance to $\partial S_j \setminus \wt{\eta}$ is at least $C_R^{-1}\delta_2$ and their boundaries also contain an arc of $\wt{\eta}$ of diameter $C_R^{-1}\delta_2$.  Note that for each $1 \leq j \leq m$, $k_j$ is almost surely bounded by a constant depending only on $R,\delta_1,\delta_2$.  For $p \in (0,1)$ fixed,  we let $\wt{F}_5$ be the event that $F_4$ occurs and the following holds.  For each $1 \leq j \leq m$ and $1\leq i \leq k_j$, there is a point in $U_i^j$ from which the harmonic measure in $U_i^j$ of any arc of $\wt{\eta}$ contained in $\partial U^j_i$ of diameter at least $C_R^{-1}\delta_2$, is at least $p$.  We claim that for $p \in (0,1)$ sufficiently small, we have that $\p[\wt{F}_5 \giv  \CG_{\tau_{z,k}}]\one_{B_{z,k}} \geq (1-p_0/7)\one_{B_{z,k}}$. Indeed, we consider conformal transformations $\psi^j_i \colon U_i^j \to \D$ for each $i,j$ chosen in some fixed measurable way.  Then the claim follows by combining the fact that the $\psi^j_i$ extend to homeomorphisms mapping $\closure{U^j_i}$ onto $\closure{\D}$ with the explicit form of the Poisson kernel in $\D$.  Next we let $1 \leq j \leq m$ be such that $\wt{\phi}_R^{-1}(z_\Right) \in S_j$ and $\dist(\wt{\phi}_R^{-1}(z_\Right),\partial S_j) \geq \delta_1^{1/2}/4$.  Let~$\wt{U}$ be the connected component of $S_j \setminus \wt{\eta}$ containing $\wt{\phi}_R^{-1}(z_\Right)$.  Note that $\dist(\wt{\phi}_R^{-1}(z_\Right),\wt{\eta})\geq C_R^{-1}\delta_2$ and so $\dist(\wt{\phi}_R^{-1}(z_\Right),\partial \wt{U}) \geq C_R^{-1}\delta_2$ for $\delta_2 > 0$ sufficiently small.  We set $\wt{J} = \partial \wt{U} \setminus \wt{\eta}$ and $\wh{J} = \wt{\phi}_R(\wt{J})$.  For $\delta_1 > 0$ sufficiently small,  we have that $\wh{J} \subseteq A^*_{z,k+1}$,  $\dist(z_\Right,\wh{J}) \asymp \delta_1^{1/2}\diam(A_{z,k+1})$,  $I \subseteq \wt{\phi}_R(\wt{\eta}\cap \partial \wt{U})$ and that $\wh{J}$ separates $z_\Right$ from $\infty$.  By possibly taking $\delta_2$ to be smaller,  we have that for every arc $K \subseteq \wh{I}$ such that $\diam(K) \geq \delta_2 \diam(A_{z,k+1})$,  it holds that $\diam(\wt{\phi}_R^{-1}(K)) \geq C_R^{-1} \delta_2$.  We fix $w \in I$ such that~$\gamma_{z_\Right,w}^{\wt{\h}_\Right}$ intersects only squares $Q$ satisfying $\disthyp^{\wt{\h}_\Right}(\cen(Q),z_\Right) \leq M (\len(Q)/\diam(A_{z,k+1}))^{-a}$ and we let $\phi_w \colon \h_{\tau_{z,k+1}} \to \h$ be the conformal transformation such that~$\phi_w(w) = 0$ and~$\phi_w(z_\Right) = i$.  We claim that for $c \in (0,1)$ sufficiently small,  we have that $B(0,c) \cap \phi_w(\wh{J}) = \emptyset$.  Indeed,  suppose that $B(0,c) \cap \phi_w(\wh{J}) \neq \emptyset$ and let $\wt{x}$ and $\wt{y}$ be the endpoints of $\phi_w(\wh{J})$, assume by symmetry that $\wt{y}$ is the smallest positive of the two (so that either $\wt{x} < 0 < \wt{y}$ or $0 < \wt{y} < \wt{x}$) and let $\wt{X} = [-\wt{x},0]$ if $\wt{x} < 0$ and $\wt{X} = (-\infty,0] \cup [\wt{x},\infty)$ if $\wt{x} > 0$. Let also $\wt{W}$ be the connected component of $\h \setminus \phi_w(\wh{J})$ which contains $i$.  Since $\wt{F}_5$ occurs and the clockwise arc of $\partial \wt{\phi}_R(\wt{U})$ connecting $\wt{\phi}_R(\partial \wt{U} \setminus \wt{J})$ to $w$ has diameter at least $\delta_1 \diam(A_{z,k+1})$,  it follows that the probability that a Brownian motion starting from $i$ exits $\wt{W}$ in $[0,\wt{y}]$ is at least $p$ and the same holds for $\wt{X}$. Furthermore, since $B(0,c) \cap \phi_w(\wh{J}) \neq \emptyset$, it follows that  either $\wt{X}$ or $[0,\wt{y}]$ is separated from $i$ by $\phi_w(\wt{J}) \cup \partial B(0,c)$. Consequently, in order for a Brownian motion from $i$ to exit $W$ in that particular set, it has to first intersect $B(0,c)$ without exiting $\h$. The probability of this, however, tends to $0$ as $c \to 0$, which contradicts the fact that the probability of exiting in said set is at least $p$. Moreover, we have that $\dist(\phi^{-1}_w(ic/4),\eta) \gtrsim \delta_2 \diam(A_{z,k+1})$ where the implicit constant depends only on $c$ and $\delta_1$ and so we can find $\zeta > 0$ sufficiently small (depending only on $c$ and $\delta_1$) such that $\dist(\wt{z},\wt{\eta}) \geq \zeta$,  where $\wt{z} = \wt{\phi}^{-1}_R(\phi^{-1}_w(ic/4))$.  By possibly taking $\zeta > 0$ to be smaller,  we have that $\dist(\wt{z},\partial \wt{U}) \geq \zeta$.

Now we let $\wt{U}_1,\ldots,\wt{U}_n$ be the connected components of $S_k \setminus \wt{\eta}$ for some $1 \leq k \leq m$ which contain some point whose distance from $\partial S_k \setminus \wt{\eta}$ is at least $\zeta$.  Note that $n < \infty$ a.s. Let $\wt{F}_6$ be the event that $\wt{F}_5$ occurs and the following holds.  For each $1 \leq i \leq n$ and each $x_1,x_2 \in \wt{U}_i$ such that $\dist(\{x_1,x_2\},\partial \wt{U}_i) \geq \zeta$,  we have that $\disthyp^{\wt{U}_i}(x_1,x_2) \leq C$. Arguing as before, we can choose $C > 0$ (depending only on the above implicit constants) such that $\p[\wt{F}_6  \giv   \CG_{\tau_{z,k}}]\one_{B_{z,k}} \geq (1-p_0/5)\one_{B_{z,k}}$.  Suppose that we are working on $\wt{F}_6$.  Then the conformal invariance of the hyperbolic metric implies that $\disthyp^{\wt{W}}(i,ic/4) \leq C$.  Consider the conformal automorphism of $\h$ given by $\wt{\psi}(w) = 2w/c$.  Then $\inrad(\wt{\psi}(\wt{W})) \geq 2$ and so Lemma~\ref{lem:hyperbolic_geodesics_close} implies that there exists a constant $\wt{C}>0$ depending only on $C$ such that $\disthyp^{\phi_w(\wt{\h}_\Right)}(\gamma_{i,0}^{\phi_w(\wt{\h}_\Right)}(t),\gamma_{ic/4,0}^{\h}(t)) \leq \wt{C}$ for each $t \geq 0$.  Since $\wt{\h}_\Right \subseteq \h_{\tau_{z,k+1}}$,  it follows that $\disthyp^{\h_{\tau_{z,k+1}}}(\gamma_{z_\Right,w}^{\wt{\h}_\Right}(t),\gamma^{\h_{\tau_{z,k+1}}}_{\phi^{-1}_w(ic/4),w}(t)) \leq \wt{C}$ for each $t \geq 0$.  Let $\wh{\CW}$ be a Whitney square decomposition of $\h_{\tau_{z,k+1}}$ and fix $\wh{Q} \in \wh{\CW}$ such that $\gamma_{\phi^{-1}_w(ic/4),w}^{\h_{\tau_{z,k+1}}}(t) \in \wh{Q}$ for some $t \geq 0$.  Note that $\dist(\phi^{-1}_w(ic/4),I) \lesssim \delta_1^{1/2} \diam(A_{z,k+1})$ and so for $\delta_1$ small enough we obtain that $\dist(\wh{Q},\partial \h_{\tau_{z,k+1}}) = \dist(\wh{Q},\partial \wt{\h}_\Right)$.  Let $\wt{Q} \in \wt{\CW}_R$ be such that $\gamma_{z_\Right,w}^{\wt{\h}_\Right}(t) \in \wt{Q}$.  Then we have that $\disthyp^{\h_{\tau_{z,k+1}}}(\wh{Q},\wt{Q}) \leq \wt{C}$ and so $\len(\wh{Q}) \asymp \len(\wt{Q})$,  where the implicit constants depend only on $\wt{C}$.  Also,  $\len(\wh{Q}) \leq \diam(A_{z,k+1})$.  Therefore,  arguing as the proof of Lemma~\ref{lem:lower_bound_F4},  we obtain that $\disthyp^{\h_{\tau_{z,k+1}}}(\phi_w^{-1}(ic/4),w) \lesssim M(\len(\wh{Q})/\diam(A_{z,k+1}))^{-a}$.  Note that $\wt{W} \subseteq \phi_w(W)$ and so $\disthyp^{\phi_w(W)}(i,ic/4) \leq C$.  Hence arguing as above by applying Lemma~\ref{lem:hyperbolic_geodesics_close} with $D = \h_{\tau_{z,k+1}}, \wt{D} = W,  z_0 = z_\Right$ and $w_0 = \phi_w^{-1}(ic/4)$,  we obtain that $\disthyp^{W}(z_\Right,\cen(Q)) \lesssim M(\len(Q)/\diam(A_{z,k+1}))^{-a}$ for each $Q \in \CW_{z,k+1}$ such that $\gamma_{z_\Right,w}^{W} \cap Q \neq \emptyset$.
\end{proof}

We now conclude this section with the proof of Lemma~\ref{lem:first_excursion_good}. This will be done by comparing the laws of radial and chordal $\SLE$.
\begin{proof}[Proof of Lemma~\ref{lem:first_excursion_good}]
Let us now compare the laws of radial $\SLE_4(2)$ and chordal $\SLE_4$.  Let $\delta_0,\delta_1,\delta_2, p_0,a \in (0,1)$,  $b \in (1,2)$, and  $\xi_2 \in (0,(e^{-b}-e^{2})/2)$ be fixed and conditional on $\CG_{\tau_{z,k}}$ we let $\p_{z,k}^*$ be the law of a radial $\SLE_4(2)$ process $\wh{\eta}$ in $\h \setminus \eta([0,\tau_{z,k}])$ from $\eta(\tau_{z,k})$ to $z$ with the force point located at $\infty$ and stopped at the first time $\wh{\tau}_{z,k+1}$ that it hits $\bIn A_{z,k+1}$.  Let also $\E_{z,k}^*$ denote expectation with respect to $\p_{z,k}^*$.  Then by Lemmas~\ref{lem:lower_bound_F1}-\ref{lem:change_of_domains}, (replacing $p_0$ with $\delta_0 p_0$) implies that we can find $M>1$ large and $\delta_1 \in (0,1)$ small such that $\p_{z,k}^*[ C_{z,k+1}^c] \one_{B_{z,k}} \leq p_0 \delta_0 \one_{B_{z,k}}$.  Let $M_t$ be as in the proof of Lemma~\ref{lem:extremal_length_good} and let $E_{z,k+1}$ is the event that $\tau_{z,k+1} < \infty$,  $\inf_{\tau_{z,k}\leq t \leq \tau_{z,k+1}} S_t \geq \delta_0$ and $C_{z,k+1}^c$ holds.  As in the proof of Lemma~\ref{lem:extremal_length_good}, we have that
\begin{align*}
\p[ E_{z,k+1}\,\giv \CG_{\tau_{z,k}}] \one_{B_{z,k}} &= \E\left [ \left( \frac{M_{\tau_{z,k+1}}}{M_{\tau_{z,k}}}\right)^{-1} \left( \frac{M_{\tau_{z,k+1}}}{M_{\tau_{z,k}}}\right) \one_{E_{z,k+1}}\,  \giv \CG_{\tau_{z,k}}\right] \one_{B_{z,k}}\\
&= S_{\tau_{z,k}} \E_{z,k}^*\left[ S_{\tau_{z,k+1}}^{-1} \left( \frac{\Upsilon_{\tau_{z,k+1}}}{\Upsilon_{\tau_{z,k}}}\right)^{1/2} \one_{E_{z,k+1}}\right] \one_{B_{z,k}}\\
&\leq \delta_0^{-1} \p_{z,k}^*[E_{z,k+1}] \one_{B_{z,k}} \leq \delta_0^{-1}\p_{z,k}^*[C_{z,k+1}^c]\one_{B_{z,k}} \leq p_0 \one_{B_{z,k}},
\end{align*}
which completes the proof.
\end{proof}

\section{Exploration of rectangle crossings}
\label{sec:rectangle_exploration}

\subsection{Definition of the exploration}
\label{subsec:exploration_definition}

\begin{figure}[ht!]
\begin{center}
\includegraphics[scale=0.9]{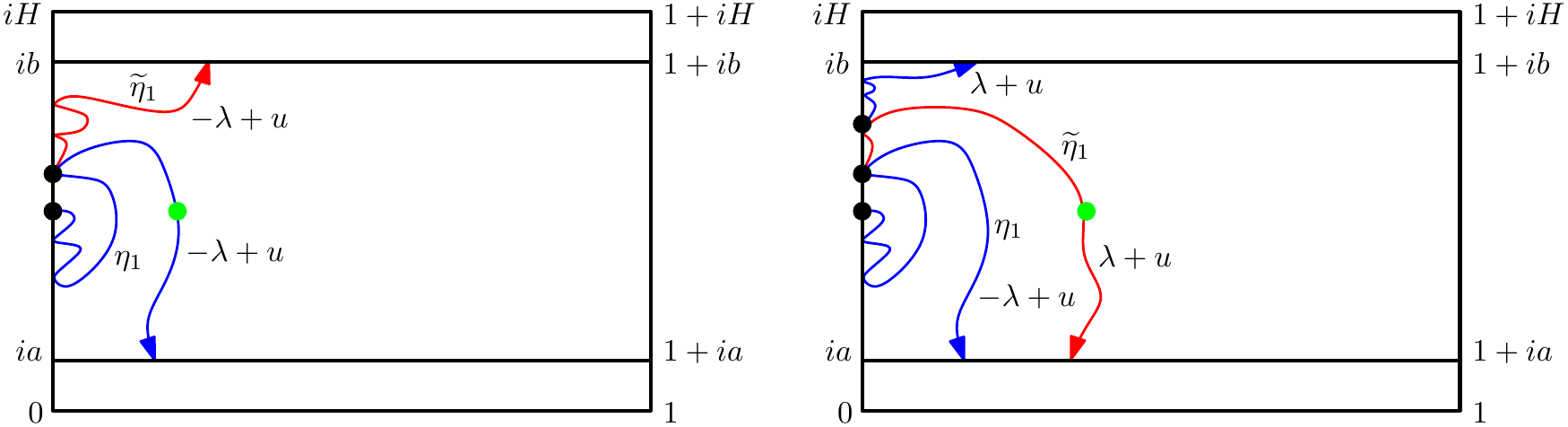}	
\end{center}
\caption{\label{fig:exploration_illustration}  Illustration of the first stage of the exploration in the case that $\eta_1$ exits~$\CR_{a,b}$ in $\pdown \CR_{a,b}$.  Note that the boundary data for $h$ along the left side of $\eta_1$ is $-\lambda+u$.  {\bf Left:} Shown in red is $\wt{\eta}_1$ in the case that it exits $\CR_{a,b}$ in $\pup \CR_{a,b}$.  Since $\wt{\eta}_1$ is a level line of $-h+u$, the boundary data for $h$ along its right side is $-\lambda+u$.  Altogether, the exploration has discovered a crossing $L_1$ so that the field boundary data on its right side (going from $\pdown \CR_{a,b}$ to $\pup \CR_{a,b}$) is $-\lambda+u$.  {\bf Right:} Shown in red is $\wt{\eta}_1$ in the case that it exits~$\CR_{a,b}$ in $\pdown \CR_{a,b}$ and the next level line in the exploration exits~$\CR_{a,b}$ in $\pup \CR_{a,b}$.  Altogether, the exploration has discovered a crossing $L_1$ so that the field boundary data on its right side (going from $\pdown \CR_{a,b}$ to $\pup \CR_{a,b}$) is $\lambda+u$. In both cases, the starting point of the part of the exploration which will discover $L_2$ is shown as a green dot.}
\end{figure}

Fix $H > 0$ and let $\CR = (0,1) \times (0,H)$.  Fix $0 < a < b < H$ and let $\CR_{a,b} = (0,1) \times (a,b)$.  We let $\pdown \CR$, $\pup \CR$, $\pleft \CR$ and $\pright \CR$ be the lower, upper, left and right boundary of $\CR$, respectively.  We use the same notation with $\CR_{a,b}$ in place of $\CR$.  Let $h$ be a GFF in~$\CR$ with boundary conditions given by $\lambda$ on $\pleft \CR$, $-\lambda$ on $\pright \CR$, and $0$ on $\pup \CR$ and $\pdown \CR$.  We shall consider an exploration of level lines of $h$ which form crossings of $\CR_{a,b}$; see Figure~\ref{fig:exploration_illustration} for an illustration.  Fix $u>0$ small and let $\eta_1$ be the level line of $h$ of height $u$ (i.e., the level line of $h-u$) started from the midpoint $i(a+b)/2$ of $\pleft \CR_{a,b}$.  Then, there are two possible outcomes:
\begin{enumerate}[(i)]
	\item $\eta_1$ exits $\CR_{a,b}$ in $\pdown \CR_{a,b}$. If this occurs, then we start to explore the level line $\wt{\eta}_1$ of $-h+u$ from the point of $\pleft \CR_{a,b}$ which has the largest imaginary part among the points which have been visited by the exploration. If $\wt{\eta}_1$ hits $\pup \CR_{a,b}$ before $\pdown \CR_{a,b}$ or $\pright \CR_{a,b}$, then the first stage of the exploration is concluded and we continue the exploration by iterating as described below. If $\wt{\eta}_1$ hits $\pdown \CR_{a,b}$ before $\pup \CR_{a,b}$, then we explore the level line of height $u$ from the point of $\pleft \CR_{a,b}$ which has the largest imaginary part among the points which have been visited by the exploration at this point in time. Repeat this step until the newly generated level line of height $u$ hits $\pup \CR_{a,b}$ before $\pdown \CR_{a,b}$ and $\pright \CR_{a,b}$, which concludes the first stage of the exploration. \label{item:option_2}
	\item $\eta_1$ exits $\CR_{a,b}$ in $\pup \CR_{a,b}$. If this occurs, then we start to explore the level line $\wt{\eta}_1$ of $-h+u$ from the point of $\pleft \CR_{a,b}$ which has the smallest imaginary part among the points which have been visited by the exploration. If $\wt{\eta}_1$ hits $\pdown \CR_{a,b}$ before $\pright \CR_{a,b}$ and $\pup \CR_{a,b}$, then the first stage of the exploration is concluded and next we iterate the exploration as described below. If $\wt{\eta}_1$ hits $\pup \CR_{a,b}$ before $\pdown \CR_{a,b}$ and $\pright \CR_{a,b}$, then we explore the level line of height $u$ from the point of $\pleft \CR_{a,b}$ which has the smallest imaginary part among the points which have been visited by the exploration at this point in time. Repeat this step until the newly generated level line of height $u$ hits $\pdown \CR_{a,b}$ before $\pup \CR_{a,b}$ and $\pright \CR_{a,b}$, concluding the first stage of the exploration. \label{item:option_3}
\end{enumerate}

Let $K_1$ be the union of $\pleft \CR_{a,b}$ and the set discovered by the first stage of the exploration.  The exploration a.s.\ does not discover a level line which hits $\pright \CR_{a,b}$ (as this is only possible if the height is in $(-2\lambda,0)$) so we let $U_1$ be the component of $\CR_{a,b} \setminus K_1$ which has $\pright \CR_{a,b}$ as part of its boundary and we let $L_1 = \closure{\partial U_1 \setminus \partial \CR_{a,b}}$.  Note that $L_1$ consists of two level lines, one of $h-u$ and one of $h+u$, from $\pleft \CR_{a,b}$ which exit $\pup \CR_{a,b}$ and $\pdown \CR_{a,b}$.  This implies that the boundary conditions of $h$ along $L_1$ are either $-\lambda + u$ or $\lambda+u$.  We say that $L_1$ is a crossing of $\CR_{a,b}$ of height $u$.

We inductively define successive stages of the exploration after $j$ steps as follows, assuming that it has not yet discovered a level line which hits $\pright \CR_{a,b}$ (which is not possible in the first stage of the iteration, but may happen later, as we will explain).  Let $K_j$ be the union of $\pleft \CR_{a,b}$ and the exploration after $j$ steps, $L_j$ the $j$th crossing, $u_j$ its height, and $U_j$ the component of $\CR_{a,b} \setminus K_j$ with $\pright \CR_{a,b}$ on its boundary.  For the iteration step, we make the following definition.
\begin{itemize}
	\item If the boundary conditions of $h$ in $U_j$ on $L_j$ are equal to $\lambda + u_j$, then let $\eta_{j+1}$ be the level line of $h$ of height $u_{j+1}=u_j+u$ starting from the leftmost intersection of $L_j$ with the line $\{z : \im(z) = (a+b)/2\}$.
	\item If the boundary conditions of $h$ in $U_j$ on $L_j$ are equal to $-\lambda + u_j$, then let $\eta_{j+1}$ be the level line of $h$ of height $u_{j+1}=u_j-u$ starting from the leftmost intersection of $L_j$ with the line $\{z : \im(z) = (a+b)/2\}$.
\end{itemize}

We repeat the steps described above, replacing $\pleft \CR_{a,b}$ with $L_j$ and $\eta_1$ by $\eta_{j+1}$ as follows. If $u_{j+1} \notin (-2\lambda,0)$, then $\eta_{j+1}$ a.s.\ does not hit $\pright \CR_{a,b}$, so we proceed as above. In the choice of where to start the next level line, one replaces the point of intersection with the largest (resp.\ smallest) imaginary part by last point with respect to the upward (resp.\ downward) exploration of $L_j$ which was intersected by the previous level line. However, if $u_{j+1} \in (-2\lambda,0)$, then $\eta_{j+1}$, or any of the subsequent level lines in the exploration steps described above, may exit $\CR_{a,b}$ in $\pright \CR_{a,b}$ and if one of them does, then we stop the exploration at that point.  From this exploration we obtain a sequence of crossings $(L_j)$ of $\CR_{a,b}$ with the property that $L_j \cap L_{j+1} \neq \emptyset$ for each $j$.

The purpose of the remainder of this section is to establish two estimates for the exploration that we have defined.  In Section~\ref{subsec:max_height}, we will prove a tail bound for $\sup_j |u_j|$.  Then, in Section~\ref{subsec:number_of_crossings}, we will show that the number of crossings that the exploration discovers is a.s.\ finite.  (One can see that the exploration a.s.\ discovers at most finitely many level lines at each stage using the same argument used to prove Lemma~\ref{lem:finitely_many_crossings}.)

\subsection{Maximum height}
\label{subsec:max_height}

\begin{lemma}
\label{lem:rectangle_maximum_crossing_height}
For each $H > 0$ and $0 < a < b < H$ there exist constants $c_0, M_0 > 0$ so that
\[ \p\!\left[ \sup_{j \geq 1} |u_j| \geq M \right] \leq \exp(-c_0 M^{1/3}\log(M)) \quad\text{for all}\quad M \geq M_0.\]
\end{lemma}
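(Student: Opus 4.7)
I view $\{u_j\}_{j\geq 1}$ as a random walk on $u\Z$ with $|u_{j+1}-u_j|=u$: the increment equals $+u$ precisely when the field boundary data on the right side of $L_j$ (as seen from $U_j$) equals $\lambda+u_j$, and equals $-u$ otherwise. Equivalently, the sign of the increment is the orientation $\sigma_j\in\{\pm1\}$ of the final level line produced in stage $j$. My goal is to show that once $|u_j|$ is large, this walk has an overwhelming drift toward $0$, so that reaching $|u_j|\geq M$ is very unlikely.

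The central estimate is a single-step bound. Suppose $u_j=M>0$ with $\sigma_j=+1$, so $u_{j+1}=M+u$. In stage $j+1$ we draw a level line $\eta_{j+1}$ of $h$ at height $M+u$ emanating from the midpoint of $L_j$. After shifting by $-(M+u)$ to make $\eta_{j+1}$ an ordinary height-$0$ level line, the boundary data picks up an offset of order $-M$ on the far sides of the current subdomain of $\CR$, while being only $-u$ on $L_j$. Under the level-line/$\SLE_4(\underline{\rho})$ coupling of Section~\ref{subsec:level_lines}, $\eta_{j+1}$ becomes an $\SLE_4(\underline{\rho})$ whose force-point weights on the far sides are all of order $-M$, so that it is essentially deterministic on macroscopic scales and is pulled rapidly to those force points before it can return to $L_j$. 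Tracking the remaining level lines of the stage via the interaction rules of Theorem~\ref{thm:interaction_rules}, one sees that $\sigma_{j+1}=+1$ (which is what is needed to make $u_{j+2}=M+2u$) forces $L_{j+1}$ to close up against this very strong drift on a specific side of $\CR_{a,b}$. The aim then is to quantify this as a crossing estimate
\begin{equation*}
	\p[\,\sigma_{j+1}=\sigma_j \mid u_j=\sigma_j M,\ \sigma_j\,] \,\leq\, \exp(-c\, M^{1/3}\log M)
\end{equation*}
for $M$ large, with the symmetric statement when $u_j<0$.

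Given this one-step bound, the tail estimate for $\sup_j|u_j|$ would follow by essentially standard bookkeeping using the GFF Markov property across successive stages: for $\sup_j|u_j|\geq M$ to occur, the walk must at some point make an outgoing step from a state with $|u_j|\in[M-u,M)$, and at the first such stage the one-step estimate applies directly, with a polynomial factor in $M$ absorbed into the constant $c_0$. The main obstacle is establishing the one-step estimate with the correct $M^{1/3}\log M$ exponent. Generic $\SLE_\kappa$ crossing bounds give the wrong rate, and the $1/3$ is characteristic of $\SLE_4$: it is precisely the exponent appearing in the sharp $(\log\delta^{-1})^{-1/3+o(1)}$ modulus of continuity of the $\SLE_4$ uniformizing map established in \cite{kms2021regularity}. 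To extract this exponent I would expect to have to adapt the harmonic-measure estimates of that paper, possibly via a conformal reduction similar to the one in Section~\ref{subsec:extremal_length} that brings the problem to analyzing a level line of large height in a standard conformal rectangle. Once the crossing estimate is in hand, the random-walk bookkeeping needed to conclude the lemma is routine.
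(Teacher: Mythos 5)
Your proposal takes a genuinely different route from the paper's, but it rests on two claims that don't hold up. First, the $M^{1/3}$ exponent in the statement is not characteristic of $\SLE_4$ and has nothing to do with the $(\log\delta^{-1})^{-1/3}$ modulus of continuity from \cite{kms2021regularity}; the paper explicitly says it does not believe this rate to be optimal. The exponent is a bookkeeping artifact: the proof's intermediate Lemma~\ref{lem:large_heights_with_small_prob} shows that off an event of probability $\exp(-cM\log M)$ all crossings have height in $[-M^3,M^3]$, and then one substitutes $M \to M^{1/3}$. Trying to derive $1/3$ from the sharp regularity exponent of $\SLE_4$ is therefore aiming at the wrong target from the start.

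Second, and more fundamentally, the proposed one-step drift bound $\p[\sigma_{j+1}=\sigma_j\mid u_j=\sigma_j M]\le \exp(-cM^{1/3}\log M)$ is neither established nor, I believe, correct. The sign $\sigma_{j+1}$ is a topological feature of how the stage-$(j+1)$ level lines assemble into $L_{j+1}$; the large force-point weights of order $-M$ on the far boundary do make the $\SLE_4(\underline\rho)$ strongly attracted to those boundary arcs, but attraction to a boundary arc does not by itself bias the orientation of the resulting crossing, and you give no argument for why it would. The mechanism the paper actually exploits is not a per-step drift at all but an accumulation constraint: it introduces $M-1$ pivot points $v_m$ along the central horizontal line of $\CR_{a,b}$, tracks $\Fh^m_t(v_m)=\E[h(v_m)\mid\CG^m_t]$ (which by \cite[Prop.~6.5]{ms2016imag1} evolves as a Brownian motion in the log-conformal-radius parametrization), and shows that discovering a crossing of height $\gtrsim M$ at distance $\in[M^{-100},2M^{-1}]$ from some $v_m$ would force this Brownian motion to move by $\gtrsim M$ during a time window of length $O(\log M)$, which has probability $\exp(-cM^2/\log M)$ (Lemma~\ref{lem:maximal_crossing_tail}). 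The remaining cases (crossings entering the $M^{-100}$-neighborhoods of the $v_m$, and the possibility of infinitely many crossings) are handled by Lemmas~\ref{lem:next_crossing_separates}--\ref{lem:finitely_many_crossings}, and the final statement comes from combining left-to-right and right-to-left explorations. So the walk $\{u_j\}$ is free to fluctuate on any single step; what is controlled is the pile-up of extreme heights near a single point, detected via the conditional-expectation Brownian motion. Your outline contains neither the pivot points nor the Brownian-motion conditional expectation, and these are the engines of the actual argument.
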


The strategy to prove Lemma~\ref{lem:rectangle_maximum_crossing_height} is as follows. We will fix $M \in \N$ and for each $1 \leq m \leq M-1$ let $v_m = \tfrac{m}{M} + \tfrac{i(a+b)}{2}$.  We will then consider the conditional expectation of $h$ evaluated at the points~$v_m$ in~$\CR$ given the exploration parameterized by $\log$-conformal radius as seen from~$v_m$.  By \cite[Proposition~6.5]{ms2016imag1}, such a conditional expectation evolves as a standard Brownian motion and its value is dominated by the boundary values of $h$ along those crossings which are close to~$v_m$ and before disconnecting $v_m$ from $\pleft \CR_{a,b}$. Each time the exploration discovers a new crossing of $\CR_{a,b}$ which is close to $v_m$ and before disconnecting~$v_m$ from $\pleft \CR_{a,b}$ this gives rise to a change in the boundary values of the GFF hence of the aforementioned Brownian motion.  In particular, in order to see a crossing with a large height there must be one of the points $v_m$ near which crossings are observed where the height increases dramatically.  On this event, the associated Brownian motion will change values by a large amount in a small amount of time and the probability of this is bounded by the standard tail estimate for the maximum of a Brownian motion.  We do not believe that the stretched exponential decay that we obtain in the statement of Lemma~\ref{lem:rectangle_maximum_crossing_height} is optimal, but it will suffice for our purposes.

Fix a point $z_0 \in \CR \setminus \closure{\CR_{a,b}}$ and let $(K_s)_{s \geq 0}$ denote the exploration parameterized by $\log$-conformal radius as seen from $z_0$. The particular choice of time parameterization here is not important, as we mentioned just above we shall reparameterize it according to $\log$-conformal radius as seen from a number of different points.  More precisely,  we set $\sigma = \log(\confrad(z_0,\CR)) - \log(\confrad(z_0,\wh{D}))$ where $K = \closure{\cup_{j\geq 1} K_j}$ and $\wh{D}$ is the connected component of $\CR \setminus K$ containing $z_0$. We continuously parameterize the union of $\pleft \CR_{a,b}$ with the level lines discovered in $\CR_{a,b}$ by $(\wt{K}_s)_{0 \leq s <1}$ in the order that they are discovered.  Then for all $0 \leq t < \sigma$,  we set 
\begin{align*}
\wt{s}(t) = \inf\{s \geq 0 : t = \log(\confrad(z_0,\CR)) - \log(\confrad(z_0,\wt{D}_s)) \}
\end{align*}
where $\wt{D}_s$ is the connected component of $\CR \setminus \wt{K}_s$ containing $z_0$.  We set $K_t = \wt{K}_{\wt{s}(t)}$ for $0 \leq t < \sigma$.  Observe that under the above parameterization we have that
$t = \log(\confrad(z_0,\CR)) - \log(\confrad(z_0,D_t))$
where $D_t  = \wt{D}_{\wt{s}(t)}$. 

In proving Lemma~\ref{lem:rectangle_maximum_crossing_height}, we consider the following setup.  Fix $1 \leq m \leq M-1$ and let $\sigma_m$ be the first time $t$ that $K_t$ disconnects $v_m$ from $\pright \CR_{a,b}$  or $\sigma_m = \sigma$ if that does not occur.  We set $s_m = \lim_{t \to \sigma_m}( \log(\confrad(v_m ,  \CR)) - \log(\confrad(v_m ,  D_t)))$.  We also set $s_m(t) = \inf\{ s \geq 0 : \log(\confrad(v_m , \CR)) - \log(\confrad(v_m , D_s)) = t\}$ for $0 \leq t \leq s_m$.  Thus, writing $D^m_t = D_{s_m(t)}$ for $0 \leq t \leq s_m$ we have that $t = \log(\confrad(v_m,\CR)) - \log(\confrad(v_m,D^m_t))$.  Furthermore, we write $K^m_t = K_{s_m(t)}$.  We let $\zeta_m$ be the first time $t$ that $K^m_t$ disconnects $v_m$ from $\pright \CR_{a,b}$ and set $\xi_m = \zeta_m \wedge s_m$.  Finally, we let~$T_m$ be the first time $t$ that $K^m_t$ discovers a crossing $L_j$ with height outside of $[-M , M]$ such that $\dist(v_m ,  L_j ) \in [M^{-100} ,  2 M^{-1}]$ and all the previously discovered crossings have height in $[-M , M]$.

We begin with the following lemma.
\begin{lemma}
\label{lem:maximal_crossing_tail}
Consider the setting above and for each $1 \leq i \leq M-1$, let $L_{j(i)}$ denote the first crossing that separates $v_i$ from $\pright \CR_{a,b}$ (if it exists).  Let $E_M$ be the event that there is no crossing~$L_k$, $k < j(i)$ with height outside $[-M,M]$ such that $\dist(v_i,L_k) \geq M^{-100}$. There exist constants $c_0 > 0$ and $M_0 \geq 0$ such that $\p[ E_M^c ] \leq \exp(-c_0 M^2 / \log(M))$ for all $M \geq M_0$.
\end{lemma}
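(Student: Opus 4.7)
The plan is to use \cite[Proposition~6.5]{ms2016imag1}, which implies that for each fixed $1 \leq m \leq M-1$ the process $\Fh_t(v_m) := \E[h(v_m) \mid \sigma(K_s : s \leq s_m(t))]$, parameterized by log conformal radius as seen from $v_m$, is a standard Brownian motion up to the disconnection time $\xi_m$. I would then bound $\p[E_M^c]$ by a union bound over $m$ combined with the Gaussian tail for a Brownian motion.

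The heart of the argument is to show that on $E_M^c$ there exist $m^* \in \{1,\ldots,M-1\}$ and a conformal time $t^* \leq C \log M$ with $t^* < \xi_{m^*}$ such that $|\Fh_{t^*}(v_{m^*})| \geq c M$ for absolute constants $c, C > 0$. If $E_M^c$ occurs, then some crossing $L_k$ satisfies $|u_k| > M$; since $L_k$ is a top-to-bottom crossing of $\CR_{a,b}$, it meets the horizontal line $\{y = (a+b)/2\}$. Using the $1/M$-spacing of the $v_m$'s, I would choose $m^*$ so that $v_{m^*}$ lies in $U_k$ and $\dist(v_{m^*}, L_k) \in [1/(2M),\, 3/(2M)]$. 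Since the earlier crossings $L_j$ with $j<k$ satisfy $U_k \subset U_j$, it follows that $v_{m^*} \in U_j$ for all $j \leq k$, so $v_{m^*}$ is not disconnected from $\pright \CR_{a,b}$ before $L_k$ is revealed and $t^* < \xi_{m^*}$, where $t^*$ is the conformal time at $v_{m^*}$ at which $L_k$ is revealed. Moreover $\partial D^{m^*}_{t^*}$ consists of (a portion of) $L_k$ together with parts of $\partial \CR_{a,b}$, each at Euclidean distance $\gtrsim 1/M$ from $v_{m^*}$; by Koebe's $1/4$ theorem this gives $\confrad(v_{m^*}, D^{m^*}_{t^*}) \gtrsim 1/M$ and thus $t^* \leq C \log M$ for a universal $C$.

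For the lower bound on $|\Fh_{t^*}(v_{m^*})|$, I note that $\Fh_{t^*}(v_{m^*})$ equals the harmonic extension to $v_{m^*}$ of the boundary values of $h$ along $\partial D^{m^*}_{t^*}$; these values have absolute value at least $|u_k| - \lambda \geq M - \lambda$ on $L_k$ and are bounded by $\lambda$ on the remainder of the boundary. A Beurling-type harmonic measure estimate, using that $L_k$ is a connected boundary component of macroscopic diameter (at least $b-a$) while $\dist(v_{m^*}, L_k) \asymp 1/M$, should give a universal lower bound $c_1 > 0$ for the harmonic measure from $v_{m^*}$ of $L_k$ in $D^{m^*}_{t^*}$, yielding $|\Fh_{t^*}(v_{m^*})| \geq c_1 (M - \lambda) - \lambda \geq c M$ for an absolute $c > 0$ and $M$ large. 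The standard Gaussian tail for the supremum of a Brownian motion then gives
\[
\p\!\left[\sup_{0 \leq t \leq C\log M} |\Fh_t(v_m) - \Fh_0(v_m)| \geq cM/2\right] \leq 2\exp\!\left(-\frac{c^2 M^2}{8C\log M}\right),
\]
and a union bound over the $M-1$ choices of $m$ yields $\p[E_M^c] \leq \exp(-c_0 M^2/\log M)$ for a constant $c_0 > 0$ and all $M \geq M_0$.

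The main technical obstacle will be to establish the harmonic measure lower bound with a universal constant $c_1$ and to justify the existence of a suitable $m^*$ (in particular, that $U_k \cap \{y = (a+b)/2\}$ contains a $v_m$ at distance $\asymp 1/M$ from $L_k$ on the $\pright$-side of $L_k$). These should follow from Beurling's projection theorem together with the fact that $L_k$ is a connected crosscut of macroscopic diameter and the $v_m$'s form a $1/M$-net of the horizontal line, but some care is needed to rule out pathological shapes of $L_k$ near $v_{m^*}$.
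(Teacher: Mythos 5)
Your overall strategy matches the paper's: use \cite[Proposition~6.5]{ms2016imag1} to view $\Fh_t^m(v_m)$ as a Brownian motion in the log-conformal-radius parameterization, bound the relevant stopping time by $O(\log M)$ via Koebe, use a harmonic-measure estimate to show the Brownian motion has moved by $\gtrsim M$, then apply a Gaussian tail bound and a union bound over $m$. However, the choice of $m^*$ and the asserted distance range contain a genuine gap. You claim there is $v_{m^*} \in U_k$ with $\dist(v_{m^*}, L_k) \in [1/(2M), 3/(2M)]$, but this can fail: $L_k$ can wiggle arbitrarily close to \emph{every} $v_m$ in $U_k$, so the closest $v_m$ on the right side of $L_k$ can be much closer than $1/(2M)$. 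Beurling's projection theorem, which you invoke to ``rule out pathological shapes,'' gives upper bounds on harmonic measure, not a lower bound on Euclidean distance, and does not help here. The $M^{-100}$ threshold built into the definition of $E_M$ is precisely the source of the needed lower bound: $E_M^c$ furnishes a $v_i$ (not yet disconnected) with $\dist(v_i, L_k) \geq M^{-100}$. The correct choice is to let $L$ be the \emph{leftmost} crossing with height outside $[-M,M]$ (so that all earlier crossings have heights in $[-M,M]$, which you will need in the next step) and let $v_{m^*}$ be the leftmost $v_m \in U_L$ at distance $\geq M^{-100}$ from $L$; the $1/M$-spacing then forces $\dist(v_{m^*},L) \leq 2/M$, so the distance lies in $[M^{-100}, 2/M]$, and Koebe gives $t^* \leq 200\log M$.

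A secondary gap is in the harmonic-measure estimate: the boundary values on $\partial D_{t^*}^{m^*} \setminus L_k$ are \emph{not} bounded by $\lambda$. That boundary also contains earlier level lines (with heights as large as $M$, so boundary data up to $M+\lambda$), together with their continuations into $\CR \setminus \CR_{a,b}$. Estimating $\Fh_{t^*}(v_{m^*})$ requires bounding the harmonic measure from $v_{m^*}$ of all those pieces; using the Beurling estimate (now correctly, as an \emph{upper} bound on harmonic measure) together with $\dist(v_{m^*}, \pup \CR_{a,b} \cup \pdown \CR_{a,b}) = (b-a)/2$ and $\dist(v_{m^*}, L) \leq 2/M$, one shows those pieces have harmonic measure $O(M^{-1/2})$, so they contribute $O(M^{1/2})$. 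Combined with the lower bound $\gtrsim M - \lambda$ from $L_k$ (obtained by the explicit circling Brownian-motion argument, not Beurling), this gives $|\Fh_{t^*}(v_{m^*}) - \Fh_0(v_{m^*})| \gtrsim M$. Your final conclusion is right, but your route to it, subtracting only $\lambda$, does not account for the earlier level lines.
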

\begin{proof}
{\noindent \it Step 1.  It is unlikely that the first crossing with large height is of distance in $[M^{-100},2M^{-1}]$ from some $v_i$.} Suppose that the event $\{T_m < \xi_m\}$ occurs and let $L^*$ be the crossing observed at time $T_m$.  Observe that there exists a constant $p > 0$ depending only on $H$, $a$, and $b$ such that if we start a Brownian motion from $v_m$,  then with probability at least $p$ it does not exit $D^m_t$ in $\pdown \CR_{a,b} \cup \pup \CR_{a,b}$ for all $0 \leq t < \xi_m$.  For any simply connected domain $D$ and $z \in D$, we have by the Schwarz lemma and the Koebe-$1/4$ theorem that $\dist(z,\partial D) \leq \confrad(z ,  D) \leq 4 \dist(z,\partial D)$. It follows that $\confrad(v_m,D_{T_m}^m) \geq M^{-100}$ and $\confrad(v_m,\CR) \leq 2$. Since $T_m = \log(\confrad(v_m,\CR)) - \log(\confrad(v_m,D^m_{T_m}))$ it follows that $T_m \leq \log(2) + 100\log(M) \leq 200 \log(M)$ for all $M \geq 2$. We set $\CG_t^m = \sigma (K^m_{s \wedge s_m} : 0 \leq s \leq t)$ and $\Fh_t^m(v_m) = \E [ h(v_m) \,\giv \, \CG_t^m]$.  Note that $\Fh^m_0$ has boundary values given by $\lambda$ (resp.\  $-\lambda$) on $\pleft \CR$ (resp.\  $\pright \CR$) and zero on $\pdown \CR \cup \pup \CR$.  In particular the boundary values are bounded uniformly and since $\Fh^m_0$ is harmonic in $\CR$ it follows that there exists universal constant $c_1>0$ such that $\Fh^m_0(v_m) \in [-c_1,c_1]$.  Next,  we claim that $\Fh^m_{T_m}(v_m) - \Fh_0^m(v_m) \notin [-cM , cM]$ where $c \in (0,1)$ is a universal constant.  Indeed, there is a universal constant $p_0 > 0$ such that with probability at least~$p_0$, a Brownian motion started from $v_m$ exits $D_{T_m}^m$ in $L^*$. To see this, note that a Brownian motion has a positive probability (independent of $M,a,b$) of hitting $\{\re(z) = v_m - 3/M \}$ before exiting the $M^{-1}$-neighborhood of $[v_m-3/M,v_m]$.  Upon doing so, there is a universal constant $p_* > 0$ such that the Brownian motion travels clockwise around $B(v_m,2/M)$ before exiting $B(v_m,4/M)$ and before hitting $[v_m-i4/M,v_m-i2/M]$ in the counterclockwise direction. By symmetry the corresponding holds in the counterclockwise direction. At least one of these events amount to hitting $L^*$ before exiting $D_{T_m}^m$. Moreover, the Beurling estimate implies that the probability that the same Brownian motion exits $D^m_{T_m} \cap \CR_{a,b}$ in $\pdown \CR_{a,b} \cup \pup \CR_{a,b}$ is $O(M^{-1/2})$ where the implicit constant depends only on $H$, $a$ and $b$.  Therefore the crossing $L^*$ gives a contribution to $|\Fh_{T_m}^m(v_m)|$ of at least $p_0(M - \lambda)$ while the crossings discovered before $L^*$ give a contribution of at most~$O(M^{1/2})$.  Similarly,  the parts of $\partial D_{T_m}^m$ lying outside of $\CR_{a,b}$ give a contribution of order $O(M^{-1/2})$. Combining it follows that $|\Fh_{T_m}^m(v_m) - \Fh_0^m(v_m)| \geq p_0M / 2$ for all $M$ sufficiently large and this proves the claim.  Note that $M^{-1} \geq M^{-100}$ and that $\Fh_t^m(v_m) - \Fh_0^m(v_m)$ evolves as a standard Brownian motion up to time $s_m$~\cite[Proposition~6.5]{ms2016imag1}.  Hence, for a standard Brownian motion $B$ the probability that $T_m< \xi_m$ is at most $\p_0 [ \sup_{0 \leq t \leq 200 \log(M)} |B_t| > cM] $ and the latter is at most $\exp(-c_1 M^2 / \log(M))$ for some constant $c_1 > 0$.  Summing over $1 \leq m \leq M-1$,  we obtain that the probability that there exists $1 \leq m \leq M-1$ such that we can find a crossing with distance in $[M^{-100} ,  2M^{-1}]$ of $v_m$ which does not disconnect~$v_m$ from $\pright \CR_{a,b}$ and with height outside of $[-M ,  M]$ and such that all of the previously discovered crossings have height in $[-M ,  M]$ is at most $M\exp(-c_1 M^2 / \log(M))  \leq \exp(-c_2 M^2/\log(M))$. 

{\noindent \it Step 2.  It is unlikely that there exists a crossing with large height and with distance at least $M^{-100}$ of some $v_i$.} Suppose that there exist $1 \leq i \leq M-1$ such that there is a crossing $L'$ with height outside of $[-M ,  M]$, $\dist(v_i,L') \in [M^{-100},2M^{-1}]$ and which does not disconnect $v_i$ from $\pright \CR_{a,b}$, let $1 \leq k \leq M-1$ be the smallest such $i$ and let $L_*^k$ be the first crossing satisfying the above with~$v_k$.  Let $L$ be the leftmost crossing which is discovered by the exploration in $\CR_{a,b}$ with height outside of $[-M ,  M]$.  We want to show that $L = L_*^k$.  If this is not the case,  then $L$ has to be to the left of $L_*^k$ and $\dist(v_k ,  L) \geq M^{-100}$.  Let $v_m$ be the leftmost point of the $v_{\ell}$'s such that $L$ is to the left of $v_m$ and $\dist(v_m ,  L) \geq M^{-100}$.  Then $m \leq k$ and $\dist(v_m,L) \leq 2M^{-1}$ (otherwise $v_{m-1}$ would be to the right of $L$ and $\dist(v_{m-1},L) > M^{-100}$, contradicting the choice of $m$). Moreover, the height of $L$ lies outside of $[-M ,  M]$ and thus $m=k$ and all of the crossings discovered before $L$ have height in $[-M ,  M]$.  This shows that $L = L_*^k$.  Combining,  it follows that off an event with probability at most $\exp(-c_2 M^2 / \log(M))$ the following holds.  For each $1 \leq i \leq M-1$,  if we explore towards $v_i$,  then we cannot find a crossing with height outside of $[-M ,  M]$ which does not disconnect $v_i$ from $\pright \CR_{a,b}$ and whose distance from $v_i$ lies between $M^{-100}$ and $2 M^{-1}$.  
\end{proof}

Next we shall see that if a crossing gets very close to a point $v_m$, then the next crossing is very likely to disconnect $v_m$ from $\pright \CR_{a,b}$. We introduce the following notation. Let $n_1$ be the first $j$ such that~$L_j$ is within distance $M^{-100}$ of some $v_k$, $k \leq M/2$ which it does not disconnect from $\pright \CR_{a,b}$ and let~$m_1$ be the smallest $k$ such that this occurs. Furthermore, we let $N_1$ be the first $j \geq n_1+1$ such that~$L_j$ disconnects $v_{m_1}$ from $\pright \CR_{a,b}$. Next, we define inductively the sequences $(n_\ell)_{\ell \geq 1}$, $(m_\ell)_{\ell \geq 1}$ and $(N_\ell)_{\ell \geq 1}$ as follows. We let $n_{\ell+1}$ be the first $j \geq N_{\ell}$ such that~$L_j$ is within distance $M^{-100}$ of some $v_k$, $k \leq M/2$ which it does not disconnect from $\pright \CR_{a,b}$, $m_{\ell+1}$ be the smallest such $k$ and $N_{\ell+1}$ be the first $j \geq n_{\ell+1}+1$ such that $L_j$ disconnects $v_{m_{\ell+1}}$ from $\pright \CR_{a,b}$. Finally, we let $\CF_k$ be the $\sigma$-algebra generated by the level lines which make up $K_1,\ldots,K_k$. Then we have the following result.

\begin{lemma}
\label{lem:next_crossing_separates}
There exist a constant $c>0$ such that the following is true. Let $F_\ell$ be the event that $n_\ell < \infty$ and $\max_{1 \leq j \leq n_\ell} |u_j| \leq M^3$. Then
\[ \p[ N_\ell - n_\ell \geq M \giv \CF_{n_\ell}] \one_{F_\ell} \leq \exp(-c M \log M)\one_{F_\ell}.\]
\end{lemma}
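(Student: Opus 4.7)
The plan is to adapt the Brownian motion martingale argument from the proof of Lemma~\ref{lem:rectangle_maximum_crossing_height}, combined with a squeezing observation. Set $B_t = \Fh_t^{m_\ell}(v_{m_\ell}) = \E[h(v_{m_\ell}) \giv \CG_t^{m_\ell}]$, which by~\cite[Proposition~6.5]{ms2016imag1} evolves as a standard Brownian motion in the parameterization $t = \log \confrad(v_{m_\ell},\CR) - \log \confrad(v_{m_\ell}, D_t^{m_\ell})$ up to the disconnection time $\xi_{m_\ell}$.

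The key geometric input is that on $\{N_\ell - n_\ell \geq M\} \cap F_\ell$, each crossing $L_{n_\ell + k}$ for $0 \leq k \leq M - 1$ lies within distance $M^{-100}$ of $v_{m_\ell}$.  I prove this by induction: given $L_{n_\ell + k}$ within $M^{-100}$ of $v_{m_\ell}$ with $v_{m_\ell}$ on its right (since $v_{m_\ell}$ has not yet been disconnected from $\pright \CR_{a,b}$), the next non-disconnecting crossing $L_{n_\ell + k+1}$ must lie in the component of $\CR_{a,b} \setminus K_{n_\ell + k}$ containing $\pright \CR_{a,b}$, and therefore pass through the strip of width at most $M^{-100}$ between $L_{n_\ell + k}$ and $v_{m_\ell}$. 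Combining this squeezing with Beurling's estimate (applied to $L_{n_\ell + k}$, which has diameter $\geq b - a > 0$) and the a priori bound $|u_j| \leq M^3 + Mu \leq 2M^3$ for $j \leq n_\ell + M$ (valid on $F_\ell$ since heights evolve by $\pm u$), I obtain $|B_{t_k} - v_k| \leq M^{-40}$ at each discovery time $t_k$ of $L_{n_\ell + k}$, where $v_k \coloneqq u_{n_\ell + k} + \sigma_k \lambda$ and $\sigma_k \in \{-1,+1\}$ encodes the sign of the boundary value of $h$ on the side of $L_{n_\ell + k}$ facing $v_{m_\ell}$.

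To conclude, the increments $v_{k+1} - v_k = (u_{n_\ell + k+1} - u_{n_\ell + k}) + (\sigma_{k+1} - \sigma_k)\lambda$ belong to the fixed finite set $\{\pm u, \pm u \pm 2\lambda\}$ (for $u < 2\lambda$), with magnitudes bounded below by $c_0 \coloneqq \min(u, 2\lambda - u) > 0$.  Since $B$ is a Brownian motion and the $(t_k)$ are stopping times, conditional on $\CF_{t_k}$ and $\Delta_k \coloneqq t_{k+1} - t_k$ the increment $B_{t_{k+1}} - B_{t_k}$ is Gaussian with variance $\Delta_k$, and so the probability that $|B_{t_{k+1}} - B_{t_k} - (v_{k+1} - v_k)| \leq 2 M^{-40}$ is bounded by $C M^{-40}$ uniformly in $\Delta_k$ (since $\sup_{s > 0} (2\pi s)^{-1/2} \exp(-c_0^2/(2s)) < \infty$).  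Iterating via the Markov property over $k = 0, 1, \ldots, M - 1$ and taking a union bound over the at most $2^M$ possible sign sequences $(\sigma_k)$ yields the overall bound $2^M (CM^{-40})^M \leq \exp(-cM \log M)$ for $M$ large.  The main technical subtlety lies in obtaining the uniform Gaussian density bound at the random times $(t_k)$, which crucially uses the lower bound $c_0 > 0$ on the increment magnitudes; without this separation, a small increment $\Delta_k$ would allow the density to blow up and the per-step bound would degrade.
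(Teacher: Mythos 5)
Your squeezing observation is correct and nicely captures why the situation is degenerate: since $L_{n_\ell+k+1}$ separates $v_{m_\ell}$ from $L_{n_\ell+k}$ inside $U_{n_\ell+k}$, and $v_{m_\ell}$ is within $M^{-100}$ of $L_{n_\ell+k}$, the short segment from $v_{m_\ell}$ to its nearest point on $L_{n_\ell+k}$ stays in $U_{n_\ell+k}$ and must hence meet $L_{n_\ell+k+1}$, so all $M$ crossings really are within $M^{-100}$ of $v_{m_\ell}$. The harmonic-measure estimate $|B_{t_k}-v_k|\lesssim M^{-47}$ is also fine.

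The gap is in the final step. You cannot condition on $\Delta_k=t_{k+1}-t_k$ and treat $B_{t_{k+1}}-B_{t_k}$ as a Gaussian of variance $\Delta_k$: the discovery time $t_{k+1}$ is a stopping time for the exploration, which is a function of $h$, and $B_t=\Fh^{m_\ell}_t(v_{m_\ell})$ is adapted to the same filtration. Conditional on a stopping time equalling $s$, the value $B_s$ is not Gaussian — think of $\tau=\inf\{t:B_t=1\}$, where $B_\tau=1$ deterministically while your supremum-of-the-density estimate would assign it probability $\lesssim M^{-40}$. Indeed the approximation $B_{t_{k+1}}\approx v_{k+1}$ is essentially automatic at the discovery time (it is a consequence of Beurling, not a rare coincidence), so stringing these together does not yield a small event: the stopping times adapt to let the Brownian motion hit the required values. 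Without a separate argument quantifying why the exploration is unlikely to survive the step (which is the actual content of the lemma), this approach does not close. The paper circumvents this entirely: after mapping $D_{n_\ell}$ to $\h$ it builds a ladder of $\approx\log M$ nested half-annuli around the image of $v_{m_\ell}$, draws a level line of the appropriate height in each, shows via a scale-invariance and~\cite[Lemma~2.5]{mw2017intersections} argument that each one disconnects $v_{m_\ell}$ with probability bounded below, and then uses a quantitative Radon–Nikodym comparison to a GFF with constant boundary data (Step~3 of the paper's proof) to make these attempts approximately independent. That gives $\p[N_\ell-n_\ell\geq 2\giv\CF_{n_\ell}]\one_{F_\ell}\leq cM^{-a}$, which is then iterated roughly $M/2$ times to obtain $\exp(-cM\log M)$.
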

\begin{proof}
{\noindent \it Step 1. Setup.} We will prove that there exist constants $a,c > 0$ so that
\[ \p[ N_\ell - n_\ell \geq 2 \giv \CF_{n_\ell}] \one_{F_\ell} \leq c M^{-a} \one_{F_\ell}.\]
The result will then follow by iteratively applying this bound. The strategy of the proof is to show that by generating level lines of the GFF starting from $L_{n_{\ell}}$ sufficiently close to $v_{m_{\ell}}$, we are very likely to find one level line which cuts off $v_{m_{\ell}}$ from $\pright \CR_{a,b}$ so that the next crossing cannot travel between $L_{n_{\ell}}$ and $v_{m_{\ell}}$.

We let $D_{n_{\ell}}$ be the connected component of $\CR \setminus \cup_{i=1}^{n_{\ell}} K_i$ containing $v_{m_{\ell}}$ and let $x_{\ell}$ (resp.\  $y_{\ell}$) be the point on $\pup \CR_{a,b}$ (resp.\  $\pdown \CR_{a,b}$) which is visited by $L_{n_{\ell}}$.  We then let $\psi_{\ell}$ be the conformal transformation mapping $D_{n_{\ell}}$ onto $\h$ such that $\psi_{\ell}(x_{\ell}) = -1$,  $\psi_{\ell}(y_{\ell}) = 1$ and $\psi_{\ell}(1) = \infty$.  Note that $\psi_{\ell}$ is $\CF_{n_{\ell}}$-measurable and that the field $h \circ \psi_{\ell}^{-1}$ on $\h$ can be expressed as $h^0 + \Fh_{\ell}$,  where $h^0$ is a zero boundary GFF on $\h$ and $\Fh_{\ell}$ is a function which is harmonic on $\h$ and has piecewise constant boundary conditions that change only a finite number of times.  Next we observe that $\dist(v_{m_{\ell}}, \pup \CR_{a,b} \cup \pdown \CR_{a,b}) = \tfrac{b-a}{2}$ and $\dist(v_{m_{\ell}} , \pright \CR_{a,b} ) \geq  \tfrac{1}{M}$.  Hence the Beurling estimate implies that the probability that a Brownian motion starting from $v_{m_{\ell}}$ exits $D_{n_{\ell}}$ either in $\cup_{i=1}^{n_{\ell}-1} K_j$ or in parts of $\partial D_{n_{\ell}}$ lying outside of $\CR_{a,b}$ is $O(M^{-50})$.   Those parts give a contribution to $|\Fh_{\ell} \circ \psi_{\ell}(v_{m_{\ell}})|$ at most $O(M^{-50}(M^{3} + \lambda)) = O(M^{-47})$.  Similarly, the probability that the Brownian motion exits $D_{n_\ell}$ in $\pright \CR_{a,b}$ is $O(M^{-99/2})$ so that its contribution to $|\Fh_{\ell} \circ \psi_{\ell}(v_{m_{\ell}})|$ is $O(M^{-99/2})$. It follows that $\Fh_{\ell}(\psi_{\ell}(v_{m_{\ell}})) = u_{n_{\ell}} + O(1)$.  Note that $h$ has boundary values on the right side of $L_{n_{\ell}}$ given by either $\lambda + u_{n_{\ell}}$ or $-\lambda + u_{n_{\ell}}$.  In the former case,  we have that $u_{n_{\ell}+1} = u_{n_{\ell}} + u$ while in the latter case we have that $u_{n_{\ell}+1} = u_{n_{\ell}} - u$.  Without loss of generality,  we can assume that the former case holds.  Then $h \circ \psi_{\ell}^{-1}$ has boundary values given by $\lambda + u_{n_{\ell}}$ on $[-1,1]$.  Set $\psi_{\ell}(v_{m_{\ell}}) = a_{\ell} + ib_{\ell}$ and $d_{\ell} = \dist(\psi_{\ell}(v_{m_{\ell}}),  \R \setminus [-1,1])$.  The above imply that the probability that a Brownian motion starting from $\psi_{\ell}(v_{m_{\ell}})$ exits $\h$ in $\partial \h \setminus [-1,1]$ is at most $O(M^{-50})$.  This implies that $\arg(\psi_{\ell}(v_{m_{\ell}}) + 1) = O(M^{-50})$ and $\arg(\psi_{\ell}(v_{m_{\ell}}) -1) = \pi - O(M^{-50})$ which in turn implies that $b_{\ell}/d_{\ell} = O(M^{-50})$.  

{\noindent \it Step 2. Half-annuli and separation of $\psi_\ell(v_{m_\ell})$.} Let $B_{k,\ell} = \h \cap B(a_\ell,M b_\ell e^{-k})$ and consider the half-annuli $A_{k,\ell} = \h \cap ( B(a_{\ell} ,  Mb_{\ell} e^{-k}) \setminus \closure{B(a_{\ell} ,  Mb_{\ell} e^{-k}/2)})$ for $0 \leq k \leq N$ where $N = \lfloor \log(M) \rfloor - 1$.  Note that $A_{k,\ell}$ disconnects $\psi_{\ell}(v_{m_{\ell}})$ from $\infty$ for all $0 \leq k \leq N$.  For each $0 \leq k \leq N$ we let $
\eta_k$ be the level line of $h \circ \psi_{\ell}^{-1}$ of height $u_{n_{\ell}+1} = u_{n_{\ell}} + u$ starting from the midpoint $c_k$ of the interval of $\partial A_k \cap \R$ to the left of $a_{\ell}$.  We draw each $\eta_k$ up until the first time $\tau_k$ that it exits $A_{k,\ell}$ and let $P_k$ be the event that $\eta_k$ disconnects $\psi_{\ell}(v_{m_{\ell}})$ from $\infty$ before time $\tau_k$.  Let also $\CG_{k,n_{\ell}}$ be the $\sigma$-algebra generated by~$\CF_{n_{\ell}}$ and all of $\eta_0,\ldots,\eta_k$ and let~$D_{k,\ell}$ be the unbounded connected component of $\h \setminus \cup_{i=0}^k \eta_i$. 

In order to see that the event $\cup_{k=0}^N P_k$ is likely to occur, we shall compare with a GFF with nicer boundary data. Let $\wh{h}_\ell$ be a GFF on $\h$ with boundary data $\lambda+u_{n_\ell}$ and for each $k \leq N$, let $\wh{\eta}_k$ be the height $u_{n_{\ell}+1}$ level line from $c_k$ stopped upon exiting $A_{k,\ell}$.  Furthermore, denote by $\wh{\mu}_{k,\ell}$ the law of $\wh{h}_\ell$ restricted to~$B_{k,\ell}$ and let $\wh{P}_k$ be the event that $\wh{\eta}_k$ disconnects $\psi_\ell(v_{m_\ell})$ from $\infty$ before exiting~$A_{k,\ell}$. Note that $\wh{P}_k$ is determined by the restriction of $\wh{h}_\ell$ to $B_{k,\ell}$. Since $\wh{\eta}_k \sim \SLE_4(-2+u/\lambda,-u/\lambda)$, that is, both force points have weight in $(-2,0)$ it follows by scale invariance and ~\cite[Lemma~2.5]{mw2017intersections} that there is some $p>0$ such that $\wh{\mu}_{k,\ell}(\wh{P}_k) \geq p$ for all $k$.

Thus, if we for each $k$ let $\mu_{k,\ell}$ denote the conditional law of $h \circ \psi_\ell^{-1}$ restricted to $B_{k,\ell}$ given $\CG_{k,n_\ell}$ and prove that there exist constants $C > 0$ and $q>0$ such that conditional on $\cap_{j=1}^k P_j^c$,
\begin{align}\label{eq:dominate_measure}
	\wh{\mu}_{k+1,\ell}(\wh{P}_{k+1})^q \leq C \mu_{k+1,\ell}(P_{k+1}),
\end{align}
then it follows that $\sum_{k=1}^N \one_{P_k}$ stochastically dominates a geometric random variable with success probability $p_0 = p^q / C > 0$ and hence the result follows since level lines of the same height cannot cross each other.

{\noindent \it Step 3. Absolute continuity and the proof of~\eqref{eq:dominate_measure}.}
In this step we shall prove that~\eqref{eq:dominate_measure} holds. We will begin by showing that on $\cap_{i=0}^k P_i^c$, the restrictions of $h \circ \psi_\ell^{-1}$ and $\wh{h}_\ell$ to $\h \cap B(a_\ell, M b_\ell e^{-k-1})$ can be coupled together so that their zero-boundary and harmonic parts are equal and independent, respectively. 

Suppose that we are working on the event $\cap_{i=0}^k P_i^c$ for $0 \leq k \leq N$ fixed.  Then the Markov property implies that the restriction of $h \circ \psi_{\ell}^{-1}$ to $D_{k,\ell}$ can be expressed as $h^{0,k} + \Fh_{k,\ell}$ conditional on $\CG_{k,n_{\ell}}$,  where $h^{0,k}$ is a zero-boundary GFF on $D_{k,\ell}$ and $\Fh_{k,\ell}$ is harmonic in $D_{k,\ell}$, $\CG_{k,n_{\ell}}$-measurable, and independent of $h^{0,k}$.  Set $B_{k,\ell}^* = \h \cap B(a_{\ell} ,  M b_\ell e^{-k}(\tfrac{1}{4} + \tfrac{1}{2e}))$ and note that $B_{k+1,\ell} \subseteq B_{k,\ell}^*$ and $B_{k,\ell}^* \cap A_{k,\ell} = \emptyset$. Similarly,  $h^{0,k}$ can be expressed as $\wt{h}^{0,k} + \wt{\Fh}_{k,\ell}$ where $\wt{h}^{0,k}$ is a zero boundary GFF on $B_{k+1,\ell}^*$ and $\wt{\Fh}_{k,\ell}$ is harmonic in $B_{k+1,\ell}^*$.   In the same way, $\wh{h}_\ell$ can be decomposed as $\wh{h}^0 + \lambda + u_{n_\ell}$ where $\wh{h}^0$ is a zero-boundary GFF on $\h$.  Also $\wh{h}^0$ can be expressed as $\wh{h}^{0,k} + \wh{\Fh}_{k,\ell}$ where $\wh{h}^{0,k}$ is a zero boundary GFF on $B_{k+1,\ell}$ and $\wh{\Fh}_{k,\ell}$ is harmonic on $B_{k+1,\ell}$.  Hence we can couple $h \circ \psi_{\ell}^{-1}$ and $\wh{h}_{\ell}$ on the same probability space such that $\wt{h}^{0,k} = \wh{h}^{0,k}$ and that $\wh{\Fh}_{k,\ell}$,  $\wt{\Fh}_{k,\ell}$ are independent.  Let $g \in C_0^{\infty}(B(0,1))$ be fixed and such that $|g(z)| \leq 1$, $|\nabla g(z)| \leq C$ for all $z$ (where $C>0$ is universal) and $g|_{B(0,e^{-1}(1/4 + 1/2e)^{-1})}  \equiv 1$.  Then we set $g_{k,\ell}(w) = g(M^{-1}b_{\ell}^{-1} e^k (\tfrac{1}{4} + \tfrac{1}{2e})^{-1} (w-a_{\ell}))$ and so $g_{k,\ell} \in C_0^{\infty}(B_{k,\ell}^*)$ and $g_{k,\ell}|_{B_{k+1,\ell}} \equiv 1$.  It follows from the proof of~\cite[Proposition~3.4]{ms2016imag1} that conditional on $\CG_{k,n_{\ell}}$ and on the event $\cap_{i=0}^k P_i^c$,  the law of the restriction of $h \circ \psi_{\ell}^{-1}$ to $B_{k+1,\ell}$ weighted by $\CZ^{-1}\exp((\wt{h}^{0,k},f_{k,\ell})_{\nabla})$ with $\CZ = \exp(\|f_{k,\ell}\|_{\nabla}^2 / 2)$ is equal to the law of $\wh{h}_{\ell}$ restricted to $B_{k+1,\ell}$,  where $f_{k,\ell} = g_{k,\ell} ((\wh{\Fh}_{k,\ell} - \wt{\Fh}_{k,\ell}) - (\Fh_{k,\ell} - \lambda - u_{n_\ell}))$.  

With the Radon-Nikodym derivative between laws of $h \circ \psi_\ell^{-1}$ and $\wh{h}_\ell$ restricted to $B_{k+1,\ell}$ at our hands, we shall prove~\eqref{eq:dominate_measure}. We fix $p>1$ (to be taken very close to $1$) and let $q>1$ be such that $\tfrac{1}{p}+\tfrac{1}{q} = 1$.  Then H\"{o}lder's inequality implies that
\begin{align*}
\wh{\mu}_{k+1,\ell}(\wh{P}_{k+1}) \leq \E_{\mu_{k+1,\ell}}[ \exp((p^2 - p)\|f_{k,\ell}\|_{\nabla}^2 / 2)]^{\frac{1}{p}} \mu_{k+1,\ell}(P_{k+1})^{\frac{1}{q}}.
\end{align*}
We claim that $\E_{\mu_{k+1,\ell}}[ \exp((p^2-p)\|f_{k,\ell}\|_{\nabla}^2 / 2)] \lesssim 1$ if $p$ is chosen sufficiently close to $1$ where the implicit constant depends only on $p$.  Indeed,  first we note that $g_{k,\ell}(\wh{\Fh}_{k,\ell} - \wt{\Fh}_{k,\ell})$ and $g_{k,\ell}(\Fh_{k,\ell} - \lambda - u_{n_\ell})$ are independent and that $\|f_{k,\ell}\|_{\nabla}^2 \leq  2\|g_{k,\ell}(\wh{\Fh}_{k,\ell} - \wt{\Fh}_{k,\ell})\|_{\nabla}^2 + 2\|g_{k,\ell}(\Fh_{k,\ell} - \lambda - u_{n_\ell})\|_{\nabla}^2$.  It follows from the argument used to prove~\cite[Lemma~4.1]{mq2020geodesics} that $\E_{\mu_{k+1,\ell}}[\exp((p^2 - p)\|g_{k,\ell}(\wh{\Fh}_{k,\ell} - \wt{\Fh}_{k,\ell})\|_{\nabla}^2)] \lesssim 1$ if $p$ is chosen sufficiently close to $1$,  where the implicit constant depends only on $p$ .  We now focus on the term $\|g_{k,\ell}(\Fh_{k,\ell} - \lambda - u_{n_\ell})\|_{\nabla}^2$.  Observe that 
\begin{align*}
	\|g_{k,\ell}(\Fh_{k,\ell} - \lambda - u_{n_\ell})\|_\nabla^2 \lesssim \| \nabla\Fh_{k,\ell}\|_{2L^2(B_{k,\ell}^*)}^2 + \sup_{w \in B_{k,\ell}^*}|\Fh_{k,\ell}(w) - \lambda - u_{n_\ell}|^2,
\end{align*}
since $\|\nabla g_{k,\ell}\|^2 \lesssim 1$.  We begin by bounding the supremum.  Note that the boundary values of $\Fh_{k,\ell}$ on $\cup_{i=0}^k \eta_i$ are $u_{n_{\ell}} + O(1)$.  Fix $w \in B(a_{\ell},4Mb_{\ell}e^{-k}/9)$. Observe that $\dist(w,\partial \h ) \lesssim Mb_{\ell}$ and so $\dist(w,\partial \h) \leq \dist(w, \partial \h \setminus [-1,1]) O(M^{-49})$.  The Beurling estimate implies that $\harm{w}{\partial \h \setminus[-1,1]}{D_{k,\ell}} = O(M^{-49 / 2})$ and so this gives a contribution to $\Fh_{k,\ell}(w)$ of order $O(M^{-43 / 2})$ from the images under $\psi_{\ell}$ of the parts of $\partial D_{n_{\ell}}$ in $\CR_{a,b}$ except for the right side of $L_{n_{\ell}}$.  Overall,  it follows that $\Fh_{k,\ell}(w) - \lambda - u_{n_\ell} = O(1)$ for $w \in \h \cap B(a_{\ell} ,  4Mb_{\ell}e^{-k} / 9)$,  where the implicit constant is universal.  As for the other term,  we note that $\Fh_{k,\ell} - \lambda - u_{n_\ell}$ can be extended by the reflection principle to a harmonic function in $B(a_{\ell} ,  4Mb_{\ell} e^{-k} / 9)$ since $\Fh_{k,\ell} - \lambda - u_{n_\ell} \equiv 0$ in $B(a_{\ell} ,  4Mb_{\ell} e^{-k} / 9) \cap \partial \h$.  Moreover,  standard estimates for harmonic functions imply that $|\nabla \Fh_{k,\ell}(w)| \lesssim M^{-1}b_{\ell}^{-1} e^k$ for all $w \in B_{k,\ell}^*$,  where the implicit constant is universal.  Therefore we have that $\|\nabla \Fh_{k,\ell} \|_{L^2(B_{k,\ell}^*)}^2 \lesssim 1$.  Therefore,  there exists a universal constant $C > 1$ such that $\wh{\mu}_{k+1,\ell}(\wh{P}_{k+1}) \leq C \mu_{k+1,\ell}(P_{k+1})$ conditional on $\CG_{k,n_{\ell}}$ and on the event $\cap_{i=0}^k P_i^c$.  Thus we have proved that~\eqref{eq:dominate_measure} holds and thus by the discussion in Step 2, the proof is complete.
\end{proof}

In the following lemma we will make sure that the crossings have bounded height, even when they get within distance $M^{-100}$ of the points $(v_m)$.

\begin{lemma}\label{lem:large_heights_with_small_prob}
There exist constants $c>0$ and $M_0 \in \N$ such that the following holds. Let $G_M$ be the event that there exists a crossing of height outside of $[-M^3,M^3]$ discovered before $v_{\lfloor M/2 \rfloor}$ is disconnected from $\pright \CR_{a,b}$ (or before the exploration terminates if $v_{\lfloor M/2 \rfloor}$ is never disconnected). Then $\p[ G_M ] \leq \exp(-cM\log(M))$ for all $M \geq M_0$.
\end{lemma}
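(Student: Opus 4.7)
The plan is to combine Lemma~\ref{lem:maximal_crossing_tail} and Lemma~\ref{lem:next_crossing_separates} to show that, off a set of probability $\exp(-cM\log M)$, the heights $|u_j|$ remain bounded by $2M$ throughout the portion of the exploration occurring before $v_{\lfloor M/2\rfloor}$ is disconnected from $\pright\CR_{a,b}$, and hence well below $M^3$.

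First I would set up the geometric and probabilistic framework. Write $T = j(\lfloor M/2\rfloor)$ for the first crossing that disconnects $v_{\lfloor M/2\rfloor}$ from $\pright\CR_{a,b}$ (with $T=\infty$ if this never happens). The basic geometric observation is that for $M$ large, consecutive points $v_k$ are separated by $1/M > 2M^{-100}$, so no single crossing can lie within distance $M^{-100}$ of two distinct $v_k$'s. Next I would identify two high-probability events to intersect. The first is the event $E_M$ from Lemma~\ref{lem:maximal_crossing_tail}, which fails with probability at most $\exp(-c_0 M^2/\log M)$. The second, provided by Lemma~\ref{lem:next_crossing_separates} together with a union bound over $\ell \le M/2$, is the event that $N_\ell - n_\ell < M$ for every $\ell \le M/2$ on which $F_\ell$ is in force, and has probability at least $1 - (M/2)\exp(-cM\log M)$.

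Next I would argue that on $E_M$ the heights are already controlled between phases. Indeed, for $k \in [N_{\ell-1},n_\ell)$ strictly between two phases, the definition of $n_\ell$ says that $L_k$ is not within $M^{-100}$ of any $v_m$ with $m\leq M/2$ which it does not disconnect; since $v_{\lfloor M/2\rfloor}$ is not yet disconnected for $k<T$, and since $L_k$ cannot both disconnect $v_{\lfloor M/2\rfloor}$ and satisfy $k<T$, we conclude $\dist(v_{\lfloor M/2\rfloor},L_k)\ge M^{-100}$. Specialized to $i=\lfloor M/2\rfloor$, $E_M$ then yields $|u_k|\leq M$ throughout all between-phase portions of the exploration before time $T$. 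In particular, $\max_{j\leq n_\ell}|u_j|\leq M+u \leq M^3$ for $M$ large, so the hypothesis $F_\ell$ of Lemma~\ref{lem:next_crossing_separates} is automatically in force on $E_M$ for every $\ell\leq M/2$ with $n_\ell<\infty$.

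The within-phase control is then immediate from the bound $N_\ell-n_\ell<M$: since heights change by at most $u$ per crossing, one gets $|u_k|\leq|u_{n_\ell-1}|+(N_\ell-n_\ell)u \leq M + Mu \leq 2M$ for $k\in[n_\ell,N_\ell]$. Combining the between-phase and within-phase estimates, $|u_k|\leq 2M < M^3$ for every $k<T$ on the intersection of the two good events, so $G_M$ is ruled out, and a union bound gives $\p[G_M]\leq \exp(-c_0M^2/\log M) + (M/2)\exp(-cM\log M) \leq \exp(-c'M\log M)$ for $M$ large. The only real subtlety is to verify carefully that the phase structure together with the spacing of the $v_k$'s indeed forces $L_k$ to be far from $v_{\lfloor M/2\rfloor}$ outside phases; the rest is a routine combination of the two earlier estimates.
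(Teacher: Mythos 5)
Your approach mirrors the paper's: intersect $E_M$ from Lemma~\ref{lem:maximal_crossing_tail} with the event that no phase lasts $M$ steps (controlled via Lemma~\ref{lem:next_crossing_separates}), argue that on this intersection all heights before the disconnection of $v_{\lfloor M/2\rfloor}$ stay in $[-M^3,M^3]$, and take a union bound. The probability arithmetic at the end is fine. However, there is a concrete gap in your within-phase height bound.

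You write $|u_k| \leq |u_{n_\ell - 1}| + (N_\ell - n_\ell)u \leq M + Mu \leq 2M$ within phase $\ell$, using that $|u_{n_\ell-1}| \leq M$ on the grounds that $n_\ell - 1$ lies between phases. But $n_\ell = N_{\ell-1}$ is possible: the crossing $L_{N_{\ell-1}}$ that disconnects $v_{m_{\ell-1}}$ may itself lie within distance $M^{-100}$ of some $v_k$ with $k>m_{\ell-1}$ that it does not disconnect, triggering phase $\ell$ immediately. Then $n_\ell - 1 = N_{\ell-1}-1$ is inside phase $\ell-1$, not between phases, and heights accumulate across consecutive adjacent phases. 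Since $m_\ell$ is strictly increasing and bounded by $\lfloor M/2\rfloor$, there are at most that many phases, each contributing at most $Mu$, so the correct conclusion is $|u_k| \leq M + u + uM^2$, which is what the paper derives in its Step~2 (and which is why the threshold in the lemma is $M^3$ and not something smaller). This is still $< M^3$ for $M$ large, so your overall conclusion survives, but the $2M$ bound as stated is wrong and conceals the inductive accounting that the paper makes explicit through its case analysis on $m_\ell$ and the disconnecting crossing. You should also state the paper's convention that when $N_\ell = \infty$, "$N_\ell - n_\ell \geq M$" is interpreted to mean the exploration continues for at least $M$ crossings beyond $n_\ell$; this is how Lemma~\ref{lem:next_crossing_separates} covers phases in which $v_{m_\ell}$ is never disconnected but the exploration terminates.
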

\begin{proof}
\noindent{\it Step 1.  Setup.}  Let $E_M$ be the event of Lemma~\ref{lem:maximal_crossing_tail} and suppose that we are working on $E_M \cap G_M$.  Let $J$ be the first $i$ such that $n_i < \infty$,  $N_i-n_i \geq M$ and the exploration does not stop after $M$ steps of the discovery of $L_{n_i}$.  In what follows, we are first going to show that $G_M \cap E_M \subseteq \{J<\infty\} \cap E_M$ (Step 2) and then combine this with Lemma~\ref{lem:maximal_crossing_tail} to prove the desired upper bound on $\p[G_M]$ (Step 3).

\noindent{\it Step 2.  $G_M \cap E_M \subseteq \{J<\infty\} \cap E_M$.}  We first consider the case that there is some $L_{j^*}$ separating $v_1, \dots, v_{\lfloor M/2 \rfloor - 1}$ but not $v_{\lfloor M/2 \rfloor}$ from $\pright \CR_{a,b}$.  Let $L_j$ be a crossing of height outside of $[-M^3,M^3]$ which is discovered before $v_{\lfloor M/2 \rfloor}$ is disconnected from $\pright \CR_{a,b}$ (or before the exploration terminates if $v_{\lfloor M/2 \rfloor}$ is never disconnected). First, we we will show that $n_1 \neq \infty$.  Indeed,  suppose that $n_1 = \infty$.  Let $v_i$ be the leftmost point that is not disconnected from $\pright \CR_{a,b}$ by $L_j$.  Then $1 \leq i \leq M/2$ and since $n_1 = \infty$,  we must have that $\dist(v_i ,  L_j) > M^{-100}$.  But then,  since $E_M$ occurs, $L_j$ must have height in $[-M,M]$ which is a contradiction.  Thus $n_1 < \infty$ and $\ell = \max\{i\in \N : n_i < \infty\}$ is hence well-defined.  Note that by the definition of the $n_i$'s,  we must have that $1 \leq \ell \leq M/2$.  

Suppose that $m_{\ell} < \lfloor M/2 \rfloor$.  Then we must have that $N_{\ell} < \infty$ (since $v_{m_\ell}$ is disconnected from $\pright \CR_{a,b}$ by $L_{j^*}$),  $n_{\ell+1} = \infty$ and that $v_{\lfloor M/2 \rfloor}$ is not disconnected by $L_{N_{\ell}}$.  Since $E_M$ occurs,  all of the crossings $L_1,\ldots,L_{n_1-1}$ have heights in $[-M,M]$ and so $|u_i| \leq M$ for each $1 \leq i \leq n_1 - 1$ and $|u_{n_1}| \leq |u_{n_1-1}| + u\leq M+u$,  since every time that we discover a new crossing the height either decreases or increases by $u$.  It follows that $|u_i| \leq M+u +u\sum_{k=1}^{\ell}(N_k - n_k)$ for each $1 \leq i \leq N_{\ell}$.  Suppose that $N_k - n_k \leq M$ for each $1 \leq k \leq \ell$.  Then $|u_i| \leq M+u+uM^2 \leq M^3$ for each $1\leq i \leq N_{\ell}$.  Since $n_{\ell+1} = \infty$ and $E_M$ occurs,  it follows that every crossing discovered after $L_{N_{\ell}}$ and up until $v_{\lfloor M/2 \rfloor}$ is disconnected from $\pright \CR_{a,b}$ (or up until the end of the exploration if no such time exists) has height in $[-M^3, M^3]$,  but that is a contradiction.  Therefore there exists $1 \leq k \leq \ell$ such that $N_k - n_k > M$ and $N_k < \infty$.  Therefore in this case we have that $J < \infty$.

Suppose that $m_{\ell} = \lfloor M/2 \rfloor$.  If $N_k - n_k \leq M$ for each $1 \leq k \leq \ell$,  then $|u_i| \leq M+u+u\sum_{k=1}^{\ell}(N_k - n_k) \leq M+u+uM^2 \leq M^3$ for each $1 \leq i \leq N_{\ell}$.  This is a contradiction because then $v_{\lfloor M/2 \rfloor}$ will be disconnected from $\pright \CR_{a,b}$ by the time $L_{N_{\ell}}$ is discovered hence the exploration will not discover a crossing with height outside $[-M^3,M^3]$ before disconnecting $v_{\lfloor M/2 \rfloor}$.  Hence there exists $1 \leq k \leq \ell$ such that $N_k - n_k > M$. Suppose that $N_k - n_k \leq M$ for each $1 \leq k \leq \ell-1$.  Then either we have that $N_{\ell}-n_{\ell} > M$ and $N_{\ell} < \infty$ or $N_{\ell} = \infty$.  Suppose also that $N_{\ell} = \infty$ and the exploration stops after at most $M$ steps after $L_{n_{\ell}}$ is discovered.  Again we have that $|u_i| \leq M+u+uM^2$ for each $1 \leq i \leq N_{\ell-1}$.  If $N_{\ell-1} = n_{\ell}$,  then it follows that every crossing discovered after $L_{n_{\ell}}$ has height whose absolute value is at most $M+u+uM^2+uM \leq M^3$,  but that is a contradiction.  If $N_{\ell-1} < n_{\ell}$,  then every crossing discovered after $L_{N_{\ell-1}}$ and before $L_{n_{\ell}}$ has height in $[-M,M]$ and so $L_{n_{\ell}}$ has height in $[-M-u,M+u]$.  Consequently,  every crossing discovered after $L_{n_{\ell}}$ has height in $[-M-u(M+1),M+u(M+1)]$,  and so again we have a contradiction.  Therefore in this case we have that $J < \infty$.

We next consider the case that there does not exist a crossing $L_j^*$ as described at the beginning of the step. We let $v_n$ be the leftmost of the $v_i$'s for $1 \leq i \leq \lfloor M/2 \rfloor$ which is not disconnected from $\pright \CR_{a,b}$ in the step just before the step at which $v_{\lfloor M/2 \rfloor}$ is disconnected from $\pright \CR_{a,b}$ if such disconnection occurs or the leftmost point of the $v_i$'s for $1 \leq i \leq \lfloor M/2 \rfloor$ which is not disconnected from $\pright \CR_{a,b}$ up until the end of the exploration if no such disconnection occurs.  It follows that $m_{\ell} \leq n$ since $m_{\ell} < \lfloor M/2 \rfloor$.  Therefore a similar argument to the one given in the previous paragraphs implies that $J<\infty$ if $G_M \cap E_M$ occurs.

Combining everything we obtain that in every case $G_M \cap E_M \subseteq \{J<\infty\} \cap E_M$.

\noindent{\it Step 3.  Upper bound on $\p[G_M]$.} By Lemma~\ref{lem:maximal_crossing_tail}, there exists $c_1 > 0$ such that $\p[E_M^c] \leq \exp(-c_1M^2/ \log(M))$ and thus
\begin{align*}
\p[G_M] \leq \exp(-c_1M^2 / \log(M)) + \p[\{J<\infty\}\cap E_M]
\end{align*}
for all $M$ sufficiently large.  Let $B_1$ be the event that $n_1 < \infty$ and $|u_i| \leq M$ for each $1\leq i \leq n_1$ such that $\dist(v_{m_1} ,  L_i) \geq M^{-100}$.  Let also $B_{\ell}$ be the event that $n_{\ell} <\infty$,  $N_i-n_i \leq M$ for each $1\leq i \leq \ell - 1$ and $|u_i| \leq M$ for each $1\leq i \leq n_{\ell}$ such that $\dist(v_{m_{\ell}},L_i)\geq M^{-100}$ for $2\leq \ell \leq M-1$.  Writing $N_{\ell}-n_{\ell} \geq M$ to mean that either $N_{\ell} < \infty$ and $N_{\ell}-n_{\ell} \geq M$ or $N_{\ell} = \infty$ and the exploration has not ended after $M$ steps of the discovery of $L_{n_{\ell}}$, we have that
\begin{align*}
\p[\{J<\infty\}\cap E_M]&\leq \sum_{m=1}^{M-1}\p[\{J=m\}\cap E_M]\\
&\leq\sum_{\ell = 1}^{M-1}\E[ \p[ N_{\ell} - n_{\ell}\geq M\,\giv \, \CF_{n_{\ell}}]\one_{B_{\ell}}].
\end{align*}
For each $1\leq \ell\leq M-1$ we let $C_{\ell}$ be the event that $n_{\ell} < \infty$ and $|u_i| \leq M^3$ for each $1 \leq i \leq n_{\ell}$.  Observe that the $B_{\ell}$'s and $C_{\ell}$'s are all $\CF_{n_{\ell}}$-measurable and $B_{\ell} \subseteq C_{\ell}$ for each $1 \leq \ell \leq M-1$.  Therefore it follows that
\begin{align*}
\p[G_M] \leq \exp(-c_1M^2 / \log(M)) + \sum_{\ell = 1}^{M-1} \E[ \p[N_{\ell} - n_{\ell} \geq M \, \giv \CF_{n_{\ell}}] \one_{C_{\ell}}].
\end{align*}
But Lemma~\ref{lem:next_crossing_separates} implies that $\p[N_{\ell}-n_{\ell}\geq M \, \giv \,  \CF_{n_{\ell}}] \one_{C_{\ell}} \leq \exp(-c_2 M\log(M))\one_{C_{\ell}}$ for all $M$ sufficiently large,  where $c_2 > 0$ is a fixed constant.  Combining we obtain that $\p[G_M] \leq \exp(-c_1 M^2 / \log(M)) + M\exp(-c_2 M \log(M))$ which concludes the proof.
\end{proof}

The final lemma we need in order to prove Lemma~\ref{lem:rectangle_maximum_crossing_height} is the following, which states that we cannot have an infinite number of crossings in the exploration without disconnecting the points $v_i$ from $\pright \CR_{a,b}$ in $\CR_{a,b}$.

\begin{lemma}
\label{lem:finitely_many_crossings}
Fix $M \in \N$ and for each $1 \leq i \leq M-1$ let $C_{i,M}$ be the event that either $v_i$ is disconnected from $\pright \CR_{a,b}$ in $\CR_{a,b}$ by some crossing of the exploration or the exploration hits $\pright \CR_{a,b}$ after finitely many crossings.  Then we have that $\p[ C_{i,M}^c] = 0$.
\end{lemma}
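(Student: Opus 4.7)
The plan is to argue by contradiction using the Schramm--Sheffield martingale, reparameterized by log-conformal radius as seen from $v_i$. Suppose that $\p[C_{i,M}^c] > 0$; then on a positive probability event the exploration produces infinitely many crossings $L_1, L_2, \ldots$, none of which disconnect $v_i$ from $\pright \CR_{a,b}$, and no level line in the exploration ever hits $\pright \CR_{a,b}$. Exactly as in the paragraph preceding Lemma~\ref{lem:maximal_crossing_tail}, I would parameterize the exploration by $t = \log\confrad(v_i,\CR) - \log\confrad(v_i,D_t^i)$, where $D_t^i$ is the component of $\CR \setminus K_t$ containing $v_i$. Since $v_i$ is never absorbed, $\Fh_t^i(v_i) = \E[h(v_i) \giv \CG_t^i]$ is defined for all $t \in [0, T_\infty)$, and by \cite[Proposition~6.5]{ms2016imag1} applied level-line-by-level-line together with the strong Markov property at the restart points, it evolves as a standard Brownian motion, where $T_\infty = \sup_j t_j$ with $t_j$ the time at which $L_j$ is completed.

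The key observation is an upper bound of the form $|\Fh_t^i(v_i) - \Fh_0^i(v_i)| \leq c_0 + c_1 \sup_{j : t_j \leq t} |u_j|$ for universal constants $c_0,c_1$, obtained by writing $\Fh_t^i(v_i)$ as the harmonic extension of the boundary data on $\partial D_t^i$ and noting that boundary data values are either $0, \pm\lambda$ (from $\partial \CR$) or $\pm\lambda + u_j$ (from the side of some crossing $L_j$). Combining this with the fact that the heights satisfy $|u_j| \leq ju$ (since they change by $\pm u$ at each step), one splits into two cases.

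If $T_\infty = \infty$, then the Brownian motion runs for infinite time and is almost surely unbounded, so $\sup_j |u_j| = \infty$. I would then leverage Lemma~\ref{lem:large_heights_with_small_prob} applied with $M \to \infty$: for each $M$, the probability that some crossing of height outside $[-M^3, M^3]$ is discovered before $v_i$ is disconnected decays like $\exp(-cM\log M)$, so a.s.\ no such crossing exists, contradicting unboundedness of $|u_j|$. If $T_\infty < \infty$, the limit domain $D_\infty^i$ has positive conformal radius from $v_i$ and infinitely many crossings must accumulate on $\partial D_\infty^i \setminus \{v_i\}$; I would rule this out by comparing to the setup of Lemma~\ref{lem:next_crossing_separates}, which shows that each crossing landing within distance $r$ of some point $v \in \CR_{a,b}$ has, conditional on the past, probability at least $1-\exp(-cM\log M)$ of being followed by a crossing that disconnects $v$ in at most $M$ further steps, so infinitely many crossings cannot accumulate near any interior point without eventually causing disconnection.

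The main obstacle is the case $T_\infty < \infty$, because even with $\Fh_t^i(v_i)$ bounded we still need to rule out accumulation of infinitely many disjoint level lines inside $\CR_{a,b}$; this requires transferring Lemma~\ref{lem:next_crossing_separates}, which was stated for the reference points $v_m$, to an arbitrary accumulation point, which should go through after an absolute continuity / rescaling argument using the piecewise-constant boundary data description of level lines together with Theorem~\ref{thm:interaction_rules}.
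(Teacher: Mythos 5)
Your two-case decomposition into $T_\infty = \infty$ (which is the same as $v_i$ being absorbed into the closure $K$ of the exploration) and $T_\infty < \infty$ ($v_i \notin K$) matches the paper's, and you correctly identify the second case as the genuine difficulty. But the proposed resolutions of both cases have gaps, and the one for the hard case uses the wrong tool entirely.

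For $T_\infty = \infty$: the Brownian-unboundedness idea is a legitimately different route from the paper (which instead applies the Lemma~\ref{lem:next_crossing_separates} estimate directly to $v_i$, at scale $m^{-100}$, and lets $m\to\infty$ while tracking heights via $E_m$). However, your appeal to Lemma~\ref{lem:large_heights_with_small_prob} ``applied with $M\to\infty$'' doesn't close the argument: the event $G_M$ in that lemma concerns crossings discovered before the particular point $v_{\lfloor M/2\rfloor}^{(M)}$ is disconnected, and for even $M$ this reference point is always $\tfrac12 + \tfrac{i(a+b)}{2}$, not $v_i$. For a general fixed $i$ (with a fixed exploration parameter), Lemma~\ref{lem:large_heights_with_small_prob} controls heights only up to the disconnection time of the \emph{centre} point, which may occur long before the exploration is finished. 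You would essentially need to re-prove Lemma~\ref{lem:large_heights_with_small_prob} with $v_i$ as the reference, and you should be careful that appealing to the a.s.\ boundedness of $\sup_j|u_j|$ from Lemma~\ref{lem:rectangle_maximum_crossing_height} would be circular, since that lemma's proof relies on the present one.

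For $T_\infty < \infty$: Lemma~\ref{lem:next_crossing_separates} is not the right tool and no rescaling/absolute-continuity transfer will make it so. On the event $v_i\notin K$, let $U$ be the component of $\CR_{a,b}\setminus K$ containing $v_i$; then $\dist(v_i,\partial U)>0$, so the crossings \emph{never} come within distance $\dist(v_i,\partial U)$ of $v_i$ — the hypothesis of Lemma~\ref{lem:next_crossing_separates} (a crossing within $M^{-100}$ of the target point) never triggers for $v_i$. And the crossings do not ``accumulate near a point'' in the relevant sense either, because each one has diameter at least $b-a$ (it spans $\CR_{a,b}$); they accumulate as sets on $\partial U$. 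Applying the disconnection estimate to a boundary point $v\in\partial U$ where crossings do cluster would at best disconnect $v$, but that does not advance the exploration into $U$ or force disconnection of $v_i$; the exploration can remain confined to $\CR_{a,b}\setminus U$ indefinitely as far as that argument is concerned. The paper's actual mechanism is entirely different: it exploits that $\Fh_j(v_i)$ is a bounded martingale (on the good event $E_m$), hence converges, while each new crossing shifts the boundary data on a set of harmonic measure bounded below (uniformizing to $\D$, an arc $I_j$ with $\length(I_j)\to\length(I)>0$, boundary jump $|d_j|\ge\min\{u,2\lambda-u\}$), so the Poisson-integral formula forces $\liminf_j|\Fh_{j+1}(v_i)-\Fh_j(v_i)|>0$ — a contradiction with martingale convergence. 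That Carath\'eodory-convergence-plus-Poisson-kernel argument is the missing idea; it has no analogue in your sketch.
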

\begin{proof}
\noindent{\it Step 1.  Setup.}
Fix $1 \leq i \leq M-1$.  For each $j \in \N$,  we let $K_j$ be as in Section~\ref{subsec:exploration_definition} and let $D_j$ be the connected component of $\CR \setminus K_j$ whose boundary contains $\pright \CR_{a,b}$.  We set $K = \closure{\cup_{j\geq 1} K_j}$ and let $D$ be the connected component of $\CR \setminus K$ whose boundary contains $\pright \CR_{a,b}$.  Note that $K_j$ and $K$ are local sets for $h$ which are also determined by $h$.  Let $\Fh$ and $\Fh_j$ be the associated harmonic functions. Let $c_0,M_0>0$ be the constants of Lemma~\ref{lem:maximal_crossing_tail} and $a>0$ be the constant from Lemma~\ref{lem:next_crossing_separates}.  For $m \in \N_0$ we let $J_m$ be the first $j \in \N$ such that $L_j$ does not disconnect $v_i$ from $\pright \CR_{a,b}$ and $\dist(v_i ,  L_j) \leq m^{-100}$.  Let also $N$ be the minimum of the number of crossings discovered by the exploration and the first $j$ such that $L_j$ disconnects $v_{i}$ from $\pright \CR_{a,b}$. 

\noindent{\it Step 2.  $v_{i}$ is disconnected from $\pright \CR_{a,b}$ whenever the exploration discovers infinitely many crossings and gets arbitrarily close to $v_{i}$.}
 We claim that $\p[ N=\infty ,  v_i \in K] = 0$.  Indeed,  first we note that $\p[ N=\infty ,  v_i \in K] = \lim_{m \to \infty} \p[ N=\infty ,  J_m < \infty]$.  Also Lemma~\ref{lem:next_crossing_separates} implies that
\begin{align*}
\p[ N=\infty  \giv \CF_{J_m}] \one_{J_m < \infty} \one_{\max_{1 \leq \ell \leq J_m} |u_{\ell}| \leq m+u} \leq O(m^{-a}).
\end{align*}
Let $E_m$ be as in Lemma~\ref{lem:maximal_crossing_tail} and note that if $E_{m}$ occurs,  then we have that $\max_{1\leq \ell \leq J_m}|u_{\ell}| \leq m+u$ and this implies that $\p[ E_m, N=\infty , J_m<\infty] \leq O(m^{-a})$.  Combining with Lemma~\ref{lem:maximal_crossing_tail} we obtain that $\p[ N=\infty ,  J_m < \infty] \leq O(m^{-a}) + \exp(-c_0 m^2 / \log(m))$. Thus the claim follows by taking $m \to \infty$,  and so it suffices to show that $\p[ N=\infty ,  v_i \notin K] = 0$.

\noindent{\it Step 3.  $v_i$ is disconnected from $\pright \CR_{a,b}$ whenever the exploration discovers infinitely many crossings and does not accumulate at $v_{i}$.}

\noindent{\it Step 3a.  The accumulation points of the exploration do not disconnect $v_i$ from $\partial \CR_{a,b}$.}
Fix $m \in \N_0$.  We claim that $\p[ N=\infty ,  J_m = \infty,  E_m] = 0$.  Indeed,  suppose that we are working on the event $E_m \cap \{N=\infty\} \cap \{J_m = \infty\}$.  Suppose first that $v_{i} \in D$ and let $U$ be the connected component of $\CR_{a,b} \setminus K$ containing $v_i$.  For each $j \in \N$,  we let $U_j$ be the connected component of $\CR_{a,b} \setminus K_j$ containing $v_{i}$.  We assume that $\partial U \nsubseteq K$ and fix $z_0 \in \partial U \setminus K$.  We let $\phi$ (resp.\  $\phi_j$) be the unique conformal transformation mapping $U$ (resp.\  $U_j$) onto $\D$ such that $\phi(v_{i}) = 0$ (resp.\  $\phi_j(v_{i}) = 0$) and $\phi(z_0) = 1$ (resp.\  $\phi_j(z_0) = 1$).  Let $x_j$ (resp.\  $y_j$) be the rightmost intersection point of $K_j$ with $\CR_{a,b}^{\Down}$ (resp.\  $ \CR_{a,b}^{\Up}$).  Note that for each $j \in \N$, the harmonic measure of of the clockwise (resp.\  counterclockwise) arc of $\partial \CR_{a,b}$ from $z_0$ to $x_j$ (resp.\  $y_j$) in $U_j$ as seen from $v_{i}$ is at least the harmonic measure of the clockwise (resp.\  counterclockwise) arc of $\partial \CR_{a,b}$ from $z_0$ to $x_{j+1}$ (resp.\  $y_{j+1}$) in $U_{j+1}$ as seen from $v_{i}$.  This implies that $I_j \subseteq I_{j+1}$ where $I_j = \phi_j(K_j^\Right)$ and so $\length(I\setminus I_j) \to 0$ as $j \to \infty$ where $I = \cup_{j\geq 1}I_j$.  We claim that $\Fh_j \to \Fh$ as $j \to \infty$ locally uniformly in $D$.  Indeed,  let $(q_m)_{m \geq 1}$ be an enumeration of the points in $\CR$ with rational coordinates.  Then the martingale convergence theorem implies that $\Fh_j(q_{\ell}) \to \Fh(q_{\ell})$ as $j \to \infty$ a.s.\  on the event that $q_{\ell} \notin K$ for each $\ell \in \N$.  The claim then follows since the $\Fh_j$'s and $\Fh$ are harmonic functions and their boundary values lie in $[-m,m]$ (since $E_{m}$ occurs) hence are equicontinuous on compact sets.

Next we claim that $\phi_j^{-1} \to \phi^{-1}$ as $j \to \infty$ locally uniformly in $\closure{\D} \setminus \closure{I}$.  Indeed, let $x_0$ be the lower left corner of $\CR_{a,b}$ and let $x_1$ (resp.\  $x_2$) be the lower (resp.\  upper) right corner of $\CR_{a,b}$.  Let $F$ be the unique conformal transformation mapping $\h$ onto $\CR_{a,b}$ such that $F(\infty) = x_0,  F(-1) = x_1,  F(1) = x_2$ and set $\psi_j = \phi_j^{-1}$.  Note that $\psi_j = F \circ \wt{\psi}_j$ where $\wt{\psi}_j$ is the unique conformal transformation mapping $\D$ onto $\h \setminus F^{-1}(K_j)$ such that $\wt{\psi}_j(0) = F^{-1}(v_{i})$ and $\wt{\psi}_j(1) = F^{-1}(z_0)$.  Note that there exists $\epsilon > 0$ such that $B(z_0 ,  \epsilon) \cap \CR_{a,b} \subseteq U$ and $\harm{v_{i}}{\partial U_j \cap B(z_0,\epsilon)}{U_j} \geq p = \harm{v_{i}}{\partial U \cap B(z_0,\epsilon)}{U} >0$ and so $\length(\phi_j(\partial U_j \cap B(z_0,\epsilon))) \geq 2\pi p$ for each $j \in \N$.  Since $\phi_j(\partial U_j \cap B(z_0,\epsilon))$ is an arc of $\partial \D$ such that $I_j \cap \phi_j(\partial U_j \cap B(z_0,\epsilon)) = \emptyset$,  it follows that $1 \notin \closure{I}$.  Thus $\wt{\psi}_j(\partial \D \setminus I)$ is a non-trivial interval of $\R \cap \partial \h \setminus F^{-1}(K_j)$ such that $\dist(\wt{\psi}_j(\partial \D \setminus I) ,  F^{-1}(K_j)) > 0$.  Hence the reflection principle implies that $\wt{\psi}_j$ extends to a conformal mapping defined on $\D_I^*$,  where $\D_I^*$ is the reflection of $\partial \D$ with respect to $\partial \D \setminus \closure{I}$.  Fix a set $X$ which is compact in $\closure{\D} \setminus \closure{I}$.  Note that there exists $R>0$ such that $\h \setminus F^{-1}(K_j) \subseteq B(0,R)$ for each $j$ and so standard estimates for conformal maps imply that $\sup_{w \in X}|\wt{\psi}_j'(w)| \lesssim 1$ and $\sup_{w \in \D}|\wt{\psi}_j(w)| \leq R$ where the implicit constant does not depend on $j$.  Thus by the Arzela-Ascoli theorem we obtain that if we fix a subsequence $(\wt{\psi}_{j'})_{j \geq 1}$,  then we can find a further subsequence $(\wt{\psi}_{j''})_{j \geq 1}$ and a conformal map $\wt{\psi}$ such that $\wt{\psi}_{j''} \to \wt{\psi}$ as $j \to \infty$ locally uniformly in $\closure{\D} \setminus \closure{I}$.  Note that $|\wt{\psi}_j'(0)| \geq m^{-100}|(F^{-1})'(v_i)|/4$ for each $j$ which implies that $\wt{\psi}'(0) \neq 0$ and so $\wt{\psi}$ is a univalent function.  Note that $F$ is a homeomorphism away from $\infty$ and so $\psi_{j''} \to \psi$ as $j \to \infty$ locally uniformly in $\closure{\D} \setminus \closure{I}$ where $\psi = F \circ \wt{\psi}$.  Note that $\psi(\D) \subseteq U$.  To see this,  fix $w \in \D$ and $r \in (0,1)$ such that $w \in B(0,r)$.  Then standard estimates for conformal maps imply that $|\psi_j'(z)| \gtrsim 1$ for each $z \in B(0,r)$ where the implicit constant does not depend on $j$ or $z$.  It follows that $\dist(\psi_j([0,w]) ,  \partial U_j) \gtrsim 1$ and so $\dist(\psi([0,w]) ,  \partial U) \gtrsim 1$.  Since $U$ is connected,  $\psi(0) = v_{i} \in U$ and $\psi([0,w])$ is a path connecting $v_{i}$ to $\psi(w)$,  it follows that $\psi(w) \in U$ and so $\psi(\D) \subseteq U$.  Note that $\psi(1) = z_0$ and so to prove the claim we need to show that $U \subseteq \psi(\D)$ since the subsequence $(\psi_{j'})_{j \geq 1}$ was arbitrary.  To show the latter,  by applying again as before the Arzela-Ascoli theorem,  we can assume that $\phi_{j''} \to \wt{\phi}$ as $j \to \infty$ locally uniformly in $U$ for some univalent function $\wt{\phi}$ taking values in $\D$.  Then the claim follows since $\psi_{j''}(\phi_{j''}(w)) = w$ and so taking $j \to \infty$ gives that $\psi(\wt{\phi}(w)) = w$ for each $w \in U$.

Now we observe that $\phi_{j+1}^{-1}(I_j) \subseteq K_{j+1}^\Right$ since $I_j \subseteq I_{j+1}$ and so $\Fh_{j+1} \circ \phi_{j+1}^{-1} - \Fh_j \circ \phi_j^{-1}$ has boundary values on $I_j$ given by $d_j$,  where $d_j$ is the difference between the boundary values of $\Fh_{j+1}$ on $K_{j+1}^\Right$ and $\Fh_j$ on $K_j^\Right$.  Thus we have that
\begin{align}\label{eqn:basic_convergence}
\Fh_{j+1} \circ \phi_{j+1}^{-1}(0) - \Fh_j \circ \phi_j^{-1}(0) = \frac{d_j \length(I_j)}{2\pi} + \frac{1}{2\pi} \int_{\partial \D \setminus I_j}(\Fh_{j+1} \circ \phi_{j+1}^{-1}(w) - \Fh_j \circ \phi_j^{-1}(w))dw.
\end{align}
Note that $\Fh_{j+1} \circ \phi_{j+1}^{-1}(w) - \Fh_j \circ \phi_j^{-1}(w) \to 0$ as $j \to \infty$ for Lebesgue a.e.\ $w \in \partial \D \setminus \closure{I}$ since $\psi_j \to \psi$ as $j \to \infty$ locally uniformly in $\closure{\D} \setminus \closure{I}$ and $\Fh_j \to \Fh$ as $j \to \infty$ locally uniformly in~$D$.  Hence the second term of the right side of~\eqref{eqn:basic_convergence} tends to $0$ as $j \to \infty$ since $\|\Fh_j\|_{\infty} \leq m$ and $\length(I \setminus I_j) \to 0$ as $j \to \infty$.  Moreover we have that $\liminf_{j \to \infty} |d_j| \length(I_j)/2\pi \geq c\length(I) / 2\pi > 0$ where $c = \min\{u,  2\lambda - u\}$.  Hence~\eqref{eqn:basic_convergence} implies that $\liminf_{j \to \infty}|\Fh_{j+1} \circ \phi_{j+1}^{-1}(w) - \Fh_j \circ \phi_j^{-1}(w)| = \liminf_{j \to \infty}|\Fh_{j+1}(v_{i}) - \Fh_j(v_{i})| >0$.  This is a contradiction since $\Fh_{j+1}(v_{i}) - \Fh_j(v_{i}) \to 0$ as $j \to \infty$.

\noindent{\it Step 3b.  The limiting points of the exploration fill in open neighborhoods of $\pright \CR_{a,b}$.}
Now suppose that either $K$ disconnects $v_{i}$ from $\pright \CR_{a,b}$ or $v_{i}$ is not disconnected but $\partial U \subseteq K$.  In both cases we let $\phi_j$ be the unique conformal transformation mapping $U_j$ onto $\D$ such that $\phi_j(v_{i}) = 0$ and $\phi_j(x_j) = -i$.  Again if we set $I_j = \phi_j(K_j^\Right)$,  we have that $I_j \subseteq I_{j+1}$ for each $j \in \N$ and $\length(I_j) \to 2\pi$ as $j \to \infty$.  It follows that~\eqref{eqn:basic_convergence} still holds and the second term in the right side of~\eqref{eqn:basic_convergence} tends to $0$ as $j \to \infty$.  Hence we have that $\liminf_{j \to \infty}|\Fh_{j+1}(v_{i}) - \Fh_j(v_{i})| >0$.  Again the martingale convergence theorem implies that $\Fh_j(v_{i})$ converges as $j \to \infty$ a.s.\ in both cases,  and so we obtain a contradiction.  Therefore we have that $\p[ N=\infty ,  J_m=\infty ,  E_{m} ] =0$.

\noindent{\it Step 3c.  Conclusion of the proof.}
Finally, the previous analysis implies that 
\begin{align*}
\p[ N= \infty,  J_m=\infty] \leq \p[ E_m^c] \leq \exp(-c_0 m \log(m)).
\end{align*}
Since the event $v_i \notin K$ implies that $J_m = \infty$ for all $m$ sufficiently large, taking a limit as $m \to \infty$ in the above implies that $\p[ N = \infty, v_i \notin K] = 0$.
\end{proof}

\begin{proof}[Proof of Lemma~\ref{lem:rectangle_maximum_crossing_height}]
We consider the case that $M$ is even. By Lemma~\ref{lem:large_heights_with_small_prob} we see that if we explore the crossings of~$\CR_{a,b}$ from left to right, we are very unlikely to find any crossing with height outside of $[-M^3,M^3]$ before either disconnecting $v_{\lfloor M/2 \rfloor}$ from $\pright \CR_{a,b}$ or terminating the exploration. By symmetry, we may define the analogous exploration from right to left, and we obtain the same bound on the probability that this new exploration discovers a crossing with height outside $[-M^3,M^3]$ before disconnecting $v_{\lfloor M/2 \rfloor}$ from $\pleft \CR_{a,b}$. Denote this event by $G_M^*$. Let $C_M$ be the event from Lemma~\ref{lem:finitely_many_crossings} in the case that $i=\lfloor M/2 \rfloor$. Also,  by arguing in the same way and using Lemma~\ref{lem:finitely_many_crossings}, we obtain that a.s.\ the exploration from right to left discovers finitely many crossings in the case that it does not disconnect $v_{\lfloor M/2 \rfloor}$ from $\pleft \CR_{a,b}$.  Denote this event by $C_M^*$.

Suppose that we are working on the event $G_M^c \cap C_M\cap (G_M^*)^c \cap C_M^*$.  If the left to right exploration does not disconnect $v_{\lfloor M/2 \rfloor}$ from $\pright \CR_{a,b}$,  then all of the crossings of that exploration have heights in $[-M^3,M^3]$ (since $G_M^c$ occurs).  Suppose that the left to right exploration disconnects $v_{\lfloor M/2 \rfloor}$ from $\pright \CR_{a,b}$.  
Note that the right to left exploration either disconnects $v_{\lfloor M/2 \rfloor}$ from $\pleft \CR_{a,b}$ or it discovers finitely many crossings (since $C_M^*$ occurs).  Therefore,  since both of the explorations are connected,  if we are in the first case,  then every crossing discovered by the left to right exploration after it discovers the crossing disconnecting $v_{\lfloor M/2 \rfloor}$ from $\pright \CR_{a,b}$ disconnects the left and right sides of~$\CR_{a,b}$ and so the only way that the right to left exploration can disconnect $v_{\lfloor M/2 \rfloor}$ from $\pleft \CR_{a,b}$ is by hitting the crossing.  Similarly,  if we are in the second case,  then the only way that the right to left exploration can hit $\pleft \CR_{a,b}$ is by hitting every crossing discovered by the left to right exploration.  It follows that in both cases,  every crossing discovered by the left to right exploration after it discovers the crossing disconnecting $v_{\lfloor M/2 \rfloor}$ from $\pright \CR_{a,b}$ is hit by a crossing of the right to left exploration discovered before the latter disconnects $v_{\lfloor M/2 \rfloor}$ from $\pleft \CR_{a,b}$.  We note that two level lines can intersect only if their height difference is less than $2 \lambda$ (Theorem~\ref{thm:interaction_rules}).  Moreover every crossing discovered by the right to left exploration before disconnecting $v_{\lfloor M/2 \rfloor}$ has height in $[-M^3,M^3]$ (since $(G_M^*)^c$ occurs).  It follows that every crossing discovered by the left to right exploration after it disconnects $v_{\lfloor M/2 \rfloor}$ from $\pright \CR_{a,b}$ has height in $[-M^3-2\lambda,  M^3 + \lambda]$ and so the same is true for every crossing discovered from that exploration.  Consequently there exists a constant $c>0$ such that off an event with probability at most $\exp(-cM\log(M))$ every crossing in the left to right exploration has height in $[-M^3,M^3]$.  This completes the proof of the lemma.
\end{proof}

\subsection{Number of crossings}
\label{subsec:number_of_crossings}

\begin{lemma}
\label{lem:rectangle_crossing_bound}
For each $H > 0$ and $0 < a < b < H$ we have that the exploration a.s.\ discovers only finitely many crossings.
\end{lemma}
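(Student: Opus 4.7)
The plan is to argue by contradiction from $\p[N=\infty]>0$, where $N$ denotes the total number of crossings discovered. The proof of Lemma~\ref{lem:finitely_many_crossings} applies verbatim with any fixed rational interior point of $\CR_{a,b}$ in place of $v_i$, so taking a countable union over such points shows that, almost surely, either $N<\infty$ or every rational point of $(0,1)\times(a,b)$ is eventually disconnected from $\pright\CR_{a,b}$ by some crossing $L_j$. Since the $U_j$ form a decreasing sequence and $L_j$ separates a point $v$ from $\pright\CR_{a,b}$ only if $L_j$ lies to the right of $v$, this forces the crossings to converge to $\pright\CR_{a,b}$: on $\{N=\infty\}$, for every $\epsilon>0$ there is a (random) $j_0$ with $L_j\subseteq\{\re(z)>1-\epsilon\}$ for all $j\geq j_0$.

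Next I would combine this with the bound $M=\sup_j|u_j|<\infty$ from Lemma~\ref{lem:rectangle_maximum_crossing_height}. For each stage $j\geq j_0$ consider the conformal map $\psi_j\colon U_{j-1}\to\h$ sending the starting point $x_j$ of the initial level line of stage $j$ to $0$, the opposite endpoint of $L_{j-1}$ (on $\pdown\CR_{a,b}$ or $\pup\CR_{a,b}$) to $\infty$, and normalised so that $\psi_j(\pright\CR_{a,b})$ is a subinterval $I_j=[a_j,b_j]\subset\R$. Since $L_{j-1}$ is a crossing of $\CR_{a,b}$ contained in $\{\re(z)>1-\epsilon\}$ and $\pright\CR_{a,b}$ is a segment of height $b-a$, an extremal length argument between $L_{j-1}$ and $\pright\CR_{a,b}$ in $U_{j-1}$ shows that for $\epsilon$ sufficiently small both $|a_j|$ and $|b_j|$ lie in a fixed interval $[c_1,c_2]$ independent of $j\geq j_0$. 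In these coordinates, the initial level line of stage $j$ is an $\SLE_4(\underline\rho)$ process in $\h$ from $0$ to $\infty$ with boundary data of absolute value $O(M)$ and force points inside $I_j$. By an absolute continuity argument paralleling Step~3 of the proof of Lemma~\ref{lem:next_crossing_separates} (decompose $h\circ\psi_j^{-1}$ into its harmonic and zero-boundary parts on a fixed neighbourhood of $I_j$, apply~\cite[Proposition~3.4]{ms2016imag1} with a cutoff supported in this neighbourhood, and compare with a reference $\SLE_4$ in $\h$ whose boundary data forces positive probability of hitting $I_j$), the conditional probability given $\CF_{j-1}$ that some level line drawn during stage $j$ hits $\psi_j^{-1}(I_j)=\pright\CR_{a,b}$ is at least $p_0=p_0(M,u,a,b)>0$, uniformly in $j\geq j_0$.

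Consequently, on $\{N=\infty\}\cap\{\sup_j|u_j|\leq M\}$ we have $\sum_{j\geq j_0}\p[\text{exploration terminates during stage $j$}\giv\CF_{j-1}]\geq\sum_{j\geq j_0}p_0=\infty$, and the conditional Borel--Cantelli lemma forces termination almost surely, contradicting $N=\infty$. The main obstacle is establishing the uniform lower bound $p_0$: a priori $U_{j-1}$ can be geometrically wild, the starting point may be badly positioned relative to $\pright\CR_{a,b}$, and the heights $u_{j-1}$ may lie outside the range $(-2\lambda,0)$ in which a single level line is able to hit $\pright\CR_{a,b}$. The first two issues are handled by the extremal length estimate above, which keeps $I_j$ bounded uniformly in $j$; the third is handled by the fact that each stage of the exploration consists of a sequence of alternating level lines of $h-u$ and $-h+u$, so over a single stage the heights of those intermediate level lines take both signs in a uniformly controlled way, and by choosing the comparison reference $\SLE_4$ appropriately one always lands in a regime with positive hitting probability of $I_j$.
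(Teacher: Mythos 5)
Your strategy---prove convergence of the crossings to $\pright\CR_{a,b}$, then run a conditional Borel--Cantelli argument with a uniform lower bound $p_0$ on the per-stage termination probability---is structurally different from the paper's, but the key estimate at its core fails.  The exploration at stage $j+1$ consists entirely of level lines whose boundary data (for $h$) takes the values $u_{j+1}\pm\lambda$, and such a level line a.s.\ cannot hit $\pright\CR_{a,b}$ (boundary data $-\lambda$) unless $u_{j+1}\in(-2\lambda,0)$.  The heights $u_j$ perform a $\pm u$ random walk and, even on the event $\{\sup_j|u_j|\le M\}$, they spend long stretches outside $(-2\lambda,0)$; at every such stage the conditional termination probability is exactly $0$, not $\ge p_0$.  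Your parenthetical explanation---that ``over a single stage the heights take both signs''---conflates the fixed increment $u$ with the running height $u_j$: within one stage, both the $h-u_{j+1}$ lines and the $-h+u_{j+1}$ lines carry boundary data $u_{j+1}\pm\lambda$, so no ``choice of comparison $\SLE_4$'' can produce a positive hitting probability of $\pright\CR_{a,b}$ when $u_{j+1}\notin(-2\lambda,0)$.  In addition, the extremal-length step asserting $I_j\subseteq[c_1,c_2]$ is not justified: $L_{j-1}\subseteq\{\re z>1-\epsilon\}$ controls only Euclidean proximity, and $U_{j-1}$ can still contain long thin fingers so that the harmonic measure of $\pright\CR_{a,b}$ (and hence the position of $I_j=\psi_j(\pright\CR_{a,b})$) is not controlled uniformly in $j$.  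The claim that Lemma~\ref{lem:finitely_many_crossings} applies ``verbatim'' to arbitrary rational interior points also overstates things slightly, since the constants in that proof depend on the distance of the chosen point to $\pdown\CR_{a,b}\cup\pup\CR_{a,b}$, although the qualitative a.s.\ statement does survive.

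The paper avoids all of this by combining Lemma~\ref{lem:finitely_many_crossings} not with a per-stage hitting estimate but with Lemma~\ref{lem:terminate_upon_separation}: one explores toward the point $v_{M-1}$ near the right boundary, and observes that the first crossing $L_{J_M}$ that disconnects $v_{M-1}$ from $\pright\CR_{a,b}$ is very likely (probability $1-O(M^{-a})$ on $E_M$) to itself touch $\pright\CR_{a,b}$---because height-$\wt u$ level lines emanating \emph{from} $\pright\CR_{a,b}$ form a shield of concentric half-annuli that the disconnecting crossing cannot cross without hitting $\pright\CR_{a,b}$.  On that event the exploration terminates with finitely many crossings, and letting $M\to\infty$ finishes.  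This sidesteps both the geometric control of $U_{j-1}$ and the problem that the explored heights may never return to $(-2\lambda,0)$.
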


We shall now bound the probability that we discover a crossing $L$ which separates $v_{M-1}$ from $\pright \CR_{a,b}$ but does not intersect $\pright \CR_{a,b}$. With this at hand we can use Lemma~\ref{lem:finitely_many_crossings} to get that the number of crossings discovered up until $v_{M-1}$ is disconnected from $\pright \CR_{a,b}$ or the exploration hits $\pright \CR_{a,b}$ is finite.  In the former case,  the crossing disconnecting $v_{M-1}$ from $\pright \CR_{a,b}$ is the last crossing of the exploration, which will conclude the proof of Lemma~\ref{lem:rectangle_crossing_bound}.

\begin{lemma}
\label{lem:terminate_upon_separation}
Let $J_M$ be the first $j$ so that $K_j$ disconnects $v_{M-1}$ from $\pright \CR_{a,b}$ in $\CR_{a,b}$. There exists a constant $a>0$ such that
\begin{align*}
	\p[J_M < \infty, L_{J_M} \cap \pright \CR_{a,b} = \emptyset, E_M] = O(M^{-a}).
\end{align*}
\end{lemma}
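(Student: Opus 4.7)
The plan is to combine the height bound implied by $E_M$ with an $\SLE_4(\underline\rho)$ estimate controlling the probability that a level line of bounded height passes between $v_{M-1}$ and $\pright \CR_{a,b}$ without touching $\pright \CR_{a,b}$.

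First I will show that on a high-probability subset $\wt{E}_M \subseteq E_M$ the height $|u_{J_M}|$ is bounded by $M^3$. On $E_M$ every crossing at distance at least $M^{-100}$ from $v_{M-1}$ has height in $[-M,M]$. Applying Lemma~\ref{lem:next_crossing_separates} iteratively, as in the proof of Lemma~\ref{lem:large_heights_with_small_prob}, gives that off an event of probability $\exp(-cM\log M)$ the exploration performs at most $M$ consecutive close-range visits to $v_{M-1}$ (i.e., visits with $\dist(\cdot,v_{M-1}) < M^{-100}$) before the disconnection occurs. Since each step changes the height by $\pm u$, this yields $|u_j| \leq M^3$ for every $j \leq J_M$.

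Since $v_{M-1}$ lies at distance $1/M$ from $\pright \CR_{a,b}$, in order for $L_{J_M}$ to disconnect $v_{M-1}$ without touching $\pright \CR_{a,b}$ the new level line $\eta^*$ drawn during step $J_M$ must enter the thin strip $S_M = (1-1/M,1) \times (a,b)$ and exit through $\pup \CR_{a,b}$ or $\pdown \CR_{a,b}$. Conditional on $\CF_{J_M-1}$ and on $\wt{E}_M$, the curve $\eta^*$ is a level line of $h$ (or $-h$) of height $u_{J_M}$ in $U_{J_M-1}$. Mapping $U_{J_M-1}$ conformally to $\h$ via $\psi$ taking the starting point of $\eta^*$ to $0$ and $\pright \CR_{a,b}$ to a bounded interval $I \subset \R$, $\psi(\eta^*)$ becomes an $\SLE_4(\underline\rho)$ in $\h$ whose force-point weights have absolute value at most $C(1+|u_{J_M}|) \leq CM^3$.

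The key estimate I need is this: for an $\SLE_4(\underline\rho)$ in $\h$ from $0$ to $\infty$ with weights of absolute value at most $N$, the probability of entering an $\epsilon$-neighborhood of a bounded interval $I \subset \R$ (disjoint from a neighborhood of $0$) without hitting $I$ is at most a polynomial in $N$ times $\epsilon^a$ for some universal $a > 0$. In the range $|u_{J_M}| < 2\lambda$ the $\rho$-weights are bounded by absolute constants, and the estimate follows from absolute continuity with respect to chordal $\SLE_4$, the Beurling estimate for the entry into the $\epsilon$-neighborhood, and the fact that an $\SLE_4(\rho)$ with $\rho \in (-2,0)$ hits $I$ with probability bounded below conditional on entering the $\epsilon$-neighborhood. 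In the range $|u_{J_M}| \geq 2\lambda$ the curve a.s.\ avoids $I$, but the strong boundary repulsion still produces the polynomial bound directly via a martingale computation analogous to the derivation of boundary-hitting exponents for $\SLE_4(\underline\rho)$. Applying this estimate to $\eta^*$ with $\epsilon = O(M^{-1/2})$ (the image of the width of $S_M$, controlled by the Beurling estimate for the harmonic measure of $\pright \CR_{a,b}$ from points of $S_M$) and combining with the height bound yields the $O(M^{-a})$ bound. The main difficulty will be obtaining the required uniformity of the SLE exponent in $N$ up to $N = M^3$; this will require careful tracking of the $\rho$-dependence in the Loewner SDE for $\SLE_4(\underline\rho)$.
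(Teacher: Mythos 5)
Your approach is genuinely different from the paper's, and while the first step (using Lemma~\ref{lem:next_crossing_separates} iteratively to force $|u_j|\le M^3$ up to time $J_M$) is sound, the core of the argument has real gaps. You flag the main one yourself: the ``key estimate'' — that an $\SLE_4(\underline\rho)$ with force-point weights of absolute value up to $N = M^3$ enters an $\epsilon$-neighborhood of a boundary interval without hitting it with probability $\lesssim \mathrm{poly}(N)\,\epsilon^a$, uniformly in $N$ — is never established, and it is not a routine consequence of the cited tools. Absolute continuity with chordal $\SLE_4$ degenerates as the force-point weights grow, so the Radon--Nikodym derivative blows up in $N$; the ``boundary repulsion'' heuristic for $|u|\ge 2\lambda$ would need an actual martingale/exponent computation with explicit $N$-dependence; and the $\epsilon = O(M^{-1/2})$ identification of the image of $S_M$ under the uniformizing map of $U_{J_M-1}$ is not a consequence of the Beurling estimate alone, since the geometry of $U_{J_M-1}$ near $\pright\CR_{a,b}$ is uncontrolled. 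A further issue is that $L_{J_M}$ is a crossing assembled from \emph{several} level lines drawn during step $J_M$ (and possibly segments of earlier ones), not a single curve $\eta^*$; your estimate controls only one of these and would need to be iterated over an a priori unbounded number of sub-steps with a quantitative conditional decoupling that is not supplied.

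The paper avoids all of this with a barrier construction, reusing exactly the mechanism of Lemma~\ref{lem:next_crossing_separates}. One centers $\asymp\log M$ nested half-annuli $A_k = B(v_*,2^{-k})\setminus B(v_*,\tfrac34 2^{-k})$ on a fixed point $v_*\in\pright\CR_{a,b}$, each separating $v_{M-1}$ from $\partial\CR_{a,b}\setminus\pright\CR_{a,b}$, and in each one draws an auxiliary level line of some fixed height $\wt u\in(-2\lambda,0)$ from a point $x_k\in\pright\CR_{a,b}$. Each such line has probability bounded below (uniformly, again via the Radon--Nikodym comparison of Lemma~\ref{lem:next_crossing_separates}) of disconnecting $v_{M-1}$ from $\partial\CR_{a,b}\setminus\pright\CR_{a,b}$ while remaining in $A_k$; after $\asymp\log M$ independent-ish trials this happens with probability $1-O(M^{-a})$. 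Once a barrier is in place, the non-crossing rule of Theorem~\ref{thm:interaction_rules} makes the conclusion purely topological: any crossing $L_{J_M}$ that separates $v_{M-1}$ from $\pright\CR_{a,b}$ must pass through the region the barrier encloses, and since the barrier itself emanates from $\pright\CR_{a,b}$ and level lines cannot cross, $L_{J_M}$ must hit $\pright\CR_{a,b}$. No height bound, no uniform-in-$\rho$ SLE exponent, and no decomposition of $L_{J_M}$ into sub-strands are needed. If you want to salvage your route you would have to prove the quantitative $\SLE_4(\underline\rho)$ boundary-avoidance estimate with explicit polynomial $N$-dependence and then sum it over the sub-steps composing $L_{J_M}$; the barrier argument is substantially shorter.
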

\begin{proof}
The proof is essentially the same as that of Lemma~\ref{lem:next_crossing_separates}. We let $v_* = 1 + i(a+b)/2$, $N_* = \lceil 1 - \log_2 (a+b) \rceil$, $N^* = \lfloor \log M  \rfloor$ and $A_k = B(v_*,2^{-k}) \setminus B(v_*,\tfrac{3}{4} \cdot 2^{-k})$. Then for all $N_* \leq k \leq N^*$ (and all $M$ sufficiently large) $A_k$ separates $v_{M-1}$ from $\partial \CR_{a,b} \setminus \pright \CR_{a,b}$ and $\dist(A_k,A_{k+1}) = 2^{-k-2}$. For such $k$, we let $x_k$ be the midpoint of the lower interval of $\partial A_k \cap \pright \CR_{a,b}$ and $\eta_k$ be the level line of $h$ of height $\wt{u} \in (-2\lambda,0)$ starting from $x_k$. Moreover, we run $\eta_k$ up until the first time $\tau_k$ that it exits~$A_k$ and note that it may intersect $\pright \CR_{a,b}$ away from~$x_k$. Let $\wt{E}_k$ be the event that~$\eta_k$ separates~$v_{M-1}$ from $\partial \CR_{a,b} \setminus \pright \CR_{a,b}$ before exiting $A_k$ and $\wt{E} = \cup_{k = N_*}^{N^*} \wt{E}_k$. By the same strategy as Lemma~\ref{lem:next_crossing_separates} we have that there exists $a>0$ such that
\begin{align*}
	\p[ \wt{E}^c\cap E_M] = O(M^{-a}).
\end{align*}
Since two level lines can never cross (Theorem~\ref{thm:interaction_rules}), we have that on $\wt{E} \cap \{{J_M} < \infty\} \cap E_M$, the level line $L_{J_M}$ must hit $\pright \CR_{a,b}$ in order to pass into the regions of $\CR_{a,b}$ which are cut off by $\cup_k \eta_k$. Thus, on the event $\wt{E}\cap \{{J_M}<\infty\} \cap E_M$, $L_{J_M}$ must intersect $\pright \CR_{a,b}$ in order to separate $v_{M-1}$ from $\pright \CR_{a,b}$. Thus the lemma is proved.
\end{proof}

\begin{proof}[Proof of Lemma~\ref{lem:rectangle_crossing_bound}]
Let $J_M$ be as in the statement of Lemma~\ref{lem:terminate_upon_separation}.  Lemma~\ref{lem:finitely_many_crossings} implies that a.s.\ either $J_M < \infty$ or the exploration hits $\pright \CR_{a,b}$ after discovering a finite number of crossings and before disconnecting $v_{M-1}$ from $\pright \CR_{a,b}$ in $\CR_{a,b}$.  Note that on the event $J_M < \infty$ and $L_{J_M} \cap \pright \CR_{a,b} \neq \emptyset$ the exploration hits $\pright \CR_{a,b}$ after discovering only finitely many crossings.  The result thus follows by combining this observation with Lemmas~\ref{lem:maximal_crossing_tail} and~\ref{lem:terminate_upon_separation} and taking a limit as $M \to \infty$.
\end{proof}

\section{Exploration of annulus crossings}
\label{sec:annulus_exploration}

\subsection{Setup and definition of the exploration}
\label{subsec:annulus_exploration_definition}

\begin{figure}[ht!]
\begin{center}
\includegraphics[scale=0.9]{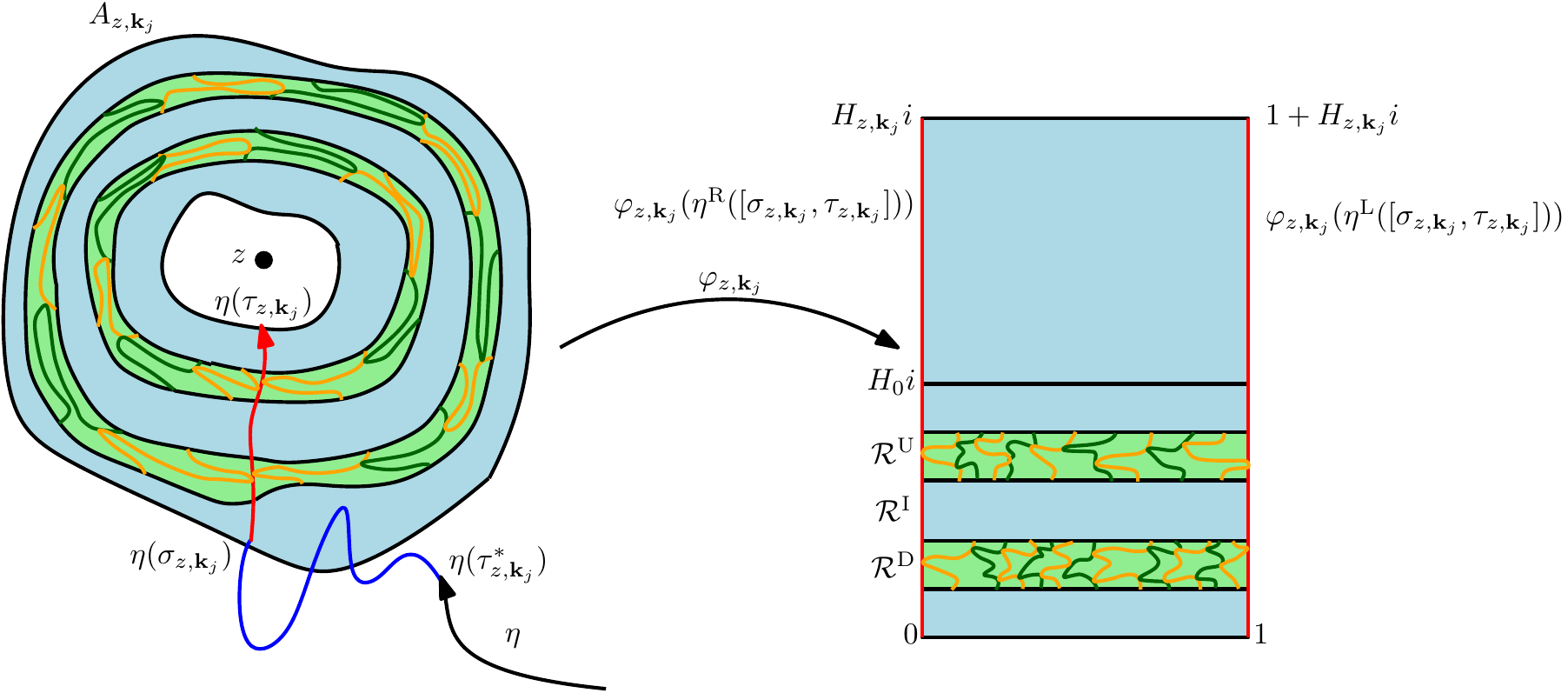}	
\end{center}
\caption{\label{fig:annulus_exploration} Illustration of the setup for the exploration of the crossings across the annulus $A_{z,\kay_j}$.  Note that by the definition of $\kay_j$ we have that $H_{z,\kay_j} \geq H_0$ so that the rectangles $\CR^\Down$, $\CR^\Inner$, $\CR^\Up$ are contained in $(0,1) \times (0,H_{z,\kay_j})$.  The crossings that the exploration in $\CR^\Down$, $\CR^\Up$ discovers are colored alternating in dark green and orange. We remark that the exploration discovers many strands which are not part of a crossing, but for the sake of convenience only the crossings are shown.}
\end{figure}

Throughout, we let $h$ be a GFF on $\h$ with boundary conditions $\lambda$ (resp.\ $-\lambda$) on $\R_+$ (resp.\ $\R_-$) and let $\eta$ be the level line of $\h$ from $0$ to $\infty$.  Then $\eta$ is an $\SLE_4$ in $\h$ from $0$ to $\infty$.  We denote by $\eta^\Left$ (resp.\ $\eta^\Right$) the collection of prime ends corresponding to $\eta$ in the boundary of the component of $\h \setminus \eta$ which is to the left (resp.\ right) of $\eta$.  See Figure~\ref{fig:annulus_exploration} for an illustration of the setup (in the context of Section~\ref{subsec:iteration_exploration} described just below).

We define the annuli and stopping times for $\eta$ as in Section~\ref{subsec:first_crossing_setup}.  Fix $H_0 > 0$ and $\xi \in (0,H_0/10)$.  Let $\CR^\Down = (0,1) \times (2 \xi, 3 \xi)$, $\CR^\Up = (0,1) \times (H_0-3 \xi,H_0-2 \xi)$, and $\CR^\Inner = (0,1) \times (3 \xi,H_0 - 3\xi)$.  As in Section~\ref{sec:rectangle_exploration}, for $q \in \{\Down,\Up\}$, we let $\pdown \CR^q$, $\pup \CR^q$, $\pleft \CR^q$ and $\pright \CR^q$ be the lower, upper, left and right boundary of $\CR^q$, respectively.  For each $z \in \h$ and $k \in \N$ with $H_{z,k} \geq H_0$ we perform the exploration as described in Section~\ref{sec:rectangle_exploration} using the GFF $h \circ \varphi_{z,k}^{-1}$ in each of $\CR^\Down$ and $\CR^\Up$.  For $q \in \{\Down,\Up\}$, we let $(L_j^q)$, $(u_j^q)$, respectively, be the crossings and heights discovered by the exploration.

Part of this reason that we have defined the exploration in this way comes from the following lemma.
\begin{lemma}
\label{lem:eta_crossing_bound}
On $\tau_{z,k} < \infty$ and $H_{z,k} \geq H_0$, the number of times that $\eta$ crosses $A_{z,k}$ is at most the minimal number of $j$ such that $u_j^q = 0$ for $q \in \{\Down,\Up\}$.
\end{lemma}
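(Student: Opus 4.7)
The plan is to use the conformal map $\varphi_{z,k}$ to transport each crossing of $\eta$ through $A_{z,k}$ into a zero-height level line of the pullback GFF on the rectangle $(0,1)\times(0,H_{z,k})$, and then to match these level lines with the zero-height crossings of the two explorations in $\CR^\Down$ and $\CR^\Up$.

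First, working on $\{\tau_{z,k}<\infty\}\cap\{H_{z,k}\geq H_0\}$, observe that every crossing of $A_{z,k}$ by $\eta$ other than the first lies entirely in $A_{z,k}^*$. Indeed, since $\eta$ is simple, $\eta([\tau_{z,k},\infty))$ is contained in the unbounded component of $\h\setminus\eta([0,\tau_{z,k}])$, whose intersection with $A_{z,k}$ is precisely $A_{z,k}^*$. Let $N$ denote the total number of crossings; the subsequent crossings $\eta_2,\ldots,\eta_N$ are mapped under $\varphi_{z,k}$ to curves $\xi_2,\ldots,\xi_N$ inside the rectangle. By the choice of $\varphi_{z,k}$ and conformal covariance of the GFF level-line coupling, the pullback field $\wt h = h\circ\varphi_{z,k}^{-1}$ is a GFF on $(0,1)\times(0,H_{z,k})$ with boundary data $\lambda,-\lambda$ on the vertical sides (coming from the two sides of the first crossing) and $0$ on the horizontal sides (coming from $\pin A_{z,k}$ and $\pout A_{z,k}$), and each $\xi_n$ is a zero-height level line of $\wt h$ connecting the top and bottom of the rectangle. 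Since $H_{z,k}\geq H_0$, each $\xi_n$ necessarily traverses both slabs $\CR^\Down$ and $\CR^\Up$.

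Next, I would match the $\xi_n$'s with the zero-height indices of the exploration. By the construction in Section~\ref{subsec:exploration_definition}, the boundary values of $\wt h$ on the unexplored side of $L_j^q$ are $\pm\lambda+u_j^q$, so the condition $u_j^q=0$ is exactly the condition under which $L_j^q$ is compatible with being part of a zero-height level line of $\wt h$. By the interaction rules of Theorem~\ref{thm:interaction_rules}, a zero-height level line cannot cross any $L_j^q$ with $u_j^q\neq 0$, so $\xi_n$ must lie in a single ``slab'' of the exploration in $\CR^q$, and by Lemma~\ref{lem:rectangle_crossing_bound} together with the left-to-right discovery order, it must be realized as one of the $L_j^q$'s with $u_j^q=0$. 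Since $\eta$ is simple, distinct $\xi_n$'s produce distinct indices $j_n$, giving $|\{j:u_j^q=0\}|\geq N-1$; absorbing the first crossing (whose image is the initial left side $\pleft$ of the rectangle, where the cumulative shift is trivially zero) yields $N\leq|\{j:u_j^q=0\}|$ for each $q\in\{\Down,\Up\}$, and taking the minimum over $q$ gives the lemma.

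The main obstacle is the book-keeping in the second step: rigorously propagating the boundary conditions through the iterative construction of the $L_j^q$'s to establish that $u_j^q=0$ really does correspond to $L_j^q$ being a zero-height level line of $\wt h$. This comes down to an induction on $j$ using the explicit rules for how the strands at heights $u_{j-1}^q\pm u$ concatenate at each stage of the exploration, and also requires invoking Lemma~\ref{lem:rectangle_crossing_bound} to ensure no $\xi_n$ is missed by the exploration.
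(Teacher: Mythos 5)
Your approach is essentially the same as the paper's: each crossing of $\eta$ through $A_{z,k}$, viewed through $\varphi_{z,k}$, is a zero-height level line of the pushed-forward field in the rectangle, and by the level-line interaction rules (Theorem~\ref{thm:interaction_rules}) the level lines making up the exploration in $\CR^\Down$ and $\CR^\Up$ cannot cross it, so the left-to-right exploration must arrive at each such barrier with a crossing of height $0$. The paper's proof is a one-sentence pointer to Theorem~\ref{thm:interaction_rules} that expects the reader to fill in exactly the matching and counting you carry out.

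There are two places where your write-up needs care. The claim that $h\circ\varphi_{z,k}^{-1}$ \emph{is} a GFF on $(0,1)\times(0,H_{z,k})$ with boundary data $\lambda,-\lambda$ on the vertical sides and $0$ on the horizontal sides is not quite right: $\pin A_{z,k}$ and $\pout A_{z,k}$ are interior arcs, not boundary arcs of $\h$, and the field extends across them; that GFF only enters the paper as a reference law for an absolute-continuity comparison (Lemma~\ref{lem:rn_formula}), not as an equality of laws. For the present deterministic statement this is harmless, since you only need that the exploration strands and the crossings of $\eta$ are level lines of the \emph{same} field $h$, so Theorem~\ref{thm:interaction_rules} applies. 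The more substantive issue is the ``absorbing the first crossing'' step at the end: $\pleft\CR^q_{a,b}$ is where the exploration is seeded, it is not one of the discovered crossings $L_j^q$, and it carries no index $j$, so it cannot contribute to $|\{j:u_j^q=0\}|$. Your barrier argument cleanly yields $N-1\leq|\{j:u_j^q=0\}|$, the $N-1$ coming from the interior barriers $\xi_2,\ldots,\xi_N$ (the first crossing becomes the two vertical sides of the rectangle, not an interior barrier). To push this to the stated bound $N$ you would need a separate reason for one additional zero-height index, and the argument you give does not supply one; you should either identify that extra index or note that the off-by-one does not matter for the way the lemma is used.
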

\begin{proof}
Suppose that $\eta$ makes a crossing $L$ across $A_{z,k}$.  Then by Theorem~\ref{thm:interaction_rules} the level lines which make up the exploration in $\varphi_{z,k}^{-1}(\CR^q)$ for $q \in \{\Down,\Up\}$ cannot cross $L$.
\end{proof}

\subsection{Iterating the exploration across annuli}
\label{subsec:iteration_exploration}

We are now going to describe how to iterate the exploration from Section~\ref{subsec:annulus_exploration_definition} across successive annuli.  Suppose that we have fixed $z \in \h$.  We let $\kay_0$ be the smallest $k \in \N$ so that $A_{z,k} \subseteq \h$.  We fix $a \in (0,1)$,  $H_0, \delta_0 > 0$,  $b \in (1,2)$,  $\xi_1 \in (0,(e^{-b}-e^{-2})/2)$,  $\xi_0 \in (0,H_0/2)$,  $\wt{\xi}_1>0$,  $\delta_1 \in (0,1)$,  $\wt{\xi}_0 \in (0,H_0/2)$, $\xi_2 >0$ and $M_0>1$.  We also fix $\xi \in (0,H_0/10)$ such that $\xi < \xi_0/3$ and perform the explorations in $\CR^{\Up}$ and $\CR^{\Down}$ as in Section~\ref{subsec:annulus_exploration_definition}.  We let $\kay_1$ be the smallest $\ell \geq \kay_0$ so that
\begin{enumerate}[(i)]
\item\label{it:good_annulus1} $\tau_{z,\ell} < \infty$, $H_{z,\ell} \geq H_0$, $\inf_{\tau_{z,\ell-1} \leq t \leq \tau_{z,\ell}}S_t \geq \delta_0$, and
\item\label{it:good_annulus2} the events $F_{z,\ell}^1,F_{z,\ell}^2$,  and $C_{z,\ell}$ of Lemmas~\ref{lem:band_mapping}, \ref{lem:first_crossing_number_of_squares},  and~\ref{lem:first_excursion_good} respectively occur with the above choice of constants,  where $\xi_2 = 3\xi_1/2$ and $\wt{\xi}_1 = \delta_2$ with $\delta_2$ as in Lemma~\ref{lem:first_excursion_good}.
\end{enumerate}
If there is no such $\ell$, then we let $\kay_1 = \infty$.  On the event $\tau_{z,\kay_1} < \infty$, we perform the exploration as in Section~\ref{subsec:annulus_exploration_definition} in $A_{z,\kay_1}$.  Let $M_1 \in \N$ be the smallest $n \in \N$ so that the height of all of the crossings discovered in $A_{z,\kay_1}$ is in $[-n,n]$.  Suppose that we have defined $\kay_1,\ldots,\kay_j$ and $M_1,\ldots,M_j$.  If $\kay_j = \infty$, we set $\kay_{j+1} = \infty$ and $M_{j+1} = 0$.  If $\kay_j < \infty$, we set $\kay_j^* = \lceil 2\log_2 M_j \rceil = \lceil 2 \log M_j / \log 2 \rceil$ and let $\kay_{j+1}$ be the smallest $\ell \geq \kay_j + \kay_j^*+4$ so that~\eqref{it:good_annulus1}, \eqref{it:good_annulus2} hold.  If there is no such $\ell$, we set $\kay_{j+1} = \infty$.  If $\kay_{j+1} < \infty$, we perform the exploration from Section~\ref{subsec:annulus_exploration_definition} in $A_{z,\kay_{j+1}}$ and let $M_{j+1}$ be the smallest $n \in \N$ so that the height of all of the crossings discovered in $A_{z,\kay_{j+1}}$ is in $[-n,n]$.  We let $\CF_{z,j}$ be the $\sigma$-algebra generated by $\eta|_{[0,\tau_{z,\kay_j}]}$ and the exploration in each of $A_{z,\kay_\ell}$ for $1 \leq \ell \leq j$.  We also let~$\CF_{z,j}^-$ be the $\sigma$-algebra generated by $\eta|_{[0,\tau_{z,\kay_j}]}$ and the exploration in each of $A_{z,\kay_\ell}$ for $1 \leq \ell \leq j-1$.

The purpose of the following lemma is to illustrate why we have defined the successive scales in the manner that we have.

\begin{lemma}
\label{lem:exploration_renewal_times}
There exists a constant $c_0 > 0$ so that the following is true.  On $\tau_{z,\kay_j} < \infty$, we let $\Fh_{z,j}$ be the conditional expectation of $h$ given $\CF_{z,j}^-$.  Then
\begin{equation}
\label{eqn:harmonic_maximum_bound}
\sup_{w \in A^*_{z,\kay_j}} |\Fh_{z,j}(w)| \leq c_0.
\end{equation}
Let $\Fh_0$ be the harmonic function on $(0,1) \times (0,H_0)$ with boundary conditions equal to $0$ on $[0,1] \times \{0,H_0\}$ and $\lambda$ (resp.\ $-\lambda$) on $\{0\} \times [0,H_0]$ (resp.\ $\{1\} \times [0,H_0]$).  Then for each $\delta \in (0,H_0/2)$ there exists a constant $c_1 > 0$ so that with $\wt{\Fh}_{z,j} = \Fh_{z,j} \circ \varphi_{z,\kay_j}^{-1}$ and $\wh{\Fh}_{z,j} = \wt{\Fh}_{z,j} - \Fh_0$ we have
\begin{equation}
\label{eqn:harmonic_dirichlet_bound}
\int_{(0,1) \times (\delta,H_0-\delta)} | \nabla \wh{\Fh}_{z,j}(w)|^2 d\Leb_2(w) \leq c_1.	
\end{equation}

\end{lemma}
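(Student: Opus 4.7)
The plan is to exploit the key spacing $\kay_j - \kay_{j-1} \geq \kay_{j-1}^* + 4$ with $\kay_{j-1}^* = \lceil 2\log_2 M_{j-1}\rceil$, which was chosen precisely so that successive scales are separated enough to make the potentially large boundary contributions from the explorations at earlier scales negligible on $A^*_{z,\kay_j}$. Throughout, let $\wt D_{z,j} := \h \setminus (\eta([0,\tau_{z,\kay_j}]) \cup \bigcup_{\ell<j} E_\ell)$, where $E_\ell$ denotes the (closed) union of level lines discovered during the exploration of $A_{z,\kay_\ell}$. Then $\Fh_{z,j}$ is the harmonic function on $\wt D_{z,j}$ whose boundary values are $\pm\lambda$ on $\partial\h \cup \eta$ and lie in $[-M_\ell - \lambda,\, M_\ell + \lambda]$ on $E_\ell$, since each crossing is a level line of height in $[-M_\ell,M_\ell]$ whose two sides carry boundary values (height)$\pm\lambda$.

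For \eqref{eqn:harmonic_maximum_bound}, representing $\Fh_{z,j}(w)$ via the harmonic measure $\omega^w$ on $\partial \wt D_{z,j}$ gives, for $w \in A^*_{z,\kay_j}$,
\[
|\Fh_{z,j}(w)| \leq \lambda + \sum_{\ell=1}^{j-1}(M_\ell + \lambda)\,\omega_\ell(w), \qquad \omega_\ell(w) := \omega(w, E_\ell, \wt D_{z,j}).
\]
The main task is to prove $\omega_\ell(w) \leq C e^{-c(\kay_j - \kay_\ell)}$ for universal $c, C > 0$. I would argue as follows: apply $\psi_{z,\kay_\ell}\colon \h_{\tau^*_{z,\kay_\ell}} \to \D$. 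Since $\confrad(z,\h_{\tau^*_{z,\kay_\ell}}) = e^{-4\kay_\ell}$ and $A^*_{z,\kay_j} \subseteq B(z, C e^{-4\kay_j})$, the Koebe distortion theorem gives $|\psi_{z,\kay_\ell}(w)| \leq C' e^{-4(\kay_j - \kay_\ell)}$, whereas $\psi_{z,\kay_\ell}(E_\ell) \subseteq B(0,e^{-1}) \setminus \overline{B(0,e^{-2})}$. The two sets are therefore separated by a conformal annulus of modulus of order $\kay_j - \kay_\ell$, and the Beurling--Nevanlinna extremal length bound yields the claimed exponential decay. Combining with $\kay_j - \kay_\ell \geq 4(j-\ell) + \kay_\ell^*$ and $\kay_\ell^* \geq 2\log_2 M_\ell$,
\[
(M_\ell + \lambda)\,\omega_\ell(w) \leq C''(M_\ell+\lambda)\, M_\ell^{-2c/\log 2}\, e^{-4c(j-\ell)} \leq C''' e^{-4c(j-\ell)}
\]
provided $c > (\log 2)/2$, and the geometric series over $\ell$ gives \eqref{eqn:harmonic_maximum_bound}.

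For \eqref{eqn:harmonic_dirichlet_bound}, first note that \eqref{eqn:harmonic_maximum_bound} extends by continuity to $\overline{A^*_{z,\kay_j}}$, since $\pin A_{z,\kay_j}$ and $\pout A_{z,\kay_j}$ lie in the interior of $\wt D_{z,j}$ away from single points of $\eta$; hence $|\wt \Fh_{z,j}| \leq c_0$ on $\overline{(0,1)\times(0,H_{z,\kay_j})}$. The function $\wh \Fh_{z,j} = \wt \Fh_{z,j} - \Fh_0$ is then harmonic on $(0,1)\times(0,H_0)$ with boundary data $0$ on the vertical sides (both $\wt \Fh_{z,j}$ and $\Fh_0$ take values $\pm\lambda$ there coming from $\eta^\Right,\eta^\Left$) and of absolute value at most $c_0$ on the horizontal sides, which sit in the interior of the larger rectangle. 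The maximum principle yields $|\wh \Fh_{z,j}| \leq c_0$ on $(0,1)\times(0,H_0)$, and odd reflection across each vertical side (possible since $\wh \Fh_{z,j}$ vanishes there) produces a harmonic extension to $(-1,2)\times(0,H_0)$ with the same $L^\infty$ bound. The standard interior gradient estimate then gives $|\nabla \wh \Fh_{z,j}(w)| \lesssim c_0/\delta$ on $(0,1)\times(\delta, H_0 - \delta)$, which squares and integrates to \eqref{eqn:harmonic_dirichlet_bound} with $c_1$ of order $c_0^2 H_0/\delta^2$. The subtlest ingredient of the proof is the Beurling-type decay $\omega_\ell(w) \leq Ce^{-c(\kay_j - \kay_\ell)}$: although the conformal map $\psi_{z,\kay_\ell}$ produces an annulus of the right modulus, one must verify that removing from $\D$ the images of $\eta|_{[\tau^*_{z,\kay_\ell}, \tau_{z,\kay_j}]}$ and of the earlier explorations does not destroy the bound, and that $\psi_{z,\kay_\ell}(E_\ell)$ remains uniformly bounded away from $\partial B(0,e^{-2})$ thanks to the $2\xi$ buffer built into $\CR^\Down$ and $\CR^\Up$.
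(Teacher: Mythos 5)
Your proof is correct, and for part~\eqref{eqn:harmonic_dirichlet_bound} it is essentially identical to the paper's (odd reflection across the vertical sides followed by an interior gradient estimate). For part~\eqref{eqn:harmonic_maximum_bound} there is a genuine organizational difference. The paper observes that, because the crossings discovered in $A_{z,\kay_{j-1}}$ together with $\eta$ separate the inner from the outer boundary of $A_{z,\kay_{j-1}}$, the component $D_{z,j}$ containing $z$ is disconnected from $E_\ell$ for every $\ell < j-1$. Consequently only $\eta$ and $E_{j-1}$ appear in $\partial D_{z,j}$, so a single Beurling estimate with the height bound $M_{j-1}$ suffices: the harmonic measure of the non-$\eta$ boundary from $w \in A^*_{z,\kay_j}$ is $\lesssim e^{-2\kay_{j-1}^*} \leq M_{j-1}^{-1}$, and the contribution $(M_{j-1}+\lambda) M_{j-1}^{-1}$ is $O(1)$. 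You instead bypass the disconnection observation and sum $\sum_\ell (M_\ell+\lambda)\omega_\ell(w)$ over all earlier scales, using $\kay_j - \kay_\ell \geq 4(j-\ell)+\kay_\ell^*$ and $\kay_\ell^* \geq 2\log_2 M_\ell$ to get a convergent geometric series. Both arguments work since Beurling gives the decay rate $\omega_\ell(w) \lesssim e^{-2(\kay_j-\kay_\ell)}$, which is more than enough ($c=2 > (\log 2)/2$), and removing additional boundary pieces from $D_{z,j}$ can only decrease $\omega_\ell$; but the paper's version is simpler because it needs only one scale and avoids having to formulate a uniform bound over $\ell$. One small point worth tightening in your write-up: the statement that the horizontal sides $[0,1]\times\{0,H_0\}$ ``sit in the interior of the larger rectangle'' is slightly off for $[0,1]\times\{0\}$, which is on $\partial\bigl((0,1)\times(0,H_{z,\kay_j})\bigr)$; what saves you, as you note elsewhere, is that this side corresponds to $\pin A_{z,\kay_j}$ or $\pout A_{z,\kay_j}$, which lies in the interior of $D_{z,j}$ away from two points of $\eta$, so~\eqref{eqn:harmonic_maximum_bound} extends by continuity and $\Fh_0$ vanishes there.
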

\begin{proof}
Given $\CF_{z,j}^-$ and $\tau_{z,\kay_j} < \infty$, let $D_{z,j}$ be the component containing $z$ in the complement of $\h$ of the union of $\eta([0,\tau_{z,\kay_j}])$ and the level lines discovered by the exploration in $A_{z,\kay_i}$ for $1 \leq i \leq j-1$.  Let $\Fh^+$ be the function which is harmonic in $D_{z,j}$ with boundary values given by $-\lambda$ (resp.\ $\lambda$) on the part of $\eta^\Left([0,\tau_{z,\kay_j}])$ (resp.\ $\eta^\Right([0,\tau_{z,\kay_j}])$) in $\partial D_{z,j}$ and $M_{j-1}+\lambda$ on the rest of $\partial D_{z,j}$.  We define $\Fh^-$ in the same way except we take the boundary values on the rest of $\partial D_{z,j}$ to be given by $-M_{j-1}-\lambda$.  Note that the exploration in $A_{z,\kay_{j-1}}$ disconnects $D_{z,j}$ from the exploration in $A_{z,\kay_i}$ for $1 \leq i \leq j-2$.  Also,  combining \cite[Theorem~3.21]{lawler2008conformally} with the fact that $|\psi_{z,\kay_{j-1}}'(z)| = e^{4\kay_{j-1}}$,  we obtain that there exist universal constants $c_1,c_2>0$ such that $\dist(z,\partial^{\text{in}}A_{z,\kay_{j-1}}) \geq c_1 e^{-4\kay_{j-1}}$ and $A_{z,\kay_j} \subseteq B(z,c_2 e^{-4\kay_j}) \subseteq B(z,c_1 e^{-4\kay_{j-1}})$.  It follows from the definition of $M_{j-1}$ that
\begin{align*}
	\Fh^-(w) \leq \Fh_{z,j}(w) \leq \Fh^+(w) \quad\text{for all}\quad w \in B(z,c_2 e^{-4\kay_j}) \setminus \eta([0,\tau_{z,\kay_j}]).
\end{align*}
Note that that if $\E^w$ denotes the expectation with respect to the law of a Brownian motion $B$, started from $w$, which is independent of $h$ and $\eta$, then conditionally on $\CF_{z,j}^-$,  we have that $\Fh_{z,j}(w) = \E^w[\Fh_{z,j}(B_{\tau}) \giv \CF_{z,j}^-]$, where $\tau$ is the first time that $B$ exits $D_{z,j}$. By the Beurling estimate, there exists a constant $c_0 > 0$ such that for all $w \in B(z,c_2 e^{-4\kay_j})$ the probability that a Brownian motion started from $w$ exits $D_{z,j}$ without first hitting $\eta([0,\tau_{z,\kay_j}]))$ is at most $c_0 e^{-2\kay^*_{j-1}} \leq c_0 M_{j-1}^{-1}$.  Consequently, the contribution to~$\Fh^+(w)$ coming from the boundary values of~$\Fh^+$ on the part of $\partial D_{z,j}$ not in $\eta([0,\tau_{z,\kay_j}]))$ is at most $c_0(1+ \lambda/M_{j-1}) \leq c_0(1+\lambda)$ whereas the contribution from the boundary values on $\eta([0,\tau_{z,\kay_j}])$ is at most $\lambda$.  Altogether, this gives the desired upper bound for $\Fh^+$.   Applying the same argument for $\Fh^-$ gives the analogous lower bound for $\Fh^-$, which completes the proof of~\eqref{eqn:harmonic_maximum_bound}.
 
We will now deduce~\eqref{eqn:harmonic_dirichlet_bound} from~\eqref{eqn:harmonic_maximum_bound}.  Fix $\delta \in (0,\frac{H_0}{2})$.  We note that $\wh{\Fh}_{z,j}$ is harmonic in $(0,1) \times (0,H_0)$ and its boundary values on $ \{0,1\} \times (0, H_0)$ are equal to zero and, by~\eqref{eqn:harmonic_maximum_bound}, its boundary values on $(0,1)$ and $(0,1) + H_0 i$ are at most $c_0$.  We can extend $\wh{\Fh}_{z,j}$ to be a harmonic function on $\wt{\CR} = (-1,2) \times (0,H_0)$ by reflection as follows.  In $(-1,0] \times (0,H_0)$ we set $\wh{\Fh}_{z,j}(x+iy) =-\wh{\Fh}_{z,j}(-x+iy)$ and in $[1,2) \times (0,H_0)$ we set $\wh{\Fh}_{z,j}(x+iy) = - \wh{\Fh}_{z,j}(2-x+iy)$.  Then the extension of $\wh{\Fh}_{z,j}$ (which we also refer to as $\wh{\Fh}_{z,j}$) is harmonic in $\wt{\CR}$ with boundary values given by zero on $\{-1,2\} \times (0,H_0)$ and bounded by $c_0$ on $[-1,2] \times \{0,H_0\}$.  Note that 
\begin{align*}
|\nabla \wh{\Fh}_{z,j}(w)| \lesssim \frac{\| \wh{\Fh}_{z,j} \|_\infty}{\dist(w,\partial \wt{\CR})} \quad \text{for all} \quad w \in \wt{\CR}
\end{align*}
where the implicit constant is universal.  In particular,  $|\nabla \wh{\Fh}_{z,\kay_j}(w)| \lesssim c_0 \delta^{-1}$ for each $w \in (0,1) \times (\delta,H_0-\delta)$.  This completes the proof of~\eqref{eqn:harmonic_dirichlet_bound}.
\end{proof}

Let $\CR = (0,1) \times (0,H_0)$, $\CR^\inn$ be the interior of $\closure{\CR^\Down \cup \CR^\Inner \cup \CR^\Up}$, and let $\p_{H_0}$ denote the law of a GFF on $\CR$ with boundary conditions equal to $\lambda$ (resp.\ $-\lambda$) on $\pleft \CR$ (resp.\ $\pright \CR$) and by $0$ on $\pdown \CR$ and $\pup \CR$.  Finally, let $\p_{H_0}^\inn$ denote the law of the restriction to $\CR^\inn$ of a sample from $\p_{H_0}$.

\begin{lemma}
\label{lem:rn_formula}
There exists a constant $c_0 < \infty$ depending only on $H_0$, $\xi$ and a universal constant $p > 1$ so that the following is true.  Let $\CZ$ be the Radon-Nikodym derivative of the conditional law of $h \circ \varphi^{-1}_{z,\kay_j}|_{\CR^\inn}$ given $\CF_{z,j}^-$ and on $\tau_{z,\kay_j} < \infty$ with respect to $\p_{H_0}^\inn$.  Then $\E[ \CZ^p \giv \CF_{z,j}^-] \one_{\tau_{z,\kay_j} <\infty} \leq c_0$.
\end{lemma}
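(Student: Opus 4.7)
The plan is to adapt the Cameron--Martin cutoff argument used in the proof of Lemma~\ref{lem:next_crossing_separates}. I would work throughout conditional on $\CF_{z,j}^-$ on the event $\{\tau_{z,\kay_j}<\infty\}$. The first step is a Markov decomposition of $\wt{h} := h\circ\varphi_{z,\kay_j}^{-1}$ on $\wt{\CR} := (0,1)\times(0,H_{z,\kay_j})$: applying the Markov property to $h|_{D_{z,j}}$ restricted to $A_{z,\kay_j}^*$ and pulling back by $\varphi_{z,\kay_j}^{-1}$, I would write $\wt{h} = \wt{\Fh}_{z,j} + \wt{h}^0$, with $\wt{h}^0$ centered Gaussian, vanishing on the left and right sides of $\wt{\CR}$ (the images of $\eta^\Right$ and $\eta^\Left$, where the boundary values of $h$ are deterministically $\pm\lambda$) and random on top and bottom. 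Restricting to $\CR\subset\wt{\CR}$ and applying the Markov property on $\CR$ gives $\wt{h}^0|_\CR = R + h^0$, where $R$ is the harmonic extension of $\wt{h}^0|_{\partial\CR}$ and $h^0$ is a zero-boundary GFF on $\CR$ independent of $R$. Writing a sample from $\p_{H_0}$ as $h^* = \Fh_0 + h^{0,*}$, I would couple by setting $h^{0,*}=h^0$, which makes $\Psi := \wt{h}|_\CR - h^* = \wh{\Fh}_{z,j}|_\CR + R$ a random harmonic function on $\CR$ that is independent of $h^*$ and vanishes on the left and right sides of $\CR$.

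Next I would pick a cutoff $g\in C_c^\infty(\CR)$ with $0\leq g\leq 1$, $g\equiv 1$ on $\CR^\inn$, $\supp(g)\subseteq (0,1)\times[\xi,H_0-\xi]$, and $\|\nabla g\|_\infty$ bounded in terms of $H_0,\xi$, and set $f := g\Psi\in H_0^1(\CR)$; then $f|_{\CR^\inn}=\Psi|_{\CR^\inn}$, so $h^*+f$ agrees with $\wt{h}|_\CR$ on $\CR^\inn$. The Cameron--Martin theorem for $h^{0,*}$ on $\CR$ (as invoked in the proof of~\cite[Proposition~3.4]{ms2016imag1}) says that conditional on $f$, the law of $h^*+f$ on $\CR$ has density $\exp((h^{0,*},f)_\nabla - \tfrac{1}{2}\|f\|_\nabla^2)$ with respect to $\p_{H_0}$. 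Pushing forward to $\CR^\inn$ (averaging over the random $f$), then applying Jensen's inequality twice (once to move the $p$-th power past the implicit conditional expectation arising from the push-forward, once to move it inside the average over $f$) and using the Gaussian exponential moment $\E[\exp(p(h^{0,*},f)_\nabla)\mid f] = \exp(\tfrac{p^2}{2}\|f\|_\nabla^2)$, I would arrive at
\[ \E[\CZ^p\mid\CF_{z,j}^-]\,\one_{\tau_{z,\kay_j}<\infty} \leq \E\!\left[\exp\!\left(\tfrac{p^2-p}{2}\|f\|_\nabla^2\right) \,\bigg|\, \CF_{z,j}^-\right]\one_{\tau_{z,\kay_j}<\infty}. \]

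The final step is to bound the right-hand side for $p>1$ close to $1$, using $\|f\|_\nabla^2 \leq 2\|g\wh{\Fh}_{z,j}\|_\nabla^2 + 2\|gR\|_\nabla^2$. The first term is $\CF_{z,j}^-$-measurable and controlled by a constant depending only on $H_0,\xi$ via Lemma~\ref{lem:exploration_renewal_times}: combining $\int_{(0,1)\times(\xi,H_0-\xi)}|\nabla\wh{\Fh}_{z,j}|^2\leq c_1$ with the $L^\infty$-bound $\|\wh{\Fh}_{z,j}\|_\infty\leq c_0+\lambda$ and $\|\nabla g\|_\infty\lesssim 1/\xi$ after a product-rule expansion gives the deterministic bound. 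The main obstacle is the random term: I need $\E[\exp(c\|gR\|_\nabla^2)\mid\CF_{z,j}^-]\leq C(H_0,\xi)$ for some small universal $c>0$. The cutoff $g$ is essential here, since the boundary trace of $R$ on $(0,1)\times\{0,H_0\}$ is too rough for the unweighted Dirichlet energy $\|R\|_\nabla^2$ to be finite; multiplying by $g$ (which vanishes in a neighborhood of those edges) regularizes the field. Controlling the covariance of $R$ away from the top and bottom boundaries where it is log-correlated, combined with a Gaussian chaos / Fernique-type bound analogous to the argument used to prove~\cite[Lemma~4.1]{mq2020geodesics}, will yield the required exponential moment estimate. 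Choosing $p>1$ so that $(p^2-p)/2$ is below the resulting threshold then completes the proof.
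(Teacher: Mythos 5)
Your proposal follows the same overall strategy as the paper's proof: decompose the field into a harmonic correction independent of the reference GFF, use a cutoff $g$ supported in $(0,1)\times(\xi,H_0-\xi)$ (the paper's $\phi$) so that $f=g\Psi\in H_0^1(\CR)$ agrees with $\Psi$ on $\CR^\inn$, and then apply the Cameron--Martin formula together with Jensen's inequality and the Gaussian exponential moment to reduce the claim to bounding $\E[\exp(\tfrac{p^2-p}{2}\|f\|_\nabla^2)\mid\CF_{z,j}^-]$. Your split of $f$ into a $\CF_{z,j}^-$-measurable piece $g\wh{\Fh}_{z,j}$ and a random piece $gR$ is the analogue of the paper's $\phi_{z,j}(\Fh^0_{z,j}+\Fh^1_{z,j})$ (the two random pieces differ by a $\CF_{z,j}^-$-measurable harmonic shift), and your treatment of the deterministic piece via Lemma~\ref{lem:exploration_renewal_times} is exactly what the paper does.

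The gap is in the random term. You write that ``controlling the covariance of $R$ away from the top and bottom boundaries\ldots\ combined with a Gaussian chaos / Fernique-type bound\ldots\ will yield the required exponential moment estimate,'' but this is precisely the part of the lemma that carries the mathematical content, and your sketch does not identify what makes the covariance uniformly controllable given $\CF_{z,j}^-$. The conditional variance of $R$ at $v\in\CR_\xi$ is the conformal-radius ratio $\log\confrad(\varphi^{-1}_{z,\kay_j}(v),D_{z,j})-\log\confrad(\varphi^{-1}_{z,\kay_j}(v),\varphi^{-1}_{z,\kay_j}(\CR))$, and there is no a priori reason this should be $O(1)$ uniformly in the (random) realization of the exploration. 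The paper establishes it by a specific geometric argument: a Brownian motion from $\varphi^{-1}_{z,\kay_j}(v)$ has conditional probability bounded below of exiting $\varphi^{-1}_{z,\kay_j}(\CR)$ on $\eta([0,\tau_{z,\kay_j}])$ (by conformal invariance and harmonic measure in the fixed rectangle), whence Beurling gives $\dist(\cdot,\eta)\lesssim\dist(\cdot,\partial\varphi^{-1}_{z,\kay_j}(\CR))$, and the scale separation encoded in $\kay_{j-1}^*+4$ ensures the previously explored level lines do not come close enough to matter, so $\dist(\cdot,\partial D_{z,j})=\dist(\cdot,\eta)$. Only then do the mean and variance become $O(1)$, and only then can the Poisson-kernel/Harnack/reflection computation (controlling $I=\|\Fh^1_{z,j}\circ\varphi^{-1}_{z,\kay_j}\|_{L^\infty(\CR_\xi)}$ and its gradient) deliver $\E[\exp(aI^2)\mid\CF_{z,j}^-]=O(1)$. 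Citing \cite[Lemma~4.1]{mq2020geodesics} does not substitute for this step: the paper uses it in Lemma~\ref{lem:next_crossing_separates} in an explicitly nested-ball geometry, whereas here the boundary comparison must be derived from the harmonic-measure/Beurling argument before any Gaussian machinery can be invoked.
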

\begin{proof}
Throughout, we shall assume that we are working on the event that $\tau_{z,\kay_j} <\infty$.  As in the proof of Lemma~\ref{lem:exploration_renewal_times}, let $D_{z,j}$ be the component containing $z$ of the complement in $\h$ of the union of $\eta([0,\tau_{z,\kay_j}])$ and the level lines discovered in the exploration of $A_{z,\kay_i}$ for $1 \leq i \leq j-1$. First we note that conditional on $\CF_{z,j}^-$ we can write $h|_{D_{z,j}} = h^0 + \Fh_{z,j}^0$ where $h^0$ is a GFF in $D_{z,j}$ with boundary values given by $-\lambda$ (resp.\ $\lambda$) on the part of the left (resp.\ right) side of $\eta$ contained in $\partial D_{z,j}$ and~$0$ on the remaining parts of $\partial D_{z,j}$ and $\Fh_{z,j}^0$ is a harmonic function in $D_{z,j}$ which is measurable with respect to $\CF_{z,j}^-$ with $0$ boundary values on the part of the left (resp.\ right) side of $\eta$ in $\partial D_{z,j}$.  Similarly,  we can write $h^0 = h^1 + \Fh_{z,j}^1$ where $h^1$ is a GFF in $\varphi_{z,\kay_j}^{-1}(\CR)$ with boundary values given by $-\lambda$ (resp.\ $\lambda$) on the part of the left (resp.\ right) side of $\eta$ contained in $\partial \varphi_{z,\kay_j}^{-1}(\CR)$ and $0$ on the remaining part of $\partial \varphi_{z,\kay_j}^{-1}(\CR)$ and $\Fh_{z,j}^1$ is harmonic in $\varphi_{z,\kay_j}^{-1}(\CR)$ and conditionally independent of $h^1$ given $\CF_{z,j}^-$.  Note that~$\Fh_{z,j}^1$ has zero boundary values on the left and right sides of $\eta$.

For each $\epsilon > 0$ set $\CR_\epsilon = (0,1) \times (\epsilon,H_0-\epsilon)$; note that $\CR^\inn = \CR_{2\xi}$.  Let $\phi \in C_0^{\infty}(\CR_\xi)$ be such that $\phi|_{\CR^\inn} \equiv 1$.  Let $\phi_{z,j} = \phi \circ \varphi_{z,\kay_j}$ and
\begin{align*}
\CZ = \E\!\left[ \exp\left((h^1 ,  \phi_{z,j}(\Fh_{z,j}^0+\Fh_{z,j}^1))_{\nabla} - \frac{1}{2}\|\phi_{z,j}(\Fh_{z,j}^0 + \Fh_{z,j}^1)\|^2_{\nabla}\right)   \, \middle| \,  h^1|_{\varphi^{-1}_{z,\kay_j}(\CR^\inn)},   \CF_{z,j}^-\right].
\end{align*}
Note that if we weight the conditional law of $h^1|_{\varphi^{-1}_{z,\kay_j}(\CR^\inn)}$ given $\CF_{z,j}^-$ by $\CZ$ then we obtain the conditional law of $h|_{\varphi^{-1}_{z,\kay_j}(\CR^\inn)}$ given $\CF_{z,j}^-$.  Note also that the field $h^1 \circ \varphi^{-1}_{z,\kay_j}|_{\CR}$ has the law of a GFF in~$\CR$ with boundary values given by $\lambda$ (resp.\  $-\lambda$) on $\pleft \CR$ (resp.\ $\pright \CR$) and $0$ on $\pup \CR$, $\pdown \CR$.  Fix $p > 1$.  We need to give an upper bound on $\E[ \CZ^p  \giv \CF_{z,j}^-]$.  By Jensen's inequality,  we have that
\begin{align*}
\E[ \CZ^p  \giv \CF_{z,j}^-]  \leq \E[ \exp((p^2-p)/2 \|\phi_{z,j}(\Fh_{z,j}^0 + \Fh_{z,j}^1)\|^2_{\nabla})  \giv  \CF_{z,j}^-].
\end{align*}
Here, we used that $\Fh_{z,j}^0$ and $\Fh_{z,j}^1$ are conditionally independent given $\CF_{z,j}^-$ and that $h^1$ is conditionally independent of $(\Fh_{z,j}^0 ,  \Fh_{z,j}^1)$ given $\CF_{z,j}^-$.  Next,  we have that
\begin{align*}
\|\phi(\Fh_{z,j}^1 \circ \varphi^{-1}_{z,\kay_j})\|_\nabla \lesssim \|\phi\|_\infty \|\Fh_{z,j}^1 \circ \varphi^{-1}_{z,\kay_j}\|_\nabla + \|\Fh_{z,j}^1 \circ \varphi^{-1}_{z,\kay_j}\|_\infty \|\phi\|_\nabla
\end{align*}
where the norms are computed on $\CR_\xi$ and the implicit constants are universal.  First,  we focus on the term $I = \|\Fh_{z,j}^1 \circ \varphi^{-1}_{z,\kay_j}\|_\infty$.  Arguing as in the proof of Lemma~\ref{lem:exploration_renewal_times}, we can extend $\Fh_{z,j}^1 \circ \varphi^{-1}_{z,\kay_j}|_\CR$ to a function $\wh{\Fh}^1$ which is harmonic in $(-1,2) \times (0,H_0)$.  For each $\epsilon > 0$ we let $\wt{\CR}_\epsilon = (-1,2) \times (\epsilon,H_0-\epsilon)$.  Let $\wt{\Fp}$ be the Poisson kernel in $\wt{\CR}_{\xi/2}$ and let $\ol{\Leb}_1$ denote Lebesgue measure on $\partial \wt{\CR}_{\xi/2}$ normalized to have unit mass.  For each $w \in \CR_{\xi}$ we have that $\wh{\Fh}^1(w) = \int_{\partial \wt{\CR}_{\xi/2}} \wt{\Fp}(w,u) \wh{\Fh}^1(u) d \ol{\Leb}_1(u)$ and so $|\wh{\Fh}^1(w)| \leq \int_{\partial \wt{\CR}_{\xi/2}} \wt{\Fp}(w,u) |\wh{\Fh}^1(u)|d \ol{\Leb}_1(u)$.  Also,  Harnack's inequality implies that there exist $w_0 \in \wt{\CR}_{\xi}$ and $c_0 < \infty$ depending only on $H_0$ and $\xi$ such that $\wt{\Fp}(w,u) \leq c_0 \wt{\Fp}(w_0,u)$ for each $w \in \CR_{\xi}$ and $u \in \partial \wt{\CR}_{\xi/2}$.  It follows that
\begin{align*}
I \leq c_0 \int_{\partial \wt{\CR}_{\xi/2}} \wt{\Fp}(w_0,u) |\wh{\Fh}^1(u)| d\ol{\Leb}_1(u) \leq c_1 \int_{\partial \wt{\CR}_{\xi/2}} |\wh{\Fh}^1(u)|d \ol{\Leb}_1(u)
\end{align*}
where $c_1 < \infty$ depends only on $H_0$ and $\xi$.  Hence Jensen's inequality implies that
\begin{align*}
\E[ \exp(aI^2) \giv   \CF_{z,j}^-] \leq \int_{\partial \wt{\CR}_{\xi/2}} \E[ \exp(ac_1^2 |\wh{\Fh}^1(u)|^2)  \giv  \CF_{z,j}^-] d \ol{\Leb}_1(u).
\end{align*}
Fix $u \in \partial \wt{\CR}_{\xi/2}$.  Then there exists $v \in \CR_{\xi/2}$ such that either $\wh{\Fh}^1(u) = \Fh^1_{z,j}(\varphi^{-1}_{z,\kay_j}(v))$ or $\wh{\Fh}^1(u) =- \Fh^1_{z,j}(\varphi^{-1}_{z,\kay_j}(v))$.  Let $\Fg_{z,j}^0$ (resp.\  $\Fg_{z,j}^1$) be the harmonic function in $D_{z,j}$ (resp.\  $\varphi_{z,\kay_j}^{-1}(\CR)$) with boundary values given by $-\lambda$ (resp.\  $\lambda$) on the part of the left (resp.\ right) side of $\eta$ contained in $\partial D_{z,j}$ (resp.\ $\partial \varphi_{z,\kay_j}^{-1}(\CR)$) and zero on the remaining parts of $\partial D_{z,j}$ (resp.\  $\partial \varphi_{z,\kay_j}^{-1}(\CR)$).  In particular, the boundary values of $\Fg_{z,j}^0$ and $\Fg_{z,j}^1$ are in $[-\lambda,\lambda]$.  Note that
\[ \mu := \E[ \Fh_{z,j}^1(\varphi^{-1}_{z,\kay_j}(v)) \giv   \CF_{z,j}^-] = \Fg_{z,j}^0(\varphi^{-1}_{z,\kay_j}(v)) - \Fg_{z,j}^1(\varphi^{-1}_{z,\kay_j}(v)).\]
We also have that
\[ \sigma^2 := \var(\Fh_{z,j}^1(\varphi^{-1}_{z,\kay_j}(v))   \giv \CF_{z,j}^-) = \log(\confrad(\varphi^{-1}_{z,\kay_j}(v),D_{z,j})) - \log(\confrad(\varphi^{-1}_{z,\kay_j}(v),\varphi_{z,\kay_j}^{-1}(\CR))).\]
By the Koebe-1/4 theorem, $\confrad(\varphi_{z,\kay_j}^{-1}(v),D_{z,j}) \asymp \dist(\varphi_{z,\kay_j}^{-1}(v),\partial D_{z,j})$ and $\confrad(\varphi_{z,\kay_j}^{-1}(v),\varphi_{z,\kay_j}^{-1}(\CR)) \asymp \dist(\varphi_{z,\kay_j}^{-1}(v),\partial \varphi_{z,\kay_j}^{-1}(\CR))$, where the implicit constants are universal.  Moreover, we claim that
\[ \dist(\varphi_{z,\kay_j}^{-1}(v),\partial D_{z,j}) \asymp \dist(\varphi_{z,\kay_j}^{-1}(v),\partial \varphi_{z,\kay_j}^{-1}(\CR)),\]
where the implicit constants depend only on $\xi$ and $H_0$.  Indeed,  first we note that there exists $q \in (0,1)$ depending only on $\xi$ and $H_0$ such that if we start a Brownian motion from $v$,  then with probability at least $q$ it exits $\CR$ either in the left or the right side of $\CR$.  Conformal invariance implies that if we start a Brownian motion from $\varphi_{z,\kay_j}^{-1}(v)$,  then with probability at least $q$ it exits $\varphi_{z,\kay_j}^{-1}(\CR)$ in $\eta([0,\tau_{z,\kay_j}])$.  Thus the Beurling estimate implies that $\dist(\varphi_{z,\kay_j}^{-1}(v),\eta([0,\tau_{z,\kay_j}])) \lesssim \dist(\varphi_{z,\kay_j}^{-1}(v),\partial \varphi_{z,\kay_j}^{-1}(\CR))$.  Also note there exists a constant $c_2 \geq 1$ so that $\dist(\varphi_{z,\kay_j}^{-1}(v),A_{z,\kay_{j-1}}) \geq c_2 e^{-4\kay_j} \geq \dist(\varphi_{z,\kay_j}^{-1}(v),\eta([0,\tau_{z,\kay_j}]))$ and so
$\dist(\varphi_{z,\kay_j}^{-1}(v),\partial D_{z,j}) = \dist(\varphi_{z,\kay_j}^{-1}(v),\eta([0,\tau_{z,\kay_j}])) \lesssim \dist(\varphi_{z,\kay_j}^{-1}(v),\partial \varphi_{z,\kay_j}^{-1}(\CR))$.  On the other hand,  we have that $\dist(\varphi_{z,\kay_j}^{-1}(v),\partial \varphi_{z,\kay_j}^{-1}(\CR)) \leq \dist(\varphi_{z,\kay_j}^{-1}(v),\eta([0,\tau_{z,\kay_j}])) = \dist(\varphi_{z,\kay_j}^{-1}(v),\partial D_{z,j})$ and so this proves the claim.  It follows that $\mu = O(1)$ and $\sigma^2 = O(1)$ where the implicit constants depend only on $H_0$ and $\xi$.  We thus have for $a > 0$ sufficiently small (depending only on $H_0$ and $\xi$) that $\E[ \exp(aI^2) \giv  \CF_{z,j}^-] = O(1)$.  Therefore arguing as in the proof of Lemma~\ref{lem:exploration_renewal_times} and observing that $\|\phi\|_\nabla \lesssim 1$,  $\|\phi\|_{\infty} \lesssim 1$ with the implicit constants depending only on $H_0$ and $\xi$,  we obtain (possibly decreasing $a > 0$) that
\begin{align}
\label{eqn:square_exp_bound1}
\E[ \exp(a \| \phi (\Fh_{z,j}^1 \circ \varphi^{-1}_{z,\kay_j}) \|^2_{\nabla}) \giv  \CF_{z,j}^-] = O(1).
\end{align}
Furthermore,  arguing as in the proof of Lemma~\ref{lem:exploration_renewal_times} gives that $\Fh_{z,j}^0 \circ \varphi^{-1}_{z,\kay_j}$ extends to a function~$\wt{\Fh}$ which is harmonic in $(-1,2) \times (0,H_0)$ and such that $\sup_{w \in (-1,2) \times (0,H_0)} |\wt{\Fh}(w)| \lesssim 1$ where the implicit constant is universal since $\Fh_{z,j}^0 + \Fg_{z,j}^0$ has the same boundary values as $h$ on $\partial D_{z,j}$.  It follows that $\|\phi (\Fh_{z,j}^0 \circ \varphi^{-1}_{z,\kay_j})\|_{\nabla} \lesssim 1$ where the implicit constant depends only on $H_0$ and $\xi$.  The proof is then complete by combining with~\eqref{eqn:square_exp_bound1} and the conformal invariance of $(\cdot,\cdot)_{\nabla}$.
\end{proof}

\subsection{Crossing bounds}
\label{subsec:annulus_crossing_bounds}

\begin{lemma}
\label{lem:annulus_maximum_crossing_height}
For each $H_0 > 0$ and $\xi \in (0,H_0/10)$ there exist constants $c_0, M_0 > 0$ so that the following is true.  On $\tau_{z,\kay_{j+1}} < \infty$, let $u^*$ be the maximum height of a crossing discovered by the exploration in $A_{z,\kay_{j+1}}^*$, otherwise let $u^* = 0$.  Then
\[ \p[ u^* \geq M  \giv \CF_{z,\tau_{z,\kay_j}}] \one_{\tau_{z,\kay_j} < \infty} \leq \exp(-c_0 M^{1/3}\log(M)) \quad\text{for all}\quad M \geq M_0.\]
\end{lemma}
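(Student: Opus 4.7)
The plan is to reduce Lemma~\ref{lem:annulus_maximum_crossing_height} to the rectangle-level estimate of Lemma~\ref{lem:rectangle_maximum_crossing_height} via the absolute continuity provided by Lemma~\ref{lem:rn_formula}. The target decay rate $\exp(-c_0 M^{1/3}\log M)$ matches exactly the rate produced by Lemma~\ref{lem:rectangle_maximum_crossing_height}, which suggests that such a direct comparison should succeed.

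First I would pass to the enlarged $\sigma$-algebra $\CF_{z,j+1}^-$ generated by $\eta|_{[0,\tau_{z,\kay_{j+1}}]}$ together with the explorations in $A_{z,\kay_\ell}$ for $1 \leq \ell \leq j$, so that by the tower property it suffices to bound $\p[u^* \geq M \giv \CF_{z,j+1}^-]\one_{\tau_{z,\kay_{j+1}}<\infty}$. On $\{\tau_{z,\kay_{j+1}} < \infty\}$ the definition of $\kay_{j+1}$ forces $H_{z,\kay_{j+1}} \geq H_0$ (together with the event $F_{z,\kay_{j+1}}^1$ of Lemma~\ref{lem:band_mapping}), so the explorations in $\CR^\Down$ and $\CR^\Up$ are well-defined, and a union bound reduces the problem to controlling the maximum crossing height in $\CR^\Down$ alone (the case of $\CR^\Up$ being symmetric).

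Next I would apply Lemma~\ref{lem:rectangle_maximum_crossing_height} to the rectangle $\CR = (0,1) \times (0,H_0)$ with subrectangle $\CR^\Down = (0,1) \times (2\xi,3\xi)$ under the standard law $\p_{H_0}$, giving $\p_{H_0}[\sup_j |u_j^\Down| \geq M] \leq \exp(-c_1 M^{1/3}\log M)$ for $M$ large, with $c_1 > 0$ depending only on $H_0$ and $\xi$. To transfer this bound to the annulus setup I would invoke Lemma~\ref{lem:rn_formula}: with $\CZ$ the Radon-Nikodym derivative on $\CR^\inn$, $p > 1$ the universal exponent, and $q = p/(p-1)$, H\"older's inequality yields
\[
\p\!\left[ \sup_j |u_j^\Down| \geq M \,\middle|\, \CF_{z,j+1}^-\right]\!\one_{\tau_{z,\kay_{j+1}}<\infty}
\leq \E[\CZ^p \giv \CF_{z,j+1}^-]^{1/p}\,\p_{H_0}^\inn\!\left[\sup_j |u_j^\Down| \geq M\right]^{1/q},
\]
which is at most $c_0^{1/p}\exp(-c_1 M^{1/3}\log(M)/q)$. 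For $M$ large enough one can absorb the prefactor $c_0^{1/p}$ into the exponential; combined with the analogous estimate for $\CR^\Up$, this yields the bound claimed in the lemma (with a suitably adjusted constant and threshold $M_0$).

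The main obstacle is the measurability underlying the H\"older step: for Lemma~\ref{lem:rn_formula} to apply, the event $\{\sup_j |u_j^\Down| \geq M\}$ must be measurable with respect to the field restricted to $\CR^\inn$. The crossings themselves lie in $\CR^\Down \subset \CR^\inn$, but the level lines used in the exploration are a priori defined on the full rectangle $(0,1) \times (0,H_{z,\kay_{j+1}})$ and may extend above $\CR^\inn$. Justifying the measurability requires using the local set property of GFF level lines to rewrite the exploration as one driven purely by the restriction of the field to $\CR^\inn$, with the induced top and bottom boundary conditions on $\partial \CR^\inn$ encoded in the restriction itself; carefully handling this coupling is the main technical point needed to complete the argument.
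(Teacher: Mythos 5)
Your proposal is correct and follows the same route as the paper, whose proof is simply the one-line combination of Lemma~\ref{lem:rectangle_maximum_crossing_height} with Lemma~\ref{lem:rn_formula}; the H\"older step with the universal exponent $p>1$ is exactly how the Radon-Nikodym bound is meant to be used. The measurability concern you raise at the end is real but is also left implicit in the paper: it is resolved by the locality of GFF level lines (a level line stopped upon exiting $\CR^\Down \subseteq \CR^\inn$ is a measurable function of $h|_{\CR^\inn}$ together with the boundary data on $\pleft\CR$, which is $\lambda$ under both laws being compared), so the event $\{\sup_j|u_j^q|\geq M\}$ is indeed determined by the restricted field.
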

\begin{proof}
This follows by combining Lemma~\ref{lem:rectangle_maximum_crossing_height} with Lemma~\ref{lem:rn_formula}.
\end{proof}

\begin{lemma}
\label{lem:annulus_crossing_bound}
Suppose that $H_0 > 0$ and $\xi \in (0,H_0/10)$.  On $\tau_{z,\kay_j} < \infty$, it is a.s.\ the case that the exploration in $A_{z,\kay_j}$ discovers only finitely many crossings.
\end{lemma}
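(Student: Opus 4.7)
The plan is to transfer the result of Lemma~\ref{lem:rectangle_crossing_bound} from the rectangle setting to the annulus setting using the Radon-Nikodym bound established in Lemma~\ref{lem:rn_formula}. Since the exploration in $A_{z,\kay_j}$ consists of two independent explorations, one in $\varphi_{z,\kay_j}^{-1}(\CR^\Down)$ and one in $\varphi_{z,\kay_j}^{-1}(\CR^\Up)$, it suffices to show that each of these individually discovers only finitely many crossings a.s.\ on $\tau_{z,\kay_j} < \infty$.

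The key observation is that the rectangle exploration of $\CR^\Down$ (respectively $\CR^\Up$), as defined in Section~\ref{sec:rectangle_exploration}, is a measurable function of the restriction of $h \circ \varphi_{z,\kay_j}^{-1}$ to any open set containing $\closure{\CR^\Down}$ (respectively $\closure{\CR^\Up}$): each level line used in the exploration is drawn up until the first time it exits the rectangle in question, so its trajectory in $\CR^\Down$ is determined by the field on $\CR^\Down$ itself. Since $\xi < \xi_0/3$ ensures that $\closure{\CR^\Down} \cup \closure{\CR^\Up} \subseteq \CR^\inn$, both explorations are measurable functions of $h \circ \varphi_{z,\kay_j}^{-1}|_{\CR^\inn}$.

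Now apply Lemma~\ref{lem:rectangle_crossing_bound} with $H = H_0$, first with $(a,b) = (2\xi, 3\xi)$ and then with $(a,b) = (H_0 - 3\xi, H_0 - 2\xi)$. This tells us that under $\p_{H_0}$ the two rectangle explorations each a.s.\ terminate after finitely many crossings, and therefore the same is true under the marginal $\p_{H_0}^\inn$ since the event in question depends only on the restriction of the field to $\CR^\inn$. By Lemma~\ref{lem:rn_formula}, on $\tau_{z,\kay_j} < \infty$ the conditional law of $h \circ \varphi_{z,\kay_j}^{-1}|_{\CR^\inn}$ given $\CF_{z,j}^-$ is absolutely continuous with respect to $\p_{H_0}^\inn$ (with Radon-Nikodym derivative in $L^p$ for some $p > 1$). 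Absolute continuity preserves events of full measure, so the conditional probability that each exploration discovers only finitely many crossings equals $1$ on $\tau_{z,\kay_j} < \infty$; integrating over $\CF_{z,j}^-$ and taking the union of the two a.s.\ events yields the lemma.

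There is no real obstacle here — everything rests on the two substantial inputs already in place (the rectangle statement Lemma~\ref{lem:rectangle_crossing_bound} and the $L^p$-bounded Radon-Nikodym derivative from Lemma~\ref{lem:rn_formula}), and the main point to verify is the measurability claim, namely that the rectangle exploration inside $\CR^\Down$ (or $\CR^\Up$) depends only on the field restricted to a set that is contained in $\CR^\inn$.
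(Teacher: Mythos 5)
Your proposal matches the paper's approach exactly: the paper's own proof is the one-line observation that Lemma~\ref{lem:rectangle_crossing_bound} combines with Lemma~\ref{lem:rn_formula}, and you have simply unpacked why (the event is measurable with respect to the field on $\CR^\inn$, and absolute continuity transfers a.s.\ statements). One small imprecision: the inclusion $\closure{\CR^\Down} \cup \closure{\CR^\Up} \subseteq \CR^\inn$ does not actually hold (the bottom edge of $\CR^\Down$ lies on $\partial \CR^\inn$, and $\xi < \xi_0/3$ is used for Lemma~\ref{lem:band_mapping} rather than for this), but the measurability conclusion you need is still correct since the level lines stopped at first exit of $\CR^q$ are local sets determined by $h \circ \varphi_{z,\kay_j}^{-1}|_{\CR^q}$, and $\CR^\Down, \CR^\Up \subseteq \CR^\inn$; also the two explorations are functions of the same field, so "independent" should be dropped, though nothing in the argument relies on it.
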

\begin{proof}
This follows by combining Lemma~\ref{lem:rectangle_crossing_bound} with Lemma~\ref{lem:rn_formula}.
\end{proof}

\section{$\SLE_4$ satisfies the hypotheses of the removability theorem}
\label{sec:sle4_is_removable}

The purpose of this section is to complete the proof of Theorem~\ref{thm:sle_4_removable} by showing that $\SLE_4$ satisfies the hypotheses of Theorem~\ref{thm:removability_of_X}.  Throughout, we will assume that we have the same setup and use the same notation as in Sections~\ref{subsec:annulus_exploration_definition} and~\ref{subsec:iteration_exploration}.

\begin{figure}[ht!]
\begin{center}
\includegraphics{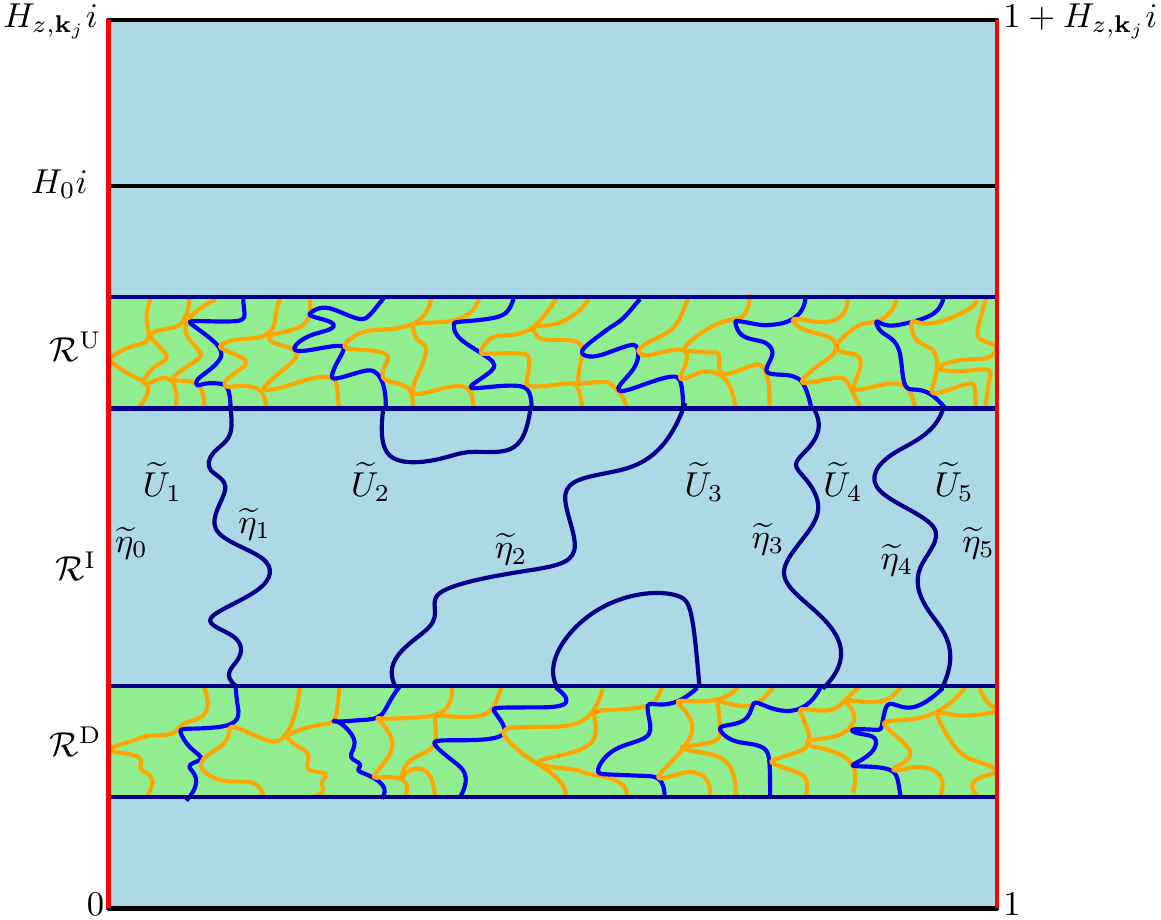}
\end{center}
\caption{\label{fig:sle4_removable_proof} Illustration of the setup for the proof of Theorem~\ref{thm:sle_4_removable} in the setting of the rectangle.  Shown is the rectangle $\CR_{z,\kay_{j}} = (0,1) \times (0,H_{z,\kay_j})$ and the explorations of $\CR^\Down$ and $\CR^\Up$.  The crossings with height zero are shown in blue and the other crossings are shown in orange.  The continuations of these zero height crossings into $\CR^\Inner$ are shown in dark blue and the ones which connect $\CR^\Down$ to $\CR^\Up$ are $\wt{\eta}_1,\ldots,\wt{\eta}_n$ (with $n=4$ in the illustration); the left and right sides of $\CR^\Inner$ are $\wt{\eta}_0$ and $\wt{\eta}_{n+1}$.  Those zero height crossings which do not cross $\CR^\Inner$ are unlabeled.  The conditional law of each $\wt{\eta}_i$ given all of the other paths is absolutely continuous with respect to the law of an $\SLE_4$ curve. As in the case of Figure~\ref{fig:annulus_exploration}, the explorations discover other strands which are not crossings, but only crossings are shown.}
\end{figure}

\subsection{Setup and main statement}
\label{sec:sle4_removable_setup}

See Figure~\ref{fig:sle4_removable_proof} for an illustration of the setup and the notation.  Suppose $j \in \N$ and $z \in \h$ are such that $\tau_{z,\kay_j} < \infty$.  Let $\wt{\eta}_i^q$ for $i = 1,\dots,n_q$ and $q \in \{\Down,\Up\}$ be the height-$0$ crossings of $\CR^q$ which are discovered by the exploration and write $\wt{\eta}_0^q$ (resp.\ $\wt{\eta}_{n_q+1}^q$) for the left (resp.\ right) side of $\CR^q$. Each of these crossings will continue into $\CR^\Inner$ and either form a crossing of~$\CR^\Inner$ or make an excursion into $\CR^\Inner$ from $\CR^q$ back to itself.  We let $\wt{\eta}_1,\dots,\wt{\eta}_n$ be the aforementioned crossings of $\CR^\Inner$, ordered from left to right, $\wt{\eta}_{n+2},\dots,\wt{\eta}_{n+m}$ be the height-$0$ level line excursions into $\CR^\Inner$ from either $\CR^\Down$ or $\CR^\Up$ back to itself, and let $\wt{\eta}_0$ (resp.\ $\wt{\eta}_{n+1}$) be the left (resp.\ right) side of $\CR^\Inner$.  For each $0 \leq i \leq n+1$ we let $\eta_i = \varphi_{z,\kay_j}^{-1}(\wt{\eta}_i)$.  We let $\wt{K}$ be the union of the exploration in~$\CR^\Down$ and~$\CR^\Up$.  For each $1 \leq i \leq n+1$ we let $\wt{U}_i$ be the component of $\CR^\inn \setminus  (\wt{K} \cup ( \cup_{\ell=1}^{n+m} \wt{\eta}_\ell))$ with $\wt{\eta}_{i-1}$ and $\wt{\eta}_i$ on its boundary (with the convention that $\wt{U}_{n+1+i} = \wt{U}_i$ for each $i$), $U_i = \varphi_{z,\kay_j}^{-1}(\wt{U}_i)$, and let $\CW_i$ be a Whitney square decomposition of $U_i$.  We also let $\wt{U}$ be the component of the complement in $\CR$ of the explorations in $\CR^\Down$, $\CR^\Up$ which contains $\CR^\Inner$.  We say $Q \in \CW_i$ is $(M,a)$-good with respect to $Q_i \in \CW_i$ if $\disthyp^{U_i}(\cen(Q),\cen(Q_i)) \leq M ( \len(Q)/\diam(A_{z,\kay_j}))^{-a}$ where $\len(Q)$ is the side length of~$Q$.

\begin{definition}
\label{def:good_event}
Fix $M, a > 0$.  We say that $A_{z,\kay_j}$ is $(M,a)$-good if either $\kay_j = \infty$ or $\kay_j < \infty$, $n \leq M$, and the following are true.  For each $0 \leq i \leq n$ there is a segment $\eta_i(I_i)$ of $\eta_i$ such that 
\begin{enumerate}
	\item $\nmeasure{\eta_i}(\eta_i(I_i))\geq M^{-1} \diam(A_{z,\kay_j})^{3/2}$; \label{it:natural_parameterization_lower_bound}
	\item $\nmeasure{\eta_i}(\eta_i(I_i) \cap Y) \leq M \diam(Y)^{3/2-a}$ for every $Y$ Borel; \label{it:natural_parameterization_diameter_bound}
	\item There exists a square $Q_i \in \CW_i$ such that if $G_i^-$ (resp.\ $G_i^+$) is the set of points $w \in \eta_{i-1}(I_{i-1})$ (resp.\ $w \in \eta_i(I_i)$) such that each square $Q \in \CW_i$ which is hit by the hyperbolic geodesic in~$U_i$ from $\cen(Q_i)$ to $w$ is $(M,a)$-good with respect to~$Q_i$, then
			\begin{enumerate}[(a)]
				\item $\nmeasure{\eta_i}(G_i^-)/\nmeasure{\eta_i}(\eta(I_i)) \geq 3/4$ and $\nmeasure{\eta_i}(G_i^+)/\nmeasure{\eta_i}(\eta(I_{i+1})) \geq 3/4$. \label{it:fraction_of_good_points}
				\item \label{it:not_too_many_squares} The number of elements of $\CW_i$ with side length $2^{-m}$ which are hit by a hyperbolic geodesic in $U_i$ from $\cen(Q_i)$ to a point in $G_i^+$ (resp.\ $G_i^-$) is at most  $M (2^m  \diam(A_{z,\kay_j}))^{3/2+a}$.
			\end{enumerate}
\end{enumerate}
\end{definition}

Let $E_{z,\kay_j}^{M,a}$ be the event that $A_{z,\kay_j}$ is $(M,a)$-good.

\begin{proposition}
\label{prop:good_everywhere}
Suppose that $\eta$ is an $\SLE_4$ in $\h$ from $0$ to $\infty$ and let $K \subseteq \h$ be compact.  For every $a \in (0,1)$ there a.s.\ exists $n_0 \in \N$ and $M > 1$ so that for every $n \geq n_0$ and $z \in (e^{-5n} \Z^2) \cap K$ there exists $m \geq n_0$ with $(1-a^2) n \leq m \leq n$ so that $E_{z,m}^{M,a}$ occurs.
\end{proposition}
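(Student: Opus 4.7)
The plan is to prove the proposition via the Borel--Cantelli lemma applied to the failure events $A_{n,M} = \{\exists\, z \in K \cap (e^{-5n}\Z^2) : E_{z,m}^{M,a} \text{ fails for every } m \in [(1-a^2)n, n]\}$. Since $K$ is bounded, the lattice $K \cap (e^{-5n}\Z^2)$ has $O(e^{10n})$ elements, so it suffices to produce a deterministic $M > 1$ (depending only on $a$ and $K$) together with a constant $c > 10$ such that for each fixed $z$ the per-point failure probability is at most $e^{-cn}$. Summing $\p[A_{n,M}] \leq C e^{(10-c)n}$ over $n$ and applying Borel--Cantelli then yields the a.s.\ existence of $n_0$ for which the desired property holds at every $n \geq n_0$.

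The per-$z$ bound will come from a telescoping inequality. Building on the renewal construction of Section~\ref{subsec:iteration_exploration}, I would first show that for every fixed $z$ and integer $m$, the conditional probability $\p[E_{z,m}^{M,a} \giv \CF_{z,m-1}]$ is bounded below by a quantity $q = q(M,H_0,\delta_0)$ which can be made arbitrarily close to $1$ by an appropriate choice of $M$ large and $H_0,\delta_0$ small. There are two contributions. First, the conditional probability that scale $m$ is itself a renewal scale (that is, that conditions~\eqref{it:good_annulus1}--\eqref{it:good_annulus2} of Section~\ref{subsec:iteration_exploration} hold, or else $\tau_{z,m} = \infty$, in which case the $\kay_j = \infty$ branch of Definition~\ref{def:good_event} makes $E_{z,m}^{M,a}$ vacuously hold) is pushed close to $1$ by invoking Lemmas~\ref{lem:hit_good}, \ref{lem:extremal_length_good}, \ref{lem:band_mapping}, \ref{lem:first_crossing_number_of_squares}, and~\ref{lem:first_excursion_good} with small $p_0$. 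Second, given that $m$ is a renewal, the remaining geometric conditions~\ref{it:natural_parameterization_lower_bound}--\ref{it:not_too_many_squares} of Definition~\ref{def:good_event} hold with conditional probability at least $1 - p_0(M) \to 1$ as $M \to \infty$: the crossing count bound $n \leq M$ follows from Lemma~\ref{lem:annulus_maximum_crossing_height}; the natural parameterization estimates~\ref{it:natural_parameterization_lower_bound} and~\ref{it:natural_parameterization_diameter_bound} follow by applying Lemma~\ref{lem:first_excursion_good} to each of the level-line crossings $\wt{\eta}_i$, after using the Radon--Nikodym bound of Lemma~\ref{lem:rn_formula} to transfer the conditional law on $\CR^\inn$ to the reference rectangle GFF and invoking Lemma~\ref{lem:natural_good_whole_plane}; and the good-fraction and square-count conditions~\ref{it:fraction_of_good_points} and~\ref{it:not_too_many_squares} follow from the same lemma combined with the standard fact that a hyperbolic geodesic hits $O(1)$ Whitney squares per unit of hyperbolic length (Appendix~\ref{app:whitney}). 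Since $E_{z,m}^{M,a}$ is measurable with respect to the filtration at scale $m$, a standard iteration using the tower property gives
\[ \p\!\left[ \bigcap_{m = \lceil (1-a^2)n \rceil}^{n} \{E_{z,m}^{M,a}\}^c \right] \leq (1-q)^{a^2 n}. \]
Choosing the parameters so that $1 - q \leq e^{-11/a^2}$ then delivers the per-$z$ bound $e^{-11n}$, completing the argument.

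The main obstacle will be extracting sufficiently strong quantitative rates from the qualitative statements of Section~\ref{sec:first_crossing}: the lemmas there promise only that the relevant conditional failure probabilities can be made $\leq p_0$ for any $p_0 > 0$, so one must track carefully how $q$ depends jointly on $(M, H_0, \delta_0)$ to ensure that the bound $1 - q \leq e^{-11/a^2}$ is simultaneously achievable with the required smallness of $p_0(M)$. A secondary technicality is that condition~\ref{it:fraction_of_good_points} asks for a single base square $Q_i \in \CW_i$ whose hyperbolic geodesics work for both boundary arcs $\eta_{i-1}(I_{i-1})$ and $\eta_i(I_i)$ of the subdomain $U_i$, whereas Lemma~\ref{lem:first_excursion_good} naturally produces one distinguished point from each crossing. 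These two points lie in $U_i$ and are at bounded hyperbolic distance thanks to condition~\ref{it:away_from_the_boundary_good} of Lemma~\ref{lem:band_mapping}, so a short stability-of-geodesics argument (along the lines of the appendix estimates already invoked in Lemma~\ref{lem:lower_bound_F4}) will merge them into a single $Q_i$ at the cost of inflating $M$ by a universal constant.
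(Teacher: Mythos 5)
Your plan has a structural flaw that is not a technicality but a missing idea, and it is exactly the problem that Steps~1, 3, and~4 of the paper's proof are built to handle.

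You propose to iterate the estimate $\p[E_{z,m}^{M,a} \mid \CF_{z,m-1}] \geq q$ over \emph{consecutive integers} $m \in [(1-a^2)n, n]$ and conclude $\p[\cap_m (E_{z,m}^{M,a})^c] \leq (1-q)^{a^2 n}$. This does not parse within the framework of Section~\ref{subsec:iteration_exploration}: the event $E_{z,m}^{M,a}$ is only defined when $m$ is one of the renewal scales $\kay_j$, and the filtration appearing in Lemma~\ref{lem:conditional_probability_good} is $\CF_{z,\kay_{j-1}}$, indexed by the \emph{renewal count} $j$, not by the scale. There is no tower-property iteration over consecutive integers available. More importantly, the statement ``the conditional probability that scale $m$ is itself a renewal scale is pushed close to~$1$'' is false as stated, for a deterministic reason: by construction $\kay_{j+1} \geq \kay_j + \kay_j^* + 4$, so after each renewal scale there is a forced cool-down of length $\kay_j^* + 4 \geq 4$ during which no scale can be a renewal scale, \emph{regardless} of whether the regularity events of Lemmas~\ref{lem:hit_good}--\ref{lem:first_excursion_good} hold there. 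You cannot make this probability close to~$1$ by tuning $(M, H_0, \delta_0)$.

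Consequently your argument is missing the two quantitative inputs that the paper uses to control how the renewal sequence advances. First, one must show that the regularity events $G_{z,\ell}$ occur at a positive fraction (about $a^2 k/8$) of scales in $[(1-a^2/2)k, k]$ (Step~1), which is a Borel--Cantelli argument over binomial deviations, not a per-scale conditional bound. Second, one must show that the cool-down lengths $\kay_j^*$ are small in aggregate ($\sum_{j\leq \lfloor\delta n\rfloor} \kay_{N+j}^* \leq a^2 n/16$ and $\kay_j^* \leq q\log n$; Step~3), which rests on the stretched-exponential tail for the maximum crossing height from Lemma~\ref{lem:annulus_maximum_crossing_height}, fed through an exponential Chebyshev bound. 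Only with \emph{both} of these can one conclude (Step~4) that at least $\lfloor \delta n\rfloor$ renewal scales land inside the window $[(1-a^2)n, n]$, so that the iteration of Lemma~\ref{lem:conditional_probability_good} (your bound $p_0^{\lfloor\delta n\rfloor}$, which is correct as far as it goes) actually gives the proposition. Your sketch supplies the analogue of Step~2 but conflates ``does the regularity condition hold at scale $m$'' with ``is $m$ a renewal scale'' and therefore silently drops the other three-quarters of the argument. The secondary technicality you identify about merging two base squares into one $Q_i$ via a geodesic-stability argument is correct and is indeed how the paper handles it, but it is peripheral next to the gap above.
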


In order to start to prove Proposition~\ref{prop:good_everywhere}, we will first in Section~\ref{subsec:regularity_rectangle} work in the rectangle and analyze the crossings  in this setting.  We will then combine these results with our other estimates in Section~\ref{subsec:proof_conclusion} to complete the proof.

\subsection{Regularity statements in the rectangle}
\label{subsec:regularity_rectangle}

We begin by formulating a version of the conditions from Definition~\ref{def:good_event} for the crossings of the rectangle in place of the annulus.  We will then show that each of these conditions is likely to occur provided we adjust the parameters appropriately.  Afterwards, we will show that these conditions imply the corresponding conditions from Definition~\ref{def:good_event} (with the exception of the first crossing, which was handled earlier in Section~\ref{sec:first_crossing}) provided the crossings of the rectangle are separated from each other and the boundary.

Fix $a,H_0 > 0$, $H \geq H_0$, and $\xi \in (0,H_0/10)$.  Let $\CR = (0,1) \times (0,H)$ and let $\p_H$ be the law of a GFF on $\CR$ with boundary conditions given by $\lambda$ on $\pleft \CR$, $-\lambda$ on $\pright \CR$, and $0$ on $\pdown \CR$ and $\pup \CR$.  We perform the exploration in $\CR^\Down$, $\CR^\Up$ as in Section~\ref{sec:annulus_exploration} and let $\wt{U}$, $\wt{U}_i$, and $\wt{\eta}_i$ be as in Section~\ref{sec:sle4_removable_setup}.  In particular, $\wt{\eta}_i$ for $1 \leq i \leq n$ are the continuations of height-$0$ crossings of the $\CR^q$ for $q \in \{\Down,\Up\}$ which cross $\CR^\Inner$.  We also let $\wt{\CW}_i$ be a Whitney square decomposition of $\wt{U}_i$ for each $1 \leq i \leq n+1$.  Suppose that we have fixed $H \geq H_0$ and $\xi \in (0,H_0/10)$.  For each $1 \leq i \leq n$, we let~$\wt{x}_i$ (resp.\ $\wt{y}_i$) be the starting (resp.\ ending) point of $\wt{\eta}_i$ and we let $\wt{\sigma}_i = \sup\{t \geq 0 : \wt{\eta}_i(t) \in B(\wt{x}_i,\xi)\}$, $\wt{\tau}_i = \inf\{t \geq 0 : \wt{\eta}_i(t) \in B(\wt{y}_i,\xi)\}$, and $\wt{I}_i = [ \wt{\sigma}_i, \wt{\tau}_i]$.  We will now argue that by adjusting the parameters it is likely that $n \leq M$ and for each $1 \leq i \leq n$ each of the following conditions hold.

\begin{enumerate}[(I)]
	\item $\nmeasure{\wt{\eta}_i}(\wt{\eta}_i(\wt{I}_i))\geq M^{-1}$; \label{it:natural_parameterization_lower_bound_rectangle}
	\item $\nmeasure{\wt{\eta}_i}(\wt{\eta}_i(\wt{I}_i) \cap Y) \leq M \diam(Y)^{3/2-a}$ for every $Y$ Borel; \label{it:natural_parameterization_diameter_bound_rectangle}
	\item \label{it:natural_measure_cube_conditions} There exists a square $\wt{Q}_i \in \wt{\CW}_i$ such that if $\wt{G}_i^-$ (resp.\ $\wt{G}_i^+$) is the set of points on $w \in \wt{\eta}_{i-1}(\wt{I}_{i-1})$ (resp.\ $w \in \wt{\eta}_i(\wt{I}_i)$) such that each square $Q \in \wt{\CW}_i$ which is intersected by the hyperbolic geodesic from $\cen(\wt{Q}_i)$ to $w$ is $(M,a)$-good with respect to $\wt{Q}_i$, then
			\begin{enumerate}[(a)]
				\item \label{it:fraction_of_good_points_rectangle} $\nmeasure{\wt{\eta}_{i-1}}(\wt{G}_i^-)/\nmeasure{\wt{\eta}_{i-1}}(\wt{\eta}_{i-1}(\wt{I}_{i-1})) \geq 3/4$ (if $i \geq 2$) and $\nmeasure{\wt{\eta}_i}(\wt{G}_i^+)/\nmeasure{\wt{\eta}_i}(\wt{\eta}_i(\wt{I}_i)) \geq 3/4$ (if $i \leq n$). 
				\item \label{it:not_too_many_squares_rectangle} 
				The number of elements of $\wt{\CW}_i$ with side length $2^{-m}$ which are hit by a hyperbolic geodesic in $\wt{U}_i$ from $\cen(\wt{Q}_i)$ to a point in $\wt{G}_i^+$ (if $i \leq n$) (resp.\ $\wt{G}_i^-$ (if $i \geq 2$)) is at most  $M 2^{(3/2+a)m}$.
			\end{enumerate}
\end{enumerate}

We begin by showing that the crossings are likely to be separated from each other.

\begin{lemma}
\label{lem:crossings_separated}
For each $H \geq H_0$ and $\xi \in (0,H_0/10)$ we have that
\begin{align}
\label{eqn:crossing_distance_positive}
	\p_H\!\left[ \min_{i \neq \ell} \dist(\wt{\eta}_i,\wt{\eta}_\ell) \leq \epsilon  \right] \to 0 \quad \text{as} \quad \epsilon \to 0.
\end{align}
\end{lemma}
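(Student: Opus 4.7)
The plan is to reduce the statement to the a.s.\ positivity of $\min_{i \neq \ell} \dist(\wt{\eta}_i, \wt{\eta}_\ell)$. Once this is established, $\p_H[\min_{i \neq \ell} \dist(\wt{\eta}_i, \wt{\eta}_\ell) \leq \epsilon] \to 0$ as $\epsilon \to 0$ follows immediately by dominated convergence, since the indicator of $\{\min \leq \epsilon\}$ is bounded by $1$ and converges pointwise to $0$ as $\epsilon \downarrow 0$. The argument therefore reduces to two ingredients: (i) the number $n$ of crossings of $\CR^\Inner$ is a.s.\ finite; and (ii) any two distinct curves in the list $\wt{\eta}_0, \ldots, \wt{\eta}_{n+1}$ are a.s.\ disjoint as closed subsets of $\closure{\CR^\Inner}$, hence at positive distance by compactness.

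Ingredient (i) follows immediately by applying Lemma~\ref{lem:rectangle_crossing_bound} separately to the explorations performed in $\CR^\Down$ and $\CR^\Up$; each one discovers only finitely many crossings a.s., and $n$ is bounded by the total number of height-$0$ crossings across both, hence $n < \infty$ a.s.

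For ingredient (ii) in the case $1 \leq i < \ell \leq n$, each $\wt{\eta}_i$ is a distinct height-$0$ level line of the GFF on $\CR$, starting from a point on $\pdown \CR^\Inner \cup \pup \CR^\Inner$. By the interaction rules for GFF level lines of equal height (Theorem~\ref{thm:interaction_rules}(2)), two distinct such level lines which ever touch must merge and never separate thereafter. Since $\wt{\eta}_i$ and $\wt{\eta}_\ell$ are by construction distinguished as separate crossings of $\CR^\Inner$, they cannot merge within $\closure{\CR^\Inner}$, and hence their images are disjoint, and their starting and ending points on $\pdown \CR^\Inner \cup \pup \CR^\Inner$ are also distinct. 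For $i = 0$ or $\ell = n+1$, i.e.\ when comparing a crossing $\wt{\eta}_\ell$ with the left or right side of $\CR^\Inner$, we rely on the fact that the level line $\wt{\eta}_\ell$ has the law of an $\SLE_4(\ul{\rho})$ process whose force-point weights at the corners make it a.s.\ not touch $\pleft \CR \cup \pright \CR$ away from the corners of $\CR$ (which lie outside $\pleft \CR^\Inner \cup \pright \CR^\Inner$). This is a standard consequence of the level-line theory in~\cite{ms2016imag1, wang2017level}.

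The main technical subtlety is the careful identification of each $\wt{\eta}_i$ with a genuine height-$0$ level line of the appropriate conditional GFF so that Theorem~\ref{thm:interaction_rules}(2) applies, together with verifying the non-intersection with $\pleft \CR$ and $\pright \CR$ from the $\SLE_4(\ul{\rho})$ description; both facts are standard but must be invoked with care, as the conditional GFF in the complement of the two explorations has piecewise constant boundary data which itself changes many times. Granted (i) and (ii), the lemma follows from the dominated convergence argument noted at the outset.
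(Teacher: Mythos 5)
Your proof is correct and follows essentially the same route as the paper's, which simply asserts that the crossings a.s.\ do not intersect one another or $\pleft \CR \cup \pright \CR$, concludes that the minimum distance is a.s.\ positive, and deduces the lemma.  You fill in the details the paper leaves implicit: finiteness of the number of crossings via Lemma~\ref{lem:rectangle_crossing_bound}, non-intersection of distinct height-$0$ level lines via the merging rule in Theorem~\ref{thm:interaction_rules}, and boundary avoidance of $\pleft \CR$ and $\pright \CR$ via the $\SLE_4(\ul\rho)$ description; then pass from a.s.\ positivity to the limit by dominated convergence.  This is the same argument, just more carefully written out, and your closing remark about the need to track the piecewise constant boundary data of the conditional GFF is a fair caveat that the paper elides.
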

\begin{proof}
Almost surely, these crossings will not intersect each other or hit $\pleft \CR \cup \pright \CR$.  Consequently, $\min_{i \neq \ell} \dist(\wt{\eta}_i^q,\wt{\eta}_\ell^q)$ for $q \in \{\Down,\Up\}$ is an a.s.\ positive random variable, which immediately implies~\eqref{eqn:crossing_distance_positive}.
\end{proof}

In the following, we will first argue that~\eqref{it:natural_parameterization_lower_bound_rectangle} and~\eqref{it:natural_parameterization_diameter_bound_rectangle} are likely to hold (Lemma~\ref{lem:natural_parameterization_bounds}), then that~\eqref{it:fraction_of_good_points_rectangle} is likely to hold (Lemma~\ref{lem:all_crossings_good}), and finally that~\eqref{it:not_too_many_squares_rectangle} is likely to hold (Lemma~\ref{lem:not_too_many_squares}).

\begin{lemma}
\label{lem:natural_parameterization_bounds}
Fix $p_0 \in (0,1)$, $a, H_0 > 0$, $H \geq H_0$, and $\xi \in (0,H_0/10)$.  Then there exists $M_0 > 1$ depending only on $p_0$, $a$, $H$, $\xi$ such that for every $M \geq M_0$ both~\eqref{it:natural_parameterization_lower_bound_rectangle} and~\eqref{it:natural_parameterization_diameter_bound_rectangle} hold with probability at least $1-p_0$ under $\p_H$.
\end{lemma}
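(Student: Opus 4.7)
The plan is to combine Lemma~\ref{lem:natural_good_whole_plane} with absolute continuity between each crossing $\wt{\eta}_i$ and a chordal $\SLE_4$. First, by Lemmas~\ref{lem:rectangle_crossing_bound}, \ref{lem:rectangle_maximum_crossing_height}, and~\ref{lem:crossings_separated}, I would fix $N_0 \in \N$, $M_1 > 0$, and $\epsilon_0 > 0$ so that the event
\[
\Omega_1 = \bigl\{ n \leq N_0,\ \sup_j |u_j^q| \leq M_1 \text{ for } q \in \{\Down,\Up\},\ \min_{i \neq \ell}\dist(\wt{\eta}_i,\wt{\eta}_\ell) \geq 2\epsilon_0,\ \min_i \dist(\wt{\eta}_i, \pleft \CR \cup \pright \CR) \geq 2\epsilon_0 \bigr\}
\]
has $\p_H$-probability at least $1 - p_0/4$.

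On $\Omega_1$ I would identify the conditional law of each crossing $\wt{\eta}_i$ given the explorations of $\CR^\Down, \CR^\Up$ together with all the other crossings $\wt{\eta}_j$ for $j \neq i$.  By the Markov property of the GFF and the level-line characterization of Section~\ref{subsec:level_lines}, this is a chordal $\SLE_4(\ul{\rho})$ in the component $D_i$ of $\wt{U} \setminus \bigcup_{j \neq i}\wt{\eta}_j$ containing the curve, run from $\wt{x}_i$ to $\wt{y}_i$, with force points on $\partial D_i$ and weights bounded in absolute value by a constant depending only on $M_1$.  Let $\phi_i \colon D_i \to \h$ send $\wt{x}_i,\wt{y}_i$ to $0,\infty$ and set $\gamma_i = \phi_i(\wt{\eta}_i)$, so that $\gamma_i$ is a chordal $\SLE_4(\ul{\rho})$ in $\h$ from $0$ to $\infty$.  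The segment $\wt{\eta}_i(\wt{I}_i)$ is at distance at least $\xi/2$ from $\{\wt{x}_i,\wt{y}_i\}$ and at distance at least $\epsilon_0$ from every other force point of $\wt{\eta}_i$ on $\Omega_1$, so the Koebe-$1/4$ theorem gives uniform two-sided bounds on $|\phi_i'|$ on a neighborhood of the segment, and $\phi_i(\wt{\eta}_i(\wt{I}_i))$ is contained in a fixed compact $K \subseteq \h$ at positive distance from every force point of $\gamma_i$.  A standard Girsanov computation (of the type used throughout \cite{ms2016imag1}) then yields that, on $\Omega_1$, the law of $\gamma_i$ restricted up to the time it enters the image of the $\xi$-ball around $\wt{y}_i$ is absolutely continuous with respect to the law of a chordal $\SLE_4$ in $\h$ from $0$ to $\infty$ up to the analogous time, with Radon-Nikodym derivative bounded by a constant $C_0 = C_0(p_0,a,H,\xi)$.

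Granting this, the upper bound \eqref{it:natural_parameterization_diameter_bound_rectangle} follows by applying Lemma~\ref{lem:natural_good_whole_plane} to $\gamma_i$ on $K$ with $\kappa = 4$ (so $d_4 = 3/2$), transferring via the absolute continuity above, and then pushing the bound back to $\wt{\eta}_i$ using the conformal covariance
\[
\nmeasure{\wt{\eta}_i}(Y \cap \wt{\eta}_i(\wt{I}_i)) = \int_{\phi_i(Y \cap \wt{\eta}_i(\wt{I}_i))} |(\phi_i^{-1})'(w)|^{3/2}\, d\nmeasure{\gamma_i}(w)
\]
together with the uniform control on $|\phi_i'|$ on $\Omega_1$.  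A union bound over $i \leq N_0$ yields \eqref{it:natural_parameterization_diameter_bound_rectangle} with probability at least $1-p_0/4$ provided $M$ is at least a suitable constant.  For the lower bound \eqref{it:natural_parameterization_lower_bound_rectangle}, $\nmeasure{\gamma_i}(\gamma_i([\wt{\sigma}_i,\wt{\tau}_i]))$ is almost surely strictly positive since $\gamma_i$ traverses a set of diameter comparable to $1$ during $\wt{I}_i$, so by the same absolute continuity and the uniform distortion bound on $|\phi_i^{-1}|'$, the distribution of $\nmeasure{\wt{\eta}_i}(\wt{\eta}_i(\wt{I}_i))\one_{\Omega_1}$ is uniformly tight away from zero.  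Enlarging $M_0$ if needed therefore gives the desired lower bound with probability at least $1-p_0/4$, and combining with the upper bound completes the proof.

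The main obstacle is obtaining the uniform Radon-Nikodym derivative bound between $\gamma_i$ and chordal $\SLE_4$ across the random exploration configurations.  The event $\Omega_1$ is designed precisely to supply this uniformity, controlling simultaneously the number of crossings, the magnitude of the boundary-value jumps (hence the $\ul{\rho}$-weights), and the geometric separation of the segments from the force points.
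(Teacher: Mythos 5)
Your overall strategy follows the paper's: isolate a high-probability event controlling the number, heights, and mutual separation of the height-$0$ crossings; transfer Lemma~\ref{lem:natural_good_whole_plane} to $\wt{\eta}_i$ by absolute continuity with chordal $\SLE_4$; then conclude via conformal covariance of the natural parameterization and the distortion estimate \cite[Corollary~3.25]{lawler2008conformally}. That part is on the right track, and the lower bound step is fine modulo the same absolute-continuity input.

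The gap is the claim that a ``standard Girsanov computation'' yields a \emph{deterministic} bound $C_0(p_0,a,H,\xi)$ on the Radon--Nikodym derivative between the conditional law of $\gamma_i$ and chordal $\SLE_4$, valid uniformly on $\Omega_1$. The event $\Omega_1$ controls the number of height-$0$ crossings, their heights, and their mutual separation, but the boundary data of the conditional GFF on $\partial D_i$ also jumps along every strand of the exploration $\wt{K}$ (not only the crossings), and $\Omega_1$ places no bound on the number of these jumps nor on the distance from the segment $\wt{\eta}_i(\wt{I}_i)$ to $\wt{K}$. Indeed $\wt{I}_i$ only excludes the $\xi$-balls around $\wt{x}_i$ and $\wt{y}_i$, so elsewhere the segment can approach $\pdown \CR^\Inner \cup \pup \CR^\Inner$ --- hence the force points sitting on $\wt{K}$ --- arbitrarily closely. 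Consequently the Girsanov martingale is not deterministically bounded on $\Omega_1$ as stated. The paper sidesteps this entirely: it observes that the Radon--Nikodym derivative $\wt{\CZ}_i$ between the conditional law of $\wt{\eta}_i$ and chordal $\SLE_4$ in the random domain $\wt{W}_i$ is an a.s.\ finite random variable, and simply enlarges $N$ so that $\p_H[\wt{\CZ}_i \leq N \text{ for all } i] \geq 1-p_0/50$ (the event $F_2$); no uniform Girsanov estimate is required. If you replace your deterministic-constant claim with this truncation-at-a-quantile step, the remainder of your argument tracks the paper's proof.
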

\begin{proof}
Let $h \sim \p_H$ and fix $N \in \N$.  Assume that we have carried out the explorations in~$\CR^\Up$ and~$\CR^\Down$, let~$\wt{K}$ be their union, let $\wt{U}$ be the connected component of $\CR \setminus \wt{K}$ which contains $\CR^\Inner$, and let~$\wt{\CW}$ be a Whitney square decomposition of $\wt{U}$.  Let $F_1$ be the event that the number of crossings discovered by the exploration in $\CR^\Down$ and $\CR^\Up$ is at most $N$, the height of any such crossing is in absolute value at most $N$, and the distances between different crossings are at least $1/N$.  Lemmas~\ref{lem:rectangle_crossing_bound}, \ref{lem:rn_formula}, and~\ref{lem:crossings_separated} imply that we can choose $N$ sufficiently large (depending only on $p_0$, $a$, $H$, $\xi$) so that $\p_H[F_1] \geq 1-p_0/100$.

Let $\wt{\eta}_1,\ldots,\wt{\eta}_n$ be the continuations of the $0$-height crossings of $\CR^\Down$ into $\CR^\Inner$ which connect with a $0$-height crossing of $\CR^\Up$ and let $\wt{\eta}_{n+1},\ldots,\wt{\eta}_{n+m}$ be the remainder of the $0$-height crossings of $\CR^\Down$ or $\CR^\Up$ which do not cross $\wt{U}$.  For each $1 \leq i \leq n$, we let $\wt{K}_i$ be the union of $\wt{K}$ and $\wt{\eta}_\ell$ for $\ell \neq i$ and we let $\wt{W}_i$ be the component of $\CR \setminus \wt{K}_i$ which contains $\wt{\eta}_i$.  Then the conditional law of $\wt{\eta}_i$ given everything else is absolutely continuous with respect to that of a chordal $\SLE_4$ in $\wt{W}_i$ from $\wt{x}_i$ to $\wt{y}_i$.  Let $\wt{\CZ}_i$ be the Radon-Nikodym derivative between the conditional law of $\wt{\eta}_i$ given everything else and that of a chordal $\SLE_4$ in $\wt{W}_i$ from $\wt{x}_i$ to $\wt{y}_i$.  We note that $\wt{\CZ}_i$ is an a.s.\ finite random variable.  Let $F_2$ be the event that $F_1$ holds and $\wt{\CZ}_i \leq N$ for each $1 \leq i \leq n$.  Lemma~\ref{lem:rn_formula} implies that we can take $N$ sufficiently large (depending only on $p_0$, $a$, $H$, $\xi$) so that $\p_H[F_2] \geq 1-p_0/50$.

For each $1 \leq i \leq n$ we let $\wt{\phi}_i$ be the unique conformal transformation $\h \to \wt{W}_i$ which takes $0$ to $\wt{x}_i$, $-1$ to the leftmost point in $\partial \wt{W}_i$ contained in the line $\{x + i H_0/2 : x \in \R\}$, and $\infty$ to $\wt{y}_i$ and let $\wh{\eta}_i = \wt{\phi}_i^{-1}(\wt{\eta}_i)$.  On $F_2$, we have that $\wh{\eta}_i$ is absolutely continuous with respect to an $\SLE_4$ in $\h$ from $0$ to $\infty$ with a Radon-Nikodym derivative which is bounded by $N$.

Fix a compact set $K \subseteq \h$.  Let $F_3$ be the event that $F_2$ holds and $\wt{\phi}_i^{-1}(\wt{\eta}_i([\wt{\sigma}_i,\wt{\tau}_i])) \subseteq K$ for each $1 \leq i \leq n$.  We note that we can choose $K$ to be sufficiently large so that $\p_H[F_3] \geq 1-p_0/25$.  By Lemmas~\ref{lem:natural_good_whole_plane} and~\ref{lem:rn_formula} there exists a constant $c_0 > 0$ (depending only on $K$, $N$, $H$, and $\xi$) so that if $F_4$ is the event that $F_3$ holds and for every Borel set $Y \subseteq K$ we have that $\nmeasure{\wh{\eta}_i}(Y) \leq c_0 \diam(Y)^{3/2-a}$ then $\p[F_4] \geq 1-p_0/10$.

Note that
\[ \nmeasure{\wt{\eta}_i}(Y) = \int_{\wt{\phi}_i^{-1}(Y)} |\wt{\phi}_i'(w)|^{3/2} d\nmeasure{\wh{\eta}_i}(w) \quad\text{for all}\quad Y \quad\text{Borel}.\]
Therefore applying~\cite[Corollary~3.25]{lawler2008conformally} we obtain that there exists $M_0 > 1$ so that if $M \geq M_0$ then~\eqref{it:natural_parameterization_diameter_bound_rectangle} holds on $F_4$.  If we let $F_5$ be the event that $F_4$ holds and~\eqref{it:natural_parameterization_lower_bound_rectangle} also holds then by increasing the value of $M_0$ if necessary we have that $\p_H[F_5]\geq 1 - p_0$, which completes the proof.
\end{proof}

\begin{lemma}
\label{lem:all_crossings_good}
Fix $p_0 \in (0,1)$, $a > 0$, $H_0 > 0$, $H \geq H_0$, and $\xi \in (0,H_0/10)$.  Then there exists $M_0 > 1$ depending only on $p_0$, $a$, $H$, $\xi$ such that~\eqref{it:natural_parameterization_lower_bound_rectangle}, \eqref{it:natural_parameterization_diameter_bound_rectangle}, and~\eqref{it:fraction_of_good_points_rectangle} hold for every $M \geq M_0$ with probability at least $1-p_0$ with respect to $\p_H$.
\end{lemma}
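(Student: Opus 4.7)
The plan is to build on Lemma~\ref{lem:natural_parameterization_bounds}, which already yields~\eqref{it:natural_parameterization_lower_bound_rectangle} and~\eqref{it:natural_parameterization_diameter_bound_rectangle} with high probability, and to establish~\eqref{it:fraction_of_good_points_rectangle} by adapting the strategy of Lemma~\ref{lem:first_excursion_good}. First I would run the proof of Lemma~\ref{lem:natural_parameterization_bounds} and retain the event $F_2$ from there, on which $n \leq N$, all heights are bounded by $N$, the pairwise distances between crossings (and from the crossings to $\pleft \CR \cup \pright \CR$) are at least $1/N$, and each conditional Radon--Nikodym derivative $\wt{\CZ}_i$ between the conditional law of $\wt{\eta}_i$ (given the remaining crossings and the exploration $\wt{K}$) and that of a chordal $\SLE_4$ in $\wt{W}_i$ from $\wt{x}_i$ to $\wt{y}_i$ is at most $N$. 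By Lemmas~\ref{lem:rectangle_crossing_bound}, \ref{lem:crossings_separated} and~\ref{lem:rn_formula}, $N$ can be chosen (depending only on $p_0,a,H,\xi$) so that $\p_H[F_2] \geq 1 - p_0/4$.

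Next, for each $1 \leq i \leq n$ I would choose a reference Whitney square $\wt{Q}_i \in \wt{\CW}_i$ whose center is at hyperbolic distance $O(1)$ from both $\wt{\eta}_{i-1}$ and $\wt{\eta}_i$, e.g.\ the square containing the leftmost point of $\wt{U}_i \cap \{\im z = H_0/2\}$ translated rightward by a universal constant; on $F_2$ this point exists and lies at Euclidean distance bounded below by a constant depending only on $N$, $H$, $\xi$ from $\partial \wt{U}_i$. The key step is then to verify the bound~\eqref{it:fraction_of_good_points_rectangle} separately for $\wt{G}_i^+$ and $\wt{G}_i^-$. Consider first $\wt{G}_i^+$. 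Condition on everything except $\wt{\eta}_i$; then the conditional law of $\wt{\eta}_i$ is absolutely continuous with respect to a chordal $\SLE_4$ in $\wt{W}_i$ from $\wt{x}_i$ to $\wt{y}_i$ with Radon--Nikodym derivative bounded by $N$. As in the proof of Lemma~\ref{lem:first_excursion_good} (Lemmas~\ref{lem:lower_bound_F4} and~\ref{lem:change_of_domains}), this chordal $\SLE_4$ can be compared, up to an additional bounded Radon--Nikodym derivative, with a segment of a two-sided whole-plane $\SLE_4$; one then invokes Lemma~\ref{lem:good_fraction_whole_plane} on the two components of the whole-plane curve to conclude that, for $M$ large, a proportion at least $1 - \epsilon$ (measured in natural parameterization on a compact part of the image curve) of the crossing consists of points whose hyperbolic geodesic in the corresponding complementary component only visits $(M,a)$-good Whitney squares.

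The transfer of this "good geodesic" property from the whole-plane complement to the domain $\wt{U}_i$, using the fixed basepoint $\cen(\wt{Q}_i)$, is carried out exactly as in Lemma~\ref{lem:change_of_domains}: Lemma~\ref{lem:hyperbolic_geodesics_close} implies that hyperbolic geodesics in $\wt{U}_i$ from $\cen(\wt{Q}_i)$ to a point of $\wt{\eta}_i$ shadow those in the whole-plane complement, provided one has enough hyperbolic separation near the basepoint, which is guaranteed by the lower bound on $\dist(\wt{Q}_i,\partial\wt{U}_i)$ on $F_2$. This gives $\nmeasure{\wt{\eta}_i}(\wt{G}_i^+) \geq (1-\epsilon)\nmeasure{\wt{\eta}_i}(\wt{\eta}_i(\wt{I}_i))$ with high probability, and choosing $\epsilon < 1/4$ and absorbing constants into $M$ yields the required $3/4$-fraction. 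Repeating the argument after conditioning instead on everything except $\wt{\eta}_{i-1}$ treats $\wt{G}_i^-$; crucially, the reference square $\wt{Q}_i$ is the same in both cases, so both halves of~\eqref{it:fraction_of_good_points_rectangle} hold for a single choice of $\wt{Q}_i$.

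Finally, a union bound over $i = 1,\ldots,n$ (which is at most $N$ on $F_2$) and a choice of $M$ so that each individual failure probability is at most $p_0/(4N)$ concludes the proof with total failure probability at most $p_0$. The main obstacle is exactly the bookkeeping in the previous paragraph: one must arrange for a single reference square $\wt{Q}_i$ to witness good geodesics to both of the neighbouring crossings $\wt{\eta}_{i-1}$ and $\wt{\eta}_i$, and one must handle the fact that after conditioning on everything but $\wt{\eta}_i$, the "other" boundary curve $\wt{\eta}_{i-1}$ is fixed, so the absolute continuity with whole-plane $\SLE_4$ only directly controls geodesics to $\wt{\eta}_i$; symmetrising by conditioning on everything but $\wt{\eta}_{i-1}$ deals with this, at the cost of a factor of $2$ in the failure probability.
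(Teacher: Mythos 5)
Your proposal follows essentially the same approach as the paper. The paper also starts from Lemma~\ref{lem:natural_parameterization_bounds}, obtains the reference squares from a comparison with two-sided whole-plane $\SLE_4$ (factored out in Lemma~\ref{lem:sle4_good}, which is stated immediately after this lemma and yields squares $\wt{Q}_i^\Left \in \wt{\CW}_i$, $\wt{Q}_i^\Right \in \wt{\CW}_{i+1}$ for each crossing $\wt{\eta}_i$), and then merges the two candidate basepoints per domain $\wt{U}_{i+1}$ by first showing $\disthyp^{\wt{U}_{i+1}}(\cen(\wt{Q}_i^\Right),\cen(\wt{Q}_{i+1}^\Left)) \leq C$ with high probability and then invoking Lemma~\ref{lem:hyperbolic_geodesics_close}. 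You instead fix a deterministic basepoint and transfer to it from both neighboring crossings. This is equivalent in spirit, but two caveats. First, the hypothesis needed to invoke Lemma~\ref{lem:hyperbolic_geodesics_close} is a bound on the \emph{hyperbolic} distance between your $\wt{Q}_i$ and the random reference square produced by the whole-plane comparison; this does not follow solely from your stated lower bound on $\dist(\wt{Q}_i,\partial\wt{U}_i)$, since the random square could a priori lie anywhere in $\wt{U}_i$. You need, as the paper does for its pair of random squares, to observe that this hyperbolic distance is an a.s.\ finite random variable and is therefore bounded by some $C$ with high probability. Second, the phrase ``at hyperbolic distance $O(1)$ from both $\wt{\eta}_{i-1}$ and $\wt{\eta}_i$'' cannot be meant literally---every interior point is at infinite hyperbolic distance from $\partial\wt{U}_i$; what you should say is that $\len(\wt{Q}_i)$ is comparable to $\dist(\wt{Q}_i,\partial\wt{U}_i)$ and that this distance is bounded below on $F_2$.
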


Before we proceed to the proof of Lemma~\ref{lem:all_crossings_good}, we first record the following analog of Lemma~\ref{lem:good_fraction_whole_plane} which is stated in the context of chordal (rather than two-sided whole-plane) $\SLE_4$.

\begin{lemma}
\label{lem:sle4_good}
Suppose that $D \subseteq \C$ is a simply connected domain, $x,y \in \partial D$ are distinct, and $\eta$ is an $\SLE_4$ in $D$ from $x$ to $y$.  Let $\sigma,\tau$ be stopping times for $\eta$ such that a.s.\ $0 < \sigma < \tau$.  For each $p_0, \Ff_0 \in  (0,1)$, $a > 0$,  there exists $M > 1$ so that the following is true.  Let $D_\Left$ (resp.\ $D_\Right$) be the component of $D \setminus \eta$ which is to the left (resp.\ right) of $\eta$ and let $\CW^\Left$ (resp.\ $\CW^\Right$) be a Whitney square decomposition of~$D_\Left$ (resp.\ $D_\Right$).  There exist squares $Q_q \in \CW^q$ for $q \in \{\Left,\Right\}$ so that the following is true. For $q \in \{\Left,\Right\}$, let $G_{q,\sigma,\tau}$ be the set of $r \in [\sigma,\tau]$ such that every $Q \in \CW^q$ hit by the hyperbolic geodesic from $\cen(Q_q)$ to $\eta(r)$ in $D_q$ satisfies $\disthyp^{D_q}(\cen(Q_q),\cen(Q)) \leq M \len(Q)^{-a}$.  Then
\[ \p[ \nmeasure{\eta}(\eta(G_{q,\sigma,\tau})) \leq \Ff_0 \nmeasure{\eta}(\eta([\sigma,\tau]))] \leq p_0 \quad\text{for}\quad q \in \{\Left,\Right\}.\]
\end{lemma}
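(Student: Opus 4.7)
The plan is to reduce Lemma~\ref{lem:sle4_good} to the whole-plane estimate in Lemma~\ref{lem:good_fraction_whole_plane} via a local absolute continuity argument, in the spirit of (but considerably simpler than) the proof of Lemma~\ref{lem:first_excursion_good}. First I would select a deterministic interior time: fix $r_0 \in (0,1)$, set $R = \sigma + r_0(\tau-\sigma)$, and identify a high-probability event $F$ on which $\eta(R)$ is bounded away from $\partial D \cup \{x,y\}$, the conformal radii of the two complementary components of $D \setminus \eta$ as seen from $\eta(R)$ are bounded below, and $\eta$ has a uniform modulus of continuity on $[\sigma,\tau]$. The probability of $F$ can be made arbitrarily close to one by tuning the parameters, via Fubini and the a.s.\ continuity/transience of $\eta$.

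On $F$ I would couple the restriction of $\eta$ to a suitable neighborhood of $R$ in time (and the corresponding piece of its range) with a two-sided whole-plane $\SLE_4$ process $\wt{\eta}$ from $\infty$ to $\infty$ through $\eta(R)$, taken with the natural parameterization normalized so that $\wt{\eta}(0) = \eta(R)$.  Such a coupling is provided by standard local absolute continuity between chordal $\SLE_4$ and two-sided whole-plane $\SLE_4$ through an interior point (using the ingredients reviewed in Section~\ref{subsec:sle} and employed in Section~\ref{sec:first_crossing}); the Radon-Nikodym derivative is bounded above and below by deterministic constants on $F$.  I would then pick reference points $z_\Left, z_\Right$ on the two sides of $\wt{\eta}$ at bounded hyperbolic distance from $\wt{\eta}(0)$, set $Q_q$ to be the Whitney square in $\CW^q$ containing $z_q$, and apply Lemma~\ref{lem:good_fraction_whole_plane} to $\wt{\eta}$ with the reference points $z_\Left$ and $z_\Right$ separately.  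Choosing $M$ large and $\delta$ small in the application of that lemma, together with a union bound over $q \in \{\Left,\Right\}$, shows that on a further high-probability event the natural-parameterization fraction of good points for $\wt{\eta}$ on the corresponding parameter interval is at least $1-\Ff_0/2$.

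The final step transfers this property back to $\eta$ with its decompositions $\CW^q$.  The absolute continuity preserves the natural parameterization up to bounded factors, so the fractions match up to a multiplicative constant.  To identify the Whitney decomposition of $D_q$ with that of the whole-plane complementary component of $\wt{\eta}$, observe that on $F$ the two decompositions agree, up to a bounded multiplicative factor in side lengths, inside a neighborhood of the relevant segment of $\eta$, since the additional boundary pieces of $D_q$ are at distance bounded below from that segment.  Hyperbolic geodesics in $D_q$ therefore remain at bounded hyperbolic distance from the corresponding geodesics in the whole-plane complementary component by Lemma~\ref{lem:hyperbolic_geodesics_close}, so that the whole-plane good-square condition yields the chordal good-square condition after inflating $M$ by an absolute constant.

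The main obstacle is precisely this Whitney-decomposition comparison.  The required arguments mirror those in Lemma~\ref{lem:lower_bound_F4} and Lemma~\ref{lem:change_of_domains}, combining Lemma~\ref{lem:hyperbolic_geodesics_close} with the Koebe-type comparison~\eqref{eq:derivative_diameter_relation}.  Since no conditioning on a first crossing of a conformal annulus is needed here, the proof should be cleaner than that of Lemma~\ref{lem:first_excursion_good}, and the lemma then follows by intersecting the three high-probability events and choosing the input probabilities small enough in terms of $p_0$.
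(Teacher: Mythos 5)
Your proposal takes a genuinely different route from the paper, and the route has a real gap.

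The paper's proof uses an \emph{exact} distributional identity rather than absolute continuity: since the forward half of a two-sided whole-plane $\SLE_4$ from $\infty$ to $\infty$ through $0$ is, given the backward half, a chordal $\SLE_4$ from $0$ to $\infty$ in its complement, one can write $\eta = \phi(\eta^w(\cdot))|_{[0,\infty)}$ where $\eta^w$ is a two-sided whole-plane $\SLE_4$ and $\phi : \C \setminus \eta^w((-\infty,0]) \to D$ is the conformal map with $\phi(0)=x$, $\phi(\infty)=y$. No Radon--Nikodym derivative is needed; the paper then shows that with high probability $\sigma,\tau \in [t,T]$ for deterministic $0 < t < T$ and $\nmeasure{\eta}(\eta([\sigma,\tau]))$ is bounded below, applies Lemma~\ref{lem:good_fraction_whole_plane} to $\eta^w$ on $[t,T]$, and transfers the good-square property through $\phi$ using~\eqref{eq:derivative_diameter_relation} and Lemma~\ref{lem:hyperbolic_geodesics_close}. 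This exact conformal-image construction is the crux of the argument.

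Your proposal replaces this with a ``standard local absolute continuity between chordal $\SLE_4$ and two-sided whole-plane $\SLE_4$ through an interior point,'' asserting a Radon--Nikodym derivative bounded above and below by deterministic constants on a high-probability event $F$. This is the gap. First, such a comparison is a conditioning-on-a-measure-zero-event statement and is not ``standard'' in the form you need; establishing a two-sided bound on the RN derivative at a random interior time $R = \sigma + r_0(\tau-\sigma)$ would require serious work, and the bound would only be expected to hold on a fixed spatial neighborhood of $\eta(R)$, not over the whole trace $\eta([\sigma,\tau])$. Second, and more decisively, your coupling only controls ``the restriction of $\eta$ to a suitable neighborhood of $R$ in time,'' but the conclusion of the lemma requires control of the natural-measure fraction of good points over all of $[\sigma,\tau]$, where $\sigma,\tau$ are arbitrary stopping times with $0 < \sigma < \tau$. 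A local-in-time coupling near $R$ does not give a uniform bound over $[\sigma,\tau]$; you would need exactly the kind of global comparison (``with high probability $[\sigma,\tau]\subseteq[t,T]$ and the relevant derivatives/natural measures are bounded above and below on a neighborhood of the whole arc'') that the paper achieves via the exact conformal identity. Without that, the proof does not close, whereas the parts of your proposal concerning the transfer of the good-square condition through the conformal map via Lemma~\ref{lem:hyperbolic_geodesics_close} and~\eqref{eq:derivative_diameter_relation} are aligned with the paper and are fine.
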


Transferring the two-sided whole-plane $\SLE_4$ result from Lemma~\ref{lem:good_fraction_whole_plane} to the setting of the first crossing of an $\SLE_4$ across an annulus was the focus of Lemma~\ref{lem:first_excursion_good}.  Since we included all of the details of the proof that in case, we will be more brief in the present setting.

\begin{proof}[Proof of Lemma~\ref{lem:sle4_good}]
We fix $\Ff_1 \in (0,1)$ sufficiently close to $1$ (to be chosen).  Consider the two-sided whole-plane $\SLE_4$ process $\eta^w$ of Lemma~\ref{lem:good_fraction_whole_plane} which we assume to have the natural parameterization and with time normalized so that $\eta^w(0) = 0$.   We let~$\C_\Left$ (resp.\ $\C_\Right$) denote the component of $\C \setminus \eta^w$ which is to the left (resp.\ right) of $\eta^w$. Moreover, we let $\CW^{\C_\Left}$ (resp.\ $\CW^{\C_\Right}$) be a Whitney square decomposition of $\C_\Left$ (resp.\ $\C_\Right$). Let $\CW^\Left$ (resp.\ $\CW^\Right$) be a Whitney square decomposition of the component $D_\Left$ (resp.\ $D_\Right$) of $D \setminus \eta$ which is to the left (resp.\ right) of $\eta$.  
Fix $z$ in the counterclockwise arc of $\partial D$ from $x$ to $y$ which is distinct from $x,y$.  Let $\phi: \C \setminus \eta^w((-\infty,0]) \to D$ be such that $\phi(0) = x$, $\phi(\infty) = y$, and the prime end associated with~$\eta^w(-1)$ on the left side of~$\eta^w$ is mapped to~$z$. Then $\eta = \phi(\eta^w(\cdot))|_{[0,\infty)}$ is a chordal $\SLE_4$ in~$D$ from~$x$ to~$y$. 
Thus we can find constants $0<t<T$ and $c_1 > 0$ such that with probability at least $1 - p_0/100$ we have that $t \leq \sigma \leq \tau \leq T$ and $\nmeasure{\eta}(\eta([\sigma,\tau])) \geq c_1$.

Next we claim that there exist constants $\wt{M}>1$ large enough and $\delta \in (0,1)$ small enough such that with probability at least $1-p_0/100$ for $q \in \{\Left,\Right\}$ there exist squares $Q^{q} \in \CW^{\C_q}$ with $\dist(Q^{q},\eta^w) \geq \delta$, $\dist(Q^{q},\eta^w)\geq \delta$ and $Q^{q} \subseteq B(0,1/\delta)$ such that if $G_q$ is the set of $r \in [t,T]$ such that the hyperbolic geodesic in~$\C_q$ from $\cen(Q^q)$ to $\eta^w(r)$ only intersects squares in $\C_q$ which are $(\wt{M},a)$-good with respect to $Q^q$, then $\Leb_1(G_q) \geq \Ff_1 (T-t)$.  Indeed,  first we note that $\eta^w$ is a.s.\  a simple curve and it does not hit fixed points.  Thus there exist $0 < \delta < \xi <1$ small enough such that with probability at least $1-p_0/300$ we have that there exists $z^q \in \xi \Z^2 \cap B(0,1/\xi)$ so that $z^q \in \C_q$ and $\dist(Q^q,\eta^w)\geq \delta,  Q^q \subseteq B(0,1/\delta)$,  where $Q^q$ is the square in $\CW^{\C_q}$ such that $z^q \in Q^q$ for $q \in \{\Left,\Right\}$.  Moreover arguing as in the proof of Lemma~\ref{lem:good_fraction_whole_plane},  we obtain that there exists $\wt{M}>1$ large such that with probability at least $1-p_0/300$ the following holds.  For each $z \in \xi \Z^2 \cap B(0,1/\xi)$ the statement of Lemma~\ref{lem:good_fraction_whole_plane} holds when $1$, $[0,1]$, and $1-\delta$ are replaced by $z$, $[t,T]$, and $\Ff_1$ respectively.  The claim then follows by taking the intersection of the above events.  By possibly taking $\delta$ to be smaller we can also assume that every hyperbolic geodesic in $\C_q$ starting from $\cen(Q^q)$ and ending at some point on $\eta^w([t,T])$ lies in $B(0,1/\delta) \cap \{w \in \C : \dist(w,\eta^w((-\infty,0]))\geq \delta\}$ and $\eta^w([0,T]) \subseteq B(0,1/\xi)$.  Let $F$ be the intersection of the above events and note that $\p[F] \geq 1 - p_0/100$.  Moreover we can find a constant $b \in (0,1)$ such that with probability at least $1-p_0/100$ we have that $b \leq |\phi'(w)| \leq b^{-1}$ for each $w \in B(0,1/\delta)$ such that $\dist(w,\eta^w((-\infty,0])) \geq \delta$.  Suppose that we are working on the event that the following hold.
\begin{enumerate}[(i)]
\item $t \leq \sigma \leq \tau \leq T$,  $\nmeasure{\eta}(\eta([\sigma,\tau])) \geq c_1$,
\item $F$ occurs,
\item $b \leq |\phi'(w)| \leq b^{-1}$ for each $w \in B(0,1/\delta)$ such that $\dist(w,\eta^w((-\infty,0])) \geq \delta$.
\end{enumerate}
Note that the above event has probability at least $1-p_0/25$.  First,  we observe that
\begin{align*}
\nmeasure{\eta}(\phi(\eta^w([t,T] \setminus G_q)))&= \int_{\eta^w([t,T] \setminus G_q)}|\phi'(w)|^{3/2}d\nmeasure{\eta}(w)\\
&\leq \frac{(1-\Ff_1)(T-t)}{c_1 b^{3/2}} \nmeasure{\eta}(\eta([\sigma,\tau])) \leq (1-\Ff_0) \nmeasure{\eta}(\eta([\sigma,\tau]))
\end{align*}
if we choose $\Ff_1$ sufficiently close to $1$.

For $q \in \{\Left,\Right\}$ we set $z_q = \cen(Q^q)$ and let $\wt{Q}^q \in \CW^q$ be the Whitney square such that $\phi(z_q) \in \closure{\wt{Q}^q} \setminus (\pup \wt{Q}^q \cup \pright \wt{Q}^q)$, where $\pup \wt{Q}^q$ and $\pright \wt{Q}^q$ denotes the upper and right boundary of $\wt{Q}^q$, respectively. Suppose that $\wt{Q} \in \CW^q$ is such that $\phi^{-1}(\wt{Q})$ is hit by a hyperbolic geodesic $\gamma_{z_q,\eta^w(r)}^{\C_q}, r \in [\sigma,\tau]$, which only intersects squares in $\CW^{\C_q}$ which are $(\wt{M},a)$-good with respect to $Q^q$.  Let $Q \in \CW^{\C_q}$ be such that $\phi(\cen(Q)) \in \wt{Q}$.  By~\eqref{eq:derivative_diameter_relation} we have that
\begin{align}
\label{eq:good_path_transformation}
	\disthyp^{D_q}(\phi(z_q),\cen(\wt{Q})) &\asymp \disthyp^{D_q}(\phi(z_q), \phi(\cen(Q))) = \disthyp^{\C_q}(z_q,\cen(Q)) \\
	&\leq \wt{M} \len(Q)^{-a} \asymp \wt{M} |\phi'(\cen(Q))|^a \len(\wt{Q})^{-a} \lesssim \wt{M} \len(\wt{Q})^{-a} \nonumber
\end{align} 

Finally,  since $\disthyp^{D_q}(z_q,\cen(\wt{Q}^q)) \leq 1$,  by applying Lemma~\ref{lem:hyperbolic_geodesics_close} as in the proofs of of Lemmas~\ref{lem:lower_bound_F4} and~\ref{lem:change_of_domains},  we obtain that~\eqref{eq:good_path_transformation} still holds if we replace $\phi(z_q)$ with $\cen(\wt{Q}^q)$.  This completes the proof.
\end{proof}

\begin{proof}[Proof of Lemma~\ref{lem:all_crossings_good}]
Assume that we have carried out the explorations in~$\CR^\Up$ and~$\CR^\Down$, let~$\wt{K}$ be their union, let $\wt{U}$ be the connected component of $\CR \setminus \wt{K}$ which contains $\CR^\Inner$, and let~$\wt{\CW}$ be a Whitney square decomposition of $\wt{U}$.  Choose $N$ large so that with probability at least $1-p_0/100$ there are at most~$N$ crossings of either $\CR^\Down$ or $\CR^\Up$.  Let $\wt{\eta}_1,\ldots,\wt{\eta}_n$ be the continuations of the $0$-height crossings of~$\CR^\Down$ into $\CR^\Inner$ which connect with a $0$-height crossing of $\CR^\Up$ and let $\wt{\eta}_{n+1},\ldots,\wt{\eta}_{n+m}$ be the remainder of the $0$-height crossings of $\CR^\Down$ or $\CR^\Up$ which do not cross $\wt{U}$.  For each $1 \leq i \leq n$, we let $\wt{K}_i$ be the union of $\wt{K}$ and $\wt{\eta}_\ell$ for $\ell \neq i$ and we let $\wt{W}_i$ be the component of $\CR \setminus \wt{K}_i$ which contains $\wt{\eta}_i$.  Then the conditional law of $\wt{\eta}_i$ given everything else is absolutely continuous with respect to that of a chordal $\SLE_4$ in $\wt{W}_i$.  Let $\wt{U}_1,\ldots,\wt{U}_{n+1}$ be the complementary components of $\wt{U} \setminus \cup_{i=1}^{n+m} \wt{\eta}_i$ which have at least one of the $\wt{\eta}_i$ for $1 \leq i \leq n$ on their boundary, ordered from left to right.  Let $\wt{\CW}_i$ be a Whitney square decomposition of $\wt{U}_i$.

We assume that we have the setup of Lemma~\ref{lem:natural_parameterization_bounds} and for each $1 \leq i \leq n$ we let $\wt{I}_i$ be the segment as constructed in Lemma~\ref{lem:natural_parameterization_bounds}.  Lemma~\ref{lem:sle4_good} implies that for each $a > 0$ there exists $M > 1$ large so that with probability at least $1-p_0/100$,  for each $1 \leq i \leq n$ we can find squares $\wt{Q}_i^\Left \in \wt{\CW}_i$ and $\wt{Q}_i^\Right \in \wt{\CW}_{i+1}$ such that~\eqref{it:fraction_of_good_points_rectangle} holds for the segment $\wt{\eta}_i(\wt{I}_i)$ and the domains $\wt{U}_i$ and $\wt{U}_{i+1}$ respectively.  Let $\wt{G}_{\Left,i}$ be the set of $r \in \wt{I}_i$ such that every square in $Q \in \wt{\CW}_i$ which is hit by a hyperbolic geodesic from $\cen(\wt{Q}_i^\Left)$ to $\wt{\eta}_i(r)$ is $(M,a)$-good.  Similarly, let $\wt{G}_{\Right,i}$ be the set of $r \in \wt{I}_i$ such that every square in $Q \in \wt{\CW}_{i+1}$ which is hit by a hyperbolic geodesic from $\cen(\wt{Q}_i^\Right)$ to $\wt{\eta}_i(r)$ is $(M,a)$-good.  Then 
\[ \frac{\nmeasure{\wt{\eta}_i}(\wt{\eta}_i(\wt{G}_{\Left,i}))}{\nmeasure{\wt{\eta}_i}(\wt{\eta}_i(\wt{I}_i))} \geq \frac{3}{4} \quad\text{and} \quad \frac{\nmeasure{\wt{\eta}_i}(\wt{\eta}_i(\wt{G}_{\Right,i}))}{\nmeasure{\wt{\eta}_i}(\wt{\eta}_i(\wt{I}_i))} \geq \frac{3}{4}.\]
Moreover we can choose $M>1$ large enough such that with probability at least $1-p_0/100$ both~\eqref{it:natural_parameterization_lower_bound_rectangle} and~\eqref{it:natural_parameterization_diameter_bound_rectangle} hold.  Also,  we can find $C>0$ large such that with probability at least $1-p_0/100$ we have that $\disthyp^{\wt{U}_{i+1}}(\cen(\wt{Q}^{\Right}_i),\cen(\wt{Q}^{\Left}_{i+1})) \leq C$ for each $1 \leq i \leq n-1$.  Fix $1 \leq i \leq n-1$ and $r \in \wt{G}_{\Right,i}$.  Let $Q \in \wt{\CW}_{i+1}$ be such that $\gamma_{\wt{y}_i,\wt{\eta}_i(r)}^{\wt{U}_{i+1}} \cap Q \neq \emptyset$ where $\wt{y}_i = \cen(\wt{Q}^{\Left}_{i+1})$.  Then by applying Lemma~\ref{lem:hyperbolic_geodesics_close} as in the proofs of of Lemmas~\ref{lem:lower_bound_F4} and~\ref{lem:change_of_domains}, we obtain that $\disthyp^{\wt{U}_{i+1}}(\wt{y}_i,\cen(Q)) \lesssim M \len(Q)^{-a}$.  This completes the proof of the lemma by considering the intersection of all of the above events.
\end{proof}

Finally, we shall see that we can choose the parameters so that with high probability,~\eqref{it:not_too_many_squares_rectangle} holds (in addition to~\eqref{it:natural_parameterization_lower_bound_rectangle}, \eqref{it:natural_parameterization_diameter_bound_rectangle}, and~\eqref{it:fraction_of_good_points_rectangle}).

\begin{lemma}
\label{lem:not_too_many_squares}
Fix $p_0 \in (0,1)$, $a, H_0 > 0$, $H \geq H_0$, and $\xi \in (0,H_0/10)$.  There exists $M_0 > 0$ depending only on $p_0$, $a$, $H_0$, and $\xi$ such that with probability at least $1-p_0$ with respect to $\p_H$ we have that~\eqref{it:natural_parameterization_lower_bound_rectangle}, \eqref{it:natural_parameterization_diameter_bound_rectangle}, and~\eqref{it:natural_measure_cube_conditions} hold.
\end{lemma}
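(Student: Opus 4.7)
The plan is to build directly on Lemma~\ref{lem:all_crossings_good} and supply the additional condition \eqref{it:not_too_many_squares_rectangle}. First, I would apply Lemma~\ref{lem:all_crossings_good} with $p_0/2$ in place of $p_0$ to obtain an event $E_1$ with $\p_H[E_1] \geq 1 - p_0/2$ on which \eqref{it:natural_parameterization_lower_bound_rectangle}, \eqref{it:natural_parameterization_diameter_bound_rectangle}, and \eqref{it:fraction_of_good_points_rectangle} hold for some $M_1$, with distinguished squares $\wt{Q}_i$, segments $\wt{\eta}_i(\wt{I}_i)$, and subsets $\wt{G}_i^\pm$. The key observation that makes \eqref{it:not_too_many_squares_rectangle} tractable is that the number of Whitney squares of side length $2^{-m}$ in $\wt{\CW}_i$ hit by geodesics to $\wt{G}_i^\pm$ is trivially at most the \emph{total} number of Whitney squares of side length $2^{-m}$ in $\wt{\CW}_i$; since every such $Q$ satisfies $\sqrt{2}\cdot 2^{-m} \leq \dist(Q,\partial\wt{U}_i) \leq 4\sqrt{2}\cdot 2^{-m}$, this total is bounded by a universal multiple of $2^{2m} \cdot \Leb_2(\{z \in \wt{U}_i : \dist(z,\partial\wt{U}_i) \leq 4\sqrt{2}\cdot 2^{-m}\})$, so it suffices to control this neighborhood area.

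The boundary $\partial\wt{U}_i$ decomposes into portions of $\wt{\eta}_{i-1}$, $\wt{\eta}_i$, and a portion of $\partial\CR^\Inner$; the contribution to the $r$-neighborhood area from $\partial\CR^\Inner$ is trivially $O(r)$. To bound the contribution from each $\wt{\eta}_j$, I would argue as in the proof of Lemma~\ref{lem:natural_parameterization_bounds} that, on a high-probability event, each $\wt{\eta}_j$ is absolutely continuous with bounded Radon-Nikodym derivative with respect to a chordal $\SLE_4$ in a bounded domain. The moment estimate $\E[\cont_{3/2}(\eta \cap K; r)^n] \lesssim 1$ uniformly in $r > 0$ (which is the pre-Fatou version of \eqref{eq:moment_bound_minkowski_content}), combined with Markov's inequality and a union bound over the dyadic scales $r = 2^{-m}$ with a sub-polynomial weight to ensure summability, yields an event $E_2$ with $\p_H[E_2] \geq 1 - p_0/2$ and a constant $M_2 = M_2(p_0,a,H,\xi)$ such that on $E_2$,
\[
\Leb_2(\{z : \dist(z,\wt{\eta}_j) \leq r\}) \leq M_2 \cdot r^{1/2 - a/2} \qquad \text{for every } r \in (0,1) \text{ and every } 1 \leq j \leq n.
\]

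Plugging in $r = 4\sqrt{2} \cdot 2^{-m}$, the total number of Whitney squares of side length $2^{-m}$ in $\wt{\CW}_i$ is at most $C M_2 \cdot 2^{(3/2 + a/2)m}$, which is bounded by $M \cdot 2^{(3/2+a)m}$ for $M$ sufficiently large. Taking $M = \max(M_1, C M_2)$ and working on $E_1 \cap E_2$ yields all three desired conditions with probability at least $1-p_0$. The only real technical point is the promotion of the moment estimate \eqref{eq:moment_bound_minkowski_content} to a uniform-in-$r$ deterministic bound on the Lebesgue measure of the $r$-neighborhood, handled by Chebyshev summed over dyadic scales with a sub-polynomial weight to absorb the union-bound loss; the rest of the argument is a straightforward combination with Lemma~\ref{lem:all_crossings_good}.
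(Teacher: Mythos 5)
Your proposal has a genuine gap in the claim that \textquotedblleft $\partial\wt{U}_i$ decomposes into portions of $\wt{\eta}_{i-1}$, $\wt{\eta}_i$, and a portion of $\partial\CR^\Inner$.\textquotedblright\ Recall that $\wt{U}_i$ is a complementary component of $\CR^\inn \setminus (\wt{K} \cup (\cup_{\ell=1}^{n+m} \wt{\eta}_\ell))$, so $\partial\wt{U}_i$ also includes parts of $\wt{K}$ --- the full union of level lines discovered by the explorations in $\CR^\Down$ and $\CR^\Up$ (including strands that never become crossings) --- as well as the height-$0$ excursions $\wt{\eta}_{n+2},\dots,\wt{\eta}_{n+m}$. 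Your strategy of bounding the number of geodesic-hit squares by the \emph{total} number of Whitney squares at each scale forces you to control $\Leb_2$ of a small neighborhood of all of $\partial\wt{U}_i$, which means you'd need a Minkowski-content bound for $\wt{K}$. While $\wt{K}$ is built out of finitely many $\SLE_4(\rho)$-type curves, the number of strands is only known to be a.s.\ finite (and the heights vary), so this requires real additional work that the proposal does not supply and may not be cheaply available; passing to the total count is not a simplification, it is a strengthening of what needs to be proved.

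The paper sidesteps this entirely with a geometric separation argument, and this is the idea you're missing. Rather than controlling all squares, it splits the squares hit by geodesics from $\cen(\wt{Q}_i)$ to $\wt{G}_i^\pm$ into two groups: (i) those whose nearest boundary point lies on the slightly enlarged arc $\wt{\eta}_i(\wt{I}^0_i)$, and (ii) the rest. For group (i), the count $|\wt{\CQ}_{i,\ell}|$ is transferred via $\wt{\phi}_i^{-1}$ to a grid-point count along an $\SLE_4$ and bounded using \cite[Proposition~4]{beffara2008dimension} (so your instinct to use the Beffara/Minkowski-content moment bound is on target, but it is applied only to the specific SLE arc, not to all of $\partial\wt{U}_i$). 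For group (ii), the observation that the \textquotedblleft fan\textquotedblright\ $S_i$ swept by the relevant geodesics satisfies $\dist(S_i,\partial\wt{U}_i \setminus \wt{\eta}_i(\wt{I}^0_i)) \geq \epsilon$ with high probability forces any such square to satisfy $\diam(\wt{Q}) \geq \epsilon/100$, so there are only $O(\epsilon^{-2})$ of them. The second group is exactly where the messy pieces of $\partial\wt{U}_i$ (coming from $\wt{K}$) could in principle contribute, and the separation argument says they contribute only boundedly many large squares --- a much softer fact than a Minkowski-content bound. Your write-up, as stated, would need to be rebuilt around this kind of restriction to be correct.
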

\begin{proof}
Suppose that we have the setup and notation from the proof of Lemma~\ref{lem:natural_parameterization_bounds}.  We let $\wt{\sigma}_i^0 = \sup\{t \geq 0 : \wt{\eta}_i(t) \in B(\wt{x}_i,\xi/2)\}$ and $\wt{\tau}_i^0 = \inf\{t \geq 0 : \wt{\eta}_i(t) \in B(\wt{y}_i,\xi/2)\}$ and set $\wt{I}^0_i = [\wt{\sigma}^0_i,\wt{\tau}^0_i]$.  Then~$\wt{I}_i^0$ contains $\wt{I}_i$.  For each $\ell \in \N$ we let~$\wt{\CQ}_{i,\ell}$ be the set of squares $\wt{Q}  \in \wt{\CW}_i$ such that $\len(\wt{Q}) = 2^{-\ell}$ and $\dist(\wt{Q},\partial \wt{U}_i) = \dist(\wt{Q},\wt{\eta}_i(\wt{I}^0_i))$.  

Note that Lemmas~\ref{lem:crossings_separated} and~\ref{lem:all_crossings_good} imply that we can choose $\wt{M}_0 > 1$,  $N \in \N$, and $K \subseteq \h$ compact depending only on $p_0$, $H$, and $\xi$ such that with probability at least $1-p_0/100$ we have that $n\leq N$,  $\wt{\phi}^{-1}_i(\wt{\eta}_i(\wt{I}^0_i)) \subseteq K$,  $\dist(\wt{\eta}_i(\wt{I}^0_i)$, $\partial \wt{W}_i) \geq 1/N$ for each $1 \leq i \leq n$,  and conditions~\eqref{it:natural_parameterization_lower_bound_rectangle},~\eqref{it:natural_parameterization_diameter_bound_rectangle} and~\eqref{it:fraction_of_good_points_rectangle} hold for the segments $\wt{\eta}_i(\wt{I}_i)$.  Let $F_1$ be that event.

Fix $\wt{Q} \in \wt{\CQ}_{i,\ell}$ and let $w \in \wt{\eta}_i(\wt{I}^0_i)$ be such that $\dist(\wt{Q},\partial \wt{U}_i) = \dist(\wt{Q},w) \lesssim 2^{-\ell}$.  Note that we can find constants $c_1,c_2>0$ and $\ell_0 \in \N$ depending only on the above implicit constants such that $\wt{\phi}^{-1}_i(\wt{Q}) \subseteq B(\wt{\phi}^{-1}_i(w),c_1 2^{-\ell})$ and $B(\wt{\phi}^{-1}_i(\cen(\wt{Q})),c_2 2^{-\ell}) \subseteq \wt{\phi}^{-1}_i(\wt{Q})$ for each $\ell \geq \ell_0$.  Thus by decreasing the value of $c_2$ if necessary,  we obtain that there exists $z \in c_2 2^{-\ell} \Z^2$ such that $z \in \wt{\phi}^{-1}_i(\wt{Q})$ and $\dist(z,\wh{\eta}_i) \leq c_1 2^{-\ell}$.  Combining we obtain that $|\wt{\CQ}_{i,\ell}| \leq N_{i,\ell}$ for each $\ell \geq \ell_0$,  where $N_{i,\ell}$ is the number of points $z \in c_2 2^{-\ell}\Z^2 \cap \h \cap B(0,2R)$ such that $\wh{\eta}_i \cap B(z,c_1 2^{-\ell}) \neq \emptyset$ and $R>0$ is chosen such that $K \subseteq B(0,R)$.  Let $\gamma$ be an $\SLE_4$ in $\h$ from $0$ to $\infty$.  Let $N_m$ the number of points $z \in  c_2 2^{-m} \Z^2 \cap \h \cap B(0,2R)$ such that $\gamma \cap B(z, c_1 2^{-m}) \neq \emptyset$.  Then \cite[Proposition~4]{beffara2008dimension} implies that $\E[N_m] \lesssim 2^{3 m/2}$ and so $\p[N_m \geq 2^{(3/2+a)m}] \lesssim 2^{-a m}$ where the implicit constant depends only on $K$, $c_1$ and $c_2$.  This implies that $\p[N_\ell \leq 2^{(3/2+a)\ell}\ \forall \ell \geq m] \to 1$ as $m \to \infty$.   Note that the proof of Lemma~\ref{lem:natural_parameterization_bounds} implies the $\p_H$-probability that the Radon-Nikodym derivative of the law of $\wh{\eta}_i$ with respect to that of $\gamma$ is at most $N$ tends to $1$ as $N \to \infty$ at rate which depends only on $H_0$ and $\xi$.  Thus we have that $\p_H[N_{i,\ell} \leq 2^{(3/2+a)\ell}\ \forall \ell \geq m] \to 1$ as $m \to \infty$.  Note also that $\max_{1 \leq \ell \leq \ell_0,1 \leq i \leq n} |\wt{\CQ}_{i,\ell}|$ is an a.s.\  finite random variable and so we can choose $M_0 > 1$ large such that with probability at least $1-p_0/100$ under $\p_H$ we have that $|\wt{\CQ}_{i,\ell}| \leq M_0 2^{(3/2 + a)\ell}$ for each $1\leq i \leq n$ and each $\ell \in \N$.  Let $F_2$ be that event and note that $\p_H[F_1 \cap F_2] \geq 1-p_0/50$.  Suppose that we are working on $F_1 \cap F_2$.

Now for each $1 \leq i \leq n$,  we let $\psi_i$ be the unique conformal transformation mapping $\D$ onto $\wt{U}_i$ such that $\psi_i(0) = \cen(\wt{Q}_i)$ and $\psi'_i(0) > 0$.  Furthermore, we let $\wt{\theta}_{i,0} \in [0,2\pi)$ and $\wt{\theta}_{i,1} \in (\wt{\theta}_{i,0},\wt{\theta}_{i,0}+2\pi)$ be such that let $\psi^{-1}_i(\wt{\eta}_i(\wt{I}_i)) = \{e^{i\theta}: \theta \in [\wt{\theta}_{i,0},\wt{\theta}_{i,1}] \}$ and $S_i = \psi_i(\{re^{i\theta} : r \in [0,1],\  \theta \in [\wt{\theta}_{i,0},\wt{\theta}_{i,1}]\})$.  Since $\min_{1 \leq i \leq n} \dist(S_i ,  \partial \wt{U}_i \setminus \wt{\eta}_i(\wt{I}^0_i)) > 0$ a.s.,   it follows that there exists $\epsilon > 0$ depending only on the above constants such that with probability at least $1-p_0/100$ under $\p_H$ we have that $\dist(S_i ,  \partial \wt{U}_i \setminus \wt{\eta}_i(\wt{I}^0_i)) \geq \epsilon$ for each $1 \leq i \leq n$.  Let~$F_3$ be that event and suppose that we are working on $F_1 \cap F_2 \cap F_3$.  Let $\wt{Q} \in \wt{\CW}_i$ be such that $\dist(\wt{Q},\partial \wt{U}_i) < \dist(\wt{Q},  \wt{\eta}_i(\wt{I}^0_i))$ and $\gamma_{\cen(\wt{Q}_i),w}^{\wt{U}_i} \cap \wt{Q} \neq \emptyset$ for some $w \in \wt{\eta}_i(\wt{I}_i)$.  Then we have that $\diam(\wt{Q}) \geq \epsilon/100$.  Note that the number of $\wt{Q} \in \wt{\CW}_i$ such that $\diam(\wt{Q}) \geq \epsilon/100$ is at most $O(\epsilon^{-2})$ where the implicit constant depends only on $H_0$ and $\xi$.  Therefore,  possibly by increasing the value of $M_0$,  we obtain that~\eqref{it:not_too_many_squares_rectangle} holds if $F_1\cap F_2 \cap F_3$ occurs,  which completes the proof.
\end{proof}

\subsection{Conclusion of the proof}
\label{subsec:proof_conclusion}

Now we fix $p_0 \in (0,1)$, $\delta_0,H_0 >0$,  $a \in (0,1/2)$, and let $b \in (1,2)$, $\xi_1 \in (0,(e^{-b}-e^{-2})/2)$, and $\xi_0 \in (0,H_0/2)$ be as in~\eqref{it:middle_circle_good} of Lemma~\ref{lem:band_mapping}.  We then let $\delta_1,  \delta_2 \in (0,1)$,  $M_0>1$ be as in Lemma~\ref{lem:first_excursion_good} for the above choices of $p_0,\delta_0,a,b$ and $\xi_2 = 3\xi_1 / 2$.  We also let $\wt{\xi}_0 \in (0,H_0/2)$ be as in~\eqref{it:away_from_the_boundary_good} of Lemma~\ref{lem:band_mapping} for $\wt{\xi}_1 = \delta_2$.  We fix $\xi \in (0,H_0/10)$ such that $\xi < \xi_0 / 3$ and perform the explorations in $\CR^{\Up}$ and  $\CR^{\Down}$ and define $(\kay_j)_{j \geq 1}$ as in Section~\ref{sec:annulus_exploration}.  
\begin{lemma}
\label{lem:conditional_probability_good}
Suppose that we have the above setup.  Then for all $\delta_1 \in (0,1)$ sufficiently small there exists $M>1$ sufficiently large and $\delta_2 \in (0,1)$ sufficiently small such that
\[ \p[ (E_{z,\kay_{j+1}}^{M,a})^c \giv \CF_{z,\kay_j}] \one_{\kay_j < \infty} \leq p_0 \one_{\kay_j < \infty} \quad\text{a.s.}\]
\end{lemma}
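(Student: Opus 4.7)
The plan is to combine the three ingredients developed in Sections~\ref{sec:first_crossing}--\ref{sec:annulus_exploration}: (a) the first-crossing regularity encoded in $C_{z,\kay_{j+1}}$ (Lemma~\ref{lem:first_excursion_good}), (b) the rectangle regularity from Lemmas~\ref{lem:natural_parameterization_bounds}, \ref{lem:all_crossings_good}, and~\ref{lem:not_too_many_squares}, and (c) the Radon-Nikodym comparison of Lemma~\ref{lem:rn_formula} to transfer (b) to the annulus setting. Since $E_{z,\kay_{j+1}}^{M,a}$ is automatic whenever $\kay_{j+1}=\infty$, I only need to bound the conditional probability of failure on $\{\kay_j<\infty\}\cap\{\kay_{j+1}<\infty\}$; on this event the very definition of $\kay_{j+1}$ guarantees $\tau_{z,\kay_{j+1}}<\infty$, $H_{z,\kay_{j+1}}\geq H_0$, $\inf_t S_t\geq\delta_0$, and that $F_{z,\kay_{j+1}}^1$, $F_{z,\kay_{j+1}}^2$, $C_{z,\kay_{j+1}}$ all occur.

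First I would handle the zeroth crossing $\eta_0$, the first excursion of $\eta$ across $A_{z,\kay_{j+1}}$. The event $C_{z,\kay_{j+1}}$ directly supplies a segment $\eta_0(I_0)$ satisfying conditions~\eqref{it:natural_parameterization_lower_bound} and~\eqref{it:natural_parameterization_diameter_bound} of Definition~\ref{def:good_event}, together with points $z_\Left, z_\Right$ whose hyperbolic-geodesic property holds for any simply connected domain $W\subseteq A_{z,\kay_{j+1}}^*$ containing a $\delta_1^{1/4}\diam(A_{z,\kay_{j+1}})$-neighborhood of the respective point. By condition~\eqref{it:away_from_the_boundary_good} of $F_{z,\kay_{j+1}}^1$, the interior crossings $\wt\eta_1,\dots,\wt\eta_n$ of $\CR^\Inner$ avoid the $\wt{\xi}_0$-neighborhood of $\partial\CR$, so for $\delta_1$ taken small enough the domain $U_1$ satisfies the hypothesis on $W$, and taking $Q_0\in\CW_1$ to contain (or be a bounded hyperbolic distance from) $z_\Right$ yields~\eqref{it:fraction_of_good_points} and~\eqref{it:not_too_many_squares} for $\eta_0$, with the constant $M$ absorbing the $\diam(A_{z,\kay_{j+1}})^{-a}$ scaling from $C_{z,\kay_{j+1}}$.

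Next, for the interior crossings $\eta_1,\dots,\eta_n$ I would work in the rectangle via $\varphi_{z,\kay_{j+1}}$. By Lemma~\ref{lem:rn_formula}, the conditional law of $h\circ\varphi_{z,\kay_{j+1}}^{-1}|_{\CR^\inn}$ has bounded $L^p$ Radon-Nikodym derivative (for a universal $p>1$) with respect to $\p_{H_0}^\inn$. Under $\p_{H_0}$, Lemmas~\ref{lem:natural_parameterization_bounds}, \ref{lem:all_crossings_good}, and~\ref{lem:not_too_many_squares} allow us to choose $M$ large so that the rectangle analogues~\eqref{it:natural_parameterization_lower_bound_rectangle}--\eqref{it:natural_measure_cube_conditions} for each interior crossing, together with $n\leq M$, hold with probability at least $1-\epsilon$ for any prescribed $\epsilon>0$. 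Applying H\"older's inequality with Lemma~\ref{lem:rn_formula} then yields conditional probability at least $1-p_0/2$ (after choosing $\epsilon$ appropriately small) that the rectangle versions of the conditions hold. The bound $n\leq M$ is obtained by combining Lemma~\ref{lem:annulus_maximum_crossing_height} with Lemma~\ref{lem:eta_crossing_bound}, or equivalently by transferring Lemma~\ref{lem:rectangle_crossing_bound} via the same Radon-Nikodym argument.

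Finally, I would convert the rectangle conditions into the annulus conditions of Definition~\ref{def:good_event} via the conformal map $\varphi_{z,\kay_{j+1}}$. Hyperbolic distances are conformally invariant, so geodesic lengths in $U_i$ equal those in $\wt{U}_i$. By $F_{z,\kay_{j+1}}^1$ combined with~\eqref{eq:derivative_diameter_relation} and the Koebe-$1/4$ theorem, the side lengths of Whitney squares in $U_i$ are comparable to $\diam(A_{z,\kay_{j+1}})$ times the side lengths of the corresponding squares in $\wt{U}_i$, with constants depending only on the previously fixed parameters. Thus the ratio $\len(Q)/\diam(A_{z,\kay_{j+1}})$ corresponds up to bounded multiplicative factors to $\len(\wt Q)$, and the conformal covariance relation $\nmeasure{\eta_i}(Y)=\int_{\varphi_{z,\kay_{j+1}}(Y)}|(\varphi_{z,\kay_{j+1}}^{-1})'(w)|^{3/2}\,d\nmeasure{\wt\eta_i}(w)$ absorbs the extra $\diam(A_{z,\kay_{j+1}})^{3/2}$ factor in condition~\eqref{it:natural_parameterization_lower_bound} into the constant $M$. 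The main obstacle will be carefully controlling the distortion of the Whitney decompositions of $U_i$ relative to those of $\wt U_i$ near $\CR^\Down\cup\CR^\Up$ and near the portions of $\partial\wt U_i$ lying on level lines discovered during the exploration; this is exactly where conditions~\eqref{it:middle_circle_good} and~\eqref{it:away_from_the_boundary_good} of $F_{z,\kay_{j+1}}^1$ are essential, providing the uniform separation needed for the scale-comparability argument to go through uniformly across all of the interior crossings.
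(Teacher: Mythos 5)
Your outline matches the paper's strategy: the first crossing is handled directly via $C_{z,\kay_{j+1}}$ and $F_{z,\kay_{j+1}}^1$, the interior crossings via Lemmas~\ref{lem:natural_parameterization_bounds}, \ref{lem:all_crossings_good}, \ref{lem:not_too_many_squares} transferred by Lemma~\ref{lem:rn_formula}, and the translation between rectangle and annulus conditions via conformal covariance and derivative comparability (using~\eqref{it:middle_circle_good} of $F^1$). However, there are two concrete gaps. First, you repeatedly need a stability statement for hyperbolic geodesics under a bounded-hyperbolic-distance change of base point and under shrinking of the domain (Lemma~\ref{lem:hyperbolic_geodesics_close}); your phrase ``or be a bounded hyperbolic distance from $z_\Right$'' glosses over this, but without it the geodesic from $\cen(Q_1)$ could wander far from the geodesic from $z_\Right$ and the good-squares property from $C_{z,\kay_{j+1}}$ would not transfer. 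You also need this to pass from the auxiliary center $\wh z_i = \phi_{z,\kay_{j+1}}(\cen(\wt Q_i))$ to the actual Whitney center $\cen(Q_i)$, and from the domain supplied by $C_{z,\kay_{j+1}}$ to $U_1$. Second, and more importantly, your treatment of condition~\eqref{it:not_too_many_squares} does not work near the first crossing: the squares hit by geodesics in $U_1$ (resp.\ $U_{n+1}$) toward $\eta(I_0)$ are concentrated near $\eta([\sigma_{z,\kay_{j+1}},\tau_{z,\kay_{j+1}}])$, whose image under $\varphi_{z,\kay_{j+1}}$ lies on $\pleft \CR \cup \pright \CR$ and hence \emph{outside} $\CR^\inn$, so the rectangle-side count~\eqref{it:not_too_many_squares_rectangle} plus Radon--Nikodym does not cover them. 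The paper instead bounds the count of such squares by the Lebesgue-measure bound on the $2^{-m}$-neighborhood of the first crossing supplied by $F^2_{z,\kay_{j+1}}$ (Lemma~\ref{lem:first_crossing_number_of_squares}); you list $F^2$ as available but never use it, which leaves this piece of the definition unverified.
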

\begin{proof}
We are going to show that the conditions of Definition~\ref{def:good_event} are likely to hold by using that the conditions in the rectangle are likely to hold and the definition of $\kay_{j+1}$ implies that on $\kay_{j+1} < \infty$ the first crossing of $A_{z,\kay_{j+1}}$ by $\eta$ is well-behaved.  Let $\phi_{z,\kay_{j+1}} = \varphi_{z,\kay_{j+1}}^{-1}$ so that $\phi_{z,\kay_{j+1}}$ conformally maps $\CR_{z,\kay_{j+1}}$ to $A_{z,\kay_{j+1}}^*$ with $\pleft \CR_{z,\kay_{j+1}}$ taken to $\eta^\Right([\sigma_{z,\kay_{j+1}},\tau_{z,\kay_{j+1}}])$ and $\pright \CR_{z,\kay_{j+1}}$ taken to $\eta^\Left([\sigma_{z,\kay_{j+1}},\tau_{z,\kay_{j+1}}])$.

\noindent{\it Step 1. First crossing.}  
First,  by the definition of $\kay_{j+1}$ on $\kay_{j+1} < \infty$ we have that the events $F_{z,\kay_{j+1}}^1$ (Lemma~\ref{lem:band_mapping}), $F_{z,\kay_{j+1}}^2$ (Lemma~\ref{lem:first_crossing_number_of_squares}), and $C_{z,\kay_{j+1}}$ (Lemma~\ref{lem:first_excursion_good}) occur.  This implies that the following is true.  

There exists an interval $I_0 \subseteq [\sigma_{z,\kay_{j+1}},\tau_{z,\kay_{j+1}}]$ and points $z_\Left,z_\Right \in A^*_{z,\kay_{j+1}}$ such that $\dist(z_\Left,\eta^{\Left}(I_0)) \leq \delta_1 \diam(A_{z,\kay_{j+1}})$,  $\dist(z_\Right,\eta^{\Right}(I_0)) \leq \delta_1 \diam(A_{z,\kay_{j+1}})$,  $\dist(\{z_\Left,z_\Right\},\partial A^*_{z,\kay_{j+1}}) \geq \delta_2 \diam(A_{z,\kay_{j+1}})$ and such that for every simply connected domain $W \subseteq A^*_{z,\kay_{j+1}}$ containing the component of $B(y,\delta_1^{1/4}\diam(A_{z,\kay_{j+1}})) \setminus \eta([0,\tau_{z,\kay_{j+1}}])$ which contains $z_\Right$ the following holds (and the same with $z_\Left$ in place of $z_\Right$).  Let $\CW$ be a Whitney square decomposition of $W$.  Then the hyperbolic geodesic in $W$ from $z_\Right$ (resp.\  $z_\Left$) to $w \in \eta^{\Right}(I_0)$ (resp.\  $w \in \eta^{\Left}(I_0)$) passes only through squares which are $(M,a)$-good with respect to $z_\Right$ (resp.\  $z_\Left$) for at least $\frac{3}{4}$-fraction of points with respect to $\nmeasure{\eta}(dw)/\nmeasure{\eta}(I_0)$.  Moreover conditions~\eqref{it:natural_parameterization_lower_bound} and~\eqref{it:natural_parameterization_diameter_bound} hold

\noindent{\it Step 2a. Condition~\eqref{it:natural_parameterization_lower_bound}.} We note that since $\eta_i = \phi_{z,\kay_{j+1}}(\wt{\eta}_i)$, we have that
\begin{align}\label{eq:conformal_covariance_natural_parameterization}
	\nmeasure{\eta_i}(Y) = \int_{\varphi_{z,\kay_{j+1}}(Y)} |\phi_{z,\kay_{j+1}}'(w)|^{3/2} d\nmeasure{\wt{\eta}_i}(w) \quad\text{for all}\quad Y \quad\text{Borel}.
\end{align}
Moreover, we note that by choosing $\epsilon > 0$ small enough and $M>1$ large enough, we have that with probability at least $1-p_0/100$, all of the crossings $\wt{\eta}_i$ are separated from each other and from $\partial^\Left \CR_{z,\kay_{j+1}}$ and $\partial^\Right \CR_{z,\kay_{j+1}}$ by at least $\epsilon$ (Lemmas~\ref{lem:rn_formula} and~\ref{lem:crossings_separated}) and $\nmeasure{\wt{\eta}_i}(\wt{\eta}_i(\wt{I}_i)) \geq M^{-1}$ for all $i$ (Lemmas~\ref{lem:rn_formula} and~\ref{lem:natural_parameterization_bounds}).  By the definition of $\kay_{j+1}$, we have that $\inf_{\sigma_{z,\kay_{j+1}} \leq t \leq \tau_{z,\kay_{j+1}}} S_t \geq \delta_0$.  Assume that we are on this event.

Let $\CR = (0,1) \times (0,H_0) \subseteq \CR_{z,\kay_{j+1}}$.  For each $\epsilon > 0$, we let $\CR^\epsilon$ be the set of points in $\CR$ at distance at least $\epsilon/2$ from $\partial \CR$. Then a Brownian motion starting from $w \in \CR^\epsilon$ has probability bounded from below (depending only on $H_0$, $\epsilon$) of exiting $\CR$ in any of the four boundary arcs.  Set $\wt{w} = \psi_{z,\kay_{j+1}}(\phi_{z,\kay_{j+1}}(w)) \in B(0,e^{-1}) \setminus \closure{B(0,e^{-2})}$ (recall Section~\ref{subsec:first_crossing_setup}).  We claim that the probability that a Brownian motion starting from $\wt{w}$ exits $B(0,e^{-1}) \setminus \closure{B(0,e^{-2})}$ in $\partial B(0,e^{-1})$ (resp.\  $\partial B(0,e^{-2})$) is bounded from below by a positive constant which depends only on $H_0$, $\epsilon$, and $b$.  Indeed,  first we note that condition~\eqref{it:middle_circle_good} of Lemma~\ref{lem:band_mapping} implies that $(\varphi_{z,\kay_{j+1}} \circ \psi_{z,\kay_{j+1}}^{-1})(\partial B(0,e^{-b}-\xi_1)) \subseteq \CR$ is a crossing from $\pleft \CR$ to $\pright \CR$ and so the probability that a Brownian motion starting from $\wt{w}$ hits $\partial B(0,e^{-b} -\xi_1)$ before exiting $B(0,e^{-1}) \setminus \closure{B(0,e^{-2})}$ is bounded from below by a positive constant depending only on $H_0$ and $\epsilon$.  The claim then follows by combining with the Markov property and the fact that for any point on $\partial B(0,e^{-b} - \xi_1)$ the probability that a Brownian motion starting from that point exits $B(0,e^{-1}) \setminus \closure{B(0,e^{-2})}$ on $\partial B(0,e^{-1})$ (resp.\ $\partial B(0,e^{-2})$) is bounded from below by a positive constant depending only on $b$ and $\xi_1$ which is uniform in the location of the starting point.  Thus we can find $q \in (0,(e^{-1}-e^{-2})/2)$ depending only on $H_0$, $\epsilon$, $b$ and $\xi_1$ such that $\wt{w} \in B(0,e^{-1}-q) \setminus \closure{B(0,e^{-2}+q)}$.  Moreover we have that $|(\psi_{z,\kay_{j+1}}^{-1})'(\wt{w})| \asymp \diam(A_{z,\kay_{j+1}})$ where the implicit constants are universal.  Also, the Koebe-$\tfrac{1}{4}$ theorem combined with the Beurling's estimate imply that $|(\psi_{z,\kay_{j+1}}^{-1})'(\wt{w})| \asymp \dist(\phi_{z,\kay_{j+1}}(w),  \partial A^*_{z,\kay_{j+1}})$ and so $\dist(\phi_{z,\kay_{j+1}}(w) ,  \partial A^*_{z,\kay_{j+1}}) \asymp \diam(A_{z,\kay_{j+1}})$.  It follows that
\begin{equation}
\label{eqn:phi_deriv_bounds}
|\phi_{z,\kay_{j+1}}'(w)| \asymp \diam(A_{z,\kay_{j+1}}) \quad\text{for all}\quad w \in \CR^\epsilon
\end{equation}
where the implicit constants depend only on $H_0$,  $\epsilon$, $b$ and $\xi_1$. Consequently, since $\wt{\eta}_i \subseteq \CR^\epsilon$, it follows from~\eqref{eq:conformal_covariance_natural_parameterization}, \eqref{eqn:phi_deriv_bounds} that $\nmeasure{\eta_i}(\eta_i(I_i)) \gtrsim M^{-1} \diam(A_{z,\kay_{j+1}})^{3/2}$ where the implicit constant depends only on $H_0$,   $\epsilon$, $b$ and $\xi_1$  and $\eta_i(I_i) = \phi_{z,\kay_{j+1}}(\wt{\eta}_i(\wt{I}_i))$.

\noindent{\it  Step 2b. Condition~\eqref{it:natural_parameterization_diameter_bound}.}  By Lemmas~\ref{lem:rn_formula},  and~\ref{lem:natural_parameterization_bounds} we can assume that in addition to the conditions of Step~2a we have that~\eqref{it:natural_parameterization_diameter_bound_rectangle} also holds with probability at least $1-p_0/100$.  Suppose that $Y \subseteq A_{z,\kay_{j+1}}$ is a Borel set such that $\varphi_{z,\kay_{j+1}}(Y) \subseteq \CR^{\epsilon}$.  Note by~\eqref{eqn:phi_deriv_bounds} that $\diam(\varphi_{z,\kay_{j+1}}(Y)) \asymp \diam(Y)/\diam(A_{z,\kay_{j+1}})$ where the implicit constants depend only on $H_0$,  $\epsilon$, $b$ and $\xi_1$.  By~\eqref{eq:conformal_covariance_natural_parameterization}, \eqref{eqn:phi_deriv_bounds}, it follows that 
\begin{align*}
	\nmeasure{\eta_i}(\eta_i(I_i) \cap Y) &= \int_{\wt{\eta}_i(\wt{I}_i) \cap \varphi_{z,\kay_{j+1}}(Y)} |\phi_{z,\kay_{j+1}}'(w)|^{3/2} d\nmeasure{\wt{\eta}_i}(w) \\
	&\lesssim \diam(A_{z,\kay_{j+1}})^{3/2} \nmeasure{\wt{\eta}_i}(\wt{\eta}_i(\wt{I}_i) \cap \varphi_{z,\kay_{j+1}}(Y)) \\
	&\lesssim \diam(A_{z,\kay_{j+1}})^{3/2-a} M \diam (\varphi_{z,\kay_{j+1}}(Y))^{3/2-a} \lesssim M \diam(Y)^{3/2-a}
\end{align*}
where the implicit constants depend only on $H_0$,  $\epsilon$, $b$ and $\xi_1$.

\noindent{\it Step 2c. Condition~\eqref{it:fraction_of_good_points}.}  We will first handle the squares in the domains $U_2,\dots,U_{n}$, that is, the domains which do not have $\eta([\sigma_{z,\kay_{j+1}},\tau_{z,\kay_{j+1}}])$ as part of their boundaries.  Combining Lemmas~\ref{lem:rn_formula} and~\ref{lem:not_too_many_squares},  we obtain that with probability at least $1-p_0/100$ condition~\eqref{it:fraction_of_good_points_rectangle} holds for $\wt{U}_i$ for each $2 \leq i \leq n$ for some large constant $\wt{M} > 1$ and with $3/4$ replaced by any fixed $\Ff \in (0,1)$ which depends on the aforementioned constants except for $\wt{M}$.  Let $G_i^- = \phi_{z,\kay_{j+1}}(\wt{G}_i^-)$ and $G_i^+ = \phi_{z,\kay_{j+1}}(\wt{G}_i^+)$.  If we choose $\Ff \in (0,1)$ sufficiently close to $1$ we have that
\begin{align*}
\frac{\nmeasure{\eta_i}(\eta_i(I_i) \setminus G_i^-)}{\nmeasure{\eta_i}(\eta_i(I_i))} = \frac{\int_{\wt{\eta}_i(\wt{I}_i) \setminus \wt{G}_i^-}|\phi_{z,\kay_{j+1}}'(w)|^{3/2}d\nmeasure{\wt{\eta}_i}(w)}{\int_{\wt{\eta}_i(\wt{I}_i)}|\phi_{z,\kay_{j+1}}'(w)|^{3/2}d\nmeasure{\wt{\eta}_i}(w)}
\lesssim \frac{\nmeasure{\wt{\eta}_i}(\wt{\eta}_i(\wt{I}_i)\setminus \wt{G}_i^-)}{\nmeasure{\wt{\eta}_i}(\wt{\eta}_i(\wt{I}_i))} \lesssim 1- \Ff < \frac{1}{4}
\end{align*}
and similarly for $G_i^+$.  We claim that condition~\eqref{it:fraction_of_good_points} holds for $U_i$ for each $2\leq i \leq n$,  by possibly taking $\wt{M}$ to be larger.  Indeed,  we fix $w \in G_i^-$ and set $\wt{z}_i = \cen(\wt{Q}_i)$.  Let $Q \in \CW_i$ be such that $\gamma_{\wh{z}_i,w}^{U_i}(t) \in Q$ for some $t \geq 0$,  where $\wh{z}_i = \phi_{z,\kay_{j+1}}(\wt{z}_i)$.  Let also $Q_i \in \CW_i$ be such that $\wh{z}_i \in \closure{Q_i} \setminus (\pright Q_i \cup \pup Q_i)$ and set $z_i = \cen(Q_i)$.  We pick $\wt{Q} \in \wt{\CW}_i$ such that $\gamma^{\wt{U}_i}_{\wt{z}_i,\varphi_{z,\kay_{j+1}}(w)}(t) \in \wt{Q}$.  Since $\varphi_{z,\kay_{j+1}}(w) \in \wt{G}_i^-$,  we have that $\disthyp^{U_i}(\wh{z}_i,\cen(Q)) \lesssim \disthyp^{\wt{U}_i}(\wt{z}_i,\cen(\wt{Q})) \lesssim \wt{M} \len(\wt{Q})^{-a}$.  Note that by~\eqref{eq:derivative_diameter_relation},
\begin{align*}
	\frac{\len(Q)}{\len(\wt{Q})} \asymp |\phi_{z,\kay_{j+1}}'(\cen(\wt{Q}))| \asymp \diam(A_{z,\kay_{j+1}})
\end{align*}
since $\wt{U}_i \subseteq \CR^{\epsilon}$ and so 
\begin{align}
\label{eq:intermediate_domains_good}
	\disthyp^{U_i}(\wh{z}_i,\cen(Q)) \lesssim \wt{M} (\len(Q)/\diam(A_{z,\kay_{j+1}}))^{-a}.
\end{align}
By applying Lemma~\ref{lem:hyperbolic_geodesics_close} as in the proofs of Lemmas~\ref{lem:lower_bound_F4} and~\ref{lem:change_of_domains},  we have that~\eqref{eq:intermediate_domains_good} still holds if we replace $\wh{z}_i$ with $z_i$.  This proves the claim.

We now turn to the case of the domain $U_1$, which has part of $\eta^\Right([\sigma_{z,\kay_{j+1}},\tau_{z,\kay_{j+1}}])$ as part of its boundary.  Let $\CW_1$ be a Whitney square decomposition of $U_1$.  We claim that there exists $Q_1 \in \CW_1$ such that the hyperbolic geodesics in $U_1$ from $z_1 = \cen(Q_1)$ satisfy~\eqref{eq:intermediate_domains_good} where $\wh{z}_i$ is replaced by $z_1$ and $w \in G_1^-$ for $\delta_1$ sufficiently small and $M>1$ sufficiently large.  Here $G_1^-$ is the set of $w \in \eta^{\Right}(I_0)$ such that $\gamma_{z_1,w}^{U_1}$ intersects only $(M,a)$-good squares in $\CW_1$.  Note that condition~\eqref{it:middle_circle_good} of Lemma~\ref{lem:band_mapping} implies that $\varphi_{z,\kay_{j+1}}(z_\Right) \in (0,1) \times (\xi_0,H_0-\xi_0)$.  Hence combining with the Beurling estimate we obtain that $\varphi_{z,\kay_{j+1}}(z_\Right) \in U_1$ for $\delta_1 > 0$ sufficiently small.  A similar argument implies that the image under $\varphi_{z,\kay_{j+1}}$ of the component of $B(z_\Right,\delta_1^{1/4} \diam(A_{z,\kay_{j+1}}))\setminus \eta([0,\tau_{z,\kay_{j+1}}])$ containing $z_\Right$ is a subset of $\wt{U}_1$,  which implies that the component of $B(z_\Right,\delta_1^{1/4}\diam(A_{z,\kay_{j+1}})) \setminus \eta([0,\tau_{z,\kay_{j+1}}])$ containing $z_\Right$ is a subset of $U_1$.  It follows that $\nmeasure{\eta}(G^{\Right})/\nmeasure{\eta}(I_0) \geq \frac{3}{4}$,  where $G^{\Right}$ is the set of points $w \in \eta^{\Right}(I_0)$ such that $\gamma_{z_\Right,w}^{U_1}$ intersects only $(M,a)$-good squares in $\CW_1$.  Note that Lemma~\ref{lem:band_mapping} implies that $\varphi_{z,\kay_{j+1}}(z_\Right) \in (\wt{\xi}_0,1-\wt{\xi}_0) \times (\wt{\xi}_0,H_0-\wt{\xi}_0)$ and so we can find $\zeta>0$ small enough depending only on the above implicit constants such that $\dist(\varphi_{z,\kay_{j+1}}(z_\Right),\partial \wt{U}_1) \geq \zeta$.  By possibly taking $\zeta$ to be smaller and applying Lemma~\ref{lem:rn_formula},  we can find a constant $C<\infty$ such that with probability at least $1-p_0/100$,  we have that $\dist(\wt{Q}_i,\partial \wt{U}_i) \geq \zeta$ and $\disthyp^{\wt{U}_i}(x_1,x_2) \leq C$ for each $x_1,x_2 \in \wt{U}_i$ such that $\dist(\{x_1,x_2\},\partial \wt{U}_i) \geq \zeta$,  for each $1\leq i \leq n$.  Suppose that we are working on that event.  Then we have that $\disthyp^{U_1}(z_\Right,z_1) = \disthyp^{\wt{U}_1}(\varphi_{z,\kay_{j+1}}(z_\Right),\varphi_{z,\kay_{j+1}}(z_1)) \leq C$ and so applying Lemma~\ref{lem:hyperbolic_geodesics_close} as in the proofs of Lemmas~\ref{lem:lower_bound_F4} and~\ref{lem:change_of_domains},  we have that $\gamma_{z_1,w}^{U_1}$ intersects only $(M,a)$-good squares in $\CW_1$ for each $w \in G^{\Right}$ (possibly by taking $M$ to larger).  A similar argument works for the domain $U_{n+1}$.

\noindent{\it Step 2d. Condition~\eqref{it:not_too_many_squares}.} 
First,  we note that if we consider $M>1$ large enough,  then with probability at least $1-p_0/100$ condition~\eqref{it:not_too_many_squares_rectangle} is satisfied for $\wt{G}_i^+$ (resp.\  $\wt{G}_i^-$) for each $1 \leq i \leq n$ (resp.\  $2\leq i \leq n+1$).  Fix $1 \leq i \leq n$ and let $Q \in \CW_i$ be such that $\len(Q) = 2^{-m}$,  $m \in \Z$,  and $\gamma_{\wh{z}_i,w}^{U_i} \cap Q \neq \emptyset$ for some $w \in G_i^+$,  where $\wh{z}_i$ is as in Step 2c.  Let also $\wt{Q} \in \wt{\CW}_i$ be such that $\wt{Q} \cap \varphi_{z,\kay_{j+1}}(Q) \neq \emptyset$.  Then~\eqref{eq:derivative_diameter_relation} and~\eqref{eqn:phi_deriv_bounds} imply that there exists $C_1 > 1$ depending only on $\epsilon$,  $H_0$ and $b$ such that $C_1^{-1}2^{-m} \diam(A_{z,\kay_{j+1}})^{-1} \leq \len(\wt{Q}) \leq C_1 2^{-m} \diam(A_{z,\kay_{j+1}})^{-1}$.  Hence combining with Lemma~\ref{lem:bound_on_squares},  we obtain that the number of $Q \in \CW_i$ such that $\len(Q) = 2^{-m}$ and $\gamma_{\wh{z}_i,w}^{U_i} \cap Q \neq \emptyset$ for some $w \in G_i^+$ is at most a constant times $M 2^{(3/2+a)m}\diam(A_{z,\kay_{j+1}})^{3/2+a}$.  We claim that the same holds if we replace $\wh{z}_i$ with $z_i$ and possibly by taking $M$ to be larger.  Indeed, since $\disthyp^{U_i}(z_i,\wt{z}_i) \leq 1$, Lemma~\ref{lem:hyperbolic_geodesics_close} implies that there exists a universal constant $C<\infty$ such that $\disthyp^{U_i}(\gamma_{z_i,w}^{U_i}(t),\gamma_{\wh{z}_i,w}^{U_i}(t)) \leq C$ for each $t \geq 0$.  This implies that there exists a universal constant $\wt{C}>1$ such that for each $Q \in \CW_i$ with $\len(Q) = 2^{-m}$ and $\gamma_{z_i,w}^{U_i} \cap Q \neq \emptyset$ for some $w \in G_i^+$,  there exists $\wh{Q} \in \CW_i$ with $\wt{C}^{-1}2^{-m} \leq \len(Q) \leq \wt{C} 2^{-m} $,  $\gamma_{\wh{z}_i,w}^{U_i} \cap \wt{Q} \neq \emptyset$ and $\Bhyp^{U_i}(\cen(\wh{Q}),\wt{C}) \cap Q \neq \emptyset$ for some $w \in G_i^+$.  The claim then follows by combining with Lemma~\ref{lem:hyperbolic_ball_covered}.  A similar argument holds for $G_i^-$ and $2\leq i \leq n+1$.

It remains to show that condition~\eqref{it:not_too_many_squares} holds for $G_{n+1}^+$ and $G_1^-$.  We will show that it holds for $G_1^-$.  A similar argument works for $G_{n+1}^+$.  Note that $\eta(I_0) \subseteq B(z_\Right,\delta_1^{1/2} \diam(A_{z,\kay_{j+1}}))$ and the component of  $B(z_\Right,\delta_1^{1/4} \diam(A_{z,\kay_{j+1}})) \setminus \eta([0,\tau_{z,\kay_{j+1}}])$ containing $z_\Right$ is a subset of $U_1$.  Hence we have that $\gamma_{z_\Right,w}^{U_1} \subseteq B(z_\Right,\delta_1^{1/3} \diam(A_{z,\kay_{j+1}}))$ for each $w \in \eta(I_0)$,  if $\delta_1$ is sufficiently small.  This implies that $\dist(Q,\partial U_1) = \dist(Q ,  \eta([\sigma_{z,\kay_{j+1}},\tau_{z,\kay_{j+1}}]))$ and so $Q$ is contained in the $2^{-m+3}$-neighborhood of $\eta([\sigma_{z,\kay_{j+1}},\tau_{z,\kay_{j+1}}])$ for every square $Q \in \CW_1$ such that $\len(Q) = 2^{-m}$ and $\gamma_{y,w}^{U_1} \cap Q \neq \emptyset$ for some $w \in G_1^-$.  Let $N_m$ be the number of such squares.  Then,  since the event $F_{z,\kay_{j+1}}^2$ occurs,  we obtain that $2^{-2m}N_m \lesssim M2^{m(a-1/2)}\diam(A_{z,\kay_{j+1}})^{3/2+a}$ and so $N_m \lesssim M2^{(3/2+a)m} \diam(A_{z,\kay_{j+1}})^{3/2+a}$.  The proof is then complete by applying Lemma~\ref{lem:hyperbolic_geodesics_close} as in the previous paragraph.
\end{proof}

\begin{proof}[Proof of Proposition~\ref{prop:good_everywhere}]
\noindent{\it Step 1. Density of scales at which $\SLE$ regularity events hold.}  Fix $a \in (0,1)$. For each $z \in \h$ and $j \in \N$, let $D_{z,j} = \{ \tau_{z,j} < \infty\}$, $G_{z,j}^1 = \{ \inf_{\tau_{z,j-1} \leq  t \leq \tau_{z,j}} S_t \geq \delta_0\}$, $G_{z,j}^2 = \{ H_{z,j} \geq H_0\}$, and
let $G_{z,j}^3$ be the event that condition~\eqref{it:middle_circle_good} of Lemma~\ref{lem:band_mapping} holds.  Let also $G_{z,j}^4$ be the event that the event $C_{z,j}$ of Lemma~\ref{lem:first_excursion_good} occurs,  $G_{z,j}^5$ be the event that condition~\eqref{it:away_from_the_boundary_good} of Lemma~\ref{lem:band_mapping} holds and let $G_{z,j}^6$ be the event that the event $F_{z,j}^2$ in the statement of Lemma~\ref{lem:first_crossing_number_of_squares} occurs.  We also set $G_{z,j} = \cap_{i=1}^6 G_{z,j}^i$.  Fix $K \subseteq \h$ compact.  For each $k \in \N$, we let $\CD_k$ be the set of squares of side length $e^{-5k}$ and with corners in $(e^{-5k} \Z)^2 + (\frac{e^{-5k}}{2},\frac{e^{-5k}}{2})$ which intersect $K$.    Fix $Q \in \CD_k$ and let $z = \cen(Q)$.  Then we have that
\begin{align}
\p[ D_{z,\ell+1}, G_{z,\ell}^c \giv \CG_{z,\ell-1} ] \one_{D_{z,\ell-1}} \leq& \big(\p[ D_{z,\ell+1}, (G_{z,\ell}^1)^c  \giv \CG_{z,\ell-1}] + \p[ D_{z,\ell+1}, G_{z,\ell}^1, (G_{z,\ell}^2)^c  \giv \CG_{z,\ell-1}] \notag\\
&\quad\quad+ \p[ D_{z,\ell+1}, G_{z,\ell}^1, G_{z,\ell}^2, \cup_{i=3}^6 (G_{z,\ell}^i)^c \giv \CG_{z,\ell-1}]\big) \one_{D_{z,\ell-1}}.	 \label{eqn:d_z_not_f_z}
\end{align}
Fix $q_0 > 0$.  By Lemma~\ref{lem:hit_good} we can make $\delta_0 > 0$ sufficiently small so that
\begin{equation}
\p[ D_{z,\ell+1}, (G_{z,\ell}^1)^c  \giv \CG_{z,\ell-1}] \one_{D_{z,\ell-1}} \leq q_0/100. \label{eqn:d_z_not_f_z1}	
\end{equation}
For the above choice of $\delta_0$,  Lemma~\ref{lem:extremal_length_good} implies that we can make $H_0 > 0$ sufficiently small so that
\begin{equation}
\p[ D_{z,\ell+1}, G_{z,\ell}^1, (G_{z,\ell}^2)^c  \giv \CG_{z,\ell-1}] \one_{D_{z,\ell-1}} \leq q_0/100.	 \label{eqn:d_z_not_f_z2}	
\end{equation}
Also part~\eqref{it:middle_circle_good} of Lemma~\ref{lem:band_mapping} implies that for the above choice of $H_0$ and $\delta_0$,  we can find $b \in (1,2)$,  $\xi_1 \in (0,(e^{-b}-e^{-2})/2)$ and $\xi_0 \in (0,H_0/2)$ (depending only on $H_0$, $\delta_0$, and $q_0$) such that
\begin{equation}
\p[ D_{z,\ell+1}, G_{z,\ell}^1, G_{z,\ell}^2, (G_{z,\ell}^3)^c  \giv \CG_{z,\ell-1}] \one_{D_{z,\ell-1}} \leq q_0/100. \label{eqn:d_z_not_fz3}
\end{equation}
Moreover Lemma~\ref{lem:first_excursion_good} implies that for the above choice of $H_0$, $\delta_0$, $a$, $b$, and $\xi_2 = 3\xi_1/2$,  we have that for $\delta_1 \in (0,1)$ sufficiently small (depending only on $H_0$, $\delta_0$, $a$, $b$, $\xi_1$, and $q_0$),  we can find $M_0>1$ sufficiently large and $\delta_2 \in (0,1)$ sufficiently small such that
\begin{equation}
\p[ D_{z,\ell+1},G_{z,\ell}^1,G_{z,\ell}^2,(G_{z,\ell}^4)^c \giv \CG_{z,\ell-1}] \one_{D_{z,\ell-1}} \leq q_0/100.    \label{eqn:d_z_not_f_z_4}
\end{equation}
Similarly part~\eqref{it:away_from_the_boundary_good} of Lemma~\ref{lem:band_mapping} implies that for the above choice of $\delta_0,H_0,\delta_1,b$ and $\xi_1$ with $\wt{\xi}_1 = \delta_2$,  we can find $\wt{\xi}_0 \in (0,H_0/2)$ such that
\begin{equation}
\p[ D_{z,\ell+1},G_{z,\ell}^1,G_{z,\ell}^2,(G_{z,\ell}^5)^c  \giv  \CG_{z,\ell-1}] \one_{D_{z,\ell-1}} \leq q_0/100.    \label{eqn:d_z_not_f_z_5}
\end{equation}
Finally Lemma~\ref{lem:first_crossing_number_of_squares} implies that by possibly taking $M_0$ to be larger,  we have that
\begin{equation}
\p[  D_{z,\ell+1}, G_{z,\ell}^1,G_{z,\ell}^2,(G_{z,\ell}^6)^c  \giv  \CG_{z,\ell-1}] \one_{D_{z,\ell-1}} \leq q_0/100.     \label{eqn:d_z_not_f_z_6}
\end{equation}
Overall, \eqref{eqn:d_z_not_f_z} and~\eqref{eqn:d_z_not_f_z1}--\eqref{eqn:d_z_not_f_z_6} imply that the number of the even values of $(1-a^2/2) k \leq \ell \leq k$ for which $D_{z,\ell+1} \cap G_{z,\ell}^c$ occurs is stochastically dominated from above by a binomial random variable with parameters $n = \lfloor a^2k/4 \rfloor$ and $p = q_0$.  Let $F_{z,k}$ be the event that the number of even values of $(1-a^2/2) k \leq j \leq k$ for which $D_{z,j+1}\cap G_{z,j}^c$ occurs is at least $a^2 k /8$.  Then \cite[Lemma~2.6]{mq2020geodesics} implies that by making $q_0 > 0$ sufficiently close to zero,  we have that $\p[ F_{z,k} ] \leq e^{-12k}$. Therefore by the Borel-Cantelli lemma there a.s.\ exists $k_0 \in \N$ so that $k \geq k_0$ implies that if $Q \in \CD_k$, $z = \cen(Q)$, and $D_{z,k}$ occurs then the number of $(1-a^2/2) k \leq j \leq k$ so that $G_{z,j}$ occurs is at least $a^2 k/8$.

\noindent{\it Step 2. Density of good scales at which the regularity of crossings hold.}  Fix $p_0 \in (0,1)$.  Then Lemma~\ref{lem:conditional_probability_good} implies that by possibly taking $\delta_1 \in (0,1)$ to be smaller and keeping all of the aforementioned parameters fixed,  we can choose $M>1$ sufficiently large and $\delta_2 \in (0,1)$ sufficiently small such that
\begin{align*}
	\p\!\left[(E_{z,\kay_j}^{M,a})^c  \giv  \CF_{z,\kay_{j-1}} \right] \one_{\kay_{j-1} < \infty} \leq p_0.
\end{align*}
We recall the convention that $(E_{z,\kay_j}^{M,a})^c = \emptyset$ if $\kay_j = \infty$.  We then have for $n, N \in \N$ that
\begin{align*}
	\p\!\left[ \bigcap_{N \leq j \leq N+n} (E_{z,\kay_j}^{M,a})^c \right] &= \E\!\left[ \p\!\left[(E_{z,\kay_{N +  n}}^{M,a})^c  \giv \CF_{z,\kay_{N+n - 1}} \right] \one_{\left\{ \bigcap_{N \leq j \leq N+n - 1} (E_{z,\kay_j}^{M,a})^c \right\}} \right] \\
	&\leq p_0 \p\!\left[ \bigcap_{N \leq j \leq N+ n - 1} (E_{z,\kay_j}^{M,a})^c \right] 
	 \leq \dots \leq p_0^n.
\end{align*}
Fix $\delta > 0$ (to be chosen) and assume that $0 < p_0 < \exp(-13/\delta)$.  Applying the above for $n = \lfloor k \delta \rfloor$ and $k \in \N$ large enough we have that
\begin{align}
\label{eq:event_occurs_for_some_j}
	\p\!\left[ \bigcap_{N \leq j \leq N + \lfloor \delta k \rfloor} (E_{z,\kay_j}^{M,a})^c \right] \leq e^{-12 k} \quad \text{for each}\,  N \in \N.
\end{align}
By the Borel-Cantelli lemma, there a.s.\ exists $n_0 \in \N$ so that $n \geq n_0$ implies that if $Q \in \CD_n$ and $z = \cen(Q)$ then for each $1 \leq N \leq n$,  there exists $j \in \N$ such that $N\leq j \leq N + \lfloor \delta n \rfloor$ and $E_{z,\kay_j}^{M,a}$ occurs.

\noindent{\it Step 3.  Gap sizes between good scales.}  Using the notation of Section~\ref{subsec:iteration_exploration}, we note that we can write
\begin{equation}
\kay_j = \sum_{\ell = 1}^j (\kay_{\ell-1}^* + \Delta_\ell+4) \label{eqn:k_j_formula}	
\end{equation}
where $\Delta_\ell = \kay_\ell - (\kay_{\ell-1}^* + \kay_{\ell-1}+4)$ gives the gap size between the scale $\kay_{\ell-1} + \kay_{\ell-1}^* + 4$ and the next scale so that the event $G_{z,j}$ occurs.  We take the convention that $\kay_j^* = 0$ whenever $\kay_j = \infty$.  Lemma~\ref{lem:annulus_maximum_crossing_height} implies that there exist constants $c_0 > 1$ and $c_1 > 0$ (depending only on $H_0$, $\xi_0$, and $\xi$) so that
\begin{align}
\label{eqn:xi_ell_tail_bound}
	\p[ \kay_\ell^* \geq t \giv \CF_{z,\kay_{\ell-1}}] \one_{\kay_{\ell-1} < \infty} \leq c_0 \exp(- c_0^{-1} e^{c_1 t}) \quad \text{for each} \quad t \geq 0.
\end{align}
Fix $\theta > 1$.  By~\eqref{eqn:xi_ell_tail_bound} we have that
\begin{align}
\label{eqn:k_ell_star_exp_moment_bound}
	\E\!\left[ e^{\theta \kay_\ell^*} \giv \CF_{z,\kay_{\ell-1}} \right]  &\leq e^{c_1^{-1} \theta \log(c_0 \theta)} + c_0 \int_{c_1^{-1} \log(c_0 \theta)}^\infty \exp(\theta t - c_0^{-1} e^{c_1 t}) dt.
\end{align}
It is easy to see that there exists a constant $c_2 > 1$ (depending only on $c_0$ and $c_1$) so that~\eqref{eqn:k_ell_star_exp_moment_bound} is at most $c_2 e^{c_2 \theta \log \theta}$.  We consequently have for any $N \in \N$ that
\begin{align}\
	\p\!\left[ \sum_{j = 1}^{\lfloor \delta n \rfloor} \kay_{N+j}^* \geq a^2 n/16 \right] &= \p\!\left[ e^{\sum_{j= 1}^{\lfloor \delta n \rfloor} \theta \kay_{N+j}^*} \geq e^{\theta a^2 n/16} \right] \leq e^{-\theta a^2 n/16} \E\!\left[ e^{\sum_{j= 1}^{\lfloor \delta n \rfloor} \theta \kay_{N+j}^*} \right] \notag\\
	&= e^{-\theta a^2 n/16} \E\!\left[ e^{\sum_{j= 1}^{\lfloor \delta n \rfloor -1} \theta \kay_{N+j}^*} \E\!\left[ e^{\theta \kay_{N+ \lfloor \delta n \rfloor}^*}  \giv \CF_{z,\kay_{N+\lfloor \delta n \rfloor-1}} \right] \right] \nonumber \\
	&\leq c_2e^{c_2 \theta \log \theta - \theta a^2 n/16} \E\!\left[ e^{\sum_{j= 1}^{\lfloor \delta n \rfloor - 1} \theta \kay_{N+j}^*} \right] \nonumber \\
	&\leq \dots \leq c_2^{\delta n}e^{n \theta (c_2 \delta \log \theta - a^2/16)}. \label{eq:all_points_in_interval1}
\end{align}
Choosing $\theta > 1$ large enough (depending only on $c_2$ and $a$) so that $\theta a^2/32 > 12 +\log (c_2)$ and then $\delta \in (0,a^2/144)$ small enough so that $c_2 \delta \log \theta < a^2/32$, we thus have that
\begin{equation}
\p\!\left[ \sum_{j = 1}^{\lfloor \delta n \rfloor} \kay_{N+j}^* \geq a^2 n/16 \right] \leq e^{-12 n} \quad \text{for each} \quad n,N \in \N.\label{eq:all_points_in_interval}	
\end{equation}
Note that $\delta$ is independent of $\delta_1$ and so we can first choose $\delta$ so that~\eqref{eq:all_points_in_interval} holds and then $p_0 \in (0,e^{-13/\delta})$ so that 
\begin{align*}
	\p\!\left[(E_{z,\kay_j}^{M,a})^c  \giv  \CF_{z,\kay_{j-1}} \right] \one_{\kay_{j-1} < \infty} \leq p_0.
\end{align*}
for $\delta_1$ small enough,  $M$ large enough and $\delta_2$ small enough. Moreover, we note for $q> 0$ large enough (depending only on $c_0$ and $c_1$) and $n \in \N$ large enough by~\eqref{eqn:xi_ell_tail_bound} and a union bound that
\begin{align}
\label{eq:gap_starting_point}
	\p[ \exists 1\leq j \leq n: \kay_j^* \geq q \log n ] &\leq n c_0 \exp(-c_0^{-1} n^{c_1 q}) \leq e^{-12 n}.
\end{align}

It thus follows from~\eqref{eq:all_points_in_interval}, \eqref{eq:gap_starting_point} and the Borel-Cantelli lemma that the following is true.  There a.s.\ exists $n_1 \in \N$ so that $n \geq n_1$ implies that for every $Q \in \CD_n$ with $z = \cen(Q)$ we have $\kay_j^* \leq q \log n$ for all $1 \leq j \leq n$ and $\sum_{j=1}^{\lfloor \delta n \rfloor} \kay_{N+j}^* \leq a^2 n/16$ for all $1 \leq N \leq n$.

\noindent{\it Step 4.  Conclusion of the proof.}  Assume that $n \geq \max(n_0,n_1,k_0)$.  Fix $Q \in \CD_n$, $z = \cen(Q)$ and suppose that $\tau_{z,n} < \infty$.  Let $J = \min\{j \in \N : \kay_j \geq n(1-a^2)\}$; note that $J \leq n/5$ as $\kay_j - \kay_{j-1} \geq \kay_{j-1}^* + 4 \geq 5$ whenever $\kay_j <\infty$ for every $j \in \N$.  As $n \geq n_1$ and $J \leq n/5$ we have that
\[ \kay_{J-1}^* \leq q \log n \quad\text{and}\quad \sum_{j=1}^{\lfloor \delta n \rfloor} \kay_{J+j-1}^* \leq a^2 n/16.\]

Let $g_1$ be the first $k \geq (1-a^2)n$ such that the event $G_{z,k}$ occurs. For each $j \geq 2$, we inductively let $g_j$ be the first $k > g_{j-1}$ such that $G_{z,k}$ occurs. Let
\[ \wt{\kay}_j = \sum_{\ell=0}^{j-1} (\kay_{J+\ell-1}^* + 4).\]
We claim that
\begin{align}
\label{eqn:g_k_lbd}
g_{\wt{\kay}_j} \geq \kay_{J+j-1} \quad\text{for each}\quad j \in \N.
\end{align}
We are going to prove~\eqref{eqn:g_k_lbd} by induction on $j \in \N$.  By definition, $g_{\wt{\kay}_1}$ is the $(\kay_{J-1}^*+4)$th scale $i$ after $(1-a^2)n$ such that $G_{z,i}$ occurs while $\kay_J$ is the first $i$ after $\kay_{J-1} + \kay_{J-1}^* + 4$ such that $G_{z,i}$ occurs.  Since $\kay_{J-1} \leq (1-a^2) n$ we have that $g_{\wt{\kay}_1} \geq \kay_J$.  Suppose there exists $j \in \N$ such that $g_{\wt{\kay}_i} \geq \kay_{J+i-1}$ for each $1 \leq i \leq j$.  Then we have that $g_{\wt{\kay}_{j+1}}$ is the $(\kay_{J+j-1}^*+4)$th scale $i$ after $g_{\wt{\kay}_j} \geq \kay_{J+j-1}$ so that $G_{z,i}$ occurs.  On the other hand, $\kay_{J+j}$ is the first $i$ after $\kay_{J+j-1}^*+4$ steps after $\kay_{J+j-1}$ so that $G_{z,i}$ occurs.  Thus it is easy to see that $g_{\wt{\kay}_{j+1}} \geq \kay_{J+j}$ hence~\eqref{eqn:g_k_lbd} holds.  We thus have that
\begin{align*}
   \kay_{J+\lfloor \delta n \rfloor }
&\leq g_{\wt{\kay}_{\lfloor \delta n \rfloor +1 }} \quad\text{(by~\eqref{eqn:g_k_lbd})}\\
 &\leq g_{ (a^2/16 + 8\delta) n + q \log n} \quad\text{(as $n \geq n_1$)}\\
& \leq n \quad \text{(as $n \geq k_0$)}.
\end{align*}
Note that $J\leq n/5 <n$ and so since $n \geq n_0$,  we obtain that there exists $j \in \N$ such that $J\leq j \leq J + \lfloor \delta n \rfloor$ and $E_{z,\kay_j}^{M,a}$ occurs.  But then $\kay_j \geq \kay_J \geq (1-a^2)n$ and $\kay_j \leq \kay_{J + \lfloor \delta n \rfloor} \leq n$.  The proof is then complete since for each $z \in (e^{-5n}\Z^2) \cap K$,  there exists $Q \in \CD_{n}$ such that $z=\cen(Q)$.
\end{proof}

\begin{proof}[Proof of Theorem~\ref{thm:sle_4_removable}]
We will show that $\eta$ satisfies the conditions of Theorem~\ref{thm:removability_of_X} a.s., which completes the proof.  We fix $a \in (0,1)$ small enough such that $\tfrac{3}{2} \in (10a ,  2 - 10a)$.  Then Proposition~\ref{prop:good_everywhere} implies that the following holds a.s.  For every compact set $K \subseteq \h$,  there exist $\wt{n}_0 \in \N$ and $M>1$ such that for each $\wt{n}\geq \wt{n}_0$ and $z \in (e^{-5(\wt{n}-1)}\Z^2) \cap K$,  there exists $\wt{m}\geq \wt{n}_0$ with $(1-a^2)(\wt{n} - 1) \leq \wt{m} \leq \wt{n}-1$ so that $E_{z,\wt{m}}^{M,a}$ occurs.

For $m \in \N$ we consider the family of sets $\CA_m = \{A_{z,\wt{m}} : z \in \h,  m = \lfloor (4\wt{m}-\log(c_1/2))/\log(2) \rfloor + 1\}$
where $c_1 \in (0,1)$ is small enough (to be chosen and depending only on $a$) such that $B(z,c_1 e^{-4m})$ lies in the bounded component of $\C \setminus A_{z,m}$ for each $z \in \h$ such that $\tau_{z,m} < \infty$.  Fix $K \subseteq \h$ compact and $\wt{n}_0 \in \N$ as above.  Let $n_0$ be large (to be chosen) and $n \geq n_0$.  Set $\wt{n} = \lfloor (\log(2)n + \log(c_1/2))/4 \rfloor$.  Then by picking $n_0$ large enough,  we have that $\wt{n}\geq \wt{n}_0$.  Fix $z \in K \cap \eta$ and let $w \in (e^{-5(\wt{n}-1)}\Z^2) \cap K$ be such that $|z-w| \leq 2 e^{-5(\wt{n}-1)}$.  Then there exists $(1-a^2)(\wt{n}-1) \leq \wt{m} \leq \wt{n}-1$ such that $E_{w,\wt{m}}^{M,a}$ occurs.  Set $m = \lfloor (4\wt{m} - \log(c_1/2))/\log(2) \rfloor + 1$ and note that $A_{w,\wt{m}} \in \CA_m$.  Also,  by possibly taking $n_0$ to be larger,  we have that $B(z,2^{-n})$ lies in the bounded component of $\C \setminus A_{w,\wt{m}}$.  Moreover we have that $(1-a^2) n \leq m \leq n$ if $c_1$ is sufficiently close to $0$ (depending only on $a$).  Furthermore we have that $\diam(A_{w,\wt{m}}) \geq 2c_1 e^{-4\wt{m}} \geq 4 \cdot 2^{-m}$ and $\diam(A_{w,\wt{m}}) \leq 2 c_2 e^{-4\wt{m}} \leq 4 c_2 2^{-m} / c_1$,  where $c_2$ is a universal constant such that $A_{x,k} \subseteq B(x,c_2 e^{-4k})$ for every $x \in \h$ such that $\tau_{x,k} < \infty$.  Thus it follows that $\diam(A_{w,\wt{m}}) \asymp 2^{-m}$ where the implicit constants depend only on $c_1$ and $c_2$.  Then it is clear that conditions~\eqref{it:components} and~\eqref{it:intersections} are satisfied with $d_i = \tfrac{3}{2}$ possibly by taking $M$ to be larger.  The proof is then complete by using that $\ol{\dim}_{\mathrm{M}}(K \cap \eta) \leq \dim_{\mathrm{M}}(\eta) = \tfrac{3}{2}$ a.s.\ \cite{rs2005basic,beffara2008dimension}, where for a set $A$, $\ol{\dim}_{\mathrm{M}}(A)$ denotes the upper Minkowski dimension of $A$.
\end{proof}

\section{Removability theorem}
\label{sec:removability_theorem}

\setlength{\leftmargini}{0cm}

\subsection{Setup and statement}
\label{subsec:removability_setup_statement}

\begin{figure}[ht!]
\begin{center}
\includegraphics[scale=0.9]{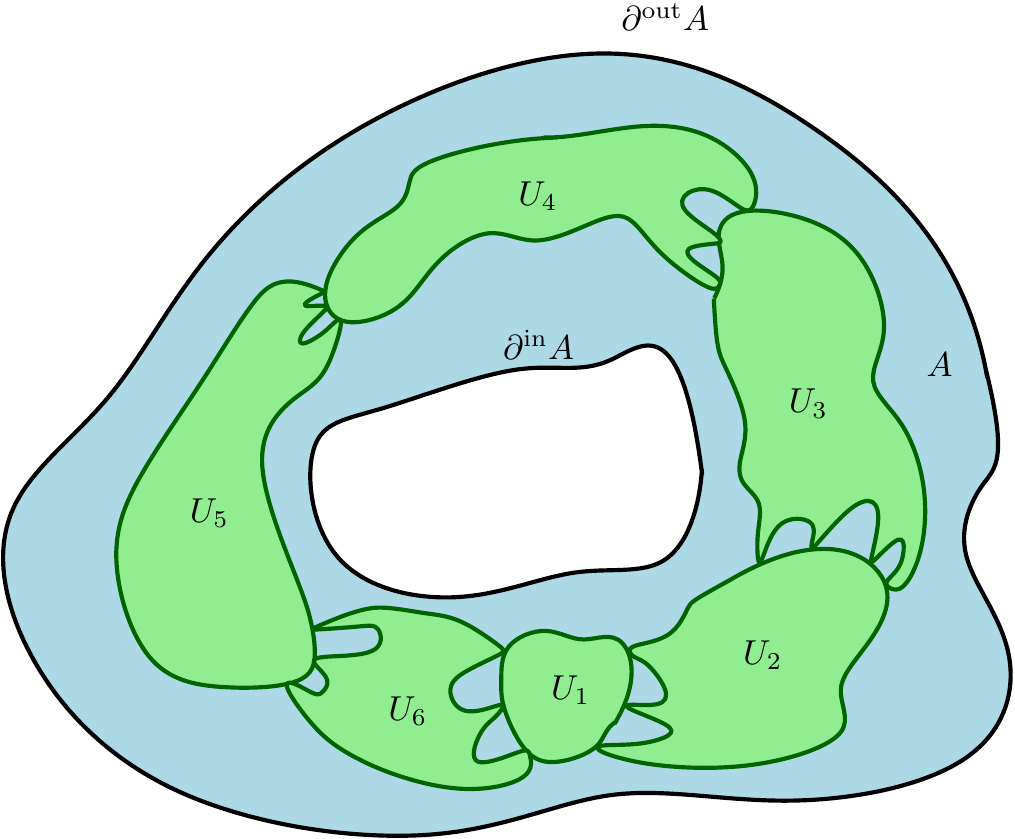}	
\end{center}
\caption{\label{fig:annulus_condition}  Illustration of the assumptions for Theorem~\ref{thm:removability_of_X}.  Shown is a topological annulus $A \in \CA_k$ where $\pin A$ (resp.\ $\pout A$) denotes its inner (resp.\ outer) boundary.  The sets $U_1,\ldots,U_m$ (with $m=6$) from~\eqref{it:components} are shown in green.  The main assumption is~\eqref{it:intersections}, which controls the behavior of $\partial U_i \cap \partial U_{i+1}$ for each $1 \leq i \leq m$ (where $U_{m+1} = U_1$).}  
\end{figure}

Suppose that $X \subseteq \C$ and that there exists $a \in (0,1)$ such that for every $K \subseteq X$ compact there exists $M>1$ such that the following is true. Let $\ol{\dim}_{\mathrm{M}}(K)$ be the upper Minkowksi dimension of $K$.  Then we have that $\ol{\dim}_{\mathrm{M}}(K) < 2$ and $0 < a < (2-\ol{\dim}_{\mathrm{M}}(K))/5$.

There exists a family $\CA = \cup_{k=1}^\infty \CA_k$ of open sets such that every $A \in \CA_k$ has the topology of an annulus with $\diam(A) \leq M 2^{-k}$.  For such $A$ we let $\bIn A$ (resp.\ $\bOut A$) be the boundary of the bounded (resp.\ unbounded) component of $\C \setminus A$.

Moreover there exists $n_0 \in \N$ so that for all $n \geq n_0$ and $z \in K$ there exists $k \in \N$ with $(1-a^2)n \leq k \leq n$ and $A \in \CA_k$ so that $B(z,2^{-n})$ is contained in the bounded component of $\C \setminus A$ and the following are true (see Figure~\ref{fig:annulus_condition}).

\begin{enumerate}[(I)]
\item\label{it:components} There exist $1 \leq m \leq M$ and pairwise disjoint, open, and simply connected subsets $U_1,\ldots,U_m$ of $A \setminus X$ such that for each $1 \leq i \leq m$ there exists $I_i \subseteq \partial U_i \cap \partial U_{i+1}$ closed (with the convention $U_{m+1} = U_1$ and $U_0 = U_m$) such that every loop $\gamma \colon \s^1 \to \closure{\cup_{i=1}^m U_i}$ which hits each of the $I_i$ exactly once and does not otherwise hit $\partial U_i \cap \partial U_{i+1}$ disconnects $\bIn A$ from $\bOut A$.
\item\label{it:intersections} There exists $d_i \in (10a,2-10a)$ and a positive and finite measure $\mu_i$ supported on $I_i$ so that the following properties hold.  Let $\ol{\mu}_i$ be $\mu_i$ normalized to be a probability measure.
	 \begin{enumerate}[(a)]
	 \item\label{it:intersection_lbd} We have that $\mu_i(I_i) \geq M^{-1} 2^{-d_i k}$.
\item\label{it:intersection_ubd} For every $Y \subseteq I_i$ Borel we have that $\mu_i(Y) \leq M \diam(Y)^{d_i-a}$.
\item\label{it:g_assumptions} For each $1 \leq i \leq m$ let $\CW_i$ be a Whitney square decomposition of $U_i$.  There exists a square $Q_i \in \CW_i$ such that the following is true.  Let $G_i^-$ be the set of $w \in I_{i-1}$ (where $I_0 = I_m$) so that if $\gamma_{i,w}$ is the hyperbolic geodesic in $U_i$, from $\cen(Q_i)$ to $w$ then for every $Q \in \CW_i$ hit by $\gamma_{i,w}$ we have that $\disthyp^{U_i}(\cen(Q_i),\cen(Q)) \leq M (2^k \len(Q))^{-a}$.  We also let $G_i^+$ be defined in the same way but with $I_i$ in place of $I_{i-1}$.  Then
\begin{enumerate}[(i)]
\item\label{it:g_meas_lbd} $\ol{\mu}_{i-1}(G_i^-) \geq 3/4$ and $\ol{\mu}_i(G_i^+) \geq 3/4$ and
\item\label{it:g_meas_ubd} The number of elements of $\CW_i$ with side length $2^{-j}$ which are hit by a hyperbolic geodesic in $U_i$ from $\cen(Q_i)$ to a point in $G_i^+$ (resp.\ $G_i^-$) is at most $M 2^{(d_{i}+a)(j-k)}$ (resp.\ $M 2^{(d_{i-1}+a)(j-k)}$). 
\end{enumerate}
\end{enumerate}
\end{enumerate}

\begin{theorem}
\label{thm:removability_of_X}
Suppose that $D \subseteq \C$ is open and $X \subseteq D$ is closed in $D$ and satisfies the above assumptions.  If $f \colon D \to \C$ is a homeomorphism onto its image and conformal on $D \setminus X$ then $f$ is conformal on $D$.
\end{theorem}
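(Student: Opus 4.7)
The plan is to show that $f$ is absolutely continuous on lines (ACL) on $D$. Since $X$ has zero two-dimensional Lebesgue measure (because every compact $K \subseteq X$ has $\ol{\dim}_{\mathrm{M}} K < 2$), and since $f$ is conformal off $X$, the quasiconformal criterion recalled in Section~\ref{subsec:quasiconformal} shows that ACL is enough to conclude that $f$ is $1$-quasiconformal, hence conformal. Fixing a compact axis-parallel rectangle $R \subseteq D$, by Fubini and symmetry it suffices to prove absolute continuity of $f|_L$ for Lebesgue almost every horizontal line $L$ meeting $R$. I will work with lines that are typical in the following two senses, which hold for a.e.\ $L$: first, $\int_{L_\delta \cap R} |f'(w)|^2\, d\Leb_2(w) = O(\delta)$ as $\delta \to 0$ (Lebesgue differentiation, using that $\int_R |f'|^2 < \infty$ because $f$ is a homeomorphism onto its image); second, the Marstrand-type slicing bound $\ol{\dim}_{\mathrm{M}}(L \cap X \cap R) \leq \ol{\dim}_{\mathrm{M}}(X \cap R) - 1 \leq 1 - 5a$. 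Since $f$ is smooth on $D \setminus X$, absolute continuity of $f|_L$ reduces to showing that the variation contribution of $L \cap X \cap R$ is zero.

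For each $n \geq n_0$, cover $L \cap X \cap R$ by at most $O(2^{(1-5a)n})$ closed subintervals $I_\alpha = [z_\alpha - 2^{-n}, z_\alpha + 2^{-n}]$ centered at $z_\alpha \in L \cap X$. By hypothesis, for each $\alpha$ there exists $k_\alpha$ with $(1-a^2) n \leq k_\alpha \leq n$ and $A_\alpha \in \CA_{k_\alpha}$ having $B(z_\alpha, 2^{-n})$ in its bounded component and satisfying conditions~\eqref{it:components}--\eqref{it:intersections}. Using~\eqref{it:g_meas_lbd}, the set $G_i^+ \cap G_{i+1}^- \subseteq I_i$ has $\ol{\mu}_i$-measure at least $1/2$, and by~\eqref{it:intersection_lbd} it is non-empty; pick $w_i$ in this intersection. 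Let $\gamma_\alpha$ be the loop obtained by concatenating, for $i = 1, \dots, m$, the hyperbolic geodesic in $U_i$ from $w_{i-1}$ to $\cen(Q_i)$ followed by the hyperbolic geodesic from $\cen(Q_i)$ to $w_i$. By~\eqref{it:components}, $\gamma_\alpha$ separates $\bIn A_\alpha$ from $\bOut A_\alpha$, so $f(\gamma_\alpha)$ surrounds $f(B(z_\alpha, 2^{-n}))$ and therefore $\diam(f(I_\alpha)) \leq \diam(f(\gamma_\alpha))$.

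The central analytic estimate that I aim to prove is
\[
\diam(f(\gamma_\alpha)) \;\leq\; C \Bigl( 2^{-(1 - 3a) k_\alpha} + 2^{(1 - a) k_\alpha} \int_{A_\alpha} |f'(w)|^2\, d\Leb_2(w) \Bigr).
\]
Granting this, summing over $\alpha$ and using that the annuli $\{A_\alpha\}$ have bounded multiplicity and are contained in $L_{\delta}$ with $\delta = C\, 2^{-(1-a^2) n}$, the typical-line bound $\int_{L_\delta \cap R} |f'|^2 \lesssim \delta$ gives
\[
\sum_\alpha \diam(f(I_\alpha)) \;\lesssim\; 2^{(1-5a)n}\,2^{-(1-3a)n} \;+\; 2^{(1-a) n}\,2^{-(1-a^2)n} \;\lesssim\; 2^{-2a n} \;+\; 2^{-a(1-a) n},
\]
which tends to $0$ as $n \to \infty$, establishing the absolute continuity.

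To prove the diameter estimate, decompose $\diam(f(\gamma_\alpha)) \leq \sum_i \sum_Q \diam(f(Q))$ over Whitney squares $Q \in \CW_i$ hit by $\gamma_\alpha \cap U_i$, using $\diam(f(Q)) \asymp \len(Q)\,|f'(\cen Q)|$ and $\int_Q |f'|^2 \asymp \len(Q)^2 |f'(\cen Q)|^2$ from~\eqref{eq:uniform_derivative_in_square}--\eqref{eq:derivative_diameter_relation}. Grouping by scale $\len(Q) = 2^{-j}$, condition~\eqref{it:g_meas_ubd} bounds the number of scale-$j$ squares traversed by the relevant geodesics by $N_{i,j} \leq M\, 2^{(d_i + a)(j - k_\alpha)}$, and Cauchy-Schwarz yields
\[
\sum_{Q: \len(Q) = 2^{-j}} \diam(f(Q)) \;\leq\; M^{1/2}\, 2^{(d_i + a)(j - k_\alpha)/2} \Bigl( \int_{A_\alpha} |f'|^2 \Bigr)^{1/2}.
\]
I then split the scale sum at a threshold $j_0$ to be optimized: for $k_\alpha \leq j \leq j_0$ the displayed Cauchy-Schwarz bound is summable because $d_i + a < 2 - 9a$, while for $j > j_0$, where Whitney squares accumulate at the boundary points $w_i$, a single hyperbolic geodesic passes through $O(1)$ squares per scale, and the tail can be bounded by a weighted Cauchy-Schwarz giving an extra factor $2^{(j_0 - k_\alpha)\alpha}$ for a parameter $\alpha \in (1/2, 1)$. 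Optimizing $j_0$ and applying $A\sqrt{B} \leq \epsilon A^2 + B/(4\epsilon)$ trades the product $2^{c(j_0 - k_\alpha)} (\int |f'|^2)^{1/2}$ against the two target terms, with the margins afforded by $d_i \in (10a, 2 - 10a)$ ensuring the right signs. The main obstacle is precisely this scale-by-scale bookkeeping and the coordination near the boundary points $w_i$, where the geometric count from~\eqref{it:g_meas_ubd} and the measure diameter bound~\eqref{it:intersection_ubd} are used in tandem to absorb the would-be divergent tail into a quantitative bound involving only $\int_{A_\alpha} |f'|^2$.
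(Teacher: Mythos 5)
Your high-level scheme is the same as the paper's: show ACL by bounding $\sum_\alpha \diam(f(I_\alpha))$ via a diameter estimate of the form $\diam(f(\gamma_\alpha)) \lesssim 2^{-(1-3a)k_\alpha} + 2^{(1-a)k_\alpha}\int_{A_\alpha}|f'|^2$, then sum over the $\lesssim 2^{cn}$ intervals meeting $X$ on a typical line. That central estimate is exactly the paper's Lemma~\ref{lem:path_good}. However, the proof you sketch for it has a genuine gap, and there are two smaller bookkeeping errors.

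The main gap is in the proof of the diameter estimate. Your proposed scale-by-scale Cauchy--Schwarz gives $\sum_{\len Q = 2^{-j}}\diam(f(Q)) \lesssim 2^{(d_i+a)(j-k_\alpha)/2}(\int_{A_\alpha}|f'|^2)^{1/2}$, but because $d_i + a > 0$ the sum of this over $j \geq k_\alpha$ \emph{diverges}; the scale-splitting at a threshold $j_0$ you propose does not repair this, because (i) for $j > j_0$ the squares accumulating at the endpoints $w_i$ still require control of $|f'|$ near the boundary, which you do not have pointwise, and (ii) the ``extra factor $2^{(j_0-k_\alpha)\alpha}$'' you invoke is not derived and its origin is unclear. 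A closely related problem: you pick $w_i$ \emph{arbitrarily} in $G_i^+\cap G_{i+1}^-$, but the hypotheses~\eqref{it:g_meas_lbd}--\eqref{it:g_meas_ubd} say nothing about the length of $f\circ\gamma_{i,w}$ for a \emph{fixed} $w$, only about the family. The paper resolves both issues simultaneously: it \emph{averages} $\length(f(\gamma_{i,w}))$ over $w\in I_i$ against $\ol{\mu}_i$, converts the resulting sum into a sum over shadows $\ol{\mu}_i(\SH_i^+(Q))\,\len(Q)\,|f'|(Q)$, and then applies a Cauchy--Schwarz weighted not by scale but by the Whitney chain-length/hyperbolic-distance weight $q_i(Q)$ (the Jones--Smirnov trick), using~\eqref{it:g_meas_ubd} on $|\CG_i^+|$ by scale together with the measure bound~\eqref{it:intersection_ubd} to make the weighted sum converge; a Markov inequality then produces a \emph{good} $w_i$ in $I_i$ for which the concatenated loop satisfies the bound. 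Without the averaging-then-Markov step and the hyperbolic-distance-weighted Cauchy--Schwarz, the estimate does not close.

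Two smaller issues. First, the Marstrand slicing inequality $\ol{\dim}_{\mathrm{M}}(L\cap X\cap R) \leq \ol{\dim}_{\mathrm{M}}(X\cap R) - 1$ for a.e.\ $L$ is not valid for upper Minkowski (box) dimension; Marstrand's theorem is a statement about Hausdorff dimension. The paper instead uses a direct Fubini argument on $\Leb_2(\{z:\dist(z,K)\leq 2^{-k}\}) \lesssim 2^{-4ak}$ to get $n_t(k) \lesssim 2^{(1-4a)k}$ off a small set of $t$'s, which is weaker but true and still closes the numerology. Second, the annuli $A_\alpha$ do \emph{not} have bounded multiplicity: they have diameter $\asymp 2^{-k_\alpha}$ with $k_\alpha$ as small as $(1-a^2)n$, so a single point can lie in $O(2^{a^2 n})$ of them; the paper accounts for this with an explicit $M2^{2+a^2k}$ multiplicity factor, which happens to be absorbable because $a^2 - a/2 < 0$. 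Both of these are repairable, but the divergent Cauchy--Schwarz at the heart of your diameter estimate is the substantive obstacle.
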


\subsection{Main lemma}

The main step in the proof of Theorem~\ref{thm:removability_of_X} is the following lemma.

\begin{lemma}
\label{lem:path_good}
Suppose that we have the setup described in Section~\ref{subsec:removability_setup_statement} and Theorem~\ref{thm:removability_of_X}.  There exists a finite and positive constant $M_0$ depending only on $a$ and $M$ so that the following is true.  Let $f \colon D \to \C$ be a homeomorphism which is conformal on $D \setminus X$.  Suppose that $A \in \CA_k$ satisfies~\eqref{it:components} and~\eqref{it:intersections} above.  There exists a path $\gamma$ in $A$ which disconnects $\bIn A$ from $\bOut A$ such that
\begin{align*}
\diam(f(\gamma)) \leq M_0 \left ( 2^{(3a - 1)k} + 2^{(1-a)k} \int_{A \setminus X} |f'(u)|^2 du \right ).
\end{align*}
\end{lemma}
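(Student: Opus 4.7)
I will first construct a loop $\gamma$ that disconnects $\bIn A$ from $\bOut A$ using hyperbolic geodesics in each $U_i$, and then bound $\diam(f(\gamma))$ by a scale-by-scale Cauchy--Schwarz estimate that uses~\eqref{it:g_meas_ubd}.

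\emph{Step 1 (construction of $\gamma$).} From~\eqref{it:g_meas_lbd}, $\ol\mu_i(G_i^+) \geq 3/4$ and $\ol\mu_i(G_{i+1}^-) \geq 3/4$, hence $\ol\mu_i(G_i^+ \cap G_{i+1}^-) \geq 1/2$; by~\eqref{it:intersection_lbd} this intersection is nonempty, so I choose $w_i \in G_i^+ \cap G_{i+1}^- \subseteq I_i$ for each $1 \leq i \leq m$. In each $U_i$, let $\gamma_i$ be the concatenation of the hyperbolic geodesics in $U_i$ from $\cen(Q_i)$ to $w_{i-1}$ and from $\cen(Q_i)$ to $w_i$, and set $\gamma := \bigcup_i \gamma_i$. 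Then $\gamma$ is a loop in $\closure{\bigcup_i U_i}$ which meets each $I_i$ exactly at the single point $w_i$ (the interiors of the geodesics stay in $U_i$, hence avoid $\partial U_i \cap \partial U_{i+1}$ elsewhere). By~\eqref{it:components}, $\gamma$ disconnects $\bIn A$ from $\bOut A$.

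\emph{Step 2 (reduction to a per-geodesic bound).} Since $\gamma$ is a concatenation of $2m \leq 2M$ geodesics of the form $\gamma_{i,w}$ with $w \in G_i^\pm$, the triangle inequality reduces the lemma to proving
\[
\diam(f(\gamma_{i,w})) \lesssim_{a,M} 2^{ak} E_i^{1/2}, \qquad E_i := \int_{U_i} |f'|^2.
\]
Indeed, AM-GM gives $2^{ak} E_i^{1/2} = (2^{(3a-1)k})^{1/2} \cdot (2^{(1-a)k} E_i)^{1/2} \leq \tfrac{1}{2}(2^{(3a-1)k} + 2^{(1-a)k} E_i)$, and summing over the $O(M)$ pieces and using $\sum_i E_i \leq \int_{A \setminus X} |f'|^2$ then yields the claim with $M_0 = M_0(a, M)$.

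\emph{Step 3 (per-geodesic estimate).} Let $S_j$ be the set of Whitney squares $Q \in \CW_i$ with $\len(Q) = 2^{-j}$ crossed by $\gamma_{i,w}$, and set $I_j := \sum_{Q \in S_j} \int_Q |f'|^2$, so $\sum_j I_j \leq E_i$. By~\eqref{it:g_meas_ubd}, $|S_j| \leq M \cdot 2^{(d_i+a)(j-k)}$ for $j \geq k$, while there are only $O_M(1)$ relevant scales $j < k$ since $\len(Q) \leq \diam(U_i) \lesssim M \cdot 2^{-k}$. Koebe distortion on the double of each Whitney square gives $\diam(f(Q)) \asymp |f'(\cen(Q))|\len(Q) \asymp (\int_Q |f'|^2)^{1/2}$. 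Bounding $\diam(f(\gamma_{i,w}))$ by $\sum_Q \diam(f(Q))$ and applying Cauchy--Schwarz at each scale,
\[
\diam(f(\gamma_{i,w})) \lesssim \sum_j |S_j|^{1/2} I_j^{1/2}.
\]
Fix the truncation $J := k + \lceil 2ak/(d_i+a)\rceil$, so that $\sum_{j \leq J}|S_j| \lesssim M \cdot 2^{(d_i+a)(J-k)} \leq M \cdot 2^{2ak}$. Cauchy--Schwarz in the outer sum yields
\[
\sum_{j \leq J} |S_j|^{1/2} I_j^{1/2} \leq \Big(\sum_{j \leq J} |S_j|\Big)^{1/2}\Big(\sum_{j \leq J} I_j\Big)^{1/2} \lesssim M^{1/2} \cdot 2^{ak} E_i^{1/2},
\]
which is the desired estimate for scales up to $J$.

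\emph{The main obstacle.} The delicate part is controlling the tail $\sum_{j > J} |S_j|^{1/2} I_j^{1/2}$: although $\sum_{j > J} I_j \leq E_i$, the partial sums $\sum_{j > J} |S_j|$ diverge under the bound from~\eqref{it:g_meas_ubd} since $d_i+a > 0$, so one cannot simply iterate Cauchy--Schwarz. The key observation is that Whitney squares of side $2^{-j}$ with $j > J$ lie within Euclidean distance $\lesssim 2^{-J}$ of $\partial U_i$, so the tail of $\gamma_{i,w}$ clusters in a small neighborhood of the endpoint $w$. Combining this geometric localization with the continuity of $f$ at $w$ and a refined scale-by-scale Cauchy--Schwarz/AM-GM argument that plays the summability $\sum_{j > J} I_j \leq E_i$ against the per-scale bound $|S_j| \leq M \cdot 2^{(d_i+a)(j-k)}$ and uses $d_i < 2 - 10a$ is expected to yield the analogous bound $\lesssim M^{1/2} 2^{ak} E_i^{1/2}$ for the tail, completing the proof.
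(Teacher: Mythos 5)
Your Step 1 is sound and matches the paper, and your high-level bookkeeping (split by scale, Cauchy--Schwarz with the count from~\eqref{it:g_meas_ubd}, AM--GM to produce the two terms in the statement) is in the right spirit. However, there is a genuine gap that you yourself flag: the per-geodesic pointwise bound $\diam(f(\gamma_{i,w})) \lesssim 2^{ak} E_i^{1/2}$ for a \emph{fixed} $w$ is not provable with the given hypotheses, and in fact is false in general. The issue is that a single geodesic to a boundary point passes through squares at infinitely many scales near $w$, and nothing constrains where the energy $\int_Q |f'|^2$ concentrates along that particular geodesic. For a single geodesic $|S_j| = O(1)$, so you are really trying to bound $\sum_j I_j^{1/2}$ by a constant times $(\sum_j I_j)^{1/2}$, and this fails once the energies $I_j$ spread over unboundedly many scales. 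Your tail paragraph (``the main obstacle'') proposes using continuity of $f$ at $w$ and geometric localization, but continuity gives no quantitative bound, and the per-scale count from~\eqref{it:g_meas_ubd}, which is a statement about the \emph{union} of all geodesics, does not give any new leverage for a fixed geodesic.

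The paper's proof avoids this by not attempting a pointwise bound at all. Instead it estimates the \emph{$\ol\mu_i$-average} of $\length(f(\gamma_{i,w}))$ over $w \in I_i$. After a Fubini swap this turns the sum over squares into
\[
\sum_{Q \in \CG_i^+} \ol\mu_i(\SH_i^+(Q)) \len(Q) |f'|(Q),
\]
where $\SH_i^+(Q)$ is the shadow of $Q$ on $G_i^+$. The crucial extra gain is the factor $\ol\mu_i(\SH_i^+(Q))$, which decays as $Q$ shrinks (bounded via the Frostman-type hypothesis~\eqref{it:intersection_ubd}); this is precisely what kills the contribution from the tail scales that you could not control. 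After AM--GM one is left with $2^{(2d_i-1+a)k}\sum_Q \mu_i(\SH_i^+(Q))^2$, which the paper bounds by $2^{(3a-1)k}$ via a chaining argument that uses~\eqref{it:g_meas_ubd}, Lemma~\ref{lem:q_i_approx}, and Lemma~\ref{lem:hyperbolic_geo_intersection}. Finally, the good endpoints $w_i$ are selected \emph{after} the averaged length estimate, via Markov's inequality (the set of $w$ with small $\length(f(\gamma_{i,w}))$ has $\ol\mu_i$-measure $\geq 11/16$, so it intersects the corresponding good set for $U_{i+1}$). Your construction picks $w_i$ first using only~\eqref{it:g_meas_lbd}, which cannot guarantee that the chosen $w_i$ avoid energy concentrations; this ordering is backwards. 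In short: replace your pointwise estimate by an integrated length estimate, use the $\ol\mu_i$-weighting of shadows to control the tail scales, and postpone the choice of $w_i$ to a final Markov/pigeonhole step.
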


\begin{proof}
This proof is in part inspired by that of~\cite[Theorem~2]{js2000removability} and calculations will be similar in Step~2.

\textit{Step~1. An integral bound.} Fix $1\leq i \leq m$.  Let $\CG_i^+$ (resp.\ $\CG_i^-$) be the set of squares in $\CW_i$ which are hit by a hyperbolic geodesic in $U_i$ from $z_i = \cen(Q_i)$ to a point of $G_i^+$ (resp.\ $G_i^-$).  For each $Q \in \CW_i$, let $\SH_i(Q)$ be the shadow cast on $\partial U_i$ by the hyperbolic geodesics $\gamma_{i,w}$ from $z_i$ to boundary points $w \in \partial U_i$ which pass through $Q$ and let $\SH_i^+(Q) = \SH_i(Q) \cap G_i^+$ and $\SH_i^-(Q) = \SH_i(Q) \cap G_i^-$.  Moreover,  we set $s_i^+(Q) = \diam(\SH_i^+(Q))$ and $s_i^-(Q) = \diam(\SH_i^-(Q))$. We fix $w \in G_i^+$.  By Lemma~\ref{lem:hyperbolic_path_length_bound}, there exists a constant $c_0 > 0$ so that
\begin{equation}
\label{eqn:f_gamma_len_bound}
\length(f(\gamma_{i,w})) \leq c_0 \sum_{Q \in \gamma_{i,w}} \len(Q) |f'|(Q).
\end{equation}

By integrating~\eqref{eqn:f_gamma_len_bound} over $I_i$ against $\ol{\mu}_i$, we see that
\begin{align}
	\int \length(f(\gamma_{i,w})) \one_{G_i^+}(w) d\ol{\mu}_i(w) 
	&\lesssim  \int \sum_{Q \in \gamma_{i,w}} \len(Q) |f'|(Q) d\ol{\mu}_i(w) \nonumber \\
	&\lesssim \sum_{Q \in \CG_i^+} \ol{\mu}_i(\SH_i^+(Q)) \len(Q) |f'|(Q) \nonumber\\
	&\lesssim 2^{-k(1-a)} \sum_{Q \in \CG_i^+} \ol{\mu}_i(\SH_i^+(Q))^2 +  2^{k(1-a)} \sum_{Q \in \CG_i^+} \len(Q)^2 |f'|(Q)^2. \label{eqn:avg_length_bound0}
\end{align}
By~\eqref{it:intersection_lbd} we have that $\mu_i(I_i) \geq M^{-1} 2^{-d_i k}$ hence $\ol{\mu}_i(\SH_i^+(Q)) \leq M 2^{d_i k} \mu_i(\SH_i^+(Q))$.  Thus~\eqref{eqn:avg_length_bound0} is at most a ($M$-dependent) constant times
\begin{align}
	&\frac{2^{-k(1-a)}}{2^{-2d_i k}} \sum_{Q \in \CG_i^+} \mu_i(\SH_i^+(Q))^2 + 2^{k(1-a)} \int_{A \setminus X} |f'(u)|^2 du \nonumber \\
	&\lesssim 2^{(2d_i -1 + a)k} \sum_{Q \in \CG_i^+} \mu_i(\SH_i^+(Q))^2 + 2^{k(1-a)} \int_{A \setminus X} |f'(u)|^2 du. \label{eqn:avg_length_bound}
\end{align}
We similarly have that
\begin{align*}
\int \length(f(\gamma_{i,w})) \one_{G_i^-}(w) d\ol{\mu}_{i-1}(w) \lesssim 2^{(2d_{i-1}-1+a)k}\sum_{Q \in \CG_i^-}\mu_{i-1}(\SH_i^-(Q))^2 + 2^{(1-a)k} \int_{A \setminus X} |f'(u)|^2 du.
\end{align*}
Note that the implicit constants depend only on $M$.  This leaves us to deal with the first term in~\eqref{eqn:avg_length_bound} (and analogously the first term on the right hand side of the above).

\textit{Step~2. Bounding $\mu_i(\SH_i^+(Q))$.}   By~\eqref{it:intersection_ubd}, we have that $\mu_i(\SH_i^+(Q)) \leq M s_i^+(Q)^{d_i-a}$.  Therefore the first term in~\eqref{eqn:avg_length_bound} is at most a constant (depending only on $M$) times
\begin{align*}
2^{(2d_i -1 + a)k}\sum_{Q \in \CG_i^+} s_i^+(Q)^{2(d_i-a)}.
\end{align*}
We shall now bound this term. Fix $Q \in \CG_i^+$ such that $s_i^+(Q) >0$ and $w_1= w_1(Q)$ and $w_2=w_2(Q)$ such that $w_1,  w_2 \in \SH_i^+(Q)$ and $|w_2 - w_1| \geq s_i^+(Q) / 2$.  Then by the definition of $\SH_i^+(Q)$,  the hyperbolic geodesics $\gamma_{i,w_1}$ and $\gamma_{i,w_2}$ intersect only squares in $\CG_i^+$.  Let $(Q_Q^{j,w_1})_{j \geq 0}$ (resp.\ $(Q_Q^{j,w_2})_{j \geq 0}$) be the sequence of squares in $\CG_i^+$, starting from $Q_i$ which $\gamma_{i,w_1}$ (resp.\ $\gamma_{i,w_2}$) hits.  Then there exist $j_1,j_2 \in \N$ such that $Q = Q_Q^{j_1,w_1} = Q_Q^{j_2,w_2}$. Writing $(\wh{Q}_Q^j)_{j \geq 0} \coloneqq (Q_Q^{j+j_1,w_1})_{j \geq 0}$ and $(\wc{Q}_Q^j)_{j \geq 0} \coloneqq (Q_Q^{j+j_2,w_2})_{j \geq 0}$,  we obtain a bi-infinite sequence in $\CG_i^+$ connecting $w_1$ and $w_2$.  Since $\diam (Q') = \sqrt{2} \len(Q')$ we have by~\eqref{eq:neighbor_squares_sidelength} that
\begin{align}
\label{eqn:good_shadow_diam_bound}
s_i^+(Q) \lesssim \sum_{j \geq 1} \len(\wh{Q}_Q^j) + \sum_{j \geq 1} \len(\wc{Q}_Q^j),
\end{align}
where the implicit constant is universal.  We will now consider the cases $d_i -a < 1$ and $d_i -a \geq 1$ separately, first focusing on the latter, and will show in both cases that
\begin{align}
\label{eqn:q_in_cg_i_plus_bound}
2^{(2d_i - 1 + a)k}\sum_{Q \in \CG_i^+} s_i^+(Q)^{2(d_i - a)} \lesssim 2^{(3a-1)k}.
\end{align}

\noindent{\it Step 2(i). $d_i -a \geq 1$.} As $s_i^+(Q) \leq \diam(A) \lesssim 2^{-k}$ we have that
\begin{align}
\sum_{Q \in \CG_i^+} s_i^+(Q)^{2(d_i - a)}
&=\sum_{Q \in \CG_i^+} s_i^+(Q)^{2(d_i - a - 1)} s_i^+(Q)^2 
 \lesssim 2^{-2(d_i - a -1)k} \sum_{Q \in \CG_i^+} s_i^+(Q)^2 \nonumber\\
&\lesssim 2^{-2(d_i - a - 1)k}\left(\sum_{Q \in \CG_i^+} \left(\sum_{j \geq 1} \len(\wh{Q}_Q^j)\right)^2 + \sum_{Q \in \CG_i^+} \left(\sum_{j \geq 1}\len(\wc{Q}_Q^j)\right)^2 \right) \quad\text{(by~\eqref{eqn:good_shadow_diam_bound})}. \label{eqn:sum_shadow_cubed_bound}
\end{align}
We will only bound the first term in~\eqref{eqn:sum_shadow_cubed_bound}; the other term in~\eqref{eqn:sum_shadow_cubed_bound} is bounded in an analogous manner.  See also~\cite[Page~275]{js2000removability}  for a similar calculation.

For $Q' \in \CW_i$, we let 
\begin{align*}
	q_i(Q') = \frac{1}{\len(Q')^2} \int_{Q'} \disthyp^{U_i}(z,z_i) d\Leb_2(z).
\end{align*}
We assume that $(\wh{Q}_Q^j)$ is ordered according to when the squares are hit by $\gamma_{i,w_1}$ after it first hits $Q$ (breaking ties using any fixed convention).
It follows from Lemma~\ref{lem:q_i_approx} that
\begin{equation}
\label{eqn:q_i_universal_bound}
q_i(\wh{Q}_Q^j) \asymp q_i(Q) + j \quad \text{for all} \quad j \in \N_0.
\end{equation}
Consequently,
\begin{align}
\sum_{Q \in \CG_i^+}\left( \sum_{j \geq 1}\len(\wh{Q}_Q^j)\right)^2
=& \sum_{Q \in \CG_i^+}\left( \sum_{j \geq 1}\len(\wh{Q}_Q^j) q_i(\wh{Q}_Q^j)^{3/4} q_i(\wh{Q}_Q^j)^{-3/4}\right)^2 \nonumber\\
\leq& \sum_{Q \in \CG_i^+}\left( \sum_{j \geq 1}\len(\wh{Q}_Q^j)^2 q_i(\wh{Q}_Q^j)^{3/2}\right) \left( \sum_{j \geq 1} q_i(\wh{Q}_Q^j)^{-3/2} \right) \quad\text{(by Cauchy-Schwarz)} \nonumber\\
\lesssim& \sum_{Q \in \CG_i^+}\left( \sum_{j \geq 1}\len(\wh{Q}_Q^j)^2 q_i(\wh{Q}_Q^j)^{3/2}\right) \left( \sum_{j \geq 1} (q_i(Q)+j)^{-3/2} \right) \quad\text{(by~\eqref{eqn:q_i_universal_bound})}\nonumber \\
\lesssim& \sum_{Q \in \CG_i^+} \left( \sum_{j \geq 1} \len(\wh{Q}_Q^j)^2 q_i(\wh{Q}_Q^j)^{3/2}\right) (q_i(Q) + 1)^{-1/2}. \label{eqn:cube_square_bound0}
\end{align}
For $\wt{Q} \in \CG_i^+$, we let $\CQ_{i,\wt{Q}}^+$ be the set of $Q \in \CG_i^+$ such that $\wt{Q} = \wh{Q}_Q^j$ for some $j \in \N_0$.  In other words,~$\CQ_{i,\wt{Q}}^+$ is the set of $Q \in \CG_i^+$ such that the sequence of squares $(\wh{Q}_Q^j)$ contains $\wt{Q}$.  Then~\eqref{eqn:cube_square_bound0} is at most a constant times
\begin{align}
\sum_{Q, \wt{Q} \in \CG_i^+} \len(\wt{Q})^2 q_i(\wt{Q})^{3/2} (q_i(Q) + 1)^{-1/2} \one_{Q \in \CQ_{i,\wt{Q}}^+} = \sum_{\wt{Q} \in \CG_i^+}\len(\wt{Q})^2 q_i(\wt{Q})^{3/2} \sum_{Q \in \CQ_{i,\wt{Q}}^+}(q_i(Q) + 1)^{-1/2}. \label{eqn:cube_square_bound}
\end{align}

We claim that
\begin{align}
\label{eqn:geo_hit_bound}
	 \sum_{Q \in \CQ_{i,\wt{Q}}^+}(q_i(Q) + 1)^{-1/2} \lesssim \sum_{j=1}^{q_i(\wt{Q})} j^{-1/2} \lesssim q_i(\wt{Q})^{1/2}
\end{align}
for every $\wt{Q} \in \CG_i^+$ distinct from $Q_i$,  where the implicit constants are universal.  Indeed,  first we let $\wt{z} = \cen(\wt{Q})$.  Lemma~\ref{lem:q_i_approx} implies that there exists a universal constant $C > 1$ such that
\begin{align*}
C^{-1}(q_i(Q) + j) \leq q_i(\wt{Q}) \leq C (q_i(Q) + j) \quad\text{for all}\quad Q \in \CQ_{i,\wt{Q}}^+ \quad\text{with}\quad \wt{Q} = \wh{Q}_Q^j.
\end{align*}
Recalling from Section~\ref{subsec:whitney_hyperbolic} that $\disthyp^{U_i}(z_i,z) \asymp d_{\CW_i}(z_i,\cen(Q))$ for all $z \in Q$, where the implicit constant is universal, we have that $\disthyp^{U_i}(z_i,z) \lesssim q_i(Q)$ for all $z \in Q$.  Hence by taking~$C$ sufficiently large,  we can assume that $\disthyp^{U_i}(z_i,z) \leq C q_i(Q)$ for all $z \in Q$.  Set
\begin{equation}
\label{eqn:n_def}
n = \lfloor C q_i(\wt{Q}) \rfloor +1
\end{equation}
and for $1\leq j \leq n$,  we let $\CF_{j,\wt{Q}}^i$ be the set of squares $Q$ in $\CW_i$ for which a hyperbolic geodesic (parameterized by hyperbolic length) from $z_i$ to some point of $\wt{Q}$ passes through $Q$ at some time $t \in [j-1,j)$.  Then we have that
\begin{align*}
\sum_{Q \in \CQ_{i,\wt{Q}}^+} (q_i(Q) + 1)^{-1/2} \leq \sum_{j=1}^n \sum_{Q \in \CF_{j,\wt{Q}}^i} (q_i(Q) + 1)^{-1/2}\lesssim \sum_{j=1}^n |\CF_{j,\wt{Q}}^i| j^{-1/2}
\end{align*}
where the implicit constants are universal.  Combining this with Lemma~\ref{lem:hyperbolic_geo_intersection} completes the proof of~\eqref{eqn:geo_hit_bound}.

Combining~\eqref{eqn:cube_square_bound} with~\eqref{eqn:geo_hit_bound} we have that
\begin{align*}
\sum_{Q \in \CG_i^+}\left( \sum_{j \geq 1}\len(\wh{Q}_Q^j)\right)^2 &\lesssim \sum_{\wt{Q} \in \CG_i^+} \len(\wt{Q})^2 q_i(\wt{Q})^{3/2} q_i(\wt{Q})^{1/2} = \sum_{\wt{Q} \in \CG_i^+} \len(\wt{Q})^2 q_i(\wt{Q})^2 \\
 &\lesssim \sum_{\wt{Q} \in \CG_i^+}\len(\wt{Q})^2 \disthyp^{U_i}(\wt{Q},z_i)^2.
\end{align*}
For the last inequality we used that $q_i(\wt{Q}) \lesssim \disthyp^{U_i}(\wt{Q},z_i)$ for every $\wt{Q} \in \wt{\CW}_i$ and some universal implicit constant.

Note that $\diam(Q) \leq \diam(A) \leq M2^{-k} \leq 2^{-k_0}$ where $k_0 = \lfloor k - \log_2(M) \rfloor$.
For $j \geq k_0$,  we let $\CN_{i,j}^+$ be the family of squares in $\CG_i^+$ of side length $2^{-j}$.  By~\eqref{it:g_meas_ubd}, we have that $|\CN_{i,j}^+| \leq M 2^{(d_i + a)(j-k)}$.  Using that $d_i +3a -2 < a$ in the final inequality, we thus have
\begin{align}
&\sum_{\wt{Q} \in \CG_i^+} \len(\wt{Q})^2 \disthyp^{U_i}(\wt{Q},z_i)^2
= \sum_{j \geq k_0} \sum_{\wt{Q} \in \CN_{i,j}^+} \len(\wt{Q})^2 \disthyp^{U_i}(\wt{Q},z_i)^2 \notag\\
&\lesssim \sum_{j \geq k_0} \underbrace{2^{-2j}}_{\len(\wt{Q})} \cdot \underbrace{2^{(d_i + a)(j-k)}}_{|\CN_{i,j}^+|} \cdot \underbrace{2^{2a(j-k)}}_{\wt{Q} \in \CG_i^+}
 \lesssim 2^{-2k} \sum_{j \geq k_0} 2^{(d_i + 3a -2)(j-k)}
 \lesssim 2^{-2k}. \label{eqn:wt_q_gi_plus_ubd}
\end{align}
Combining the above and repeating the same argument for $(\wc{Q}_Q^j)$,  it follows that
\begin{align*}
2^{(2d_i - 1 + a)k} \sum_{Q \in \CG_i^+} s_i^+(Q)^{2(d_i - a)} \lesssim 2^{(2d_i - 1 + a)k} \cdot \underbrace{2^{-2(d_i - a - 1)k}}_{\eqref{eqn:sum_shadow_cubed_bound}} \cdot \underbrace{2^{-2k}}_{\eqref{eqn:wt_q_gi_plus_ubd}} \leq 2^{(3a-1)k}\quad \text{if} \quad d_i - a \geq 1.
\end{align*}
That is, we have proved~\eqref{eqn:q_in_cg_i_plus_bound} in the case $d_i-a \geq 1$.

\noindent{\it Step 2(ii). $0 < d_i -a < 1$.}  We have that
\begin{equation}\label{eqn:sum_shadow_cubed_bound2}
\sum_{Q \in \CG_i^+} s_i^+(Q)^{2(d_i - a)} \lesssim \sum_{Q \in \CG_i^+} \left ( \left ( \sum_{j \geq 1} \len(\wh{Q}_Q^j) \right )^{2(d_i - a)} + \left ( \sum_{j \geq 1} \len(\wc{Q}_Q^j) \right )^{2(d_i - a)} \right ).
\end{equation}
Again,  we will only bound the first term in~\eqref{eqn:sum_shadow_cubed_bound2}.  We have that
\begin{align}
&\sum_{Q \in \CG_i^+} \left ( \sum_{j \geq 1} \len(\wh{Q}_Q^j) \right )^{2(d_i - a)}  = \sum_{Q \in \CG_i^+} \left ( \sum_{j \geq 1} \len(\wh{Q}_Q^j) q_i(\wh{Q}_Q^j)^{3/4} q_i(\wh{Q}_Q^j)^{-3/4} \right )^{2(d_i - a)} \nonumber \\
\leq& \sum_{Q \in \CG_i^+} \left ( \sum_{j \geq 1} \len(\wh{Q}_Q^j)^2 q_i(\wh{Q}_Q^j)^{3/2} \right )^{d_i - a} \left ( \sum_{j \geq 1} q_i(\wh{Q}_Q^j)^{-3/2} \right )^{d_i - a} \quad \text{(by Cauchy-Schwarz)} \nonumber\\
\lesssim& \sum_{Q \in \CG_i^+} \left ( \sum_{j \geq 1} \len(\wh{Q}_Q^j)^2 q_i(\wh{Q}_Q^j)^{3/2} \right )^{d_i - a} \left ( \sum_{j \geq 1} (q_i(Q) + j)^{-3/2} \right )^{d_i - a} \quad \text{(by~\eqref{eqn:q_i_universal_bound})} \label{eqn:cube_square_bound2_0}
\end{align}
Since $(x_1+\cdots+ x_n)^p \leq x_1^p+\cdots +x_n^p$ for all $n \in \N$,  $p \in (0,1)$ and $x_1,\ldots ,x_n \geq 0$, we have that~\eqref{eqn:cube_square_bound2_0} is at most a constant times
\begin{align}
 & \sum_{Q \in \CG_i^+} \left ( \sum_{j \geq 1} \len(\wh{Q}_Q^j)^{2(d_i  -a)} q_i(\wh{Q}_Q^j)^{3(d_i - a)/2} \right ) (q_i(Q) + 1)^{-(d_i - a)/2} \nonumber\\
&\lesssim \sum_{\wt{Q} \in \CG_i^+} \len(\wt{Q})^{2(d_i - a)}q_i(\wt{Q})^{3(d_i - a)/2} \sum_{Q \in \CQ_{i,\wt{Q}}^+} (q_i(Q) + 1)^{-(d_i - a) / 2}. \label{eqn:cube_square_bound2}
\end{align}

Arguing as in the case $d_i -a \geq 1$ and with $n$ as in~\eqref{eqn:n_def}, we have that
\begin{align}
\sum_{Q \in \CQ_{i,\wt{Q}}^+} (q_i(Q) + 1)^{-(d_i - a) /2} \leq& \sum_{j=1}^{n} \sum_{Q \in \CF_{j,\wt{Q}}^i} (q_i(Q) + j)^{-(d_i - a)/2} 
\lesssim \sum_{j=1}^{n} |\CF_{j,\wt{Q}}^i| j^{-(d_i - a)/2} \nonumber\\
\lesssim&  \sum_{j=1}^{n} j^{-(d_i - a) / 2}
\lesssim  q_i(\wt{Q})^{1- (d_i - a)/2}. \label{eqn:geo_hit_bound2}
\end{align}

Combining~\eqref{eqn:cube_square_bound2} with~\eqref{eqn:geo_hit_bound2},  we obtain that
\begin{align*}
\sum_{Q \in \CG_i^+} \left ( \sum_{j \geq 1} \len(\wh{Q}_Q^j) \right )^{2(d_i - a)} &\lesssim \sum_{\wt{Q} \in \CG_i^+} \len(\wt{Q})^{2(d_i - a)} q_i(\wt{Q})^{d_i - a +1} \\
&\lesssim \sum_{\wt{Q} \in \CG_i^+} \len(\wt{Q})^{2(d_i - a)} \disthyp^{U_i}(\wt{Q},z_i)^{d_i - a + 1}.
\end{align*}

Therefore if $0 < d_i - a <1$,  we have that
\begin{align*}
\sum_{\wt{Q} \in \CG_i^+} \len(\wt{Q})^{2(d_i - a)} \disthyp^{U_i}(z_i , \wt{Q})^{d_i - a +1} &= \sum_{j \geq k_0} \sum_{\wt{Q} \in \CN^+_{i,j}} \len(\wt{Q})^{2(d_i - a)} \disthyp^{U_i}(z_i,\wt{Q})^{d_i - a + 1} \\
& \lesssim \sum_{j \geq k_0} \underbrace{2^{-2j(d_i-a)}}_{\len(\wt{Q})} \underbrace{2^{(j-k)(d_i+a)}}_{| \CN_{i,j}^+|} \underbrace{2^{(d_i-a+1)a(j-k)}}_{\wt{Q} \in \CG_i^+}  \\
&\lesssim 2^{-2(d_i - a)k} \sum_{j \geq k_0} 2^{(j-k)(d_i + a + a(d_i - a + 1) - 2(d_i - a))} \lesssim 2^{-2(d_i - a)k}
\end{align*}
since $d_i + a + a(d_i - a + 1) -2d_i + 2a \leq -d_i + 5a <-a$ and the implicit constant depends only on~$M$ and~$a$.  Altogether, we have shown that
\begin{align*}
2^{(2d_i - 1 + a)k} \sum_{Q \in \CG_i^+} s_i^+(Q)^{2(d_i-a)} \lesssim 2^{(2d_i - 1 + a)k} \cdot 2^{-2(d_i - a)k} = 2^{(3a-1)k}\quad \text{if}\quad  0 < d_i - a <1.
\end{align*}
That is, we have proved~\eqref{eqn:q_in_cg_i_plus_bound} in the case $0 < d_i-a < 1$.

\textit{Step~3. Combining the above bounds.} Finally, combining the bounds of Steps~1 and~2, we obtain that there exists $\wt{M}_0 > 0$ depending only on $M$ and $a$ so that
\begin{equation}\label{eq:length_bound}
\int \text{length}(f(\gamma_{i,w}))\one_{G_i^+}(w)d\ol{\mu}_i(w)  \leq \wt{M}_0 \left( 2^{-(1-3a)k} + 2^{k(1-a)} \int_{A \setminus X} |f'(w)|^2 d\Leb_2(w) \right).
\end{equation}
A similar bound holds when we replace $G_i^+$ by $G_i^-$ and~$\ol{\mu}_i$ by~$\ol{\mu}_{i-1}$.  By applying Markov's inequality,  we obtain that the fraction of points in $I_i$ (with respect to $\ol{\mu}_i$) which satisfy~\eqref{eq:length_bound} but with $\wt{M}_0$ replaced by $16 \wt{M}_0$ is at least $11/16$ and similarly for the set $I_{i-1}$ and the measure $\ol{\mu}_{i-1}$.  By also applying the above reasoning to the domain $U_{i+1}$,  we can find $w \in I_i$ and paths $\gamma_{i,w}$ and $\gamma_{i+1,w}$ in~$U_i$ and~$U_{i+1}$ respectively which connect $w$ to $z_{i}$ and $z_{i+1}$ respectively and such that
\begin{align*}
\text{length}(f(\gamma_{j,w})) \leq 16 \wt{M}_0 \left( 2^{-(1-3a)k} + 2^{k(1-a)} \int_{A \setminus X} |f'(w)|^2 d\Leb_2(w) \right) \quad \text{for} \quad j \in \{i,i+1\}.
\end{align*}
By repeating the above procedure for all $i$,  and concatenating the paths $\gamma_{i,w}$, we obtain a path satisfying the desired bound with $M_0  = 16 M \wt{M}_0$.
\end{proof}

\subsection{Proof of Theorem~\ref{thm:removability_of_X}}

\emph{Step 1. Setup.} Let $f \colon D \to \C$ be a homeomorphism onto its image which is conformal on $D \setminus X$.  We will first show that $f$ satisfies the ACL property on $D$ and then deduce that $f$ is conformal on $D$.

We fix a rectangle $F = [x_1,x_2] \times [y_1,y_2] \subseteq D$ and assume that $K \subseteq X$ is compact and contains $F \cap X$.  Let $M, a$ be the constants for $K$ as described before the statement of Theorem~\ref{thm:removability_of_X}.  We note that the assumption $0 < a < (2-\ol{\dim}_{\mathrm{M}}(K))/5$ implies in particular that $a \in (0,2/5)$.

For $t \in [y_1,y_2]$ we let $L_t = \{s+it : s \in \R\}$.  Since~$f$ is conformal on $D \setminus X$, with $\mathrm{J}_f$ the Jacobian of~$f$, we have that $\int_E \mathrm{J}_f (z) d\Leb_2(z) = \Leb_2(f(E))$ for every Borel set $E \subseteq D \setminus X$.  Note that $\Leb_2(X)=0$.  Indeed,  since $X$ is closed in $D$,  it follows that there exists an increasing sequence of compact sets $(K_n)_{n\geq 1}$ so that $X = \cup_{n\geq 1} K_n$.  By our assumptions,  each $K_n$ has upper Minkowski dimension strictly less than $2$ hence $\Leb_2(K_n) = 0$.  Therefore $\Leb_2(X) = 0$ as a countable union of sets of Lebesgue measure $0$. Moreover, since $\Leb_2(X) = 0$, we obtain that $\mathrm{J}_f$ is defined Lebesgue a.e.\ and $\int_E \mathrm{J}_f(z) d\Leb_2(z) \leq \Leb_2(f(E))$ for every Borel set $E \subseteq D$.  In particular, $\int_F \mathrm{J}_f(z) d\Leb_2(z) < \infty$ since $\Leb_2(f(F)) < \infty$ as $f$ is continuous, and since $|f'|^2 = \mathrm{J}_f$ Lebesgue a.e., it follows that there exists $\wt{M}$ finite such that
\begin{align}
\label{eq:bound_on_derivatives}
 \int_{F}|f'(w)|^2 d \Leb_2(w) \leq \wt{M}.
\end{align}

Next, we fix $\xi \in (0,1)$ small and for $N \in \N$ and $t \in [y_1,y_2]$ we consider the following property:
\begin{enumerate}
\item[$(P_{t,N})$] There exists $\delta > 0$ such that for every collection $((r_i,s_i))_i$ of pairwise disjoint intervals in $L_t \cap F$ such that $\sum_i |s_i - r_i| < \delta$ we have that $\sum_i |f(s_i) - f(r_i)| < \frac{1}{N}$.
\end{enumerate}
We will show that the Lebesgue measure of the set of $t \in [y_1,y_2]$ such that $(P_{t,N})$ holds for every $N \in \N$ is at least $(1-\xi)(y_2 - y_1)$. 

\noindent{\it Step 2. Typical lines are good.} 

\noindent{\it Step 2a. Bound on the number of intersections of $K$ with a typical line.} For $k \in \N$,  $t \in [y_1,y_2]$,  we partition the line $L_t \cap F$ into segments of length $2^{-k}$ with dyadic endpoints. We assume (without loss of generality) that $x_2-x_1 = 2^{n'}$ for some $n' \in \Z$.  We also let $I_{1,k}^t,\dots ,  I_{n_t(k),k}^t$ be the subfamily of the above intervals that $K$ intersects and we set $I_{j,k}^t = [a_{j,k}^t,  b_{j,k}^t]$ for $j = 1,\dots,n_t(k)$.  Since the upper Minkowski dimension of $K$ is at most $2-5a$,  it follows that $\limsup_{s \to \infty} \cont_{2-4a}(K;s) = 0$.  In particular, there exists a finite constant $c_1$ such that
\begin{align*}
\Leb_2(\{z \in \C : \dist(z,K) \leq r \}) \leq c_1 r^{4a} \quad \text{for all}\quad r \in (0,1).
\end{align*}
Then we have for all $k \in \N$ that
\begin{align*}
\int_{y_1}^{y_2} 2^{-k}n_t(k) d\Leb_1(t) \leq \Leb_2(\{z \in \C : \dist(z,K) \leq 2^{-k} \}) \leq c_1 2^{-4a k}.
\end{align*}
Fix $c_2 > 0$ (independent of $k$) and set $T_k = \{t \in [y_1,y_2] : n_t(k) > c_2 2^{(1-4a)k}\}$.  Then, taking $c_2$ sufficiently large (but still independent of $k$), Markov's inequality implies that $\Leb_1(T_k) \leq c_1 / c_2 \leq \xi (y_2 - y_1) / 3$ for all $k \in \N$.

\noindent{\it Step 2b. Bound on the integral of $|f'|^2$ on a neighborhood of a typical line.}  By Fubini's theorem, we have for a constant $\wt{c} > 0$ that
\begin{align}\label{eq:bound_on_derivative_on_ngbd}
	\int_{y_1}^{y_2}	 \int_{\neigh{L_t}{M 2^{1-(1-a^2)k}} \cap F}|f'(w)|^2 d\Leb_2(w) dt \leq \wt{c} \wt{M} 2^{-(1-a^2)k}
\end{align}

Consequently, by taking $c_3$ large (independently of $k$) the Lebesgue measure of the set of $t \in [y_1,y_2]$ such that
\begin{equation}\label{eq:bound_on_derivative_on_ngbd}
	\int_{\neigh{L_t}{M 2^{1-(1-a^2)k}} }|f'(w)|^2 d\Leb_2(w) \leq c_3 2^{-(1-a/2)k}
\end{equation}
holds is at least $(y_2-y_1)(1-\xi/3)$. Thus, the (one-dimensional) Lebesgue measure of the intersection of $[y_1,y_2] \setminus T_k$ with the set of $t \in [y_1,y_2]$ such that~\eqref{eq:bound_on_derivative_on_ngbd} holds is at least $(1-2\xi / 3)(y_2-y_1)$ for all $k$ sufficiently large.

\noindent{\it Step 2c. Bound on the integral of $|f'|^2$ on a  typical line.} We similarly have for all $c_4 > 0$ sufficiently large (depending only on $\wt{M}$ and $\xi$) that
\[ \Leb_1\left(\left\{ t \in [y_1,y_2] : \int_{L_t \cap F} |f'(w)|^2 d\Leb_1(w) \geq c_4 \right\} \right) \leq \frac{\wt{M}}{c_4} < \frac{\xi}{3} (y_2-y_1).\]

\noindent{\it Step 2d. All three properties hold on a typical line.}  Combining the above, we obtain that the one-dimensional Lebesgue measure of $t \in [y_1,y_2]$ such that the following hold is at least $(1-\xi)(y_2-y_1)$:
\begin{enumerate}[(A)]
\item \label{it:good_intersections} $n_t(k) \leq c_2 2^{(1-4a)k}$,
\item \label{it:good_bound_on_ngbds} $\int_{\neigh{L_t}{M 2^{1-(1-a^2)k}} }|f'(w)|^2d\Leb_2(w) \leq  c_3 2^{-(1-a/2)k}$,  and
\item \label{it:good_bound_on_lines} $\int_{L_t \cap F} |f'(w)|^2 d\Leb_1(w) \leq c_4$.
\end{enumerate}

\emph{Step 3. Bounding sums of over subintervals.} We fix $t \in [y_1,y_2]$ and for $1 \leq j \leq n_t(k)$ we fix $z_{j,k}^t \in K \cap I_{j,k}^t$.  Then there exists $k_j \in \N$ such that $(1-a^2) k \leq k_j \leq k$ and $A(z_{j,k}^t,k_j) \in \CA_{k_j}$ and the properties of $K$ stated right above Theorem~\ref{thm:removability_of_X} hold for $A(z_{j,k}^t,k_j)$ and $B(z_{j,k}^t,2^{-k})$ is contained in the bounded component of $\C \setminus A(z_{j,k}^t,k_j)$.  Note that $z_{j,k}^t \in L_t \cap F$ and so $A(z_{j,k}^t,k_j) \subseteq \neigh{L_t}{M 2^{1-(1-a^2)k}}$ and $\bIn A(z_{j,k}^t,k_j)$ surrounds $I_{j,k}^t$ provided $k$ is sufficiently large. Throughout this step, we assume that~\eqref{it:good_intersections}, \eqref{it:good_bound_on_ngbds}, and~\eqref{it:good_bound_on_lines} hold.

Next, we fix a collection $([r_i,s_i])_i$ of pairwise disjoint (except possibly at their endpoints) subintervals of $L_t \cap F$ such that $\sum_i |s_i - r_i| < \delta$ and $|s_i - r_i| \geq 2^{-k}$ for all $i$.  Here,  $\delta > 0$ is small but fixed (to be chosen independently of $k$ and depending only on $N$, $c_1,\ldots,c_4$, and the implicit constants $M$, $a$ of the properties of $K$).  Note that each interval $I_{j,k}^t$ can contain at most one of the $[r_i,s_i]$.  By subdividing each of the $[r_i,s_i]$ if necessary (which only makes the sum $\sum_i |f(s_i)-f(r_i)|$ larger),  we can further assume that each $[r_i,s_i]$ is either contained in one of the intervals $I_{j,k}^t$ or is disjoint from all of them,  except possibly at its endpoints.  After performing this subdivision,  each $I_{j,k}^t$ can contain at most two of the $[r_i,s_i]$.  Moreover, if we have an interval $[r_i,s_i]$ which does not intersect one of the $I_{j,k}^t$,  then clearly $|f(s_i) - f(r_i)| \leq \int_{[r_i,s_i]}|f'(w)| d\Leb_1(w)$. On the other hand, if we have an interval $[r_i,s_i]$ which is contained in one of the $I_{j,k}^t$ and $\gamma_{j,k}$ is the path in $A(z_{j,k}^t,k_j)$ from Lemma~\ref{lem:path_good} then since $\gamma_{j,k}$ surrounds $I_{j,k}^t$, we have that $|f(s_i) - f(r_i)| \leq \diam(f(\gamma_{j,k}))$.  Hence, it follows that
\begin{equation}
\label{eq:basic_bound}
\sum_i |f(s_i) - f(r_i)| \leq \int_{\cup_i [r_i,s_i]} |f'(w)|d\Leb_1(w) + 2\sum_{j=1}^{n_t(k)}\diam(f(\gamma_{j,k})).
\end{equation}
By Lemma~\ref{lem:path_good}, we have that
\begin{align*}
\diam(f(\gamma_{j,k})) &\leq M_0 \left(2^{-(1-3a)k_j} + 2^{(1-a)k_j} \int_{A(z_{j,k}^t,k_j)}|f'(w)|^2 d\Leb_2(w) \right) \\
&\leq M_0 \left( 2^{-(1-a^2)(1-3a)k} + 2^{(1-a)k}\int_{A(z_{j,k}^t,k_j)}|f'(w)|^2 d\Leb_2(w) \right)
\end{align*}
for all $j$. The Cauchy-Schwarz inequality together with~\eqref{it:good_bound_on_lines} implies that the first term in the right-hand side of~\eqref{eq:basic_bound} is at most $\delta^{1/2} \left( \int_{L_t \cap F}|f'(w)|^2 d\Leb_1(w) \right)^{1/2} \leq \delta^{1/2} c_4^{1/2}$.  Together with~\eqref{it:good_intersections},  this implies that 
\begin{align*}
&\sum_i |f(s_i) - f(r_i)| \\
&\leq c_4^{1/2} \delta^{1/2} + 2M_0 \left( n_t(k) 2^{-(1-a^2)(1-3a) k} + 2^{(1-a)k} \int \sum_{j=1}^{n_t(k)} \one_{A(z_{j,k}^t,k_j)}(w) |f'(w)|^2 d\Leb_2(w) \right)\\
&\leq c_4^{1/2} \delta^{1/2} + 2M_0 \left(c_2 2^{-(a-a^2+3a^3)k} + 2^{(1-a)k}\int \sum_{j=1}^{n_t(k)} \one_{A(z_{j,k}^t,k_j)}(w) |f'(w)|^2 d\Leb_2(w) \right).
\end{align*}
Note also that if we pick a point $z \in L_t \cap F$,  then $\sum_{j=1}^{n_t(k)} \one_{A(z_{j,k}^t,k_j)}(z)$ is equal to the number of annuli $A(z_{j,k}^t,k_j)$ which contain $z$.  We fix such an annulus $A(z_{i,k}^t,k_i)$.  Noting that $A(z_{j,k}^t,k_j) \subseteq B(z_{j,k}^t , M2^{-(1-a^2)k})$ and that the balls $B(z_{i,k}^t, M 2^{-(1-a^2)k}),  B(z_{j,k}^t,  M 2^{-(1-a^2)k})$ intersect only if $|z_{i,k}^t - z_{j,k}^t| \leq M 2^{1-(1-a^2)k}$,  we have that $\sum_{j=1}^{n_t(k)}\one_{A(z_{j,k}^t,k_j)}(z)$ is at most the number of $j$ such that $B(z_{j,k}^t,M 2^{-(1-a^2)k})$ intersects $B(z_{i,k}^t,M 2^{-(1-a^2)k})$. This, in turn, is at most $M 2^{1+a^2k}+1 \leq M 2^{2+a^2k}$.  It thus follows that
\begin{align*}
\sum_i |f(s_i) - f(r_i)| \lesssim \delta^{1/2} + 2^{-(a-a^2+3a^3)k} + 2^{(1-a+a^2)k} \int_{\cup_{j=1}^{n_t(k)} A(z_{j,k}^t,k_j)} |f'(w)|^2 d\Leb_2(w).
\end{align*}
Since each set $A(z_{j,k}^t,k_j)$ is contained in $\neigh{L_t}{M 2^{1-(1-a^2)k}}$ we have by~\eqref{it:good_bound_on_ngbds} that
\begin{align*}
\sum_i |f(s_i) - f(r_i)| &\lesssim \delta^{1/2} + 2^{-(a-a^2 + 3a^3)k} + 2^{(1-a+a^2)k} \int_{\neigh{L_t}{M 2^{1-(1-a^2)k}}}|f'(w)|^2 d\Leb_2(w) \\
&\lesssim \delta^{1/2} + 2^{-(a-a^2 + 3a^3)k} + 2^{(a^2-a/2) k}
 < \frac{1}{N}
\end{align*}
for all $k \in \N$ sufficiently large and $\delta > 0$ is chosen small and independent of $k$.  Note also that the implicit constant in the above inequality is independent of $N$ and $k$. Since $a \in (0,2/5)$, it follows that both exponents are negative.

\emph{Step 4. Deducing that $f$ is ACL and hence conformal.} Let $G_{\delta,k,N}$ be the set of $t \in [y_1,y_2]$ such that $(P_{t,N})$ holds for $\delta$ and intervals $([r_i,s_i])_i$ such that $|s_i - r_i| \geq 2^{-k}$ for all $i$.  The above implies that for each $N \in \N$ and $\xi \in (0,1)$ there exists $k_0 = k_0(N) \in \N$ and $\delta_0 = \delta_0(N,\xi) > 0$ (i.e., $\delta_0$ depends on $N$, $\xi$ but $k_0$ depends only on $N$) such that for all $k \geq k_0$ and $\delta \in (0,\delta_0)$ we have that $\Leb_1(G_{\delta,k,N}) \geq (1-\xi) (y_2-y_1)$.  This implies that for all $k \geq k_0$ we have that $\Leb_1( \cup_{r \in \Q_+} G_{r,k,N}) = y_2 - y_1$.  As $\cup_{r \in \Q_+} G_{r,k,N}$ is decreasing in $k$, we have that $\Leb_1( \cap_{k=1}^\infty \cup_{r \in \Q_+} G_{r,k,N}) = y_2 - y_1$.  Note that if $(P_{t,N})$ holds for every $N$, i.e., $t \in \cap_{N=1}^\infty \cap_{k=1}^\infty \cup_{r \in \Q_+} G_{r,k,N}$ then $f$ is absolutely continuous on $L_t \cap F$.  Thus, as $\Leb_1( \cap_{N=1}^\infty \cap_{k=1}^\infty \cup_{r \in \Q_+} G_{r,k,N}) = y_2 - y_1$, we see that $f$ is absolutely continuous on $L_t \cap F$ for Lebesgue a.e.\ $t \in [y_1,y_2]$.  Finally,  by repeating the same argument for lines of the form $\{t + is : s \in \R\}$ for $t \in [x_1,x_2]$,  we obtain that Lebesgue a.e., $f$ is absolutely continuous on segments of the form $\{t + is : s \in \R\}$ for $t \in [x_1,x_2]$ or $\{it + s : s \in \R\}$ for $t \in [y_1,y_2]$.  It follows that $f$ has the ACL property on $D$.  Moreover, since $\Leb_2(X)=0$ and $f$ is $1$-quasiconformal on $D \setminus X$, it follows that $f$ is $1$-quasiconformal on $D$ and so conformal on $D$ as well.  This completes the proof of the theorem.
\qed

\appendix

\section{Whitney squares and the hyperbolic metric}
\label{app:whitney}

In this appendix, we will collect several facts about Whitney squares and the hyperbolic metric which are used in this article. 

\begin{lemma}
\label{lem:hyperbolic_ball_covered}
Let $D$ be a simply connected domain and $\CW$ a Whitney square decomposition of $D$. For $z \in D$ and $r>0$, let $\Bhyp^D(z,r) \coloneqq \{w \in D: \disthyp^D(z,w) < r \}$. Then, for each $r>0$, there exists a positive constant $M(r)$ such that for all $z \in D$, $\Bhyp^D(z,r)$ intersects at most $M(r)$ squares in $\CW$. Moreover, $M(r)$ is uniform in the choice of simply connected domain and Whitney square decomposition.
\end{lemma}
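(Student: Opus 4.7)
The plan is to reduce the statement to a combinatorial fact about the adjacency graph $G_\CW$. Suppose $w \in \Bhyp^D(z,r)$ and let $Q_j \in \CW$ be a square with $w \in Q_j$. If $Q_j = Q_{x(z)}$ there is nothing to check beyond recording one square, so assume $Q_j \neq Q_{x(z)}$. Since every Whitney square has bounded quasihyperbolic (and hence hyperbolic) diameter, there is a universal constant so that $\disthyp^D(z,\wt w) \leq \disthyp^D(z,w) + O(1)$ for any $\wt w \in Q_j^\dagger$. Combining~\eqref{eq:dist_hyp_qh_comparable} with~\eqref{eq:dist_qh_cube_comparable} (applied to $z$ and such a $\wt w$) gives universal constants $C_1,C_2>0$ so that
\begin{equation*}
d_\CW(x(z),x(w)) \leq C_1 r + C_2.
\end{equation*}
Therefore it suffices to bound the cardinality of the graph-ball of radius $\lceil C_1 r + C_2\rceil$ around $x(z)$ in $G_\CW$.

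Next I would show that $G_\CW$ has uniformly bounded vertex-degree, independently of $D$ and of the choice of decomposition. Fix $Q \in \CW$ and suppose $Q'$ is adjacent to $Q$, so $\partial Q \cap \partial Q'$ contains an open interval. By~\eqref{eq:neighbor_squares_sidelength} there is a universal $M>1$ with $M^{-1}\len(Q) \leq \len(Q') \leq M\len(Q)$. Along each of the four sides of $Q$, the Whitney squares $Q'$ meeting that side in an open interval have side lengths at least $M^{-1}\len(Q)$ and their interiors are pairwise disjoint; so the number of such $Q'$ along each side is bounded by a constant depending only on $M$. Summing over the four sides yields a universal upper bound $\Delta$ on the degree of every vertex of $G_\CW$.

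It follows by a trivial induction that the number of vertices of $G_\CW$ within graph-distance $n$ of any fixed vertex is at most $1 + \Delta + \Delta(\Delta-1) + \cdots + \Delta(\Delta-1)^{n-1} \leq (2\Delta)^n$. Combined with the first paragraph, we conclude that the number of $Q_j \in \CW$ with $Q_j \cap \Bhyp^D(z,r) \neq \emptyset$ is at most $1 + (2\Delta)^{\lceil C_1 r + C_2 \rceil}$, which depends only on $r$. Since the constants $C_1, C_2, \Delta$ are universal, this bound is uniform in the choice of $D$ and~$\CW$. There is no real obstacle in this argument; the only point requiring mild care is accommodating the square $Q_{x(z)}$ containing $z$ itself, which is handled by the additive $O(1)$ in passing from $w$ to the center of its Whitney square.
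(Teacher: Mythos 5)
Your proof is correct, and it takes a genuinely different route from the paper's. The paper works analytically: it sets $\phi \colon \D \to D$ with $\phi(0)=z$, uses the explicit description $\Bhyp^\D(0,r)=B(0,a(r))$ together with the distortion theorem to bound $\Leb_2(\Bhyp^D(z,r)) \lesssim |\phi'(0)|^2$, uses Koebe to bound $\len(Q)^2 \gtrsim |\phi'(0)|^2$ for any $Q \subseteq \Bhyp^D(z,r)$, and concludes by a Lebesgue-measure packing argument (then enlarges $r$ to $r+1$ to pass from squares contained in the ball to squares merely intersecting it). You instead work combinatorially: you translate the hyperbolic radius into a quasihyperbolic radius via~\eqref{eq:dist_hyp_qh_comparable}, then into a graph-distance bound on $G_\CW$ via~\eqref{eq:dist_qh_cube_comparable}, and reduce to counting the graph-ball, which is finite because $G_\CW$ has uniformly bounded degree thanks to~\eqref{eq:neighbor_squares_sidelength}. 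Your route is more elementary in that it avoids conformal distortion theory and the explicit form of hyperbolic balls in $\D$; it leans instead on the comparison $\disthyp^D \asymp \distqh^D \asymp d_\CW$, where the first comparison still uses simple connectivity via Koebe (as the paper notes when deriving~\eqref{eq:dist_hyp_qh_comparable}), but the ingredients are cleanly separated. One minor refinement: in the degree bound, the cleanest way to see finiteness is that the shared intervals of adjacent squares along a fixed side of $Q$ have pairwise disjoint interiors and each has length at least $\min(\len(Q),\len(Q')) \geq M^{-1}\len(Q)$ while the side has length $\len(Q)$, giving at most $M+1$ neighbors per side; your area-packing phrasing also works but this is tighter and arguably more transparent.
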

\begin{proof}
Fix $a \in D$ and let $\phi$ be the unique conformal transformation mapping $\D$ onto $D$ such that $\phi(0) = a$ and $\phi'(0)>0$.  Then the conformal invariance of the hyperbolic metric implies that $\Bhyp^D(a,r) = \phi(\Bhyp^{\D}(0,r))$.  Also we have that $\Bhyp^{\D}(0,r) = B(0,a(r))$ where $a(r) = \frac{e^{2r}-1}{e^{2r}+1}$ (see \cite{gm2005harmonic}) and so $\Bhyp^D = \phi(B(0,a(r)))$.  Moreover~\cite[Theorem~3.21]{lawler2008conformally} implies that $\Bhyp^D(a,r) \subseteq B(a,\frac{a(r)|\phi'(0)|}{(1-a(r))^2})$.  In particular we have that
\begin{equation}
\label{eqn:hyperbolic_ball_lebesgue_ubd}
\Leb_2(\Bhyp^D(a,r)) \lesssim |\phi'(0)|^2	
\end{equation}
where the implicit constant depends only on $r$.  Let $Q \in \CW$ be such that $Q \subseteq \Bhyp^D(a,r)$.  Then the Koebe-$\frac{1}{4}$ theorem combined with~\cite[Theorem~3.21]{lawler2008conformally} implies that $\dist(\cen(Q) ,  \partial D) \gtrsim |\phi'(0)|$ and so $\Leb_2(Q) = \ell(Q)^2 \gtrsim |\phi'(0)|^2$ where the implicit constant depends only on $r$.  Fix $M \in \N$ and suppose that $Q_j \subseteq \Bhyp^D(a,r)$ for $Q_j \in \CW$ and $1 \leq j \leq M$ are pairwise disjoint.  By~\eqref{eqn:hyperbolic_ball_lebesgue_ubd} we have that $\Leb_2(\cup_{j=1}^M Q_j) = \sum_{j=1}^M \ell(Q_j)^2 \lesssim |\phi'(0)|^2$ and so $M \lesssim 1$ where the implicit constant depends only on $r$.  It follows that there exists $\wt{M}(r) > 0$ depending only on $r$ such that for each $a \in D$,  the hyperbolic ball $\Bhyp^D(a,r)$ can contain at most $\wt{M}(r)$ squares in $\CW$.  Note for each $Q \in \CW$ and $z,w \in Q$ we have that $\disthyp^D(z,w) \leq 1$ (which follows from~\eqref{eq:dist_hyp_qh_comparable} and the fact that the same holds for the quasihyperbolic diameter).  The claim of the lemma then follows by setting $M(r) = \wt{M}(r+1)$ since $Q \subseteq \Bhyp^D(a,r+1)$ for every $Q \in \CW$ such that $Q \cap \Bhyp^D(a,r) \neq \emptyset$.
\end{proof}

Lemma~\ref{lem:hyperbolic_ball_covered} immediately implies the following.

\begin{lemma}
\label{lem:bound_on_squares}
There exists a universal constant $M > 0$ such that the following holds. Let $D,\wt{D}$ be simply connected domains, $\varphi : D \to \wt{D}$ conformal and let $\CW$ and $\wt{\CW}$ be a Whitney square decomposition of $D$ and $\wt{D}$, respectively. Then, for any $Q \in \CW$, $\varphi(Q)$ intersects at most $M$ Whitney squares in $\wt{\CW}$ and for any $\wt{Q} \in \wt{\CW}$, $\varphi^{-1}(\wt{Q})$ intersects at most~$M$ Whitney squares in~$\CW$.
\end{lemma}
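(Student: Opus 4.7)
The plan is to reduce the statement directly to Lemma~\ref{lem:hyperbolic_ball_covered} by exploiting the conformal invariance of the hyperbolic metric, observing that every Whitney square has uniformly bounded hyperbolic diameter.

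First I would recall that if $Q \in \CW$ and $z, w \in Q$ then $\disthyp^D(z,w) \leq 1$. This was noted in Section~\ref{subsec:whitney_hyperbolic} (it follows from $\distqh^D(z,w) \leq 1$ on a Whitney square together with~\eqref{eq:dist_hyp_qh_comparable}). Fixing $Q \in \CW$ and letting $z_Q = \cen(Q)$, this gives $Q \subseteq \Bhyp^D(z_Q, 1)$.

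Next, by the conformal invariance of the hyperbolic metric, $\varphi$ is an isometry between $(D, \disthyp^D)$ and $(\wt{D}, \disthyp^{\wt{D}})$, so
\[
\varphi(Q) \subseteq \varphi(\Bhyp^D(z_Q, 1)) = \Bhyp^{\wt{D}}(\varphi(z_Q), 1).
\]
By Lemma~\ref{lem:hyperbolic_ball_covered}, the hyperbolic ball $\Bhyp^{\wt{D}}(\varphi(z_Q), 1)$ intersects at most $M(1)$ squares in $\wt{\CW}$, and the constant $M(1)$ is universal (independent of the choice of domain and Whitney decomposition). Hence $\varphi(Q)$ intersects at most $M(1)$ squares of $\wt{\CW}$. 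Applying the same argument to the conformal map $\varphi^{-1} \colon \wt{D} \to D$ and any $\wt{Q} \in \wt{\CW}$ gives the symmetric bound, and taking $M = M(1)$ yields the lemma.

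There is no real obstacle here; the only point requiring care is that the constant coming from Lemma~\ref{lem:hyperbolic_ball_covered} is genuinely universal in the pair (domain, Whitney decomposition), which is why the conclusion holds for arbitrary $\CW$ and $\wt{\CW}$. Since that uniformity was already part of the statement of Lemma~\ref{lem:hyperbolic_ball_covered}, the proof is essentially a one-line application once the hyperbolic diameter bound on Whitney squares is recorded.
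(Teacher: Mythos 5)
Your proof is correct and follows exactly the route the paper intends: the paper's own proof is the one-liner ``This follows from Lemma~\ref{lem:hyperbolic_ball_covered}, since the hyperbolic diameter of any Whitney square is at most~$1$,'' and you have simply unwound that sentence — hyperbolic diameter bound, conformal invariance, then Lemma~\ref{lem:hyperbolic_ball_covered} applied in $\wt{D}$, and symmetrically for $\varphi^{-1}$. No gaps.
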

\begin{proof}
This follows from Lemma~\ref{lem:hyperbolic_ball_covered}, since the hyperbolic diameter of any Whitney square is at most~$1$.
\end{proof}

\begin{lemma}
\label{lem:hyperbolic_path_length_bound}
There exists a constant $c_0 > 0$ so that the following is true.  Suppose that $U,V \subseteq \C$ are simply connected domains,  $f \colon U \to V$ is a conformal transformation, and $\CW$ is a Whitney square decomposition of $U$.  Then for each $z \in U$ and $w \in \partial U$ we have that
\begin{equation}
\label{eqn:f_gamma_len_bound_statement}
\length(f(\gamma_{z,w})) \leq c_0 \sum_{Q \in \gamma_{z,w}} \len(Q) |f'|(Q),
\end{equation}
where $\gamma_{z,w}$ denotes the hyperbolic geodesic in $U$ connecting $z$ to $w$, by $Q \in \gamma_{z,w}$ we mean that $\gamma_{z,w}$ intersects the square $Q$, and $|f'|(Q) = \len(Q)^{-2} \int_{Q}|f'(u)| du$.	
\end{lemma}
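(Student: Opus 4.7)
The plan is to split the integral defining $\length(f(\gamma_{z,w}))$ according to the Whitney square the geodesic is currently in, and then use the bounded distortion of $|f'|$ on each square to factor out $|f'|(Q)$. Concretely, for $u \in Q$, the estimate~\eqref{eq:uniform_derivative_in_square} gives $|f'(u)| \asymp |f'(\cen(Q))|$, and averaging this comparison over $Q$ also yields $|f'(\cen(Q))| \asymp |f'|(Q)$ (where the implicit constants are universal). Hence
\[
\length(f(\gamma_{z,w})) \;=\; \sum_{Q \in \gamma_{z,w}} \int_{\gamma_{z,w} \cap Q} |f'(u)|\,|du|
\;\lesssim\; \sum_{Q \in \gamma_{z,w}} |f'|(Q) \cdot \length(\gamma_{z,w} \cap Q),
\]
so the lemma reduces to the purely geometric assertion $\length(\gamma_{z,w} \cap Q) \lesssim \len(Q)$ with a universal implicit constant.

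For the geometric assertion, I would pass to the disk. Let $\phi \colon \D \to U$ be the conformal map with $\phi(0) = z$ and $\phi$ sending some radial prime end to $w$; then $\gamma_{z,w}$ is the image of a radial segment $R \subseteq \D$. Inside a single Whitney square $Q$, the quasihyperbolic (hence, by~\eqref{eq:dist_hyp_qh_comparable}, the hyperbolic) diameter is bounded by a universal constant, so by conformal invariance $\phi^{-1}(Q)$ is contained in a hyperbolic ball $\Bhyp^{\D}(\zeta_Q, C_0)$ with $C_0$ universal. Because a hyperbolic ball in $\D$ is in particular a Euclidean disk, and because the radial segment $R$ is itself a hyperbolic geodesic in $\D$, the intersection $R \cap \Bhyp^{\D}(\zeta_Q, C_0)$ is a single hyperbolic geodesic sub-segment of hyperbolic length at most $2C_0$. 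This handles the potential trouble that $\gamma_{z,w}$ could enter and leave $Q$ many times: even so, the total hyperbolic length of $\gamma_{z,w} \cap Q$ is uniformly bounded.

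Finally I would convert from hyperbolic back to Euclidean length inside $Q$. On $Q$, $\dist(u,\partial U) \asymp \len(Q)$, so~\eqref{eq:dist_hyp_qh_comparable} gives that Euclidean length and hyperbolic length on $\gamma_{z,w} \cap Q$ differ by a factor comparable to $\len(Q)$. Combining with the hyperbolic length bound from the previous paragraph yields $\length(\gamma_{z,w} \cap Q) \lesssim \len(Q)$ with a universal constant, and plugging this into the display above gives~\eqref{eqn:f_gamma_len_bound_statement}. The main obstacle is the second step: one must rule out the a priori possibility that the Euclidean length of $\gamma_{z,w} \cap Q$ is large because $\gamma_{z,w}$ wiggles in and out of $Q$ many times, and the cleanest way to do this is the uniformization trick above, using that hyperbolic balls in the disk are convex (in fact, Euclidean disks) and that a hyperbolic geodesic meets a hyperbolic ball in a single sub-segment.
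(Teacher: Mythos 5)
Your proposal is correct and follows the same high-level strategy as the paper: bound the hyperbolic length of the portion of $\gamma_{z,w}$ associated with each Whitney square $Q$, convert to Euclidean length on $Q$, and sum using the bounded distortion of $|f'|$ from~\eqref{eq:uniform_derivative_in_square}. The technical route differs in one point worth noting. The paper uniformizes with $\phi(0)=\cen(Q)$ and bounds the Euclidean length of $\gamma_{z,w}$ over the entire time interval $[s,t]$ from its first to last visit of $Q$; since that stretch may exit $Q$, the conversion to Euclidean length then requires a covering argument (Lemma~\ref{lem:hyperbolic_ball_covered} together with~\eqref{eq:neighbor_squares_sidelength}) to show that all Whitney squares meeting $\Bhyp^U(\cen(Q),2)$ have side length comparable to $\len(Q)$, which controls $|(\phi^{-1})'|$ there. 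You instead uniformize with $\phi(0)=z$ so that $\gamma_{z,w}$ pulls back to a radius $R$, and use that $\Bhyp^{\D}(\zeta_Q,C_0)$ is a Euclidean disk, so $R$ meets it in a single sub-segment of hyperbolic length $\leq 2C_0$; this bounds the hyperbolic length of $\gamma_{z,w}\cap Q$ itself, and the Euclidean conversion then uses only $\dist(u,\partial U)\asymp\len(Q)$ for $u\in Q$, with no covering lemma needed. (In fact, one can simplify even further: $\gamma_{z,w}\cap Q\subseteq\gamma_{z,w}([s,t])$, whose hyperbolic length equals $\disthyp^U(\gamma(s),\gamma(t))\leq 1$ by the geodesic property and the hyperbolic diameter bound on $Q$, so neither uniformization is really essential for the hyperbolic-length bound once one works with $\gamma_{z,w}\cap Q$ rather than the full $[s,t]$ stretch.) Both arguments correctly rule out the ``wiggling in and out'' pathology; yours is slightly shorter because bounding $\gamma_{z,w}\cap Q$ rather than $\gamma_{z,w}([s,t])$ keeps the conversion confined to $Q$.
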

\begin{proof}
Suppose that $Q \in \gamma_{z,w}$.  We parameterize $\gamma_{z,w}$ by $[0,1]$ and we set
\begin{align*}
	s = \inf\{r \geq 0 : \gamma_{z,w}(r) \in Q\} \quad\text{and}\quad t = \sup\{r \geq s : \gamma_{z,w}(r) \in Q\}.
\end{align*}
Since $\disthyp^{U}(x,y) \leq 1$ for all $x,y \in Q$,  it follows for $r \in [s,t]$ that
\begin{align*}
\disthyp^{U}(\cen(Q),\gamma_{z,w}(r))
&\leq \disthyp^{U}(\cen(Q),\gamma_{z,w}(s)) + \disthyp^{U}(\gamma_{z,w}(s),\gamma_{z,w}(r))\\
&\leq \disthyp^{U}(\cen(Q),\gamma_{z,w}(s))+ \disthyp^{U}(\gamma_{z,w}(s),\gamma_{z,w}(t))\leq 2
\end{align*}
as $\gamma_{z,w}$ is a geodesic.  Thus $\gamma_{z,w}([s,t]) \subseteq \closure{\Bhyp^U(\cen(Q),2)}$ where $\Bhyp^U(z,r) = \{w \in U: \disthyp^U(z,w) < r \}$ denotes the hyperbolic ball of radius $r$ around $z$ in $U$. Let $\phi : \D \rightarrow U$ be the unique conformal transformation such that $\phi(0) = \cen(Q)$ and $\phi'(0) > 0$.  Note that $\Bhyp^{\D}(0,2) = B(0,a)$ where $a = \frac{e^4-1}{e^4+1}$ (see \cite{gm2005harmonic}) and so the conformal invariance of the hyperbolic metric implies that if $\wt{\gamma} = \phi^{-1}(\gamma_{z,w}|_{[s,t]})$, then $\wt{\gamma} \subseteq B(0,a)$.  Moreover, we have that
\begin{align*}
1 \geq \disthyp^{\D}(\wt{\gamma}(s),\wt{\gamma}(t)) = \int_s^t \frac{|\wt{\gamma}'(r)|}{1-|\wt{\gamma}(r)|^2}dr \geq \text{length}(\wt{\gamma})
\end{align*}
and so $\length(\wt{\gamma}) \leq 1$.  We let $Q_1,\dots,Q_k \in \CW$ be such that $Q_j \cap \Bhyp^U(\cen(Q),2) \neq \emptyset$ for each $1\leq j \leq k$ and $\Bhyp^U(\cen(Q),2) \subseteq \bigcup_{j=1}^m \closure{Q_j}$.  Note that Lemma~\ref{lem:hyperbolic_ball_covered} implies that $m \leq M_0$ for some universal constant~$M_0$.  Since for every square $Q_j$ we can find a subfamily of squares of the above family which is a chain and connects $Q$ to $Q_j$, it follows from~\eqref{eq:neighbor_squares_sidelength} that $\len(Q_j) \asymp \len(Q)$, where the implicit constants are universal. Consequently, $\dist(Q_j,  \partial U) \lesssim \len(Q_j) \lesssim \len(Q)$ for all $1 \leq j \leq k$ and so $\dist(\gamma_{z,w}(r),  \partial U)\lesssim \len(Q)$ for all $r \in [s,t]$. By the Koebe-$1/4$ theorem, this implies that $|(\phi^{-1})'(\gamma_{z,w}(r))| \gtrsim \len(Q)^{-1}$ for all $r \in [s,t]$. Combining with the above, we obtain that $1 \geq \length(\wt{\gamma}) \gtrsim \length(\gamma_{z,w}([s,t])) \len(Q)^{-1}$ which implies that $\length(\gamma_{z,w}([s,t])) \lesssim \len(Q)$ where the implicit constants are universal.  Finally, by~\eqref{eq:uniform_derivative_in_square} it follows that $\length(f(\gamma_{z,w} \cap Q)) \leq c_0 \len(Q) |f'|(Q)$ for some universal constant $c_0 > 0$. The claim then follows by summing over $Q \in \gamma_{z,w}$.
\end{proof}

\begin{lemma}
\label{lem:hyperbolic_geo_intersection}
There exists a constant $c_0 > 0$ so that the following is true.  Suppose that $U \subseteq \C$ is a simply connected domain, $\CW$ is a Whitney square decomposition of $U$, $Q_0,Q_1 \in \CW$, and for $j \in \N$ we let $\CF_j$ be the set of $\wt{Q} \in \CW$ for which a hyperbolic geodesic (parameterized by hyperbolic length) from $\cen(Q_0)$ to some point in $Q_1$ passes through at some time $t \in [j-1,j)$.  Then $|\CF_j| \leq c_0$.	
\end{lemma}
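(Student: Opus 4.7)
The plan is to show that, for each $j$, there exist a single point $z_j \in U$ and a universal constant $R > 0$ so that every point $\gamma_p(t)$ with $p \in Q_1$ and $t \in [j-1,j)$ lies in the hyperbolic ball $\Bhyp^U(z_j, R)$; the bound $|\CF_j| \leq c_0$ then follows immediately from Lemma~\ref{lem:hyperbolic_ball_covered}.  Set $d = \disthyp^U(\cen(Q_0), \cen(Q_1))$ and $L_p = \disthyp^U(\cen(Q_0), p)$ for $p \in Q_1$.  Since Whitney squares have universally bounded hyperbolic diameter (by~\eqref{eq:dist_hyp_qh_comparable}), there is a universal $C_0 > 0$ with $|L_p - d| \leq C_0$ for every $p \in Q_1$.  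In particular, if $j - 1 > d + C_0$ then no geodesic $\gamma_p$ reaches time $j - 1$, so $\CF_j = \emptyset$, and I may assume $j \leq d + C_0 + 1$.

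I would split the remaining range into two subcases.  If $d - C \leq j \leq d + C_0 + 1$ for a large universal constant $C$ to be fixed below, then for any valid pair $(p, t)$, $\disthyp^U(\gamma_p(t), p) = L_p - t \leq 2 C_0 + 2$, so together with $\disthyp^U(p, \cen(Q_1)) \leq C_0$ this places $\gamma_p(t)$ in $\Bhyp^U(\cen(Q_1), 3 C_0 + 2)$, and I take $z_j = \cen(Q_1)$.  If instead $j < d - C$, I would invoke that $(U, \disthyp^U)$ is Gromov $\delta$-hyperbolic with a universal $\delta$ (the classical hyperbolicity of the Poincar\'e disk, transferred to $U$ by conformal invariance of the hyperbolic metric).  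The thin-triangle property applied to the triangle with vertices $\cen(Q_0)$, $p$, $\cen(Q_1)$ places every point of $\gamma_p$ within hyperbolic distance $\delta$ of $\gamma_{\cen(Q_1)} \cup [p, \cen(Q_1)]$; since $[p, \cen(Q_1)]$ has length at most $C_0$, any point within $\delta$ of it lies within $\delta + C_0$ of $p$.  But $\disthyp^U(\gamma_p(t), p) = L_p - t \geq (d - C_0) - j > C - C_0$, which exceeds $\delta + C_0$ once $C > 2 C_0 + \delta$, so $\gamma_p(t)$ must lie within $\delta$ of some $\gamma_{\cen(Q_1)}(s)$.  Comparing hyperbolic distances from $\cen(Q_0)$ forces $|s - t| \leq \delta$, hence $\gamma_p(t) \in \Bhyp^U(\gamma_{\cen(Q_1)}(j), 2\delta + 1)$ and I take $z_j = \gamma_{\cen(Q_1)}(j)$.

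The main obstacle is invoking Gromov $\delta$-hyperbolicity with a universal constant, though this is standard.  An alternative avoiding this is to transfer to $\D$ via the conformal map sending $\cen(Q_0)$ to $0$, so that hyperbolic geodesics from $\cen(Q_0)$ become Euclidean radii and $\gamma_p(t) = \tanh(t) e^{i \theta_p}$; the image of $Q_1$ has universally bounded hyperbolic diameter near $|z| = \tanh(d)$, which forces the angular spread of $\{\theta_p : p \in Q_1\}$ to be $O(e^{-2d})$, and a direct computation of the hyperbolic distance between two points at the same hyperbolic radius $t$ with that angular spread yields diameter of order $O(e^{2(t-d)}) = O(1)$ throughout the relevant range $t \leq d + C_0$.
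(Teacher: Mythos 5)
Your proposal is correct, and your main route is genuinely different from the paper's. Both arguments reduce to showing that, for each $j$, all points $\gamma_p(t)$ with $p \in Q_1$ and $t \in [j-1,j)$ lie in a hyperbolic ball of universally bounded radius, after which Lemma~\ref{lem:hyperbolic_ball_covered} gives the count. The paper obtains this by mapping to $\D$ so that geodesics from $\cen(Q_0)$ become Euclidean radii, applying a Koebe-type distortion bound to place the image of (a quarter of) a square $\wt{Q} \in \CF_j$ inside a Euclidean ball $B(\wh{z}_k,pd)$ with $p < 1$ universal, and then checking directly that radii meeting that ball at hyperbolic times in $[j-1,j)$ lie within a bounded hyperbolic distance of one another --- this is essentially your alternative sketch. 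Your main route instead invokes the $\delta$-thin triangle property of the hyperbolic metric (with a universal $\delta$, by conformal invariance) to push $\gamma_p(t)$ to within $\delta$ of the reference geodesic $\gamma_{\cen(Q_1)}$ once $t$ is far from $L_p$, and then uses reparameterization by hyperbolic length. This is cleaner and more robust (it would work verbatim in any Gromov-hyperbolic geodesic space), at the cost of quoting Gromov hyperbolicity, which the paper never needs elsewhere. One small arithmetic slip: in the intermediate range $d - C \leq j \leq d + C_0 + 1$ the bound on $L_p - t$ should be $C + C_0 + 1$ rather than $2C_0 + 2$ (they differ since you later require $C > 2C_0 + \delta$), but the resulting radius $C + 2C_0 + 1$ is still universal, so the argument stands.
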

\begin{proof}
Let $\phi  \colon U \to \D$ be the unique conformal transformation such that $\phi(\cen(Q_0)) = 0$ and $\phi'(\cen(Q_0)) > 0$.  Fix $j \in \N$ and suppose that $\wt{Q} \in \CF_j$.  We divide $\wt{Q}$ into squares $\wt{Q}_k$ for $1 \leq k \leq 4$ with sides parallel to the coordinate axes and equal side length $\len(\wt{Q}_k) = \len(\wt{Q})/2$.  Note that
\begin{align*}
\dist(\cen(\wt{Q}_k),  \partial U) \geq \frac{\len(\wt{Q}_k)}{2} + \dist(\wt{Q},\partial U)\geq \left(1 + \frac{1}{4\sqrt{2}} \right)\! \diam (\wt{Q}),
\end{align*}
so that for $w \in \wt{Q}_k$,  we have that 
\begin{align*}
|w-\cen(\wt{Q}_k)| \leq \frac{\diam(\wt{Q}_k)}{2} = \frac{\diam(\wt{Q})}{4} \leq r_0 \dist(\cen(\wt{Q}_k), \partial U) \quad\text{where}\quad r_0 = \frac{\sqrt{2}}{1+4\sqrt{2}} \in (0,1).
\end{align*}
Hence, $\wt{Q}_k \subseteq B(\cen(\wt{Q}_k), r_0 \dist(\cen(\wt{Q}_k),\partial U))$ and thus, by \cite[Corollary 3.25]{lawler2008conformally} we have that
\begin{align*}
|\phi(w) - \phi(\cen(\wt{Q}_k))|\leq  \frac{4|w-\cen(\wt{Q}_k)|}{1-r_0^2}\frac{\dist(\phi(\cen(\wt{Q}_k)),\partial \D)}{\dist(\cen(\wt{Q}_k),\partial U)} \leq \frac{4r_0}{1-r_0^2}\dist(\phi(\cen(\wt{Q}_k)),\partial \D)
\end{align*}
for all $w \in B(\cen(\wt{Q}_k),r_0\dist(\cen(\wt{Q}_k),\partial U))$. It follows that $\phi(\wt{Q}_k)\subseteq B(\phi(\cen(\wt{Q}_k)),pd)$ where $p = \frac{4r_0}{1-r_0^2}$ and $d = \dist(\phi(\cen(\wt{Q}_k)),\partial \D)$.  Note that $p \in (0,1)$.

For $x,y \in \D$ we let $\wh{\gamma}_{x,y}$ be the hyperbolic geodesic in $\D$ from $x$ to $y$ parameterized by hyperbolic length and such that $\wh{\gamma}_{x,y}(0) = x$.  Let $\wh{z}_k = \phi(\cen(\wt{Q}_k))$.  Fix $j \in \N$ and suppose that there exists $t \in [j-1,j)$ and $\wh{z} \in B(\wh{z}_k,pd)$ such that $\disthyp^{\D}(0,\wh{z}) \geq t$.  Note that $\wh{\gamma}_{0,\wh{z}}$ is the line segment connecting $0$ to $\wh{z}$.  Let also $\wh{w} \in B(\wh{z}_k,pd)$ be such that $\disthyp^{\D}(0,\wh{w}) \geq s$ for some $s \in [j-1,j)$.  Then we have that
\begin{align*}
|\wh{\gamma}_{0,\wh{z}}(j-1) - \wh{\gamma}_{0,\wh{w}}(j-1)|\leq |\wh{z}-\wh{w}| \leq |\wh{z}-\wh{z}_k|+|\wh{z}_k-\wh{w}| \leq 2pd.
\end{align*}
Moreover,  for all $w \in B(\wh{z}_k,pd)$ it holds that $\dist(w,\partial \D) \geq (1-p)d$ and so $\dist(x,\partial \D) \geq (1-p)d$ for all $x \in I = [\wh{\gamma}_{0,\wh{z}}(j-1),\wh{\gamma}_{0,\wh{w}}(j-1)]$.  Thus,  it follows that
\begin{align*}
\disthyp^{\D}(\wh{\gamma}_{0,\wh{z}}(j-1),\wh{\gamma}_{0,\wh{w}}(j-1)) \leq \int_{I}\frac{|dz|}{1-|z|^2} \leq \frac{2p}{1-p}.
\end{align*}
Moreover,  $\disthyp^{\D}(\wh{\gamma}_{0,\wh{w}}(j-1),\wh{\gamma}_{0,\wh{w}}(r)) \leq 1$ and so $\wh{\gamma}_{0,\wh{w}}(r) \in B_k = \Bhyp^{\D}(\wh{\gamma}_{0,\wh{z}}(j-1),\frac{1+p}{1-p})$ for all $r \in [j-1,j)$.

By the conformal invariance of the hyperbolic metric we obtain the following. Fix $w \in Q_1$, pick some $t \in [j-1,j)$ and let $\wt{Q} \in \CW$ be such that $\gamma_{\cen(Q_0),w}(t) \in \wt{Q}$. Then it holds that $\gamma_{\cen(Q_0),w}(t) \in 
\phi^{-1}(B_k)$ for some $1 \leq k \leq 4$.  In particular,  $\wt{Q} \cap \phi^{-1}(B_k) \neq \emptyset$.  By Lemma~\ref{lem:hyperbolic_ball_covered},  we obtain that there exists some universal constant $M_0 > 0$ such that for each $1 \leq k \leq 4$, $\phi^{-1}(B_k)$ intersects at most $M_0$ squares in $\CW$,  and so $|\CF_j| \leq 4M_0$.
\end{proof}

\begin{lemma}
\label{lem:q_i_approx}
Suppose that $U \subseteq \C$ is a simply connected domain and $\CW$ is a Whitney square decomposition of $U$.  Suppose that $Q_0 \in \CW$ and for $Q \in \CW$ we let 
\begin{align*}
	q(Q) = \frac{1}{\len(Q)^2} \int_{Q} \disthyp^{U}(z,\cen(Q_0)) d\Leb_2(z).
\end{align*}
Suppose that $w \in \partial U$ and $\gamma$ is the hyperbolic geodesic from $\cen(Q_0)$ to $w$.  Suppose that $Q \in \CW$ is hit by $\gamma$ and let $(\wh{Q}_j)$ be the sequence of squares hit by $\gamma$ starting after it first hits $Q$ and ordered according to when they are first hit by $\gamma$ (breaking ties using any fixed convention).  Then we have that
\begin{equation}
\label{eqn:q_i_universal_bound_statement}
q(\wh{Q}_j) \asymp q(Q) + j \quad \text{for all} \quad j \in \N_0
\end{equation}
with universal implicit constants.
\end{lemma}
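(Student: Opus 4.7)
The plan is to reduce the statement to a two-sided comparison between the hyperbolic time at which $\gamma$ enters $\wh{Q}_j$ and the index $j$.  First, I would observe that every Whitney square has hyperbolic diameter bounded by a universal constant (a direct consequence of~\eqref{eq:dist_hyp_qh_comparable} together with the fact that $\distqh^U \leq 1$ within any single Whitney square), so that
\[
q(Q') = \disthyp^U(\cen(Q_0), \cen(Q')) + O(1)
\]
for every $Q' \in \CW$ with a universal additive error.  Parameterizing $\gamma$ by hyperbolic arc length so that $\gamma(0) = \cen(Q_0)$ and $\disthyp^U(\cen(Q_0), \gamma(t)) = t$, and letting $t_i$ denote the first time $\gamma$ enters $\wh{Q}_i$ (with $t_0 = s$ under the convention $\wh{Q}_0 = Q$), the same bound yields $q(\wh{Q}_i) = t_i + O(1)$.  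Thus the lemma is equivalent to the two-sided comparison $t_j - s \asymp j$ for $j \geq 1$; the case $j=0$ is immediate from $\wh{Q}_0 = Q$.

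For the upper bound $t_j - s \lesssim j$, I would use that the bounded-hyperbolic-diameter property forces $\gamma^{-1}(Q')$ to lie in a time interval of length at most some universal $C_0$ for every square $Q'$ met by $\gamma$ (since any two preimage times are at hyperbolic distance at most $C_0$ along the geodesic).  The squares visited by $\gamma$ during $[s, t_j]$ are $\wh{Q}_0, \dots, \wh{Q}_{j-1}$ together with at most $O(1)$ previously-visited squares whose visit windows overlap $[s, s+C_0]$; this last count is bounded using Lemma~\ref{lem:hyperbolic_ball_covered} applied to $\Bhyp^U(\gamma(s), C_0)$.  Adding up the lengths of the corresponding subintervals of $[s, t_j]$ gives $t_j - s \leq (j + O(1)) C_0$.

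The main obstacle is the lower bound $t_j - s \gtrsim j$, for which I would pass to the unit disk via a Riemann map $\phi \colon \D \to U$ with $\phi(0) = \cen(Q_0)$.  Since $\phi$ is a hyperbolic isometry, $\gamma^\D \coloneqq \phi^{-1}(\gamma)$ is a hyperbolic geodesic in $\D$ starting at $0$ in the same arc-length parameterization.  Fixing a Whitney decomposition $\CW^\D$ of $\D$ and applying Lemma~\ref{lem:bound_on_squares} to both $\phi$ and $\phi^{-1}$, the number of distinct squares of $\CW$ visited by $\gamma$ on $[s, t_j]$ and the number of distinct squares of $\CW^\D$ visited by $\gamma^\D$ on the same interval differ by at most a universal multiplicative factor.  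Pre-composing $\phi$ with a rotation of $\D$, we may assume $\gamma^\D(t) = \tanh(t/2)$ is the radial segment; since $1 - |\gamma^\D(t)| \asymp e^{-t}$, the Whitney squares in $\CW^\D$ met by $\gamma^\D$ have sizes ranging dyadically over an interval of $\log_2$-length $\asymp t_j - s$, with only $O(1)$ such squares along the positive real axis at each dyadic scale.  Hence the number of distinct $\CW^\D$-squares visited on $[s, t_j]$ is at most $C_1 (t_j - s) + O(1)$, and transferring back yields $j \lesssim t_j - s + O(1)$.  Combining the two bounds gives $t_j - s \asymp j$, which upon substitution into $q(\wh{Q}_j) = t_j + O(1)$ and $q(Q) = s + O(1)$ yields $q(\wh{Q}_j) \asymp q(Q) + j$.
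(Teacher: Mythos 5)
Your argument is correct and rests on the same two key facts as the paper's proof — the universal comparability of $q(Q')$ with $\disthyp^U(\cen(Q_0),\cen(Q'))$ (because a Whitney square has bounded hyperbolic diameter), together with Lemma~\ref{lem:hyperbolic_ball_covered} and Lemma~\ref{lem:bound_on_squares}.  The organization differs modestly: you parameterize the geodesic by hyperbolic arc length and compare first-entry times $t_j$, whereas the paper phrases everything through the Whitney graph distance $d_\CW$ and minimal chains of squares.  Your upper bound ($t_j - s \lesssim j$) is carried out purely inside $U$ via the visit-window/ball-covering argument and is in my view cleaner than the paper's, which routes that direction through a Riemann map to $\D$.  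For the lower bound you do pass to $\D$, as the paper does (though it is terse there), and your dyadic counting along a radial geodesic is the right way to make that step precise; one could also run this direction entirely in $U$ by covering $[s,t_j]$ with unit-length subintervals, each of which is contained in a hyperbolic ball of radius $1$ and so meets $O(1)$ Whitney squares by Lemma~\ref{lem:hyperbolic_ball_covered}.

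Two small points of care.  First, the assertion that the lemma ``is equivalent to $t_j - s \asymp j$'' overstates what you actually prove and what is actually true: for small $j$ the quantity $t_j - s$ can be much less than $1$ (adjacent Whitney squares), so what you really establish is the pair of inequalities $t_j - s \lesssim j$ and $j \lesssim (t_j - s) + O(1)$, not a two-sided multiplicative comparison.  Second, when you substitute $q(\wh{Q}_j) = t_j + O(1)$ and $q(Q) = s + O(1)$ and try to absorb the additive $O(1)$ errors into the multiplicative comparison $q(\wh{Q}_j) \asymp q(Q) + j$, you implicitly use the universal lower bound $q(Q') \gtrsim 1$ for any $Q' \in \CW$ (which holds because either $Q' = Q_0$, where the average hyperbolic distance to the center is bounded below, or $Q' \neq Q_0$, where $\disthyp^U(z,\cen(Q_0)) \gtrsim 1$ for $z \in Q'$).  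You should state this explicitly; it is what lets the additive errors be controlled by $q(Q) + j$ when that sum is small.
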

\begin{proof}
We will first prove that $q(\wh{Q}_j) \lesssim q(Q) + j$ for all $j \in \N_0$.  Let $\wh{Q}$ be the square with the same center as $Q$ but with half the side length.  Since $\disthyp^U(z,\cen(Q_0)) \gtrsim d_{\CW}(\cen(Q),\cen(Q_0))$ for all $z \in \wh{Q}$ where the implicit constant is universal, it follows that
\begin{equation}\label{eqn:lower_bound_on_q_i}
q(Q) \geq \frac{1}{\len(Q)^2}\int_{\wh{Q}}\disthyp^U(z,\cen(Q_0)) d\Leb_2(z) \gtrsim d_{\CW}(\cen(Q),\cen(Q_0)).
\end{equation}
By the definition of $(\wh{Q}_k)$ we can find $s<t$ such that $\gamma(s) \in Q$,  $\gamma(t) \in \wh{Q}_j$,  $\gamma([s,t]) \subseteq \closure{\bigcup_{k=0}^j \wh{Q}_k}$ and $\gamma([s,t]) \cap \wh{Q}_k \neq \emptyset$ for all $1\leq k \leq j$.  Let $\varphi : \D \rightarrow U$ be the conformal transformation such that $\varphi(0) = \gamma(s)$ and $a = \varphi^{-1}(\gamma(t)) \in (0,1)$.  Let $\CW'$ be a fixed Whitney square decomposition of $\D$ and let $(Q_k')_{0 \leq k \leq l}$ be a chain of minimal length of adjacent squares in $\CW'$ such that $0 \in Q_0'$ and $a \in Q_l'$.  Note that $\varphi([0,a]) = \gamma([s,t])$ since $\gamma|_{[s,t]}$ is the hyperbolic geodesic in~$U$ from~$\gamma(s)$ to~$\gamma(t)$, and consequently $[0,a] \subseteq \closure{\bigcup_{k=0}^j \varphi^{-1}(\wh{Q}_k)}$.  Moreover for all $0 \leq k \leq j$ there exists $z_k \in \varphi^{-1}(\wh{Q}_k)$ such that $\varphi^{-1}(\wh{Q}_k) \subseteq \Bhyp^{\D}(z_k,2)$ and so $[0,a] \subseteq \bigcup_{k=0}^j \Bhyp^{\D}(z_k,2)$.  Note that Lemma~\ref{lem:hyperbolic_ball_covered} implies that $B_{\text{hyp}}^{\D}(z_k,2)$ can intersect at most $M_0$ squares in $\CW'$ for some universal constant $M_0$ and hence $l \lesssim j$.  Suppose that $l \geq 1$.  Then $(\varphi(Q_k'))_{0 \leq k \leq l}$ is a connected family of sets connecting $Q$ to $\wh{Q}_j$ and by Lemma~\ref{lem:bound_on_squares}, each $\varphi(Q_k')$ can intersect at most $\wt{M}_0$ squares in $\CW$ for some universal constant $\wt{M}_0$.  This implies that $d_{\CW}(\cen(Q),\cen(\wh{Q}_j)) \lesssim l \lesssim j$ and so by~\eqref{eqn:lower_bound_on_q_i} we have that for all $z \in \wh{Q}_j$,
\begin{align*}
\disthyp^{U}(z,\cen(Q_0)) \lesssim d_{\CW}(\cen(Q_0),\cen(\wh{Q}_j)) \lesssim d_{\CW}(\cen(Q_0),\cen(Q)) + d_{\CW}(\cen(Q),\cen(\wh{Q}_j)) \lesssim  q(Q) + j
\end{align*}
where the implicit constants are universal and so this gives $q(\wh{Q}_j) \lesssim q(Q) + j$ for all $j \in \N_0$ in the case that $l\geq 1$.  When $l=0$,  we have that $\{\gamma(s),\gamma(t)\} \subseteq \varphi(Q_0')$ and $\varphi(Q_0')$ can be covered by at most $\wt{M}_0$ squares in $\CW$.  Therefore $d_{\CW}(\cen(Q),\cen(\wh{Q}_j)) \lesssim 1 \lesssim j$ and similarly we obtain $q(\wh{Q}_j) \lesssim q(Q) + j$ for all $j \in \N_0$.

We now turn to proving that $q(\wh{Q}_j) \gtrsim q(Q) + j$ for all $j \in \N_0$.  Let $j_1 \in \N$ be such that $Q$ is the $j_1$th square in $\CW$ that $\gamma$ intersects when the squares in $\CW$ are ordered according to when they are first hit by $\gamma$.  We note that the analysis of the previous paragraph implies that $q(\wh{Q}_j) \gtrsim j+j_1$ where the implicit constant is universal.  Moreover,  the collection of squares in $\CW$ which $\gamma$ intersects up until it first hits $Q$ is connected and connects $Q_0$ to $Q$ and thus we can find a subcollection of those squares which is a chain of squares connecting~$Q_0$ to~$Q$.  This implies that 
\begin{align*}
\disthyp^{U}(z,\cen(Q_0)) \lesssim d_{\CW}(\cen(Q_0),\cen(Q)) \lesssim j_1 \quad \text{for all}\quad z \in Q,
\end{align*}
and hence $q(Q) \lesssim j_1$,  where the implicit constant is universal.  Combining, this completes the proof of~\eqref{eqn:q_i_universal_bound_statement}.
\end{proof}

Recall that the inradius of a set $D \subseteq \h$ with $0 \in \partial D$ is defined by $\inrad(D) = \sup\{ r>0: B(0,r) \cap \h \subseteq D \}$. Furthermore, recall the notation $\gamma_{z,w}^D$ in Section~\ref{subsec:whitney_hyperbolic}.

\begin{lemma}
\label{lem:hyperbolic_geodesics_close}
Fix $C>0$ and $r_0 \geq 2$. Let $\wt{D} \subseteq D \subseteq \C$ be simply connected domains, $z_0, w_0 \in \wt{D}$, and let $\zeta$ be a prime end of $\partial D$ which is also in $\partial \wt{D}$.  Let $\phi : D \to \h$ be the unique conformal map sending $\zeta$ to $0$ and $w_0$ to $i$ and let $r =  \inrad(\phi(\wt{D}))$. Assume that $r \geq r_0$ and
\begin{equation}
\label{eqn:hyp_close_to_ball_assumptions}
\disthyp^{\wt{D}}(z_0,w_0) \leq C.
\end{equation}
There exists a constant $c_0 > 0$ depending only on $C$, $r_0$ such that
\begin{equation}
\label{eqn:hyperbolic_constant}
\disthyp^{\wt{D}}(\gamma_{z_0,\zeta}^{\wt{D}}(t), \gamma_{w_0,\zeta}^D(t)) \leq c_0 \quad\text{for all}\quad t \geq 0.
\end{equation}
\end{lemma}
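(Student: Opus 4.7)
By the conformal invariance of the hyperbolic metric, I first apply $\phi$ throughout. Writing $U := \phi(\wt{D})$ and $\tilde{z}_0 := \phi(z_0)$, we have $B(0,r) \cap \h \subseteq U \subseteq \h$; the image of $\gamma_{w_0,\zeta}^D$ under $\phi$ is the vertical $\h$-geodesic $\beta(t) = ie^{-t}$, and the image of $\gamma_{z_0,\zeta}^{\wt{D}}$ is $\alpha(t) := \gamma_{\tilde z_0, 0}^U(t)$. The claim reduces to $\disthyp^U(\alpha(t), \beta(t)) \leq c_0$.

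\textbf{Intermediate geodesic.} I introduce the $U$-geodesic $\gamma^*(t) := \gamma_{i,0}^U(t)$ and split
\[
\disthyp^U(\alpha(t), \beta(t)) \leq \disthyp^U(\alpha(t), \gamma^*(t)) + \disthyp^U(\gamma^*(t), \beta(t)).
\]
The curves $\alpha$ and $\gamma^*$ are both $U$-hyperbolic geodesic rays ending at the boundary point $0$ and parameterized by hyperbolic arclength from starting points at distance at most $C$. Since $U$ with its hyperbolic metric is isometric to the hyperbolic plane, the function $t \mapsto \disthyp^U(\alpha(t), \gamma^*(t))$ is convex, non-negative, and tends to $0$, hence non-increasing, so the first term is bounded by $C$ uniformly in $t$.

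\textbf{Metric comparison and quasi-geodesic tracking.} For the second term, the curve $\beta$ is an $\h$-geodesic and not a $U$-geodesic. Combining $U \supseteq B(0,r)\cap\h$ with Schwarz--Pick, the scale invariance of the hyperbolic metric, and an explicit Riemann map computation on the half-disk which gives $\rho_{B(0,R)\cap\h}(i)/\rho_\h(i) = (R^2+1)/(R^2-1)$, I will show
\[
\frac{\rho_U(\beta(t))}{\rho_\h(\beta(t))} - 1 \leq \frac{2}{(re^t)^2 - 1} \leq \frac{4}{r^2 e^{2t}}.
\]
Integrating, the $U$-hyperbolic length $s(t)$ of $\beta|_{[0,t]}$ satisfies $0 \leq s(t) - t \leq 2/r^2 \leq 2/r_0^2$, so $\beta$ reparameterized by $U$-arclength is a $(1, 2/r_0^2)$-quasi-geodesic in $U$ from $i$ to $0$. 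Applying the Morse stability lemma for quasi-geodesics in the universally Gromov-hyperbolic space $U$ produces a constant $D(r_0)$ such that this reparameterization stays within $\disthyp^U$-distance $D(r_0)$ of $\gamma^*$; combined with the bounded time shift $|s(t) - t| \leq 2/r_0^2$ and the unit speed of $\gamma^*$, this yields $\disthyp^U(\gamma^*(t), \beta(t)) \leq c_1(r_0)$, and the lemma follows with $c_0 = C + c_1(r_0)$.

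\textbf{Main obstacle.} The technical crux is upgrading the quasi-geodesic tracking between $\beta$ and $\gamma^*$ to a pointwise bound at the common time parameter $t$, since $\beta$ moves at unit $\h$-speed and $\gamma^*$ at unit $U$-speed. The integrable decay $\rho_U/\rho_\h - 1 = O((re^t)^{-2})$ along $\beta$, coming from the scale self-similarity of the hyperbolic metric together with the inradius hypothesis, produces a time parameter shift bounded uniformly in the choice of $U$ in terms of $r_0$ alone, and is precisely what allows the Morse tracking to be converted into the desired pointwise estimate.
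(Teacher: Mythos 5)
Your argument is correct but follows a genuinely different route from the paper. The paper proves a strictly stronger intermediate statement (Lemma~\ref{lem:hyperbolic_geodesics_exponentially_close}: the distance from $\gamma_{z_0,\zeta}^{\wt{D}}(t)$ to the geodesic ray in $D$ decays like $e^{-t}$) by reflecting $\wt{D}$ across the real interval $(-r,r)$, normalizing the resulting Riemann map $\varphi$ at $0$, and using the distortion estimate $|\varphi(z)-z| \lesssim |z|^2/r$ together with the explicit exponential convergence of $\h$-geodesics toward the vertical ray. The bounded-distance conclusion then follows by showing the time parameters along the two geodesics stay within $O(1)$ of each other. You instead insert the $U$-geodesic $\gamma^*=\gamma_{i,0}^U$, dispose of the first term by CAT$(0)$-convexity of the distance function along asymptotic geodesic rays, and handle the second term by showing that $\beta(t)=ie^{-t}$, reparameterized by $U$-arclength, is a $(1,2/r_0^2)$-quasi-geodesic (using $\rho_\h\leq\rho_U\leq\rho_{B(0,r)\cap\h}$ and the explicit half-disk density) and then invoking the Morse stability lemma. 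This is a softer argument which yields only the uniform bound, but that is all the lemma asserts; the trade-off is that it relies on the Morse lemma for quasi-geodesic rays rather than elementary distortion estimates.

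One small repair: you assert that $t\mapsto\disthyp^U(\alpha(t),\gamma^*(t))$ ``tends to $0$.'' For two unit-speed geodesic rays in $\h^2$ asymptotic to the same ideal point, this distance converges to the absolute value of the parameter shift (e.g.\ for $\alpha(t)=ie^{-t}$ and $\gamma^*(t)=2ie^{-t}$ it is identically $\log 2$), not necessarily to $0$. What you actually need, and what is true, is that this convex function is \emph{bounded}; a bounded convex function on $[0,\infty)$ is non-increasing, which gives your bound of $C$. The remainder of the proof --- the half-disk density computation, the resulting $(1,2/r_0^2)$-quasi-geodesic estimate, and the conversion of Morse's Hausdorff tracking into a same-parameter bound via the uniformly bounded time-shift --- is sound, and the constants depend only on $r_0$ and $C$ as required.
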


In order to prove Lemma~\ref{lem:hyperbolic_geodesics_close}, we will first state and prove the following infinite length version.

\begin{lemma}
\label{lem:hyperbolic_geodesics_exponentially_close}
Assume that we have the same setup as in Lemma~\ref{lem:hyperbolic_geodesics_close}.  Then there exists a constant $\wt{C} > 0$ depending only on $r_0$ and $C$ such that
\begin{equation}
\label{eqn:hyperbolic_exponential_decay}
	\disthyp^{\wt{D}}(\gamma_{z_0,\zeta}^{\wt{D}}(t),\gamma_{w_0,\zeta}^D(\R_+)) \leq \wt{C} e^{-t}.
\end{equation}
\end{lemma}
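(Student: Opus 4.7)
My plan is to reduce, via $\phi$, to the case where $D = \h$, $\zeta = 0$, $w_0 = i$, and $\wt D$ is a simply connected subdomain $\Omega \subseteq \h$ with $B(0,r) \cap \h \subseteq \Omega$ and $\disthyp^\Omega(z_0, i) \leq C$. In this reduction, $\alpha(t) = \gamma^D_{w_0,\zeta}(t) = ie^{-t}$, and $\alpha(\R_+) = (0,i]$ lies inside $B(0,r) \cap \h \subseteq \Omega$ since $r \ge r_0 \ge 2$. The whole argument will then be carried out by pulling $\Omega$ back to $\h$ via a carefully chosen Riemann map and using standard hyperbolic geometry in $\h$.

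Specifically, I would let $\psi : \h \to \Omega$ be the conformal map with $\psi(0) = 0$ (as prime ends) and $\psi(i) = i$, so that $\tilde\beta := \psi^{-1} \circ \beta$ is (by conformal invariance) the $\h$-hyperbolic geodesic from $\tilde z_0 := \psi^{-1}(z_0)$ to $0$ parameterized by hyperbolic length, with $\disthyp^\h(\tilde z_0, i) \leq C$. The key step is to exploit the fact that $\partial \Omega$ coincides with $\partial \h = \R$ throughout $B(0,r)$: Schwarz reflection then allows $\psi^{-1}$ to be extended to a univalent map on the full disk $B(0,r) \subseteq \C$. Applying the Koebe distortion theorem to the normalized map $z \mapsto \psi^{-1}(rz)/((\psi^{-1})'(0)\,r)$ (univalent on $\D$, vanishing at $0$, with derivative $1$ at $0$, and mapping $i/r$ to $i/((\psi^{-1})'(0)\,r)$) yields two-sided bounds $c := \psi'(0) \in [c_1,c_2]$ with $c_1, c_2$ depending only on $r_0$, together with the Taylor-type estimate $\psi^{-1}(w) = w/c + O(|w|^2/r)$ for $|w| \leq r/2$, with universal control on the implicit constant. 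This is the main technical input.

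Next I would analyze $\tilde\beta$ in $\h$. Since any $\h$-geodesic to $0$ is a circular arc perpendicular to $\R$ approaching $0$ tangentially to $i\R_+$, an elementary computation using $\disthyp^\h(\tilde z_0, i) \leq C$ gives $\im \tilde\beta(t) \leq c_3 e^{-t}$ and, via the explicit formula for $\h$-hyperbolic distance, $|\re \tilde\beta(t)| \leq c_3 e^{-2t}$, with $c_3$ depending only on $C$. On the other hand, the Taylor expansion of $\psi^{-1}$ shows $\psi^{-1}(iy) = iy/c + O(y^2/r)$ for small $y$, so choosing $y(t) := c\,\im \tilde\beta(t)$ produces a point $\psi^{-1}(iy(t)) \in \psi^{-1}(\alpha(\R_+))$ whose imaginary part matches $\im \tilde\beta(t)$ to leading order, and whose real part is also $O(e^{-2t})$.

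The conclusion follows by combining these estimates: for $t$ large, both $\tilde\beta(t)$ and $\psi^{-1}(iy(t))$ lie at height comparable to $e^{-t}$ with horizontal separation at most $c_4 e^{-2t}$, which translates into $\disthyp^\h(\tilde\beta(t), \psi^{-1}(iy(t))) \leq c_5 e^{-t}$ by the standard $\h$-distance formula $\disthyp^\h(u+iv, iv) \asymp u/v$. Transferring back by conformal invariance of the hyperbolic metric yields $\disthyp^{\wt D}(\beta(t), \alpha(\R_+)) \leq c_5 e^{-t}$, and the trivial bound $\disthyp^{\wt D}(\beta(t), \alpha(\R_+)) \leq \disthyp^{\wt D}(\beta(t), i) \leq t + C$ handles small $t$ after absorbing constants into $\wt C$. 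The principal obstacle is producing the quantitative Schwarz-reflection-plus-Koebe bounds in Step 2 uniformly in $r \ge r_0$; all subsequent estimates are routine hyperbolic geometry in $\h$.
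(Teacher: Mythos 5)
Your proposal is correct and follows essentially the same route as the paper: reduce to $D=\h$, $\zeta=0$, $w_0=i$; use Schwarz reflection across the flat boundary arc $(-r,r)$ together with the Koebe distortion estimate to show the Riemann map between $\wt{D}$ and $\h$ is $z\mapsto z(1+O(|z|/r))$ near $0$; and exploit that $\h$-geodesics terminating at $0$ approach $i\R_+$ at rate $e^{-t}$ (indeed $\re/\im$ decays like $e^{-t}$). The only differences are cosmetic — you normalize $\psi(i)=i$ and control $\psi'(0)$ via Koebe, whereas the paper normalizes $\varphi'(0)=1$ and controls $\varphi(i)$ via distortion, and you carry out the final comparison with Euclidean estimates in $\h$ rather than a triangle inequality with an intermediate point in $\wt{D}$.
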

\begin{proof}
By conformal invariance of hyperbolic distance, may assume that $D = \h$, $w_0 = i$ and $\zeta = 0$. Note that the unit speed hyperbolic geodesic from $i$ to $0$ is $\gamma_{i,0}^\h(t) = ie^{-t}$, $t \geq 0$.  Moreover, since the unique conformal automorphism $f$ of $\h$ which fixes $0$ and sends $i$ to $z_0 = x_0 + i y_0$ is given by
\begin{align*}
	f(z) = \frac{|z_0|^2 z}{x_0 z + y_0},
\end{align*}
it follows that the unit speed hyperbolic geodesic from $z_0$ to $0$ is given by
\begin{align}\label{eq:hyperbolic_geodesic_upper_half_plane}
	\gamma_{z_0,0}^\h(t) = \frac{|z_0|^2 x_0 e^{-2t} + i|z_0|^2 y_0 e^{-t}}{x_0^2 e^{-2t} + y_0^2}.
\end{align}
Note here, that the distance to the hyperbolic geodesic $\gamma_{i,0}^\h(t) = ie^{-t}$, $t \in \R$,  shrinks much faster than that to the boundary, due to the difference in exponents. Indeed, we have that
\begin{align*}
	\frac{\re(\gamma_{z_0,0}^\h(t))}{\im(\gamma_{z_0,0}^\h(t))} = \frac{x_0}{y_0} e^{-t}.
\end{align*}
Consequently, by~\eqref{eq:dist_hyp_qh_comparable},
\begin{align}\label{eq:dist_geodesic_line}
	\disthyp^\h(\gamma_{z_0,0}^\h(t),\gamma_{i,0}^\h(\R)) &\leq \distqh^\h(\gamma_{z_0,0}^\h(t),\gamma_{i,0}^\h(\R)) = \inf_{P:\gamma_{z_0,0}^\h(t) \to i \R} \int_P \frac{|dz|}{|\im(z)|} \\
	&\leq \frac{|\re(\gamma_{z_0,0}^\h(t))|}{\im(\gamma_{z_0,0}^\h(t))} = \frac{|x_0|}{y_0} e^{-t}. \nonumber
\end{align}
By mapping $z_0$ and $i$ to $\D$ with the map $\psi(z) = (i-z)/(i+z)$ it is easy to see that the condition $\disthyp^\h(z_0,i) \leq C$ gives a lower bound on $y_0$ and an upper bound on $|z_0|$.

Let $\wt{D}^*$ be the reflection of $\wt{D}$ across $(-r,r)$ and let $\varphi: \wt{D}^* \to \C$ be the unique conformal map satisfying $\varphi(\wt{D}) = \h$, $\varphi(0) = 0$, $\varphi'(0) = 1$ and $\varphi(-r_0) = -s$ and $\varphi(r_0) = s$ for some $s >0$. The proof idea is the following. We know that we have the desired exponential convergence of the image $\gamma_{\wt{z}_0,0}^\h = \varphi(\gamma_{z_0,0}^{\wt{D}})$ to the geodesic $\gamma_{i,0}^\h$, so in order to check that $\gamma_{z_0,0}^{\wt{D}}$ converges towards $\gamma_{i,0}^\h$ as well, conformally map it with $\varphi$ and then check that $\varphi$ does not perturb $\gamma_{i,0}^\h$ too much. Indeed, this is the case as $\varphi$ will locally look like the identity close to $0$.

We now formalize the ideas.   We let $H^* = \varphi(\wt{D}^*)$ and note that $\varphi_r(z) \coloneqq \varphi(rz)$ is a conformal map defined on $\D$ and $\varphi_r: \D \to \varphi_r(\D) \subseteq H^*$, $\varphi_r(0) = 0$ and $\varphi_r'(0) = r$. Thus, writing $\wt{\varphi}(z) = \tfrac{1}{r} \varphi_r(z)$, then (considering the restriction to $\D$) $\wt{\varphi}: \D \to \wt{\varphi}(\D) \subseteq \tfrac{1}{r}H^*$ is conformal and $\wt{\varphi}(0) = 0$ and $\wt{\varphi}'(0) = 1$. By~\cite[Proposition~3.26]{lawler2008conformally} there is a universal constant $C_*$ such that whenever $|z| \leq 1/2$, then $|\wt{\varphi}(z) -z| \leq C_* |z|^2$ (in fact, the optimal value is $C_* = 6$). In particular, when $|z| \leq r/2$, 
\begin{align}
\label{eqn:varphi_identity}
	|\varphi(z) - z| \leq C_* \frac{|z|^2}{r}.
\end{align}
It follows that for $s > \max(0,\log (2C^*/r))$, we have $\im(\varphi(i e^{-s})) \geq e^{-s}/2$ and consequently that
\begin{align}\label{eq:distortion_hyperbolic_geodesic}
	\disthyp^\h(\gamma_{i,0}^{\h}(s),\varphi(\gamma_{i,0}^{\h}(s))) \leq \distqh^\h(\gamma_{i,0}^{\h}(s),\varphi(\gamma_{i,0}^{\h}(s))) \leq \frac{|ie^{-s}-\varphi(ie^{-s})|}{e^{-s}/2} \leq \frac{2C_* e^{-s}}{r}.
\end{align}
We let $T_t = \argmin\{ s \in \R  : \disthyp^\h(\gamma_{\wt{z}_0,0}^\h(t),\gamma_{i,0}^\h(s)) \}$ and $\wt{z}_t = \wt{x}_t + i \wt{y}_t \coloneqq \gamma_{\wt{z}_0,0}^\h(t) = \varphi(\gamma_{z_0,0}^{\wt{D}}(t))$ and note that by conformal invariance and the triangle inequality
\begin{align}\label{eq:triangle_inequality_hyperbolic}
	\disthyp^{\wt{D}}(\gamma_{z_0,0}^{\wt{D}}(t),\gamma_{i,0}^\h(\R)) &\leq \disthyp^\h(\gamma_{\wt{z}_0,0}^\h(t),\gamma_{i,0}^\h(T_t)) + \disthyp^\h(\gamma_{i,0}^\h(T_t),\varphi(\gamma_{i,0}^\h(T_t))).
\end{align}
Moreover, we have by the distortion theorem (\cite[Theorem~3.21]{lawler2008conformally}) that there is a universal constant $C' > 0$ such that $\disthyp^{\h}(\varphi(i),i) \leq C'$. Thus, as above, the bound $\disthyp^\h(\varphi(z_0),i) \leq \disthyp^{\wt{D}}(z_0,i) + \disthyp^{\h}(\varphi(i),i) \leq C + C'$ provides an upper bound on $|\wt{z}_0|$ and a lower bound on $\wt{y}_0$ which depend only on $C$.  By~\eqref{eq:dist_geodesic_line} the first term on the right-hand side of~\eqref{eq:triangle_inequality_hyperbolic} is at most $\tfrac{|\wt{x}_0|}{\wt{y}_0} e^{-t}$.  Thus, what remains is to handle the second term.

We begin by noting that
\begin{align*}
	\log \frac{e^{-T_t}}{\wt{y}_t} = \disthyp^{\h}(\gamma_{i,0}^\h(T_t), i\wt{y}_t) \leq \disthyp^{\h}(\gamma_{i,0}^\h(T_t), \wt{z}_t)\leq \frac{|\wt{x}_0|}{\wt{y}_0} e^{-t}.
\end{align*}
It follows that
\begin{align}\label{eq:geodesic_min_time}
	T_t \geq \log(1/\wt{y}_t) - \frac{|\wt{x}_0|}{\wt{y}_0} e^{-t}.
\end{align}
Thus, by the above mentioned bounds on $|\wt{x}_0|$ and $\wt{y}_0$, the fact that $\wt{y}_t \asymp \tfrac{|z_0|^2}{\wt{y}_0} e^{-t}$ where the implicit constant depends only on $C$ (recall~\eqref{eq:hyperbolic_geodesic_upper_half_plane}) and~\eqref{eq:distortion_hyperbolic_geodesic}, it follows that there exist constants $t_0 \geq 0$ and $C_0 > 0$ depending only on $r_0$ and $C$ such that for $t \geq t_0$,  $T_t \geq \max(0,\log(2C_*/r))$ and
\begin{align*}
	\disthyp^\h(\gamma_{i,0}^\h(T_t),\varphi(\gamma_{i,0}^\h(T_t))) \leq C_0 e^{-t}.
\end{align*}
Thus, the proof is done for $t \geq t_0$,  since then $\disthyp^{\wt{D}}(\gamma_{z_0,0}^{\wt{D}}(t),\gamma_{i,0}^\h(\R)) = \disthyp^{\wt{D}}(\gamma_{z_0,0}^{\wt{D}}(t),\gamma_{i,0}^\h(\R_+))$. 

Finally, we note that if $0 \leq t \leq t_0$, then
\begin{align*}
	\disthyp^{\wt{D}}(\gamma_{z_0,0}^{\wt{D}}(t),\gamma_{i,0}^\h(\R_+)) \leq \disthyp^{\wt{D}}(\gamma_{z_0,0}^{\wt{D}}(t),i) \leq \disthyp^{\wt{D}}(z_0,i) + t \leq C + t_0,
\end{align*}
which finishes the proof.
\end{proof}

\begin{proof}[Proof of Lemma~\ref{lem:hyperbolic_geodesics_close}]
By applying the conformal map $\phi$, we can assume without loss of generality that $D = \h$, $w_0 = i$, and $\zeta = 0$.  By Lemma~\ref{lem:hyperbolic_geodesics_exponentially_close} there exists $\wt{C}>0$ depending only on $C,r_0$ such that for each $t \geq 0$ $\disthyp^{\wt{D}}(\gamma_{\wt{z}_0,0}^{\wt{D}}(t),\gamma_{i,0}^{\h}(\R_+)) \leq \wt{C}e^{-t}$.  Fix $t \geq 0$ and let $s \geq 0$ be such that $\disthyp^{\wt{D}}(\gamma_{\wt{z}_0,0}^{\wt{D}}(t),\gamma_{i,0}^{\h}(s)) \leq 2 \wt{C}e^{-t} \leq 2\wt{C}$.  Then by~\eqref{eq:dist_hyp_qh_comparable} and the fact that $\dist(\gamma_{i,0}^\h(u),\partial \wt{D}) = \dist( \gamma_{i,0}^\h(u),\partial \h)$ for all $u>0$, we have that
\begin{align}\label{eq:t_s_ubd}
t = \disthyp^{\wt{D}}(z_0,\gamma_{z_0,0}^{\wt{D}}(t)) &\leq \disthyp^{\wt{D}}(z_0,i) + \distqh^{\wt{D}}(i,\gamma_{i,0}^\h(s)) + \disthyp^{\wt{D}}(\gamma_{i,0}^\h(s),\gamma_{z_0,0}^{\wt{D}}(t)) \leq  C  + s + 2\wt{C}. 
\end{align}

Since $\disthyp^{D_2}(z_1,w_1) \leq \disthyp^{D_1}(z_1,w_1)$ whenever $D_1 \subseteq D_2 \subseteq \C$
 and $z_1,w_1 \in D_1$, it follows that
\begin{align}
s
&= \disthyp^{\h}(i, \gamma_{i,0}^\h(s))\leq \disthyp^{\wt{D}}(i,\gamma_{i,0}^{\h}(s))\notag\\ 
&\leq \disthyp^{\wt{D}}(i,z_0) + \disthyp^{\wt{D}}(z_0,\gamma_{z_0,0}^{\wt{D}}(t)) + \disthyp^{\wt{D}}(\gamma_{z_0,0}^{\wt{D}}(t),\gamma_{i,0}^{\h}(s)) \notag \\ 
&\leq C + t + 2\wt{C}.	 \label{eq:s_t_ubd}
\end{align}
Thus~\eqref{eq:t_s_ubd} and~\eqref{eq:s_t_ubd} imply that there exists a constant $c_1 > 0$ depending only on $C,r_0$ such that $|t-s| \leq c_1$.  Finally, since $\disthyp^{\wt{D}}(\gamma_{i,0}^{\h}(s),\gamma_{i,0}^{\h}(t)) \leq \distqh^{\wt{D}}(\gamma_{i,0}^{\h}(s),\gamma_{i,0}^{\h}(t)) = |t-s|$ we have that
\begin{align*}
 \disthyp^{\wt{D}}(\gamma_{z_0,0}^{\wt{D}}(t),\gamma_{i,0}^\h(t))
&\leq \disthyp^{\wt{D}}(\gamma_{z_0,0}^{\wt{D}}(t),\gamma_{i,0}^\h(s)) + \disthyp^{\wt{D}}(\gamma_{i,0}^\h(s),\gamma_{i,0}^\h(t)) \leq 2\wt{C}+c_1.
\end{align*}
\end{proof}

\bibliographystyle{abbrv}
\bibliography{references}

\end{document}